\documentclass{article}
\usepackage{algorithm}
\usepackage[dvipsnames]{xcolor}
\usepackage{algpseudocode}
\usepackage[export]{adjustbox}
\usepackage{indentfirst}
\usepackage{amsthm}
\usepackage[utf8]{inputenc}
\usepackage{mathtools}   
\usepackage{amsfonts}
\usepackage{array}
\usepackage{booktabs}
\usepackage{graphicx}
\usepackage{dashrule}
 \usepackage{booktabs}
 \usepackage{tabularx}
\usepackage{multirow}
\usepackage{subcaption}
\usepackage{comment}
\usepackage{makecell}
\newtheorem{assumption}{Assumption}
\newtheorem{theorem}{Theorem}

\newtheorem{lemma}{Lemma}
\newtheorem{propo}{Proposition}
\newtheorem{definition}{Definition}[section]

\usepackage{float}
\usepackage{extarrows}
\newtheorem{remark}{\textbf{Remark}}
\usepackage{etoolbox}
\usepackage{tikz} 
\usepackage[colorlinks]{hyperref}
\usepackage[capitalize]{cleveref} 
\definecolor{inkgreen}{RGB}{0, 90, 0}

\definecolor{burgundy}{RGB}{135, 10, 30}
\definecolor{blue}{RGB}{0, 0, 128}
\hypersetup{
    linkcolor=inkgreen,  
    citecolor=blue, 
    urlcolor=inkgreen    
}
\crefname{assumption}{Assumption}{Assumptions}
\crefname{propo}{Proposition}{Propositions}
\usepackage{dynkin-diagrams}
\usepackage{blindtext}
\usepackage{xspace}
\usepackage{multicol}
\usepackage{enumitem}
\usepackage{booktabs}
\usepackage{array}
\makeatletter
\newcommand{\mathleft}{\@fleqntrue\@mathmargin0pt}
\newcommand{\mathcenter}{\@fleqnfalse}
\usepackage{geometry}
\usepackage{lipsum}
\usepackage{graphicx}
\usepackage{bbm}

\makeatother
\usepackage{fullpage}

\newcommand{\EE}{\mathbb{E}}

\usepackage{amsfonts}
\newcommand{\PP}{\mathbb{P}}
\title{Robust Estimation via Robust Optimization}
\author{}
\date{}

\usepackage{titlesec}
\usepackage{tocloft}

\titleformat{\paragraph}
  {\normalfont\normalsize\bfseries}{\theparagraph}{1em}{}
\titlespacing*{\paragraph}{0pt}{3.25ex plus 1ex minus .2ex}{1.5ex plus .2ex}

\renewcommand{\theparagraph}{\thesubsubsection.\arabic{paragraph}} 



\begin{document}
\thispagestyle{empty}

\begin{center}

    {\bf{\LARGE{Robust Accelerated Adaptive Search: High-Probability Complexity Bounds under Bounded-Moment Stochastic Oracles}}} \\

    \vspace*{.25in}

    {\large{Shunzhi Zhang\textsuperscript{1}, \hspace*{1em} Shichen Liao\textsuperscript{1}, \hspace*{1em}  Congying Han\textsuperscript{1,*}, \hspace*{1em} Tiande Guo\textsuperscript{1}}}

    \begin{center}
        \textsuperscript{1}School of Mathematical Sciences, University of Chinese Academy of Sciences
    \end{center}

\end{center}

{\renewcommand{\thefootnote}{}
\footnotetext{This paper is supported by the National Natural Science Foundation of China (Nos. 12431012, U23B2012). \\
* Corresponding author: \texttt{hancy@ucas.ac.cn}. \\
Email: \texttt{zhangshunzhi23@mails.ucas.ac.cn}, \texttt{liaoshichen20@mails.ucas.ac.cn}, \texttt{tdguo@ucas.ac.cn}.}}

\begin{abstract} 

We study unconstrained smooth convex optimization under stochastic first- and zeroth-order oracles subject only to finite-moment bounds, naturally admitting persistent bias and heavy-tailed noise. In this hostile environment, integrating momentum into \emph{adaptive step search} to secure acceleration poses an inherent structural challenge, because momentum propagates oracle errors across iterations, inevitably undermining the stabilizing effect of local search.

To address this difficulty, we propose \texttt{RAAS}, a robust accelerated adaptive search method with tunable momentum intervention. Theoretically, we develop a general high-probability framework for adaptive search methods under stochastic oracle feedback, and instantiate it through the strongly convex and general convex analyses of \texttt{RAAS}. This yields high-probability stopping-time complexity bounds for reaching the attainable precision neighborhood. The resulting guarantees also clarify how the algorithmic parameters trade off early-stage acceleration against late-stage stability, and motivate a simple  switching heuristic that performs well empirically.
\end{abstract}

\section{Introduction}\label{sec:Introduction}
This paper focuses on  minimization of an unconstrained, differentiable objective function
\[
\phi:\mathbb{R}^d\to\mathbb{R}.
\]
Throughout the paper, we make the following standard regularity assumptions on the function $\phi$:
\begin{assumption}\label{assump:SC-Lsmooth}
    Assume that there exist constants $L \ge \mu \ge 0$ such that $\phi$ is differentiable and satisfies
    \begin{equation}\label{eq:SC-Lsmooth}
        \frac{\mu}{2}\,\|y-x\|^2
        \;\le\;
        \phi(y)-\phi(x)-\big\langle\nabla \phi(x),\,y-x\big\rangle
        \;\le\;
        \frac{L}{2}\,\|y-x\|^2,
    \end{equation}
    for all $x,y\in\mathbb{R}^d$. We denote the class of differentiable functions satisfying condition \eqref{eq:SC-Lsmooth} by $\mathcal{C}_{\mu,L}$.
\end{assumption} 
The right-hand side of \eqref{eq:SC-Lsmooth} implies that $\nabla\phi$ is $L$-Lipschitz continuous ($L$-smooth), whereas the left-hand side ensures $\mu$-strong convexity. 
For $\mu>0$, the ratio $\kappa := L/\mu$ denotes the condition number.
 We conduct our analysis for both the strongly convex regime ($\mu>0$) and the general convex regime ($\mu=0$).
 
\begin{assumption}\label{assump:global-min}
 Assume that $\phi$ admits a finite global minimum, i.e.,
$\phi^* \;:=\; \inf_{x\in\mathbb{R}^d} \phi(x) \;>\; -\infty,$
and that this infimum is attained at some point $x^*\in\mathbb{R}^d$, so that $x^* \in \mathrm{argmin}_{x\in\mathbb{R}^d} \phi(x)$. 

\end{assumption}
Operating under Zeroth- and First-order Oracle models, this work proposes an adaptive search method equipped with a momentum structure, and studies its iteration complexity to achieve an $\boldsymbol{\varepsilon}$-approximate state, where $\boldsymbol{\varepsilon} := (\varepsilon_\phi,\varepsilon_\nabla)$ denotes the target precision pair. Unlike classical analyses that bound the expected gap at a fixed horizon, our theoretical framework focuses on establishing high-probability bounds for the $\boldsymbol{\varepsilon}$-Stopping Time $T_{\boldsymbol{\varepsilon}}$, precisely defined as the number of iterations required to reach
\begin{equation}
\phi(\,\cdot\,)-\phi^* \;\le\; \varepsilon_\phi
\qquad\text{or}\qquad
\Vert\nabla\phi(\,\cdot\,)\Vert \;\le\; \varepsilon_\nabla.
\label{problem}
\end{equation}

Achieving such precision, however, is profoundly challenged by the severely inexact stochastic environment we consider, where the oracle feedback is restricted by weak \emph{finite-moment} bounds, naturally accommodating heavy-tailed noise and persistent bias. Such an oracle setting has recently gained significant traction and is practically motivated by modern large-scale learning systems: In gradient-based training, particularly across heterogeneous or highly variable data regimes, heavy-tailed stochastic fluctuations are routinely observed \cite{simsekli2019tail}. Concurrently, practical mechanisms designed to make large-scale learning feasible systematically distort the oracle feedback. Examples include biased compression or quantization for communication efficiency \cite{Beznosikov2023,Karimireddy2019}, privacy-preserving perturbations \cite{Differentially_Private_Accelerated_Optimization}, and zeroth-order surrogates derived via finite-difference or smoothing constructions \cite{katyanogradient2021}. 

To stably navigate such a hostile stochastic landscape, our algorithmic design builds upon the Stochastic Adaptive Step Search (\texttt{SASS}) framework \cite{Xie2024}. As a robust paradigm within the broader class of adaptive first-order methods \cite{Cartis2018,courtny,Albert2021,scheinberg2025stochasticadaptiveoptimizationunreliable}, \texttt{SASS} relies on two core stabilizing mechanisms. First, rather than blindly committing to a corrupted gradient direction, it introduces an \emph{acceptance} test: each trial step is explicitly evaluated against a \emph{generalized} Armijo condition. This verifiable descent condition is endowed with necessary noise tolerance, thereby avoiding the premature algorithmic stagnation that typically plagues rigid line-search methods under degraded oracle feedback. Second, it enforces dynamic step-size adaptation combined with fresh independent sampling for every trial evaluation. Together, these mechanisms prevent the algorithm from being trapped along directions heavily contaminated by extreme noise.

\medskip
\noindent\textbf{Stochastic oracle model.}
To operationalize our adaptive architecture, the \emph{stochastic first-order oracle} (\textsc{SFO}; \Cref{SFO}) generates $\mathbf{G}(\cdot)$ as trial descent directions, while computationally cheaper \emph{stochastic zeroth-order oracle} (\textsc{SZO}; \Cref{SZO}) feedback $\mathbf{F}(\cdot)$ evaluates the acceptance rule. Inspired by \cite{Xie2024,scheinberg2025stochasticadaptiveoptimizationunreliable}, our \textsc{SZO} bypasses absolute pointwise accuracy; it imposes bounded $(1+\varrho)$-th moment conditions exclusively on the maximum \emph{pairwise difference} of estimation errors, rendering it structurally oblivious to systematic deterministic shifts. Meanwhile, our \textsc{SFO} formulation explicitly departs from classical accelerated analyses \cite{Schmidt2013SGC,stoestimatsequence,fallah2022robust,gürbüzbalaban2025acceleratedgradientmethodsbiased} that require sub-exponential concentration and a finite moment-generating function (MGF). Relying exclusively on weak finite-moment bounds, we distinguish two regimes: for general convexity, the weaker \textsc{SFO-1} bounds the \emph{mean error} (ME) alongside a fractional centered moment, naturally accommodating infinite-variance extreme heavy-tailed noise (e.g., Pareto distributions with shape parameter $\alpha \in (1+\delta, 2]$). For strong convexity, the slightly stronger \textsc{SFO-2} mandates a bounded \emph{mean squared error} (MSE), ensuring finite variance while still capturing distributions with rapidly diverging MGFs (e.g., log-normal distributions, or Student's $t$ with degrees of freedom $\nu > 2(1+\delta)$). Despite operating strictly outside the classical sub-exponential scope, leveraging martingale difference inequalities---specifically Burkholder--Rosenthal \cite{rio2009moment} and Fuk--Nagaev \cite{FAN2017538}---proves fully sufficient to secure a rigorous high-probability complexity analysis.

\subsection{Challenges}
While the \texttt{SASS} framework provides a relatively stable foundation in stochastic environments, naturally incorporating a momentum step to pursue a superior convergence rate raises several challenges to be considered. Pioneering this integration, the Stochastic FISTA Step Search (\texttt{FISTA-SS})~\cite{stoFISTA} adapts the deterministic \texttt{FISTA}~\cite{FISTA} for solving composite objective $F = \phi + h$ to stochastic environment. However, when confronted with the oracle models considered in our work, \texttt{FISTA-SS} exposes a critical vulnerability: despite inheriting the robustness contributed by fresh oracle query and step-size recovery mechanisms from \texttt{SASS}, we observe that \texttt{FISTA-SS} remains fundamentally bottlenecked by an insufficiently relaxed acceptance rule.

To see this clearly, consider the smooth setting (i.e., $h \equiv 0$, $F$ degrades to $\phi$) focused on in this paper. Under this setting, the inexact-model-based descent condition of \texttt{FISTA-SS} correspondingly degrades to the classical Armijo condition, evaluated with exact function values along the estimated direction $-\mathbf{G}$ for a trial step-size $\gamma > 0$, using a strict, fixed scaling parameter $\theta=1/2$.\footnote{i.e., $\phi(y-\gamma\mathbf{G}) \;\le \;\phi(y) - \tfrac{\gamma}{2}\Vert\mathbf{G}\Vert^2$. The formal proof of this  degradation is provided in \Cref{equivalence_lemma}.} However, a fixed $\theta=1/2$ is overly stringent for \textsc{SFO}-feedback, inevitably triggering premature step-size collapse and necessitating a flexible $\theta \in (0,1)$ as recognized in \texttt{SASS}. Furthermore, evaluating such a condition using highly inexact \textsc{SZO} feedback without an explicit additive tolerance inevitably also leads to frequent false rejections. Resolving this incompatibility yields our \emph{Design Insight I}:
to prevent premature step-size collapse, we structurally relax the acceptance rule by admitting $\theta \in (0,1)$ and introducing an explicit additive tolerance as formulated in \eqref{Check_1}. By explicitly accommodating both gradient and objective inaccuracies, this tailored relaxation successfully preserves the core design principle of \texttt{SASS}---guaranteeing a sufficiently high acceptance probability as the trial step-size contracts, thereby preventing the algorithm from stalling.

While Design Insight I successfully prevents stagnation, its relaxed acceptance rule inherently admit lower-quality searched steps. This introduces a secondary challenge: how to adapt the momentum update to these accepted suboptimal inexact gradient steps to secure acceleration without sacrificing robustness against oracle inaccuracies. Classical Nesterov momentum achieves optimal rates via implicit second-order corrections of \emph{precise} gradients \cite{su2016differential,shi2021understanding}, implying a rigorous structural coupling between momentum extrapolation and the gradient step. If this traditional update is rigidly applied to lower-quality searched steps, this intrinsic coupling is destroyed. Consequently, rather than accelerating convergence, the momentum mechanism actively amplifies oracle-induced instabilities. Preserving its efficacy therefore dictates a delicate re-coupling between the inexact gradient step and the momentum update.

Beyond this foundational coupling, a robust framework must modulate the degree of momentum intervention. As the trajectory approaches the optimum, oracle inexactness inevitably dominates the local algorithmic performance. Without appropriate regulation, the imprudent momentum threatens to destabilize the algorithm or even trigger divergence, necessitating a safe transition from aggressive acceleration to conservative local refinement. Synthesizing these structural requirements yields our \emph{Design Insight II}: to secure robust acceleration, the momentum update must be governed by a two-parameter mechanism---explicitly parameterized by the relaxation scalar $\theta$ to restore the critical gradient-momentum coupling, and complemented by a tuning parameter $\vartheta \in (0,1)$ to flexibly govern the overall momentum amplitude.

\subsection{Related Work}

The stochastic line/step search strategy has attracted sustained interest because it allows first-order methods to utilize highly accessible zeroth-order feedback to maintain algorithmic stability when stochastic first-order oracles are unreliable, while preserving adaptivity to local $L$-smoothness. Assuming access to exact zeroth-order information and the Probabilistic Relative First-Order Oracle (PRFO),\footnote{Under this relative error model, the estimator deviation $\Vert\mathbf{G}(\cdot)-\nabla\phi(\cdot)\Vert$ is bounded by a proportion of the true (or stochastic) gradient norm with some probability $p>1/2$.} Cartis and Scheinberg \cite{Cartis2018} established early \emph{expected} complexity guarantees. Paquette and Scheinberg \cite{courtny} extended this analytical framework to accommodate a stochastic zeroth-order oracle coupled with the PRFO. They achieved this by designing an acceptance rule that tracks the magnitude of the stochastic gradient, thereby systematically balancing the combined impact of errors from both zeroth- and first-order oracles. Alternatively, taking a complementary route based on \cite{Cartis2018}, Berahas et al. \cite{Albert2021} directly introduced an explicit error tolerance into the Armijo acceptance condition. This relaxation enables the algorithm to handle bounded zeroth-order noise that is strictly decoupled from the PRFO. 

Subsequent works by Xie and collaborators elevated this line of research from expectation to \emph{high-probability} guarantees. These advancements include accommodating the PRFO with irreducible absolute error in \texttt{SASS}~\cite{Xie2024}, deriving broader sample complexities \cite{Jin-sample}, and providing a unified framework for severely inexact or heavy-tailed zeroth-order noise \cite{scheinberg2025stochasticadaptiveoptimizationunreliable}. These noise-aware principles have also permeated other inexact frameworks, including trust-region \cite{blanchet2019convergence,cao2024first,fang2025highprobabilitycomplexitybounds}, cubic regularization \cite{bellavia2020stochastic,bellavia2022adaptive,scheinberg2023cubic}, and SQP methods \cite{SQP-XIE,fang2025highprobabilitycomplexitybounds}. Collectively, these  works demonstrate that adaptive search leveraging lower-order information substantially relaxes the theoretical assumptions regarding the uncertainty of higher-order oracles.

The situation becomes markedly more delicate once momentum is introduced. While Nesterov \cite{Nes1983-1st} established the momentum mechanism for smooth optimization, step-size backtracking was prominently pioneered in composite settings via Nesterov's later framework \cite{Nesterov2013}, \texttt{FISTA} \cite{FISTA}, and Tseng's unified schemes \cite{Tseng2008}. To track local $L$-smoothness variations, \texttt{FISTA-BKTR} \cite{BKTR} and its later variants \cite{mu>0adaptive} equip this framework with a step-size-recovering technique known as full-backtracking. Its stochastic variant, \texttt{FISTA-SS} \cite{stoFISTA}, borrows the refreshing-oracle strategy from \texttt{SASS} to avoid reusing poor-quality gradients, thereby mildly improving stochastic robustness. However, its acceptance rule remains anchored to the strict sufficient descent conditions of classical backtracking line search \cite{FISTA,localLip}, lacking the flexible relaxation seen in \texttt{SASS} that is necessary to accommodate more challenging oracle regimes. Furthermore, to keep the first-order perturbations accumulated by momentum manageable, \texttt{FISTA-SS} necessitates stronger assumptions than momentum-free frameworks---specifically, an additional decaying mean-squared-error (MSE) control atop the PRFO. This structural limitation reveals a fundamental bottleneck: once a momentum strategy is introduced, local search assisted by zeroth-order information can no longer suffices to deterministically confine the propagation and accumulation of first-order stochastic errors across iterations. Consequently, its capacity to tolerate first-order uncertainty becomes significantly restricted.

A complementary line of work studies momentum methods that directly utilize inexact or stochastic first-order information, independently of adaptive search. Early results showed that momentum-based schemes can tolerate controlled deterministic inaccuracies within certain error models \cite{d2008smooth,Inexactprox2011}. This perspective was substantially sharpened by Devolder, Glineur, and Nesterov through the inexact-oracle framework \cite{devolder2014first}, which demonstrated that the superiority of momentum mechanisms is fundamentally conditional on first-order oracle precision: incorporating momentum is highly effective in the exact gradient information regime, but generally makes the dynamics more sensitive to inexactness than standard gradient methods. Related control-theoretic analyses further support this conclusion from a complementary viewpoint~\cite{gurbuzbalaban2023robustlystableacceleratedmomentum}. This vulnerability motivated the Intermediate Gradient Method (\texttt{IGM})~\cite{devolder2013intermediate} and, more broadly, parameterized momentum update~\cite{aybat2020robust,gupta2024nesterovaccelerationdespitenoisy,kornilov2025intermediate,liu2025nonasymptotic} that interpolate between the robustness of vanilla gradient descent and the speed of full momentum updates, allowing the degree of momentum intervention to be explicitly tuned to control the algorithm's sensitivity to oracle errors.

Predictably, in stochastic settings, ideal acceleration guarantees have mostly been secured under first-order oracle assumptions that are substantially stronger than those required for momentum-free analyses. Typical assumptions encompass unbiasedness coupled with bounded variance, relative-noise control, or light-tailed concentration, which support expectation or high-probability complexity guarantees that nearly match deterministic benchmarks \cite{Schmidt2013SGC,fallah2022robust,chen2023acceleratedsgd,zhang2024robust,gupta2024nesterovaccelerationdespitenoisy}. An alternative route enforces a decaying noise level via monotonically increasing batch sizes or variance-reduction mechanisms \cite{stoFISTA,Tran2022,Nguyen2023,lei2024variancereduced}. From a broader structural perspective, stochastic estimate sequences provide an abstract framework that unifies the analysis of several stochastic momentum methods \cite{stoestimatsequence}. Furthermore, a recent control-theoretic study \cite{Trade_off} quantifies the trade-off between robustness and acceleration for general momentum methods in stochastic environments via risk-sensitivity measures, analogous to the deterministic inexact analyses in \cite{gurbuzbalaban2023robustlystableacceleratedmomentum}. While this metric effectively characterizes the robustness limits of momentum under sub-Gaussian oracle noise, it currently yields suboptimal high-probability bounds restricted to the ergodic (iterate-averaged) sequence. This inherent vulnerability is severely exacerbated in the heavy-tailed regime. Although gradient clipping serves as a classical technique to combat heavy-tailed noise by forcibly restoring concentration to secure high-probability guarantees \cite{gorbunov2020accelerated,nguyen2023improved}, it inevitably introduces a systematic bias. As is well known, the momentum mechanism subsequently propagates and amplifies this bias, ultimately causing the algorithm to exhibit persistent stochastic drift during the later stages of optimization.

Bridging these distinct research lines therefore demands a unified framework—one that simultaneously mitigates severe oracle unreliability and restrains the stochastic drift amplified by momentum—to fully realize robust stochastic adaptive step search in accelerated regimes.

\subsection{Contributions}\label{contribution}

Our main contributions are summarized as follows:

\begin{itemize}[leftmargin=0.4cm]
    \item \textbf{Robust accelerated adaptive search (\texttt{RAAS}):} We propose \texttt{RAAS}, a novel method for stochastic first- and zeroth-order oracles (\Cref{section_algo}). It couples a noise-tolerant step search mechanism with a parameterized momentum update, enabling a \emph{piecewise linear interpolation} between aggressive Nesterov-type acceleration ($\vartheta = 0$) and the conservative, momentum-free step search baseline \texttt{SASS} ($\vartheta\uparrow1$). This design is specifically tailored to mitigate the instability caused by persistent oracle bias and heavy-tailed fluctuations.

    \item \textbf{Abstract high-probability framework:} We develop a general high-probability analysis framework for adaptive search methods under inexact oracle feedback (\Cref{subsec:frame}). Our framework effectively resolves the complex Lyapunov recursions featuring intertwined multiplicative contraction and additive residuals, which remain structurally incompatible with the purely additive arguments of previous analyses \cite{Xie2024,scheinberg2025stochasticadaptiveoptimizationunreliable}.

  \item \textbf{Complexity guarantees for \texttt{RAAS}:} We instantiate our abstract high-probability framework under finite-moment oracles (\Cref{SFO,SZO}) through the theoretical analysis of \texttt{RAAS} in both the strongly convex (\textsc{SFO-2}; \Cref{sub_complexity_>0}) and general convex (\textsc{SFO-1}; \Cref{sec_mu=0_cons}) regimes. Because the weaker \textsc{SFO-1} accommodates extreme heavy-tailed noise with infinite variance, its analysis inherently requires an additional \emph{bounded-trajectory} (BT) assumption. Ultimately, our results for \texttt{RAAS} demonstrate that provided the attainable precision $\boldsymbol{\varepsilon}$ is bounded below by a threshold $\boldsymbol{\varepsilon}'$ (determined by the tolerance level and momentum intervention), the stopping-time complexity $T_{\boldsymbol{\varepsilon}}$ achieving \eqref{problem} nearly preserves the optimal first-order rates, with the failure probability decaying polynomially. \Cref{tab:intro_complexity} summarizes these results.
\end{itemize}

\begin{table}[htbp]
    \centering
    \caption{Headline high-probability guarantees for $T_{\boldsymbol{\varepsilon}}$. Both regimes admit polynomial-tail guarantees. }
    \label{tab:intro_complexity}
    \renewcommand{\arraystretch}{1.15}
    \setlength{\tabcolsep}{6pt}
    \resizebox{\textwidth}{!}{%
    \begin{tabular}{lccc}
        \toprule
        \textbf{Geometry} & \textbf{Oracle} & \textbf{High-Prob. Complexity} & \textbf{Attainable $\boldsymbol{\varepsilon}=(\varepsilon_\phi,\varepsilon_\nabla)$} \\
        \midrule
        $\mathcal{C}_{\mu>0\mid L}$ 
        & \textsc{SFO-2} \& const.-scale \textsc{SZO}
        & ${\mathcal O}\!\left(\sqrt{\kappa}\log\frac{1}{\varepsilon_\phi}\right)$
        & $\varepsilon_\phi>\varepsilon_\phi'>0,\ \varepsilon_\nabla\ge0$ \\
        \midrule
        $\mathcal{C}_{\mu=0\mid L}$ \, (BT)
        & \textsc{SFO-1} \& decaying-scale \textsc{SZO}
        & $
          \mathcal O\!\left(
\sqrt{\frac{L}{\varepsilon_\phi-\varepsilon_0}}\
\right)$
        & $\varepsilon_\phi>\max\{\varepsilon_\phi',\varepsilon_0\}>0,\ \varepsilon_\nabla\ge\varepsilon_\nabla'>0$ \\
        \bottomrule
    \end{tabular}%
    }
\end{table}
Our results make explicit how the algorithmic parameters concerning tolerance level and momentum intervention regulate the inherent trade-off between early-stage progress and late attainable precision (\Cref{Remark_2,Remark_mu=0_1}). These theoretical observations further inform a practical heuristic for tuning parameters across different noise regimes (\Cref{algo_adaptive_switch}).
 
To facilitate reference, \Cref{tab:parameters} summarizes the various constants appearing throughout this work—spanning intrinsic oracle parameters, user-specified algorithm parameters, intermediate framework constants, and free variables in the complexity bounds.

\begin{table}[ht]
\centering
\caption{Summary of Constant Parameters}
\label{tab:parameters}
\renewcommand{\arraystretch}{1.3} 
\begin{tabularx}{\textwidth}{l X l}
\toprule
\textbf{Category} & \textbf{Symbols} & \textbf{Reference} \\
\midrule
\textbf{Algorithm} & $\mu, \nu, \theta, \vartheta, \gamma_{\max}, \gamma_0,\alpha_0,\bar{\alpha},\epsilon_g',\epsilon_f'$ & \Cref{algo}, \cref{para},  \Cref{pre_assumption_1,pre_assumption_2} \\

\textbf{Oracle} & $\epsilon_g,\epsilon_f, \upsilon_g, \upsilon_f,\delta, \varrho$ &\Cref{SFO,SZO} \\

\textbf{Framework} & $\bar T, p,p',  \bar{\gamma},\varepsilon_\phi',\varepsilon_\nabla',\varepsilon_0$& \Cref{hyp_1},  \Cref{proposition_fisrst_1,proposition_second}  \\

\textbf{Free} & $\hat{p},S_g, S_f$ &\Cref{lemma3.1},  \Cref{Complexity_mu>0,complexity_mu=0_1} \\
\bottomrule
\end{tabularx}
\end{table}

\section{ Algorithm: Robust Accelerated Adaptive Search}\label{section_algo}

We integrate stochastic adaptive step-search strategy from (stochastic) gradient methods~\cite{Cartis2018,Albert2021,Xie2024} into a Nesterov-type accelerated framework.
Compared with \texttt{FISTA-SS}~\cite{stoFISTA}, a representative stochastic accelerated method with adaptive backtracking, our method jointly modifies both the acceptance rule and the momentum update.
We parameterize both components by $(\theta,\vartheta,\mu)$ and the tolerance sequences $\{\epsilon_f^{(t)},\epsilon_g^{(t)}\}_{t\ge1}$ to accommodate biased and heavy-tailed stochastic oracle errors.
At a high level, these parameters govern a trade-off between early-iteration progress and the final noise-induced optimality neighborhood.
The proposed \emph{Robust Accelerated Adaptive Search} (\texttt{RAAS}) method is given in \Cref{algo}.

\begin{algorithm}[t] 
\caption{\texttt{RAAS}: Robust Accelerated Adaptive Search} 
\label{algo}
\begin{algorithmic}[1]
    \State \textbf{Input:} Hyper-parameters $\mu\geq0$, $(\nu,\theta,\vartheta)\in(0,1)$, step size limits $\gamma_{\max} > 0$.
    \State \textbf{Initialize:} $x_0 \gets x_1 \gets \bar{x}_1$, $\hat{\gamma}_1 \in (0, \gamma_{\max}]$, $\gamma_0 \gets \hat{\gamma}_1/\nu$. Tolerance sequences $\{\epsilon_f^{(t)}, \epsilon_g^{(t)}\}$.
    \vspace{0.1cm}
    \For{$t = 1,2,\ldots$}
        \vspace{0.05cm}
        \State \textit{\color{inkgreen}// Phase 1: Momentum and Prediction}
        \State Compute momentum parameter $\hat{\rho}_t = \hat{\rho}_t\bigl(\theta,\vartheta,\mu,\hat{\gamma}_t,(\gamma_i)_{i\le t-1}\bigr)$.
        \State Compute trial extrapolation point:
        \begin{equation}\label{update_y_t}
             \hat{y}_t \;=\; x_t + \hat{\rho}_t\big(\bar{x}_t - x_{t-1}).
        \end{equation}
    
        \vspace{-0.4cm}
        
        \State \textit{\color{inkgreen}// Phase 2: Oracle Interaction}
        \State Query  \textsc{SFO} at $\hat y_t$ to obtain the gradient estimator $\mathbf G_t$; \quad Compute $\hat x_{t+1} = \hat y_t - \hat\gamma_t \mathbf G_t$.
        \State Query \textsc{SZO} at the triple $\mathcal X_t := (x_t,\hat y_t,\hat x_{t+1})$ to obtain the estimates $\mathbf{F}_t = \bigl(f_t(x_t),f_t(\hat y_t),f_t(\hat x_{t+1})\bigr)$.
        \vspace{0.1cm}

        \State \textit{\color{inkgreen}// Phase 3: Adaptive Check}
        \State \textbf{Verify} conditions \eqref{Check_1} and \eqref{Check_2}:
        \vspace{-0.1cm}
        \begin{align}
            \tag{\textbf{I}} &  f_t(\hat{x}_{t+1}) \;\le\; f_t(\hat{y}_t) - \hat{\gamma}_t \theta \Vert \mathbf{G}_t\Vert^2 + \epsilon_f^{(t)}, \label{Check_1}\\
            \tag{\textbf{II}}& f_t(\hat{y}_t)\; \le\; f_t(x_t) + \langle \mathbf{G}_t, \, \hat{y}_t - x_t\rangle + \epsilon_g^{(t)} + \epsilon_f^{(t)}.\label{Check_2}
        \end{align} 
        
        \If{Conditions \eqref{Check_1} and \eqref{Check_2} hold} \hfill {\footnotesize $\triangleright$ Accepted Step (Recovery)}
            \State \textbf{Update Iterates:} $x_{t+1} \gets \hat{x}_{t+1}$,  \  $\gamma_t \gets \hat{\gamma}_t$.
            \State \textbf{Update Auxiliary:} $\gamma_t' = \gamma_t'(\theta,\vartheta,\mu,\gamma_t,(\gamma_i)_{i\leq t-1})$, \ $\bar{x}_{t+1} \gets \hat{y}_t - \gamma_t'\mathbf{G}_t$.
            \State \textbf{Increase Step-Size:} $\hat{\gamma}_{t+1} \gets \min\{\hat{\gamma}_t/\nu, \gamma_{\max}\}$.
        \Else \hfill {\footnotesize $\triangleright$ Rejected Step (Backtracking)}
            \State \textbf{Null Step:} $\bar{x}_{t+1} \gets \bar{x}_t$, \ $x_{t+1} \gets x_t$,  \ $\gamma_t \gets \gamma_{t-1}$.
            \State \textbf{Decrease Step-Size:} $\hat{\gamma}_{t+1} \gets \hat{\gamma}_t \nu$.
        \EndIf
    \EndFor
\end{algorithmic}
\textbf{Note.} Explicit bookkeeping of the retained quantities $y_t$ and $\rho_t$ is omitted for brevity; the auxiliary parameter $\gamma_t'$ is computed only on successful trials. See Section~\ref{Notation_2.1} for details.
\end{algorithm}

\subsection{Algorithmic States and Notation}\label{Notation_2.1}

Algorithm~\ref{algo} is indexed by \emph{trial steps} rather than accepted steps. 
At trial $t$, hatted variables denote tentative quantities computed for the current oracle-access attempt, including the trial step size $\hat{\gamma}_t$, the trial momentum parameter $\hat{\rho}_t$, the trial extrapolation point $\hat{y}_t$, and the trial main point $\hat{x}_{t+1}$. 
A trial is accepted if both conditions \eqref{Check_1}--\eqref{Check_2} hold; otherwise it is rejected. 
Rejected trials are still counted by the index $t$, since they consume oracle calls and update the subsequent trial step size through backtracking.

We accordingly distinguish between \emph{trial quantities} and \emph{accepted quantities}. 
If trial $t$ is accepted, then $(\hat y_t,\hat\rho_t,\hat x_{t+1},\hat\gamma_t)$ are accepted as $(y_t,\rho_t,x_{t+1},\gamma_t)$, and an auxiliary step-size parameter $\gamma_t'$ is computed to generate
\[
\bar x_{t+1}=y_t-\gamma_t' \mathbf G_t.
\]
If trial $t$ is rejected, then
\[
(x_{t+1},\bar x_{t+1},\gamma_t)=(x_t,\bar x_t,\gamma_{t-1}),
\]
so the previously accepted state is simply carried forward. 
Thus, $y_t$ and $\rho_t$ denote the accepted momentum state, whereas $\hat y_t$ and $\hat\rho_t$ are recomputed at every trial. 
The trial step size $\hat\gamma_t$ follows the full-backtracking rule, being enlarged after acceptance and reduced after rejection, while $\gamma_t$ records the most recently accepted step size. 
In particular, on accepted trials one has $0<\gamma_t/\gamma_{t-1}\le \nu^{-1}$.

The sequence $\{x_t\}_{t\ge0}$ is the main iterate sequence, $\{\bar x_t\}_{t\ge1}$ is an auxiliary sequence, and $\{y_t\}_{t\ge1}$ is the accepted extrapolation sequence. 
The initialization $x_0=x_1=\bar x_1$ makes the first trial reduce to a gradient step. 
The $(\theta,\vartheta,\mu)$-parameterized momentum update is specified through the pair $(\hat\rho_t,\gamma_t')$. 
More precisely, $\hat\rho_t$ depends on the current trial step size $\hat\gamma_t$ and the history of accepted step sizes $(\gamma_i)_{i\le t-1}$, and determines the trial extrapolation point through \eqref{update_y_t}; once trial $t$ is accepted, $\gamma_t'$ is then used to update $\bar x_{t+1}$. 
Unlike the classical Nesterov construction, our method employs an auxiliary sequence $\{\bar x_t\}$ that is decoupled from the main sequence $\{x_t\}$, which creates an additional degree of freedom in the momentum design.
As a result, since the next extrapolation is generated from the direction $\bar x_{t+1}-x_t$, the effective momentum update is jointly determined by $(\hat\rho_{t+1},\gamma_t')$. 
We formalize its explicit parameterization in \Cref{Momentum_para} and the associated Lyapunov function subsequently in \Cref{def:Lyapunov_struct}.

At each trial $t$, \Cref{algo} interacts with two stochastic oracles. 
First, the stochastic first-order oracle (\textsc{SFO}) is queried at the trial extrapolation point $\hat y_t$ and returns the random gradient estimate $\mathbf G_t$, from which the trial point $\hat x_{t+1}$ is formed. 
Next, the stochastic zeroth-order oracle (\textsc{SZO}) is queried at the ordered triple $\mathcal X_t:=(x_t,\hat y_t,\hat x_{t+1})$ and returns the jointly random function-value estimates
\[
\mathbf{F}_t=\bigl(f_t(x_t),\,f_t(\hat y_t),\,f_t(\hat x_{t+1})\bigr).
\]
The subscript $t$ indicates the $t$-th oracle call, so repeated queries of the same point at different trials may produce different random outputs. 
The tolerance sequences $\{\epsilon_f^{(t)}\}_{t\ge1}$ and $\textstyle \{\epsilon_g^{(t)}\}_{t\ge1}$, as well as the concrete parameter relations involving $(\nu,\theta,\vartheta,\mu,\gamma_{\max})$, $\hat\rho_t$, and $\gamma_t'$, are left unspecified at this stage.
In the next subsection, we introduce the stochastic-process model and the associated filtration, under which the oracle conditions will be stated in conditional form.
The concrete choice of the tolerance levels $\epsilon_f^{(t)}$ and $\epsilon_g^{(t)}$, which must depend only on information available prior to trial $t$, will then be specified later in \Cref{subsection_adaptive_step_search}, based on prior estimates of the oracle parameters.

\subsection{Filtration and Oracle Conditions}\label{subsection_3.1}

We now formalize the stochastic process generated by the oracle calls in Algorithm~\ref{algo}. 
Let $(\Omega,\mathcal F,\mathbb P)$ be a probability space supporting two sequences of oracle-randomness variables $\{\Xi_t^1\}_{t\ge1}$ and $\{\Xi_t^0\}_{t\ge1}$, where $\Xi_t^1$ generates the output of the \textsc{SFO} call at trial $t$ and $\Xi_t^0$ generates the output of the \textsc{SZO} call at trial $t$. 
No further restriction is imposed on their joint law, either within the same trial or across different trials.

According to Algorithm~\ref{algo}, the first-order oracle is queried at the trial extrapolation point $\hat y_t$, while the zeroth-order oracle is queried at the ordered triple
\[
\mathcal X_t:=(x_t,\hat y_t,\hat x_{t+1}).
\]
We therefore write the oracle outputs as
\[
\mathbf G_t:=\mathbf G(\hat y_t,\Xi_t^1),
\qquad
\mathbf{F}_t=\bigl(f_t(x_t),f_t(\hat y_t),f_t(\hat x_{t+1})\bigr):=\mathbf F(\mathcal X_t,\Xi_t^0),
\]
where $\mathbf G(\cdot)$ denotes the first-order oracle map and $\mathbf F(\cdot)$ denotes the zeroth-order oracle map returning the jointly random triple of function-value estimates. 
At trial $t$, we introduce the following random variables (denoted by uppercase letters) and their corresponding realizations (denoted by lowercase letters):
\begin{itemize}
    \item $(\widehat Y_t,\widehat X_{t+1})$: the random trial extrapolation point and trial point, with realization $(\hat y_t,\hat x_{t+1})$;
    \item $W_t:=\|\mathbf G_t-\nabla\phi(\hat y_t)\|$: the gradient-estimation error magnitude;
    \item $\bigl(E_t(x_t),E_t(\hat y_t),E_t(\hat x_{t+1})\bigr)$: the zeroth-order approximation error triplet, where for $z\in\{x_t,\hat y_t,\hat x_{t+1}\}$,
    \[
    E_t(z):=f_t(z)-\phi(z);
    \]
    \item $D_t:=\max_{z_1,z_2}|E_t(z_1)-E_t(z_2)|$, where $z_1,z_2\in\{x_t,\hat y_t,\hat x_{t+1}\}$; this is the maximum pairwise difference of zeroth-order errors over the queried triple $\mathcal X_t$;
    \item $\widehat\Gamma_t$: the random trial step size, with realization $\hat\gamma_t$;
    \item $\widehat R_t$: the random trial momentum parameter, with realization $\hat\rho_t$;
    \item $(\Gamma_t,X_{t+1},\bar{X}_{t+1})$: the accepted quantities at trial $t$, with realization $(\gamma_t,x_{t+1},\bar x_{t+1})$.
\end{itemize}
Throughout the analysis, lowercase letters denote realized state variables whenever no ambiguity arises, whereas the notation $\mathbf G_t$, $W_t$, $E_t(\cdot)$, and $D_t$ is kept in uppercase to emphasize that these quantities are induced directly by the oracle outputs at trial $t$.

To reflect the order of oracle access within each trial, we introduce the two-stage filtration
\[
\mathcal F_0:=\{\,\varnothing,\,\Omega\,\},\qquad
\mathcal F_{t-\frac12}:=\sigma(\mathcal F_{t-1},\Xi_t^1),\qquad
\mathcal F_t:=\sigma(\mathcal F_{t-\frac12},\Xi_t^0),\qquad t\ge1.
\]
Under this construction, all quantities determined before the \textsc{SFO} call at trial $t$ are $\mathcal F_{t-1}$-measurable; these include $x_t$, $\bar x_t$, $x_{t-1}$, $\hat\gamma_t$, $\hat\rho_t$, $\hat y_t$, and the tolerance levels $\epsilon_f^{(t)}$ and $\epsilon_g^{(t)}$. 
After the \textsc{SFO} call, the oracle output $\mathbf G_t$, the error magnitude $W_t$, and the trial point $\hat x_{t+1}$ become $\mathcal F_{t-\frac12}$-measurable. 
Since the query triple $\mathcal X_t=(x_t,\hat y_t,\hat x_{t+1})$ is then $\mathcal F_{t-\frac12}$-measurable, the subsequent \textsc{SZO} call reveals the function-value estimates and renders the error quantities $E_t$ and $D_t$ $\mathcal F_t$-measurable. 
Consequently, the acceptance decision and the accepted updates $(\gamma_t,x_{t+1},\bar x_{t+1})$ are fully determined within $\mathcal F_t$.

The quantities $y_t$, $\rho_t$, and $\gamma_t'$ are interpreted according to the update rule in \Cref{Notation_2.1}. 
On an accepted trial, one has $y_t=\hat y_t$ and $\rho_t=\hat\rho_t$, while $\gamma_t'$ is generated by the accepted auxiliary update; on a rejected trial, no new such quantities are generated and the previously accepted state is carried forward. 
In particular, these quantities are $\mathcal F_t$-measurable whenever they appear in the analysis.

The filtration $\{\mathcal F_{t-\frac12},\mathcal F_t\}_{t\ge1}$ therefore captures the two-stage information structure induced by Algorithm~\ref{algo}. 
The trial extrapolation point $\hat y_t$ is available at the level $\mathcal F_{t-1}$, whereas the query triple $\mathcal X_t$ becomes available only after the \textsc{SFO} call and is therefore $\mathcal F_{t-\frac12}$-measurable. 
These two information levels provide the natural conditioning stages for the first- and zeroth-order oracle models. 
For later use, we abbreviate the corresponding conditional expectations by
\[
\mathbb E_t[\,\cdot\,]:=\mathbb E[\,\cdot\mid\mathcal F_{t-1}],
\qquad
\mathbb E_{t+\frac12}[\,\cdot\,]:=\mathbb E[\,\cdot\mid\mathcal F_{t-\frac12}],
\]
and define the centered oracle-error variables
\begin{equation}\label{centrialized_error}
\widetilde W_t:=W_t-\mathbb E_t[W_t],\qquad
\widetilde W_t':=W_t^2-\mathbb E_t[W_t^2],\qquad
\widetilde D_t:=D_t-\mathbb E_{t+\frac12}[D_t].
\end{equation}

We now state the stochastic first-order oracle assumptions in the conditional form used throughout the analysis.
These assumptions are imposed along the random trajectory generated by Algorithm~\ref{algo}, relative to the information level $\mathcal F_{t-1}$ introduced above.
Moreover, the required condition depends on the convexity regime: \textsc{SFO-1} is imposed when $\mu=0$, whereas \textsc{SFO-2} is imposed when $\mu>0$.

\begin{assumption}[Stochastic first-order oracle \textbf{\textsc{SFO}} conditions]\label{SFO}
There exist state-independent constants $\epsilon_g,\upsilon_g,\delta>0$ such that the following holds uniformly for all trials $t\ge1$:
\begin{itemize}
    \item \emph{\textbf{\textsc{(SFO-1)}} Bounded mean error with bounded centered moment.}
    In the regime $\mu=0$,
    \begin{equation}\label{AME}
    \mathbb E_t[W_t]\le \epsilon_g
    \qquad\text{and}\qquad
    \mathbb E_t\!\left[\,|\widetilde W_t|^{1+\delta}\right]\le \upsilon_g.
    \end{equation}

    \item \emph{\textbf{\textsc{(SFO-2)}} Bounded mean squared error with bounded centered moment.}
    In the regime $\mu>0$,
    \begin{equation}\label{AMSE}
    \mathbb E_t[W_t^2]\le \epsilon_g^2
    \qquad\text{and}\qquad
    \mathbb E_t\!\left[\,|\widetilde W_t'|^{1+\delta}\right]\le \upsilon_g.
    \end{equation}
\end{itemize}
\end{assumption}
We next state the stochastic zeroth-order oracle assumption in the conditional form used throughout the analysis.
This assumption is also imposed along the random trajectory generated by Algorithm~\ref{algo}, relative to the information level $\mathcal F_{t-\frac12}$ introduced above.
Unlike \textsc{SFO}, the \textsc{SZO} model is stated through a single condition valid for both regimes $\mu=0$ and $\mu>0$; the difference between the two regimes enters only through the later specialization of the scaling quantities.

\begin{assumption}[Stochastic zeroth-order oracle \textbf{\textsc{SZO}} condition]\label{SZO}
Recall that $D_t$ denotes the maximum pairwise difference of zeroth-order errors over the queried triple $\mathcal X_t=(x_t,\hat y_t,\hat x_{t+1})$, namely,
\[
D_t:=\max_{z_1,z_2\in\{x_t,\hat y_t,\hat x_{t+1}\}} |E_t(z_1)-E_t(z_2)|,
\qquad
E_t(\cdot):=f_t(\cdot)-\phi(\cdot).
\]
We assume that there exist oracle parameters $\epsilon_f,\upsilon_f>0$ and $\varrho\in(0,1]$, together with nonnegative $\mathcal F_{t-\frac12}$-measurable scaling quantities $\mathcal E_t$, such that for every trial $t\ge1$,
\begin{equation}\label{eq:szo_mean}
\mathbb E_{t+\frac12}[D_t]\le \epsilon_f\,\mathcal E_t
\qquad\text{and}\qquad
\mathbb E_{t+\frac12}\!\left[\,|\widetilde D_t|^{1+\varrho}\right]\le \upsilon_f\,\mathcal{E}_t^{\,1+\varrho}.
\end{equation}
\end{assumption}

\begin{remark}[On \textsc{SZO}]\label{rem:SZO-specialization-conditional}
The scaling quantity \(\mathcal E_t\) will be specialized differently in the two regimes considered later.
For \(\phi\in\mathcal C_{\mu>0\mid L}\), we use \(\mathcal E_t\equiv1\), see \Cref{pre_assumption_1}.
For \(\phi\in\mathcal C_{\mu=0\mid L}\), we use \(\mathcal E_t=\hat\alpha_t\), see \Cref{pre_assumption_2}.
Moreover, \Cref{Para_lemma} ensures that \(\hat\alpha_t\) cannot decay faster than \(\mathcal O(1/t)\).

In contrast to the single-point error model used in \textsc{SFO}, the \textsc{SZO} assumption is formulated through the maximum pairwise error difference $D_t$ over the queried triple $\mathcal X_t=(x_t,\hat y_t,\hat x_{t+1})$. 
This relative formulation is tailored to Algorithm~\ref{algo}, since both acceptance conditions \eqref{Check_1}--\eqref{Check_2} involve only comparisons among the three queried function values, rather than their absolute accuracy at individual points. 
Accordingly, control of these relative zeroth-order errors is already sufficient for the subsequent analysis: the quantity $D_t$ directly captures the perturbation relevant to the checks, and no stronger single-point model is needed.
\end{remark}

\begin{definition}[$\boldsymbol{\varepsilon}$-Stopping Time]\label{Stopping_time}
Given a pair of constants $\boldsymbol{\varepsilon}=(\varepsilon_\phi,\varepsilon_\nabla)$ with $\varepsilon_\phi>0$ and $\varepsilon_\nabla\ge0$, define
\begin{equation}\label{eq_dfn_stopping_time}
T_{\boldsymbol{\varepsilon}}
:= \inf\Bigl\{t\ge0:\ \min\{\phi(\hat x_{t+1}),\phi(\hat y_t)\}-\phi^* \le \varepsilon_\phi
\ \text{or}\ 
\|\nabla\phi(\hat y_t)\|\le\varepsilon_\nabla \Bigr\}.
\end{equation}
We call $T_{\boldsymbol{\varepsilon}}$  the $\boldsymbol{\varepsilon}$-stopping time of Algorithm~\ref{algo}. 
In the special case $\varepsilon_\nabla=0$, we abbreviate
\[
T_{\boldsymbol{\varepsilon}}
=
T_{\varepsilon_\phi}
:=
\inf\Bigl\{t\ge0:\ \min\{\phi(\hat x_{t+1}),\phi(\hat y_t)\}-\phi^* \le \varepsilon_\phi\Bigr\}.
\]
Thus $T_{\boldsymbol{\varepsilon}}$ is a stopping time with respect to $\{\mathcal F_t\}_{t\ge0}$, since $\hat y_t$ is $\mathcal F_{t-1}$-measurable and $\hat x_{t+1}$ is $\mathcal F_{t-\frac12}$-measurable.
\end{definition}
\begin{remark}[Convention on $\boldsymbol{\varepsilon}$ order]\label{rem:epsilon-order}
For ordered threshold pairs $\boldsymbol{\varepsilon}=(\varepsilon_\phi,\varepsilon_\nabla)$ and
$\boldsymbol{\varepsilon}'=(\varepsilon_\phi',\varepsilon_\nabla')$, we write
\[
\boldsymbol{\varepsilon}\succ \boldsymbol{\varepsilon}'
\quad\Longleftrightarrow\quad
\varepsilon_\phi>\varepsilon_\phi',\ \varepsilon_\nabla\ge \varepsilon_\nabla',
\qquad
\boldsymbol{\varepsilon}\succ0
\quad\Longleftrightarrow\quad
\varepsilon_\phi>0,\ \varepsilon_\nabla\ge0.
\]
This shorthand is used throughout for ordered pairs $\boldsymbol{\varepsilon}$ and does not define a partial order on $\mathbb R^2$.
\end{remark}

\subsection{Acceptance Rule}\label{subsection_adaptive_step_search}

The acceptance rule in \Cref{algo} requires two conditions to hold simultaneously: the generalized Armijo condition~\eqref{Check_1} for the trial step from extrapolated $\hat y_t$ to $\hat x_{t+1}$, and the two-point approximate convexity condition~\eqref{Check_2} between $\hat y_t$ and $x_t$. 
The first condition controls descent along the step $-\hat\gamma_t\mathbf G_t$, whereas the second enforces compatibility with the local convex geometry between $x_t$ and $\hat y_t$.
Together, once both conditions are satisfied so that \(\hat x_{t+1}\) is accepted as \(x_{t+1}\), they induce a controllable quasi-descent structure from \(x_t\) to \(x_{t+1}\).

The design of this two-part acceptance rule is driven by robustness under the noisy oracle models \Cref{SFO,SZO}. 
Condition~\eqref{Check_1} relaxes the deterministic accelerated backtracking test by introducing an Armijo parameter \(\theta\in(0,1)\) and a positive tolerance \(\epsilon_f^{(t)}\), so that descent is not rejected frequently because of oracle errors. 
Condition~\eqref{Check_2} is then used to exclude trials that, although passing the relaxed descent check, remain poorly aligned with the convex geometry between \(x_t\) and \(\hat y_t\), a property that is crucial for preserving the descent structure of the coupled iterates in the presence of momentum.
Thus, \eqref{Check_1} provides tolerance to oracle inexactness, while \eqref{Check_2} restores the geometric control needed for accepted trials to remain favorable for the subsequent Lyapunov analysis in \Cref{energy_1}. These together provide a robust mechanism.

To further characterize this mechanism, we introduce the following key indicators.

\begin{definition}[Indicators]\label{Key_indicator}
For each trial step $t$, define

\begin{itemize}[leftmargin=0.4cm]
  \item \textsc{Accepted steps:} $\Theta_t := \mathbf 1\bigl\{\text{Acceptance conditions \eqref{Check_1} and \eqref{Check_2} are both satisfied at trial }t\bigr\} \in \mathcal{F}_t$;
  \item \textsc{Reliable steps:} $I_t := \mathbf 1\bigl\{W_t \le\epsilon_g'\text{ and } D_t\le\mathcal{E}_t\,\epsilon_f'\bigr\} \in \mathcal{F}_t$, where $\epsilon_g'>\epsilon_g$ and  $\epsilon_f'>\epsilon_f$ are two constants;
 \item \textsc{Large steps:} \(\Lambda_t:=\mathbf{1}\{\hat{\gamma}_t \ge \bar{\gamma}(L;\theta,\vartheta)\}\in\mathcal F_{t-1}\), where \(0<\bar{\gamma}\le \gamma_{\max}\) is a conservative step-size threshold of order \(1/L\), nonincreasing in both \(\theta\) and \(\vartheta\).
\end{itemize}
\end{definition}
\begin{remark}
The three indicators encode the basic status of trial \(t\): \(\Theta_t=1\) or \(0\) according as the trial is accepted or rejected, \(I_t=1\) or \(0\) according as the oracle realization is reliable or unreliable relative to the prescribed tolerances, and \(\Lambda_t=1\) or \(0\) according as the trial step size \(\hat\gamma_t\) is above or below the conservative threshold \(\bar\gamma(L;\theta,\vartheta)\), that is, according as trial \(t\) is a large-step or small-step trial. The explicit form of \(\bar\gamma(L;\theta,\vartheta)\) will be specified later in \Cref{proposition_second,proposition_fisrst_1}.    
\end{remark}

The tolerance level---namely, the Armijo scaling \(\theta\) and the additive tolerance sequences \(\epsilon_f^{(t)}\), \(\epsilon_g^{(t)}\)---are chosen to control the rejection frequency under our \textsc{SZO} and \textsc{SFO} feedback. More precisely, we design them so that robustness is enhanced through two complementary effects: first, the reliable event \(I_t=1\) should occur with sufficiently large conditional probability; second, every reliable realization at a small-step trial should already guarantee acceptance. Thus, the basic mechanism we seek to establish is
\begin{equation}\label{equ:condition_proba}
\mathbb P\big(\,I_t=1\mid \mathcal F_{t-1}\,\big)>p>\tfrac12\qquad\text{together with}\qquad
(1-\Lambda_t)\,I_t\,\le\, \Theta_t.
\end{equation}
The first ingredient is driven by the oracle, whereas the second follows from a suitable choice of the threshold \(\bar\gamma(L;\theta,\vartheta)\) and compatibility conditions between the tolerance level parameters and the oracle parameters.

We first explain how the tolerance sequences are chosen according to the prior estimates of the intrinsic oracle parameters \(\epsilon_f\) and \(\epsilon_g\), leading to the first ingredient. Under the \textsc{SZO} condition \eqref{eq:szo_mean}, we set zeroth-order tolerance as 
\begin{equation}\label{equ_epsilon_f}
    \epsilon_f^{(t)}:=\epsilon_f'\;\mathcal E_t,
\qquad \epsilon_f'>\epsilon_f.
\end{equation}
Since \(\mathbb E_{t+\frac12}[D_t]\le \epsilon_f\mathcal E_t\), the event
\(D_t>\epsilon_f^{(t)}\) implies
$\widetilde D_t>(\epsilon_f'-\epsilon_f)\mathcal E_t$. 
Hence, by the definition of $D_t$ and the conditional Markov Inequality, one has for any ${z_1,z_2\in\{x_t,\hat y_t,\hat x_{t+1}\}} $,
\begin{equation}\label{equ_D_t_pre}
\mathbb P\!\left\{\big|f_t(z_1)-\phi(z_1)-\big(f_t(z_2)-\phi(z_2)\big)\big|\leq\epsilon_f^{(t)} \; \middle|\; \mathcal{F}_{t-\frac12}\right\}~\ge~\mathbb P\!\left(D_t\le \epsilon_f'\,\mathcal E_ t\mid \mathcal F_{t-\frac12}\right)
\;\ge\;
1-\frac{\upsilon_f}{(\epsilon_f'-\epsilon_f)^{1+\varrho}}.
\end{equation}
For the first-order tolerance $\epsilon_g^{(t)}$, the choice depends on the {convexity of objective function $\phi$}:

\begin{itemize}[leftmargin=0.45cm]
    \item When \(\phi\in\mathcal C_{\mu=0\mid L}\), we invoke \textsc{SFO-1} condition~\eqref{AME} and set tolerance as
    \begin{equation}\label{equ_epsilon_g_0}
        \epsilon_g^{(t)}:=\epsilon_g'\,\|\hat y_t-x_t\|,
    \qquad \epsilon_g'>\epsilon_g.
    \end{equation}
By convexity of \(\phi\) and the Cauchy--Schwarz inequality, on the event \(\{W_t\le \epsilon_g'\}\) one has
\[
\phi(\hat y_t)
~\le~
\phi(x_t)+\langle \nabla\phi(\hat y_t),\,\hat y_t-x_t\rangle
~\le~
\phi(x_t)+\langle \mathbf G_t,\,\hat y_t-x_t\rangle+\epsilon_g^{(t)}.
\]
    Moreover, exactly as in \eqref{equ_D_t_pre}, since \(\mathbb E_t[W_t]\le \epsilon_g\), then \(W_t>\epsilon_g'\) implies $\widetilde W_t>\epsilon_g'-\epsilon_g$,
    and the conditional Markov inequality therefore yields
    \begin{equation}\label{equ_W_t_pre}
    \mathbb P\!\left\{
    \phi(\hat y_t)\le \phi(x_t)+\langle \mathbf G_t,\,\hat y_t-x_t\rangle+\epsilon_g^{(t)}
    \,\middle|\, \mathcal F_{t-1}
    \right\}
    \;\ge\;
    \mathbb P\!\left(W_t\le \epsilon_g' \mid \mathcal F_{t-1}\right)
    \;\ge\;
    1-\frac{\upsilon_g}{(\epsilon_g'-\epsilon_g)^{1+\delta}}.
    \end{equation}

    \item When \(\phi\in\mathcal C_{\mu>0\mid L}\), we invoke \textsc{SFO-2} condition~\eqref{AMSE} and set tolerance as
    \begin{equation}\label{equ_epsilon_g_>0}
            \epsilon_g^{(t)}:=\frac{(\epsilon_g')^2}{2\mu},
    \qquad \epsilon_g'>\epsilon_g.
    \end{equation}
    By \(\mu\)-strong convexity of \(\phi\) and Young's inequality, on the event \(\{W_t\le \epsilon_g'\}\) one has
\[
\begin{aligned}
\phi(\hat y_t)
&\le
\phi(x_t)+\langle \nabla\phi(\hat y_t),\,\hat y_t-x_t\rangle-\tfrac{\mu}{2}\|\hat y_t-x_t\|^2
\le
\phi(x_t)+\langle \mathbf G_t,\,\hat y_t-x_t\rangle
+\tfrac{1}{2\mu}\|\mathbf G_t-\nabla\phi(\hat y_t)\|^2.
\end{aligned}
\]
    Consequently, condition \eqref{AMSE} and the conditional Markov inequality applied to \(|\widetilde W_t'|^{1+\delta}\) yield
    \begin{equation}\label{equ_w^2_pre}
    \mathbb P\!\left\{
    \phi(\hat y_t)\le \phi(x_t)+\langle \mathbf G_t,\,\hat y_t-x_t\rangle+\epsilon_g^{(t)}
    \,\middle|\, \mathcal F_{t-1}
    \right\}
    \;\ge\;
    \mathbb P\!\left(W_t\le \epsilon_g' \mid \mathcal F_{t-1}\right)
    \;\ge\;
    1-\frac{\upsilon_g}{\bigl((\epsilon_g')^2-\epsilon_g^2\bigr)^{1+\delta}}.
    \end{equation}
\end{itemize}
These bounds show that, under the chosen tolerance levels, the reliable event \(I_t=1\) is not rare conditional on the past. In the analysis of \Cref{proposition_second,proposition_fisrst_1}, by combining \eqref{equ_D_t_pre}, \eqref{equ_W_t_pre} and \eqref{equ_w^2_pre} with suitable compatibility conditions between the intrinsic oracle parameters \((\epsilon_g,\epsilon_f,\upsilon_g,\upsilon_f)\) and the relaxation levels \(\epsilon_f'\) and \(\epsilon_g'\), we will then verify $\mathbb P(I_t=1\mid \mathcal F_{t-1})>p>1/2$ in \eqref{equ:condition_proba}.

The second ingredient concerns the conservative step-size threshold \(\bar\gamma(L;\theta,\vartheta)\), which is of order \(1/L\) and increases as the Armijo parameter \(\theta\) decreases.
In \Cref{proposition_second,proposition_fisrst_1}, we will determine its explicit form and prove that, for all \(t<T_{\boldsymbol\varepsilon}\) with \(\boldsymbol\varepsilon=(\varepsilon_\phi,\varepsilon_\nabla)\), the implication $(1-\Lambda_t)I_t\le \Theta_t$ in \eqref{equ:condition_proba} holds, provided that for some \(\boldsymbol\varepsilon'=(\varepsilon_\phi',\varepsilon_\nabla')\), the target precision \(\boldsymbol\varepsilon\succ\boldsymbol\varepsilon'\). The components of \(\boldsymbol\varepsilon'\) depend explicitly on the tolerance level parameters $(\theta,\epsilon_f',\epsilon_g')$ and the momentum intervention parameter \(\vartheta\).

Ultimately, by reducing rejection frequencies from both oracle-inexactness and step-size perspectives, these mechanisms ensure that the momentum acceleration is not bottlenecked by premature stagnation. However, this simultaneously introduces an inherent structural trade-off: while endowing the acceptance rule with noise tolerance liberates the algorithm's performance during the early exploration phase, it inevitably restricts its capacity for fine-grained local convergence in later stages. In our subsequent theoretical guarantees, this limitation is naturally reflected as above strictly lower bound on the attainable precision (\(\boldsymbol\varepsilon\succ\boldsymbol\varepsilon'\))---whose explicit dependence on $\theta$, $\epsilon_f'$, $\epsilon_g'$, and $\vartheta$ is detailed in \Cref{pre_assumption_1,pre_assumption_2}. Practically, this insight directly motivates the adaptive stagnation switch strategy proposed later in \Cref{section_experiment}.

\subsection{Parameterized Momentum and Lyapunov Function}
\label{subsection_momentum strategy}
In this subsection, we define the $(\theta,\vartheta,\mu)$-parameterized momentum update underlying \Cref{algo}.
Its construction combines two ingredients: the step-size--momentum coupling adopted by \texttt{APG}-type backtracking accelerated schemes \cite{localLip} and related adaptive method~\cite{mu>0adaptive}, and the robustness-oriented momentum adjustment developed for noisy gradients in \texttt{AGNES}~\cite{gupta2024nesterovaccelerationdespitenoisy}.
We therefore begin by recording the evolution of the retained and trial step sizes generated by the acceptance/rejection rule in \Cref{algo}.
\begin{lemma}\label{lem:gamma_evolution}
The step-size process $\{(\hat\gamma_{t+1},\gamma_{t})\}_{t\ge1}$ generated by \Cref{algo} satisfies
\begin{equation}\label{eq:gamma-dyn}
\hat{\gamma}_1\;=\;\nu\gamma_0,\qquad\gamma_{t}\;=\;
\begin{cases}
\gamma_{t-1}, & \text{if }\Theta_t=0,\\
\hat\gamma_t,  & \text{if }\Theta_t=1,
\end{cases}\qquad \hat\gamma_{t+1}\;=\;
\begin{cases}
\nu\,\hat\gamma_t, & \text{if }\Theta_t=0,\\[2pt]
\min\{\nu^{-1}\hat\gamma_t,\gamma_{\max}\}, & \text{if }\Theta_t=1.
\end{cases}
\end{equation}
\end{lemma}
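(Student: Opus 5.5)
This lemma is a direct transcription of the update rules in \Cref{algo}. The plan is to read off each assignment line of the algorithm, identify the branch taken at trial $t$ with the value of the acceptance indicator $\Theta_t$ from \Cref{Key_indicator}, and verify the three displayed relations one at a time. No probabilistic or analytic input is needed; the content is purely bookkeeping of the two branches.

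\emph{Step 1: initialization.} The \textbf{Initialize} line of \Cref{algo} sets $\gamma_0 \gets \hat\gamma_1/\nu$. Multiplying by $\nu>0$ gives $\hat\gamma_1=\nu\gamma_0$, the first relation in \eqref{eq:gamma-dyn}.

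\emph{Step 2: the case split on $\Theta_t$.} Fix $t\ge1$. By definition, $\Theta_t=1$ exactly when \eqref{Check_1} and \eqref{Check_2} both hold, so the algorithm executes the ``Accepted Step'' branch. Otherwise $\Theta_t=0$ and it executes the ``Rejected Step'' branch. The two branches are mutually exclusive and exhaustive, so each trial falls in exactly one case.
\begin{itemize}
\item If $\Theta_t=1$, the branch assigns $\gamma_t\gets\hat\gamma_t$ and $\hat\gamma_{t+1}\gets\min\{\hat\gamma_t/\nu,\gamma_{\max}\}$.
\item If $\Theta_t=0$, the null step assigns $\gamma_t\gets\gamma_{t-1}$ and $\hat\gamma_{t+1}\gets\nu\hat\gamma_t$.
\end{itemize}
These are precisely the two case-defined formulas in \eqref{eq:gamma-dyn}. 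An induction on $t$ then confirms that the process $\{(\hat\gamma_{t+1},\gamma_t)\}_{t\ge1}$ is well defined, with $\gamma_0$ serving as the base value consumed by $\gamma_1$ in the rejected case at $t=1$.

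\emph{Anticipated obstacle.} There is no genuine obstacle. The one point to check is the convention for $\gamma_{t-1}$ at $t=1$, which the initialization resolves by defining $\gamma_0$. It also helps to note, for later use, that $\hat\gamma_t\le\gamma_{\max}$ for all $t$. This follows by induction: $\hat\gamma_1\le\gamma_{\max}$ holds initially, the accepted branch caps the step at $\gamma_{\max}$, and the rejected branch multiplies by $\nu<1$.
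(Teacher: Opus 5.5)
Your proposal is correct and matches the paper, which gives no separate proof and treats this lemma as a direct reading of the initialization line ($\gamma_0\gets\hat\gamma_1/\nu$) and of the accepted/rejected branches of \Cref{algo}, indexed by $\Theta_t$. Your side remark that $\hat\gamma_t\le\gamma_{\max}$ for all $t$ also follows by the induction you describe, and the later analysis uses it implicitly.
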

We next introduce the coefficient pair $(\hat\alpha_t,\alpha_t)$, parameterized by $(\theta,\vartheta,\mu)$ and implicitly coupled to the adaptive step-size pair $(\hat\gamma_t,\gamma_{t-1})$ through a quadratic equation.

\begin{definition}\label{para}For each trial step $t\ge1$, define $\hat\alpha_t$ as the unique positive root of
\begin{align}
\frac{\hat{\alpha}_t^2}{\hat{\gamma}_t}=
(1-\hat{\alpha}_t)\frac{\alpha_{t-1}^2}{\gamma_{t-1}}
+2\theta(1-\vartheta)^2\mu\hat\alpha_t,
\qquad \text{and define}\qquad
\alpha_t
~:=
\begin{cases}
\alpha_{t-1}, &\text{if }\Theta_t=0,\\
\hat\alpha_t, &\text{if }\Theta_t=1.
\end{cases}
\label{para_alpha}
\end{align}Here $\mu\ge0$, $\theta,\vartheta\in(0,1)$, \(\alpha_0\in\bigl((1-\vartheta)\sqrt{2\theta\mu\gamma_0},\sqrt{\gamma_0/\gamma_{\max}}~\bigr)\), $ 0<\hat\gamma_t\leq\gamma_{\max}$ and 
\begin{equation}\label{dfn_gamma_max}
    \gamma_{\max} := \begin{cases} \widetilde{\gamma}_{\max}, & \text{if } \mu = 0, \\ \frac{1}{2(1-\vartheta)^2\mu}, & \text{if } \mu > 0, \end{cases}\qquad \text{with }\widetilde{\gamma}_{\max} \text{ an arbitrary finite positive constant.}\footnote{Equation~\eqref{dfn_gamma_max} defines the parameter $\gamma_{\max}$ used in \Cref{algo}. The auxiliary constant $0<\widetilde{\gamma}_{\max}<\infty$ is introduced to guarantee that, when $\mu=0$, the admissible initialization interval $\alpha_0\in\bigl(0,\sqrt{\gamma_0/\gamma_{\max}}\bigr)$ is nonempty and well defined. In the subsequent analysis, $\gamma_{\max}$ is regarded as fixed, and its explicit dependence on \eqref{dfn_gamma_max} is suppressed for notational brevity.}
\end{equation}  
\end{definition}

\begin{remark}
     Given that $\{(\hat\gamma_t,\gamma_{t-1})\}$ evolves according to \eqref{eq:gamma-dyn}, we observe that $\hat{\alpha}_t$ is determined entirely by $\hat{\gamma}_t \in \mathcal{F}_{t-1}$ and the history $\{\gamma_i\}_{i=0}^{t-1} \subset \mathcal{F}_{t-1}$, rendering it $\mathcal{F}_{t-1}$-measurable. In contrast, $\alpha_t$ is dictated by the trial outcome $\Theta_t$ and is therefore $\mathcal{F}_t$-measurable. The well-definedness of the sequence $\{\alpha_t\}$ is guaranteed by recursively proving $\alpha_t\in(0,1)$, see proof of Lemma~\ref{para_lemma_c}, \ref{para_lemma_d} for details. Notably, by setting $\theta=1/2$ and letting $\vartheta\downarrow0$, the recursion governing $\alpha_t$ recovers the update studied in \cite{localLip}, modulo the variations introduced by the adaptive step size $\{\gamma_t\}$ and the constraint $\gamma_{\max}$.

    Specifically, when $\mu > 0$, we enforce the constraint $\hat{\gamma}_t \le \gamma_{\max} = \frac{1}{2(1-\vartheta)^2\mu}$, instead of the looser upper bound $\frac{1}{2\theta(1-\vartheta)^2\mu}$ permitted by  \eqref{para_alpha}. This restriction is crucial for establishing a uniform upper bound $\bar{\alpha} < 1$ on $\alpha_t$, as established in \ref{para_lemma_d}. Specifically, this conservative choice of $\gamma_{\max}$ ensures that the term $2\theta(1-\vartheta)^2\mu\gamma_t$ remains bounded by $\theta < 1$. Consequently, by the strict monotonicity of the \emph{positive root function} $R^+(\nu^{-1}, \cdot)$ (defined in the proof of Lemma~\ref{para_lemma_d}), we obtain $\alpha_t = R^+(\nu^{-1}, 2\theta(1-\vartheta)^2\mu\gamma_t) \le \bar{\alpha} < 1$. 

    This restriction is particularly important in the strongly convex regime \((\mu>0)\). 
Indeed, the factor \((1-\alpha_t)^{-1}\) appears in the coefficients of the error terms in the subsequent Lyapunov analysis. 
To prevent these stochastic-error contributions from being pathologically amplified in the theoretical guarantees, we impose a conservative upper bound on \(\gamma_{\max}\), which ensures a uniform bound \(\alpha_t\le \bar\alpha<1\), and hence a finite uniform upper bound on \((1-\alpha_t)^{-1}\). 
We finally note that this choice of \(\gamma_{\max}\) is only a convenient sufficient condition; alternative choices would merely modify the absolute constants in the resulting bounds.
\end{remark}

We next define the momentum quantities \((\hat\rho_t,\gamma_t')\) from the adaptive triplet \((\hat\gamma_t,\hat\alpha_t,\alpha_{t-1})\) introduced in \Cref{para}. Here \(\hat\rho_t\) determines the momentum size, while \(\gamma_t'\) specifies the accepted auxiliary step used to generate the next momentum direction.
\begin{definition}[Momentum parameterization]\label{Momentum_para}
For any trial step \(t\ge1\) of \Cref{algo}, define
\begin{equation}\label{rho_t_hat_def}
\hat{\rho}_{t}~
:=~
\frac{\hat{\alpha}_{t}(1-\alpha_{t-1})(1-\hat{\beta}_{t})}
{\alpha_{t-1}\bigl(1-\hat{\alpha}_{t}+\hat{\alpha}_{t}(1-\hat{\beta}_{t})(1-\vartheta)^{-1}\bigr)}.
\end{equation}
If trial \(t\) is accepted, i.e., $\Theta_t=1$,  define
\begin{equation}\label{gamma_t_prime}
\gamma_t'~
:=~
\gamma_{t}\,(1-\alpha_t)^{-1}
\max\bigl\{2\theta-\alpha_t(1-\vartheta)^{-1},\,
2\theta+(\theta-2)\alpha_t\bigr\}.
\end{equation}
Here \((\hat{\alpha}_{t},\alpha_{t-1})\in\mathcal F_{t-1}
\) are defined in \Cref{para}, and $\hat\beta_t:=2(1-\vartheta)^2\theta\mu\hat\gamma_t\hat\alpha_t^{-1}\in[0,1)$
by \eqref{eq:alpha-lb-polished}.
\end{definition}

\begin{remark}[On \cref{Momentum_para}]\label{RMK_GAMMA_PRIME}
By \Cref{para}, the coefficient $\hat\alpha_t$ is determined by triplet $(\hat\gamma_t,\gamma_{t-1},\alpha_{t-1})$, or equivalently by $\hat\gamma_t$ together with the retained step-size history $(\gamma_i)_{i\le t-1}$ through the recursion of $\alpha_{t-1}$.
Accordingly, the quantities in \Cref{Momentum_para} are written in \Cref{algo} as
\[
\hat\rho_t=\hat\rho_t\bigl(\theta,\vartheta,\mu,\hat\gamma_t,(\gamma_i)_{i\le t-1}\bigr),
\qquad
\gamma_t'=\gamma_t'\bigl(\theta,\vartheta,\mu,\hat\gamma_t,(\gamma_i)_{i\le t-1}\bigr),
\]
where the dependence is mediated by the intermediate coefficients \((\hat\alpha_t,\alpha_{t-1})\).

Moreover, \eqref{gamma_t_prime} admits the truncated representation
$\gamma_t'(\theta,\vartheta)=\max\bigl\{\widetilde\gamma_t'(\theta,\vartheta),\,\gamma_t^{\mathrm{Tr}}(\theta)\bigr\}$, 
where
\begin{equation}\label{equ_gamma_trunc}
\widetilde \gamma_t'(\theta,\vartheta)
=
\gamma_t\bigl[2\theta-\alpha_t(1-\vartheta)^{-1}\bigr](1-\alpha_t)^{-1},
\qquad
\gamma_t^{\mathrm{Tr}}(\theta)
=
\gamma_t\bigl[2\theta+(\theta-2)\alpha_t\bigr](1-\alpha_t)^{-1}.
\end{equation}
This representation will be useful later when discussing how the geometry of the parameterized momentum structure, and the associated Lyapunov descent, vary with \(\vartheta\).
\end{remark}

We now introduce an auxiliary state variable $z_t$ that makes the momentum structure explicit and, at the same time, determines the Lyapunov function $\Phi_t$ used later in the analysis.

\begin{definition}[Lyapunov]
\label{def:Lyapunov_struct}
For any trial $t\ge 0$ of \Cref{algo}, define
\begin{equation}\label{z_t}
    z_t
    ~:=~
    x_t+\frac{1-\alpha_{t-1}}{\alpha_{t-1}}(1-\vartheta)(\bar x_t-x_{t-1}),
\end{equation}
and
\begin{equation}\label{Energy}
    \Phi_t
    ~:=~
    \phi(x_t)-\phi^*
    +
    \frac{\alpha_{t-1}^2}{4\theta(1-\vartheta)^2\gamma_{t-1}}
    \,\|z_t-x^*\|^2,
\end{equation}
where $x^*$ denotes an arbitrary minimizer of $\phi$.
\end{definition}

A direct substitution of \eqref{z_t} into the momentum update \eqref{update_y_t} and \eqref{rho_t_hat_def} shows that the trial extrapolation point $\hat y_t$ is a convex combination of the current iterate $x_t$ and the auxiliary state $z_t$:
\begin{equation}\label{eq:yhat_zt}
    \hat y_t
    ~=~
    \frac{\hat\alpha_t(1-\hat\beta_t)}
    {(1-\vartheta)(1-\hat\alpha_t)+\hat\alpha_t(1-\hat\beta_t)}\,z_t
    +
    \frac{(1-\vartheta)(1-\hat\alpha_t)}
    {(1-\vartheta)(1-\hat\alpha_t)+\hat\alpha_t(1-\hat\beta_t)}\,x_t
    \;\in\;\mathcal F_{t-1}.
\end{equation}

The parameterization above in \Cref{Momentum_para} provides a convenient decomposition of the accepted auxiliary step \(\gamma_t'\) into an untruncated branch and a truncation branch. 
Indeed, by \eqref{equ_gamma_trunc}, one has
\[
\widetilde\gamma_t'(\theta,\vartheta)-\gamma_t^{\mathrm{Tr}}(\theta)
=
\gamma_t\alpha_t(1-\alpha_t)^{-1}
\Bigl[(2-\theta)-(1-\vartheta)^{-1}\Bigr].
\]
Since \(\gamma_t>0\), \(\alpha_t\in(0,1)\), and \((1-\vartheta)^{-1}\) is strictly increasing in \(\vartheta\), it follows that \(\gamma_t'\) depends continuously on \(\vartheta\), and is represented by different explicit branches on the two sides of \(\bar\vartheta(\theta)\):
\[
\gamma_t'(\theta,\vartheta)=
\begin{cases}
\widetilde\gamma_t'(\theta,\vartheta), & \vartheta\le \bar\vartheta(\theta),\\[2mm]
\gamma_t^{\mathrm{Tr}}(\theta), & \vartheta\ge \bar\vartheta(\theta).
\end{cases}
\]
In particular, when \(\vartheta<\bar\vartheta(\theta)\), the truncation is inactive and the accepted auxiliary step is given by the untruncated expression \(\widetilde\gamma_t'(\theta,\vartheta)\). 
By contrast, when \(\vartheta\ge \bar\vartheta(\theta)\), the truncation branch becomes active and prevents the factor \((1-\vartheta)^{-1}\) in \(\widetilde\gamma_t'(\theta,\vartheta)\) from driving the accepted auxiliary step to a singular scale as \(\vartheta\uparrow1\).  This decomposition is useful for interpreting the two endpoint regimes induced by \(\vartheta\).

First, at the extreme point \((\theta,\vartheta)=(1/2,0)\), one has $\gamma_t^{\mathrm{Tr}}(\tfrac12)\le \widetilde\gamma_t'(\tfrac12,0)=\gamma_t$, 
and therefore \(\gamma_t'=\gamma_t\). 
Hence, on every accepted trial, $\bar x_{t+1}=y_t-\gamma_t\mathbf G_t=x_{t+1}$,
and \eqref{z_t} reduces to
\[
z_{t+1}
=
x_{t+1}+\frac{1-\alpha_t}{\alpha_t}(x_{t+1}-x_t),
\]
which is exactly the auxiliary point $z_t$ used in \cite{localLip}. 
Thus the parameterization recovers the classical Nesterov momentum structure at this endpoint.

Second, as \(\vartheta\uparrow1\), the truncation branch becomes active and redirects the update toward a momentum-free \texttt{SASS}-type limit. 
The key quantity is the trial extrapolation gap
$\|\hat y_t-x_t\|$, which should vanish in the limit \(\vartheta\uparrow1\). 
By \eqref{eq:yhat_zt}, it suffices to show that for any $t$, 
\[
\|z_t-x_t\|\overset{\vartheta\uparrow1}{\longrightarrow}0,
\qquad
\text{since }\hat y_t-x_t=\omega_t(z_t-x_t)\text{ and }|\omega_t|\le1.
\]

To examine this limit, it suffices to track the update of \(z_t-x_t\) on accepted trials. 
Whenever trial \(t-1\) is accepted, substituting
\[
\bar x_t=y_{t-1}-\gamma_{t-1}'\mathbf G_{t-1},
\qquad
x_t=y_{t-1}-\gamma_{t-1}\mathbf G_{t-1}
\]
into \eqref{z_t} yields
\begin{equation}\label{eq:zt-xt-structure}
z_t-x_t
=
(1-\vartheta)\frac{1-\alpha_{t-1}}{\alpha_{t-1}}
\left[
(y_{t-1}-x_{t-1})
-
\frac{\gamma_{t-1}'}{\gamma_{t-1}}(y_{t-1}-x_t)
\right].
\end{equation}
Thus, for the right-hand side of \eqref{eq:zt-xt-structure} to vanish as \(\vartheta\uparrow1\), the ratio \(\gamma_{t-1}'/\gamma_{t-1}\) must remain free of any explicit \((1-\vartheta)^{-1}\) amplification. 
This is precisely what the truncation branch achieves. 
Indeed, once \(\vartheta\ge \bar\vartheta(\theta)\),
\[
\frac{\gamma_{t-1}'}{\gamma_{t-1}}
=
\frac{\gamma_{t-1}^{\mathrm{Tr}}}{\gamma_{t-1}}
=
\frac{2\theta+(\theta-2)\alpha_{t-1}}{1-\alpha_{t-1}},
\]
which is independent of \(\vartheta\). 
Substituting this into \eqref{eq:zt-xt-structure} gives
\[
z_t-x_t
=
(1-\vartheta)\frac{1-\alpha_{t-1}}{\alpha_{t-1}}(y_{t-1}-x_{t-1})
-
(1-\vartheta)\Bigl(\frac{2\theta}{\alpha_{t-1}}+\theta-2\Bigr)(y_{t-1}-x_t).
\]
Consequently,
$\|z_t-x_t\|\to0$ and hence $
\|\hat y_t-x_t\|\to0$ as $\vartheta\uparrow1$,
which makes the momentum induced by \eqref{eq:yhat_zt} structurally coincide with a momentum-free \texttt{SASS} limit.

This role of truncation is essential. 
Without it, one would remain on the untruncated branch
\begin{equation}\label{untruncated_ratio}
\frac{\widetilde\gamma_{t-1}'}{\gamma_{t-1}}
=
\frac{2\theta-\alpha_{t-1}(1-\vartheta)^{-1}}{1-\alpha_{t-1}},
\end{equation}
whose explicit \((1-\vartheta)^{-1}\) factor may cancel the prefactor \(1-\vartheta\) in \eqref{eq:zt-xt-structure}, and therefore prevents \(z_t-x_t\) from being driven to zero as \(\vartheta\uparrow1\). 
In that case,  the trajectory of \(\hat y_t\) is not redirected toward \(x_t\); instead, it is pulled back along the line toward the previous extrapolation point \(y_{t-1}\), see \eqref{PULL_BACK}. 
Thus the method does not approach a momentum-free update, but rather degenerates into a stalled pull-back regime in which the apparent robustness is achieved mainly through the loss of forward progress.

The threshold $\bar\vartheta(\theta)=\frac{1-\theta}{2-\theta}$
thus separates the two geometric regimes described above: for \(\vartheta<\bar\vartheta(\theta)\), the untruncated branch connects the scheme to the classical Nesterov endpoint, whereas for \(\vartheta\ge \bar\vartheta(\theta)\), the truncation branch redirects the update toward a \texttt{SASS}-type limit. 
This particular switching point is not essential to the algorithmic principle itself; choosing a different truncation threshold would merely alter certain technical constants in the subsequent theory, notably those appearing in the bounds involving \(\bar\gamma\) and the guaranteed neighborhood \(\boldsymbol\varepsilon'\). 
We adopt the present choice because it leads to the cleanest Lyapunov analysis. 
In the remainder of the theoretical development, we restrict attention to the pre-truncation regime \(\vartheta<\bar\vartheta(\theta)\) and show how increasing \(\vartheta\) within this range enhances robustness. 
A fuller geometric illustration of the parameterization is provided in Appendix~\ref{app:momentum_geometry}.

\section{Complexity Theory}\label{SECTION_3}

\subsection{ Complexity Framework}\label{subsec:frame}
This subsection develops an abstract high-probability stopping-time framework for stochastic adaptive search methods.
The resulting statement is formulated so that, once the ingredients of the framework are instantiated by a particular algorithm and oracle, the $\boldsymbol{\varepsilon}$-stopping-time complexity follows by verifying a small collection of inequalities and tail-probability estimates.

A standard route in the analysis of adaptive step-search schemes
(e.g.,~\cite{Albert2021,Xie2024,scheinberg2025stochasticadaptiveoptimizationunreliable})
is to construct a nonnegative Lyapunov sequence $\{\Phi_t\}$ satisfying a linear \emph{progress--damage} decomposition.
Namely, one identifies trials on which reliable oracle feedback and acceptance at a large step size jointly hold ($I_t\Theta_t\Lambda_t=1$ in our notation, also called \emph{good} step), so that $\Phi_t$ decreases by at least $h(\bar\gamma)$, while the cumulative oracle perturbation over the first $t$ trials is bounded by $t\,r(\epsilon_f',s)$ on a suitable high-probability event.
This leads to
\begin{equation} \label{equ_linear_energy}
    \Phi_t \;\le\; \Phi_0 \;-\; h(\bar\gamma)\,N_{\mathrm{good}}(t) \;+\; t\,r(\epsilon_f',s),\qquad 
N_{\mathrm{good}}(t):=\textstyle\sum_{i=1}^t I_i\Theta_i\Lambda_i .
\end{equation} To make \eqref{equ_linear_energy} effective, one needs $N_{\mathrm{good}}(t)$ to grow linearly in $t$ with high probability.
This is typically obtained from \eqref{equ:condition_proba}, which follows from the oracle condition and the acceptance rule.
Consequently, if $p$ is large enough so that the average progress dominates the average error accumulation
(typically $p>p'(\epsilon_f',\bar\gamma,s)>1/2$),
then the decrease term in \eqref{equ_linear_energy} eventually dominates the damage term.

However, the above linear \emph{progress--damage} template becomes difficult to use for momentum-coupled schemes.
The main obstacle is that it is often hard to construct a Lyapunov sequence $\{\Phi_t\}$ obeying a decomposition of the form \eqref{equ_linear_energy}.
Instead, on each accepted iteration, the recursion is typically dominated by a $(1-\alpha_t)$-type contraction, i.e., $\Phi_{t+1}\lesssim(1-\alpha_t)\,\Phi_t$, which is the classical pattern in Lyapunov analyses of accelerated methods; see, e.g.,~\cite{stoestimatsequence,localLip}. Oracle inexactness then enters the inequality by weakening the contraction and/or adding an error term on the right-hand side.
As a result, the energy decrease and the error accumulation are intertwined and can no longer be cleanly separated into two independent linear parts.

To handle this compounded structure, we propose a new analysis framework based on Lyapunov recursions. Specifically, the Lyapunov value remains unchanged during rejected trials, while accepted trials trigger a quasi-descent with a contraction factor $1-\alpha_t$. This descent, however, is perturbed by oracle inexactness via a multiplicative amplification $\mathcal V_t\ge1$ on $(1-\alpha_t)$ and an additive residual $\mathcal R_t\ge0$. As in prior frameworks, our analytical cornerstone is also the high-probability linear growth of $N_{\mathrm{good}}(t)$ guaranteed by \eqref{equ:condition_proba}. Yet, rather than driving simple linear progress as in \eqref{equ_linear_energy}, $N_{\mathrm{good}}(t)$ acts implicitly in our framework by governing the compounded contraction and residual accumulation over the first $t$ trials with high-probability.

\begin{assumption}\label{hyp_1}  
Let \(T_{\boldsymbol{\varepsilon}}\) denote the \(\boldsymbol{\varepsilon}\)-stopping time defined in \Cref{Stopping_time}, where \(\boldsymbol{\varepsilon}\succ\boldsymbol{\varepsilon}'\) for some \(\boldsymbol{\varepsilon}'=(\varepsilon_\phi',\varepsilon_\nabla')\). Assume the existence of a Lyapunov function $\Phi_t\ge\phi(x_t)-\phi^*$, contraction coefficient $\alpha_t \in (0,1)$, amplification factor $\mathcal{V}_t(W_t, D_t; \varepsilon_\phi) \ge 1$, and residual term $\mathcal{R}_t(W_t, D_t; \varepsilon_\phi)$. Additionally, let $p \in (p', 1]$ be a reliability probability threshold. Recalling the indicators $\Theta_t$, $I_t$ and $\Lambda_t$ in \Cref{Key_indicator},  the algorithm satisfies the following properties for all $t < T_{\boldsymbol{\varepsilon}}$:

\begin{enumerate}[label=(\roman*), ref=(\roman*)]
\item\label{hyp_1_2}$(1-\Theta_t)\Phi_{t+1}=(1-\Theta_t)\Phi_t$ and $~\Theta_t\,\Phi_{t+1}
\le (1-\alpha_t)\,\mathcal{V}_t(W_t,{D}_t;\varepsilon_\phi)\,\Phi_t+\alpha_t\,\mathcal{R}_t(W_t,{D}_t;\varepsilon_\phi)$. (On rejected trials, the Lyapunov value is unchanged; on accepted trials, it satisfies a quasi-descent inequality.)
\item\label{hyp_1_3} $\mathbb{P}(I_t=1\mid\mathcal{F}_{t-1})\ge p$. (Reliable oracle realizations occur with conditional probability  $p$ at each trial.)
\item\label{hyp_1_4} $(1-\Lambda_t)\,I_t\le \Theta_t$. (A small-step trial with reliable oracle realizations is accepted.)
\end{enumerate}
Based on the recurrence structure in  \ref{hyp_1_2}, we define the following notations for brevity: for all $1\le i\le t$,
\begin{align}\label{dfn_cumulative_prod}
  \mathcal{V}_i\;:=\;\mathcal{V}_i(W_i,D_i;\varepsilon_\phi),\quad \mathcal{R}_i\;:=\;\mathcal{R}_i(W_i,{D}_i;\varepsilon_\phi),\quad  \lambda_t\;:=\;\textstyle \prod_{i=1}^t(1-\Theta_i\alpha_i),
\quad 
a_i^{(t)}\;:=\;{\lambda_t^{}}{\bigl(\lambda_i\bigr)^{-1}}\,\Theta_i\alpha_i.
\end{align}

Finally, we assume that for any $\hat{p}\in(p',p)$, there exists an envelope function $\mathcal{H}_{\hat{p},\bar{\gamma}}:\mathbb{N}^+\to\mathbb{R}_+$, a non-negative constant $\varepsilon_0\ge0$ and a threshold $\bar{T}>0$ such that for all $\bar{T}\le t<T_{\boldsymbol{\varepsilon}}$ satisfying \ref{hyp_1_2} to \ref{hyp_1_4}, there exists a tail probability $P_t^{\mathrm{Tail}}$ such that the following condition \ref{hyp_1_5} holds:

\begin{enumerate}[ label=(\roman*), ref=(\roman*),start=4]
\item\label{hyp_1_5} $\mathbb{P}\bigl\{A_t\ \cup\ B_t\bigr\}\ \le\ P^{\mathrm{Tail}}_t$,
where\begin{small}
    \begin{align*}
    A_t \;:=\; \bigg\{
        \lambda_t^{} \exp\!\Big( 
            \textstyle\sum_{i=1}^t \Theta_i\ln\mathcal{V}_i 
        \Big) 
        \;>\; \mathcal{H}_{\bar{\gamma},\hat{p}}(t) 
    \bigg\},\qquad
    B_t \;:= \;\bigg\{ 
        \sum_{i=1}^t a_i^{(t)} \exp\Big(\sum_{j=i+1}^t\Theta_j\ln\mathcal{V}_j\Big) 
          \mathcal{R}_i 
        \;>\; \varepsilon_0
    \bigg\}.
\end{align*}
\end{small}
\end{enumerate}
\end{assumption}
\begin{remark}
In \Cref{proposition_second,proposition_fisrst_1}, we specialize the ingredients
\begin{small}
$\Phi_t$, $\hat\gamma_t$, $\gamma_{t-1}$, $\alpha_t$, $\mathcal{V}_t$, $\mathcal{R}_t$, $\bar{\gamma}$, $p'$, and $\boldsymbol{\varepsilon}'=(\varepsilon_\phi',\varepsilon_\nabla')$
\end{small}
under certain oracle and geometry settings. Consequently, we derive the explicit forms of $\mathcal{H}_{\bar{\gamma},\hat{p}}(t)$ and \begin{small}$P_t^{\mathrm{Tail}}$\end{small}, dictated by the specific structures of $\mathcal V_t$ and $\mathcal R_t$ alongside the oracle conditions.
\end{remark}

\begin{remark}\label{rem:two-regimes}Unrolling the quasi-descent recursion in \ref{hyp_1_2} illuminates how our framework moves beyond the traditional linear progress--damage balance. Specifically, it yields:\begin{equation}\label{unrolled_recursion}\Phi_{t+1} \;\le\; \Phi_0\cdot\Bigl[\underbrace{\lambda_t\,\exp\Bigl(\textstyle\sum_{i=1}^t \Theta_i\ln \mathcal V_i\Bigr)}_{\diamond}\Bigr] \;+\; \underbrace{\textstyle\sum_{i=1}^t a_i^{(t)}\exp\Bigl(\textstyle\sum_{j=i+1}^t \Theta_j\ln\mathcal V_j\Bigr)\mathcal R_i}_{\diamond\diamond},
\end{equation}
 The events $A_t^c$ and $B_t^c$ ensure that the compounded stochastic contraction--amplification factor $\diamond$ is bounded by the deterministic envelope $\mathcal H_{\bar\gamma,\hat p}(t)$, and the weighted residual accumulation $\diamond\diamond$ is bounded by the constant budget $\varepsilon_0$. On $(A_t\cup B_t)^c$, this holistically yields $\Phi_{t+1}\le \Phi_0\,\mathcal H_{\bar\gamma}(t)+\varepsilon_0$, which is analogous to \eqref{equ_linear_energy} in the previous framework. The two special cases exemplified below---each relying on a single, distinct mechanism to control the oracle inexactness---are sufficient to demonstrate the breadth of analytical scenarios that our framework can accommodate. These naturally align with our subsequent analyses for general convex and strongly convex objectives:
\begin{itemize}[leftmargin=0.4cm]
\item \emph{Scaling-driven regime ($\mathcal R_i\le 0$).} Since $\mathcal R_i \le 0$, the additive term trivially satisfies $\diamond\diamond \le 0$ and does not impede the Lyapunov descent. The analysis thus reduces entirely to bounding the multiplicative term $\diamond$. The control dynamics here closely parallel the traditional linear formulation \eqref{equ_linear_energy}. To ensure convergence, the contraction $\lambda_t$ must strictly dominate the compounded noise amplification $\prod_i \mathcal V_i$. This necessitates a reliability threshold $p' > 1/2$, analogous to the strict condition $p'=\tfrac12+\frac{r(\epsilon_f',s)}{h(\bar\gamma)}$ in prior linear frameworks \cite{Xie2024}. Consequently, choosing a sufficiently large $p'$ guarantees the existence of a strictly decreasing deterministic envelope $\mathcal H_{\bar\gamma,\hat p}(t)$ to control $\diamond$. We deploy this regime for the strongly convex analysis (\Cref{proposition_second}).

\item \emph{Residual-driven regime ($\mathcal V_i\equiv 1$).} Here, the vanishing multiplicative amplification isolates all inexactness to the additive residual $\mathcal R_t$, reducing the accumulation $\diamond\diamond$ to a randomly weighted sum $\sum_{i=1}^t a_i^{(t)} \mathcal R_i$. The event $B_t$ bounds this entire sum by a constant $\varepsilon_0$ with high probability. Since the aggregate error is controlled independently of the descent rate, the framework naturally accommodates a relaxed reliability threshold of $p' = 1/2$, thus lowering the accuracy requirements imposed on the oracle as discussed in \Cref{subsection_adaptive_step_search}, this relaxation indirectly . This regime serves as the foundation for our general convex analysis (\Cref{proposition_fisrst_1}).
\end{itemize}

 \end{remark}

 The following \Cref{lemma3.1} restates the exponential-tail bound for the submartingale $\textstyle\sum_{i=1}^t I_i{-}pt$ derived by the Azuma-Hoeffding inequality from~\cite{Xie2024}.

\begin{lemma}\label{lemma3.1}\emph{(\cite[Lemma~3.1]{Xie2024})}
Suppose \Cref{hyp_1_3} holds, for any integer $t\ge1$ and any $\hat{p}\in(\tfrac{1}{2},p)$, one has
\begin{equation}
    \mathbb{P}\bigl\{\,\mathcal{I}_t(\hat{p})=0\,\bigr\}
\ \le\
\exp\!\left(-\,\frac{(p-\hat{p})^2}{2p^2}\,t\right),
\qquad
\mathcal{I}_t(\hat{p})\;:=\;\mathbf{1}\bigl\{\textstyle\sum_{i=1}^t I_i>\hat{p}t\bigr\}.
\end{equation}
\end{lemma}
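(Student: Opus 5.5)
The plan is to run an Azuma--Hoeffding argument on the centered reliability indicators. For $i\ge1$ set
\[
M_t:=\sum_{i=1}^t \bigl(I_i-\mathbb P(I_i=1\mid\mathcal F_{i-1})\bigr),\qquad M_0:=0 .
\]
Since $I_i\in\mathcal F_i$, the sequence $\{M_t\}$ is a martingale with respect to $\{\mathcal F_t\}$. Its increments $\xi_i:=I_i-\mathbb P(I_i=1\mid\mathcal F_{i-1})$ satisfy $|\xi_i|\le1$. More sharply, conditionally on $\mathcal F_{i-1}$ each $\xi_i$ lies in an interval of length $1$.

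By \Cref{hyp_1_3}, $\mathbb P(I_i=1\mid\mathcal F_{i-1})\ge p$ for every $i$, so $\sum_{i=1}^t I_i\ge M_t+pt$. The failure event $\{\mathcal I_t(\hat p)=0\}=\{\sum_{i=1}^t I_i\le\hat pt\}$ is therefore contained in $\{M_t\le-(p-\hat p)t\}$. Azuma--Hoeffding with bounded differences then gives
\[
\mathbb P\{M_t\le-(p-\hat p)t\}\le\exp\!\Bigl(-\tfrac{(p-\hat p)^2t^2}{2t}\Bigr)=\exp\!\Bigl(-\tfrac{(p-\hat p)^2}{2}\,t\Bigr).
\]
Because $p\le1$, we have $p^2\le1$, so $\tfrac{(p-\hat p)^2}{2}\ge\tfrac{(p-\hat p)^2}{2p^2}$. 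The right-hand side above is thus at most $\exp\!\bigl(-\tfrac{(p-\hat p)^2}{2p^2}t\bigr)$, which is the claimed bound. The restriction $\hat p>\tfrac12$ is not needed for the inequality itself; it only matters for how the lemma is used later.

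One point needs care. \Cref{hyp_1} states \Cref{hyp_1_3} only for $t<T_{\boldsymbol\varepsilon}$, whereas the lemma is stated for every $t$. I would handle this in one of two ways. The first is to read the lemma, as in \cite{Xie2024}, with \Cref{hyp_1_3} holding at all trials; this is justified because the oracle bounds \eqref{equ_D_t_pre}--\eqref{equ_w^2_pre} hold uniformly in $t$. The second is to replace $I_i$ by $I_i\mathbf 1\{i\le T_{\boldsymbol\varepsilon}\}+\mathbf 1\{i>T_{\boldsymbol\varepsilon}\}$ after stopping; the conditional-probability lower bound is preserved because $\{i\le T_{\boldsymbol\varepsilon}\}\in\mathcal F_{i-1}$. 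This measurability bookkeeping is the only step I expect to be delicate. The concentration step itself is routine.
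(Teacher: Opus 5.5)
\textbf{There is a genuine gap: your final comparison of constants runs the wrong way.} Your overall strategy, Azuma--Hoeffding applied to the centered reliability indicators, is the one the paper imports from \cite{Xie2024}. But from $p\le 1$ you get $1/p^2\ge 1$, so
\[
\frac{(p-\hat p)^2}{2p^2}\ge\frac{(p-\hat p)^2}{2}
\quad\text{and}\quad
\exp\bigl(-\tfrac{(p-\hat p)^2}{2}t\bigr)\ge\exp\bigl(-\tfrac{(p-\hat p)^2}{2p^2}t\bigr).
\]
Azuma with $|\xi_i|\le 1$ therefore gives a strictly weaker bound than the claim whenever $p<1$, and the lemma is not proved.

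For the same reason, your remark that $\hat p>\tfrac12$ plays no role is wrong. An argument that uses only $p\le 1$ and $\hat p<p$ would also cover i.i.d.\ Bernoulli$(0.1)$ indicators with threshold $\hat p=0$. There, $\mathbb P\{\sum_{i\le t}I_i=0\}=0.9^t\approx e^{-0.105t}$, which exceeds $\exp\bigl(-\tfrac{(p-\hat p)^2}{2p^2}t\bigr)=e^{-t/2}$. The constant $1/(2p^2)$ is attainable only because $p>\hat p>\tfrac12$.

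\textbf{Two repairs work.} The route in \cite{Xie2024} centers at $p$ rather than at $q_i:=\mathbb P(I_i=1\mid\mathcal F_{i-1})$. By hypothesis (ii), $Y_t:=\sum_{i=1}^t(I_i-p)$ is a submartingale. Its increments lie in $\{-p,1-p\}$, so $|I_i-p|\le\max\{p,1-p\}=p$, precisely because $p>\tfrac12$. The lower-tail Azuma--Hoeffding inequality with $c_i=p$ then gives $\mathbb P\{Y_t\le-(p-\hat p)t\}\le\exp\bigl(-\tfrac{(p-\hat p)^2}{2p^2}t\bigr)$, which is exactly the stated bound.

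Alternatively, keep your $M_t$ and use the range form you mention but do not exploit. We have $\xi_i\in[-q_i,1-q_i]$ with $\mathcal F_{i-1}$-measurable endpoints, so the conditional Hoeffding lemma gives $\mathbb E[e^{-\lambda\xi_i}\mid\mathcal F_{i-1}]\le e^{\lambda^2/8}$. Hence $\mathbb P\{M_t\le-(p-\hat p)t\}\le\exp(-2(p-\hat p)^2t)$, which implies the claim since $2\ge 1/(2p^2)\iff p\ge\tfrac12$. With your centering, the symmetric bound $|\xi_i|\le\max\{q_i,1-q_i\}=q_i$ can be arbitrarily close to $1$. So the $c_i$-form of Azuma cannot recover the constant, and you need either the range form or centering at $p$.

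\textbf{On the stopping-time bookkeeping,} use your first option. The paper's verification of (ii) in its propositions uses only the oracle bounds, uniformly in $t$. Your second option has an off-by-one: $\{i\le T_{\boldsymbol\varepsilon}\}\in\mathcal F_{i-1}$ contains the event $\{i=T_{\boldsymbol\varepsilon}\}$, where (ii) is not assumed.
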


The following result establishes the high-probability \(\boldsymbol{\varepsilon}\)-stopping-time complexity under \Cref{hyp_1}.
\begin{theorem}\label{Big_threorem}
Under \Cref{hyp_1}, fix \(\hat p\in(p',p)\) and suppose additionally that \(\varepsilon_\phi>\varepsilon_0\). 
Assume moreover that there exists \(\bar T\in\mathbb N^+\) such that \(\mathcal H_{\bar\gamma,\hat p}(t)\) is strictly decreasing for all \(t>\bar T\). 
Then every integer \(t\) satisfying
\begin{equation}
t \;\ge\; \max\!\left\{\,\bar T,\ \mathcal H_{\bar\gamma,\hat p}^{-1}\!\left(\frac{\varepsilon_\phi-\varepsilon_0}{\Phi_0}\right)\right\}
\end{equation}
is sufficient to guarantee that
\begin{equation}
\mathbb P\big\{\,T_{\boldsymbol{\varepsilon}}\,\le\, t+1\,\big\}\ \ge\ 1-P_t^{\mathrm{Tail}}.
\end{equation}\hfill \emph{\footnotesize (Proof in \Cref{proof_bigtheorem})}
\end{theorem}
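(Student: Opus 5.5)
We argue by contradiction on the event $\{T_{\boldsymbol\varepsilon}>t+1\}$, showing that it is contained in $A_t\cup B_t$. Fix $t$ as in the statement. On $\{T_{\boldsymbol\varepsilon}>t+1\}$ we have $t<T_{\boldsymbol\varepsilon}$, and $t\ge\bar T$. Hence every trial $i\le t$ satisfies \ref{hyp_1_2}--\ref{hyp_1_4} (each of them is below the stopping time), and the tail estimate \ref{hyp_1_5} is available at index $t$. It therefore suffices to show
\[
\{T_{\boldsymbol\varepsilon}>t+1\}\subseteq A_t\cup B_t,
\]
since then $\mathbb P\{T_{\boldsymbol\varepsilon}>t+1\}\le P_t^{\mathrm{Tail}}$, which is the claim.

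\textbf{Step 1: unroll the recursion \ref{hyp_1_2}.} Define the per-trial multiplier $m_i:=1-\Theta_i\alpha_i$ together with the amplification $\mathcal V_i^{\Theta_i}=\exp(\Theta_i\ln\mathcal V_i)$. The rejected and accepted cases of \ref{hyp_1_2} combine into the single inequality
\[
\Phi_{i+1}\le (1-\Theta_i\alpha_i)\exp(\Theta_i\ln\mathcal V_i)\,\Phi_i+\Theta_i\alpha_i\mathcal R_i .
\]
This is legitimate because $\Phi_i\ge0$ (from $\Phi_i\ge\phi(x_i)-\phi^*\ge0$) and $\mathcal V_i\ge1$. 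An induction over $i=1,\dots,t$ then yields \eqref{unrolled_recursion}, with the coefficients $a_i^{(t)}=\lambda_t\lambda_i^{-1}\Theta_i\alpha_i$ arising as the products of the later multipliers $\prod_{j>i}(1-\Theta_j\alpha_j)$. A small bookkeeping point needs checking here: the index of $\Phi_0$ versus $\Phi_1$. Since $x_0=x_1$, one should verify that $\Phi_1=\Phi_0$ or absorb the difference; I would simply start the induction at the index matching the convention of \eqref{unrolled_recursion}. Along the way the induction requires nonnegativity of the prefactors, which follows from $\alpha_i\in(0,1)$.

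\textbf{Step 2: conclude on the good event.} On $(A_t\cup B_t)^c$ we get
\[
\Phi_{t+1}\le\Phi_0\,\mathcal H_{\bar\gamma,\hat p}(t)+\varepsilon_0 .
\]
Because $\mathcal H$ is strictly decreasing for $t>\bar T$ and $t\ge\mathcal H^{-1}\bigl((\varepsilon_\phi-\varepsilon_0)/\Phi_0\bigr)$, we have $\mathcal H(t)\le(\varepsilon_\phi-\varepsilon_0)/\Phi_0$, and therefore $\Phi_{t+1}\le\varepsilon_\phi$ (this uses $\varepsilon_\phi>\varepsilon_0$, so that the argument of $\mathcal H^{-1}$ is positive). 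Consequently $\phi(x_{t+1})-\phi^*\le\Phi_{t+1}\le\varepsilon_\phi$.

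\textbf{Step 3: transfer to the stopping-time criterion.} The point $x_{t+1}$ equals either $\hat x_{s+1}$ for the last accepted trial $s\le t$, or the initial point; a single gradient step may be needed to handle the initial-point case. If trial $s$ is accepted, then $\phi(\hat x_{s+1})-\phi^*\le\varepsilon_\phi$ gives $T_{\boldsymbol\varepsilon}\le s\le t+1$, contradicting $T_{\boldsymbol\varepsilon}>t+1$. If no trial has been accepted, then $\Phi_{t+1}=\Phi_0\le\varepsilon_\phi$ would contradict the fact that the stopping time has not yet occurred; this case can be handled via $\hat y_1=x_1$ in the definition of $T_{\boldsymbol\varepsilon}$.

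\textbf{Main obstacle.} I expect the principal difficulty to lie in Step 3 together with the index conventions: precisely matching $x_{t+1}$ with a point $\hat x_{s+1}$ or $\hat y_s$ that appears in the stopping-time definition, and making sure that the use of \ref{hyp_1_5} is valid when the ambient hypotheses hold only for $t<T_{\boldsymbol\varepsilon}$. The latter should be resolved by working on the intersection with $\{T_{\boldsymbol\varepsilon}>t+1\}$ throughout.
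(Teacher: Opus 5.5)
Your proof is correct and follows the same route as the paper's: contradiction on the complement of $A_t\cup B_t$, unrolling the one-step recursion \ref{hyp_1_2} into the multiplicative and additive parts of \eqref{unrolled_recursion}, and using the monotonicity of $\mathcal H_{\bar\gamma,\hat p}$ to contradict $t\ge\mathcal H_{\bar\gamma,\hat p}^{-1}\bigl((\varepsilon_\phi-\varepsilon_0)/\Phi_0\bigr)$. Your Step 3 is a more careful justification of $\phi(x_{t+1})-\phi^*>\varepsilon_\phi$ than the paper gives: you pass through the last accepted trial $s\le t$, or use $\hat y_1=x_1$ if no trial was accepted, whereas the paper's one-line chain $\min\{\phi(\hat x_{t+1}),\phi(\hat y_t)\}-\phi^*\le\phi(x_{t+1})-\phi^*$ is, as written, only justified when trial $t$ is accepted.
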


\subsection{Lyapunov Analysis for {RAAS}}\label{subsection_Lyap}
We now instantiate the abstract high-probability framework in \Cref{subsec:frame} for \texttt{RAAS} (\Cref{algo}). To this end, we establish two key lemmas. The first derives a one-step Lyapunov recursion (\Cref{energy_1}), while the second gives a high-probability bound on the compounded contraction $\lambda_t$ (\Cref{High_pro_rate}). Together, these results explicitly characterize how the parameter $\vartheta$ governs the fundamental trade-off between algorithmic acceleration and robustness to oracle error.

Throughout this subsection, we restrict attention to the \emph{pre-truncation regime}
\[
0\;\le\; \vartheta\;<\;\bar{\vartheta}(\theta)\;:=\;\frac{1-\theta}{2-\theta},
\]
in which the truncation for \(\gamma_t'\) is inactive, so that \(\textstyle \gamma_t'=\widetilde{\gamma}_t'\). In this regime, the accepted-step update yields a  one-step recursion for $\Phi_{t+1}$ with contraction factor \(1-\alpha_t\). Hence, from this point forward, we identify the abstract sequence \(\{\alpha_t\}\) in \Cref{hyp_1} exactly with the algorithmic sequence generated by \eqref{para}.

\begin{lemma}\label{energy_1}
Let the adaptive parameters $\hat{\rho}_t$ and $\gamma_t'$ in \Cref{algo} be chosen as in \Cref{Momentum_para}.
If $0\le \vartheta<\frac{1-\theta}{2-\theta}$,
then, on the event $\{\Theta_t=1\}$,
\begin{equation}
\Phi_{t+1}~\le~ (1-\alpha_t)\,\Phi_t-\mathcal T_C+\mathcal T_E.
\label{energy_Ite}
\end{equation}
Here, the compensation term $\mathcal T_C$ and oracle-error term $\mathcal T_E$ are given by:
\begin{align}
\mathcal T_C
&~:= \frac{\vartheta\alpha_t}{1-\vartheta}
\left(
\phi(y_t)-\phi^*+\frac{\mu}{2}\|y_t-x^*\|^2
\right)\ge 0,
\label{T_c}\\[3pt]
\mathcal T_E
&~:= {\alpha_t}\left(\frac{1}{1-\vartheta}
\langle y_t-x^*,\,\nabla\phi(y_t)-\mathbf G_t\rangle+(\epsilon_f^{(t)}+E_t(y_t)-E_t(x_{t+1})\bigr)\right)\notag\\&
~\qquad+(1-\alpha_t)\bigl(\epsilon_g^{(t)}+2\epsilon_f^{(t)}+E_t(x_t)-E_t(x_{t+1})\bigr) .
\label{T_e}
\end{align}
\hfill\emph{\footnotesize (Proof in \Cref{proof_energy})}
\end{lemma}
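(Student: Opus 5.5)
The plan is a standard accelerated Lyapunov (estimate-sequence) argument, carried out with oracle-perturbed inequalities instead of exact ones. Work on $\{\Theta_t=1\}$, so $y_t=\hat y_t$, $x_{t+1}=y_t-\gamma_t\mathbf G_t$, $\bar x_{t+1}=y_t-\gamma_t'\mathbf G_t$, and $\gamma_t'=\widetilde\gamma_t'$ because we are in the pre-truncation regime.

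\textbf{Step 1: convert the checks into inequalities for $\phi$.} Condition \eqref{Check_1} becomes $\phi(x_{t+1})\le\phi(y_t)-\theta\gamma_t\|\mathbf G_t\|^2+\epsilon_f^{(t)}+E_t(y_t)-E_t(x_{t+1})$. Combining \eqref{Check_1} with \eqref{Check_2} gives
\[
\phi(x_{t+1})\le \phi(x_t)+\langle\mathbf G_t,y_t-x_t\rangle-\theta\gamma_t\|\mathbf G_t\|^2+\epsilon_g^{(t)}+2\epsilon_f^{(t)}+E_t(x_t)-E_t(x_{t+1}).
\]
The error terms here are exactly those in $\mathcal T_E$. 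Multiply the first bound by $\alpha_t$ and the second by $1-\alpha_t$, and add. This produces $\phi(x_{t+1})$ on the left and, on the right, $\alpha_t\phi(y_t)+(1-\alpha_t)\phi(x_t)+(1-\alpha_t)\langle\mathbf G_t,y_t-x_t\rangle-\theta\gamma_t\|\mathbf G_t\|^2$ plus errors. Now write $\alpha_t\phi(y_t)$ using strong convexity toward $x^*$:
\[
\phi(y_t)\le \phi^*+\langle\nabla\phi(y_t),y_t-x^*\rangle-\tfrac{\mu}{2}\|y_t-x^*\|^2 .
\]
Only the fraction $(1-\vartheta)$ of $\alpha_t(\phi(y_t)-\phi^*)$ is treated this way. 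The remaining fraction $\vartheta/(1-\vartheta)$, after rescaling, is kept aside as the compensation $\mathcal T_C$. Equivalently, apply the inequality with weight $\alpha_t/(1-\vartheta)$ and subtract $\mathcal T_C$. Then replace $\nabla\phi(y_t)$ by $\mathbf G_t$; this replacement generates the error term $\frac{\alpha_t}{1-\vartheta}\langle y_t-x^*,\nabla\phi(y_t)-\mathbf G_t\rangle$.

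\textbf{Step 2: handle the distance term.} The next goal is to show
\[
\tfrac{\alpha_t^2}{4\theta(1-\vartheta)^2\gamma_t}\|z_{t+1}-x^*\|^2
\le \tfrac{(1-\alpha_t)\alpha_{t-1}^2}{4\theta(1-\vartheta)^2\gamma_{t-1}}\|z_t-x^*\|^2+\text{(the linear and quadratic terms in $\mathbf G_t$ from Step 1)}.
\]
First compute $z_{t+1}$ from \eqref{z_t}, using $\bar x_{t+1}=y_t-\widetilde\gamma_t'\mathbf G_t$ and the formula \eqref{equ_gamma_trunc}. The aim is to exhibit $z_{t+1}$ as an affine combination $z_{t+1}=(1-\hat\beta_t)\,[\text{combination of } z_t,y_t]+\hat\beta_t y_t-c_t\mathbf G_t$, with $c_t$ proportional to $\gamma_t/\alpha_t$. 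This uses \eqref{eq:yhat_zt} to eliminate $x_t$ in favor of $y_t$ and $z_t$. Then expand $\|z_{t+1}-x^*\|^2$ and apply convexity of $\|\cdot\|^2$ to the $z_t$, $y_t$ mixture. The recursion \eqref{para_alpha} is exactly what matches the coefficient of $\|z_t-x^*\|^2$ to $(1-\alpha_t)\alpha_{t-1}^2/\gamma_{t-1}$. Its $\mu$-term, via $\hat\beta_t$, absorbs the $\frac{\mu}{2}\|y_t-x^*\|^2$ coming from strong convexity.

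\textbf{Step 3: match the cross and quadratic terms.} The cross term $-2c_t\langle\mathbf G_t,\cdot-x^*\rangle$ should cancel the inner products $\frac{\alpha_t}{1-\vartheta}\langle\mathbf G_t,y_t-x^*\rangle+(1-\alpha_t)\langle\mathbf G_t,y_t-x_t\rangle$. Verifying this amounts to checking that the choice of $\rho_t$ in \eqref{rho_t_hat_def} makes $y_t-x_t$ proportional to $z_t-y_t$ with the right ratio. The residual quadratic term $c_t^2\|\mathbf G_t\|^2$ times the prefactor must be at most $\theta\gamma_t\|\mathbf G_t\|^2$. I expect this to reduce to the inequality $2\theta-\alpha_t(1-\vartheta)^{-1}\ge$ (a positive quantity), which is where $\vartheta<\frac{1-\theta}{2-\theta}$ enters: it guarantees $\widetilde\gamma_t'\ge\gamma_t^{\mathrm{Tr}}$ and positivity of the relevant coefficients.

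\textbf{Main obstacle.} The bookkeeping in Steps 2 and 3 is the hard part: the algebraic identity expressing $z_{t+1}$ through $(z_t,y_t,\mathbf G_t)$ together with the exact coefficient matching. I would first carry it out at $\mu=0$, where $\hat\beta_t=0$, and then insert $\hat\beta_t$ as a convex weighting. As a consistency check, the case $(\theta,\vartheta)=(1/2,0)$ should reproduce the derivation of \cite{localLip}.
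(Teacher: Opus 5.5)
Your plan is correct and every step you outline goes through. It uses the same inequalities as the paper: Condition \eqref{Check_1} with total weight $1$, Condition \eqref{Check_2} with weight $1-\alpha_t$, and strong convexity toward $x^*$ with weight $\alpha_t/(1-\vartheta)$ with $\mathcal T_C$ split off. What differs is how the algebra is organized.

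The paper never writes a recursion for $z_{t+1}$. It expands $\mathcal M_{t+1}^{\mathrm{Prior}}=\frac12\|a_t(y_t-x_t)+b_t(y_t-x^*)-(a_t\gamma_t'+b_t\gamma_t)\mathbf G_t\|^2$ with undetermined coefficients and bounds the two inner products with $\mathbf G_t$ inside this expansion. It then solves for $a_t,b_t,\gamma_t'$ and an intermediate form $\mathcal M_t^{\mathrm{Post}}$ by coefficient matching, and finally picks $\hat\rho_t$ so that $\mathcal M_t^{\mathrm{Post}}=\mathcal M_t^{\mathrm{Prior}}$.

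You instead verify the given formulas directly. On $\{\Theta_t=1\}$, \eqref{eq:yhat_zt} gives $\frac{(1-\vartheta)(1-\alpha_t)}{\alpha_t}(y_t-x_t)=(1-\hat\beta_t)(z_t-y_t)$. The untruncated branch gives $\gamma_t+\frac{(1-\vartheta)(1-\alpha_t)}{\alpha_t}\widetilde\gamma_t'=\frac{2\theta(1-\vartheta)\gamma_t}{\alpha_t}=:c_t$. Hence $z_{t+1}=(1-\hat\beta_t)z_t+\hat\beta_t y_t-c_t\mathbf G_t$ exactly.

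With $K_t:=\alpha_t^2/(4\theta(1-\vartheta)^2\gamma_t)$ one has:
\begin{itemize}
\item $K_tc_t^2=\theta\gamma_t$;
\item $2K_tc_t=\alpha_t/(1-\vartheta)$;
\item $K_t\hat\beta_t=\alpha_t\mu/2$;
\item $K_t(1-\hat\beta_t)=(1-\alpha_t)K_{t-1}$, by \eqref{para_alpha}.
\end{itemize}
These are exactly the cancellations your Steps 2--3 require. The Jensen gap $K_t\hat\beta_t(1-\hat\beta_t)\|z_t-y_t\|^2$ that you discard is precisely the nonnegative slack the paper throws away when it says the remaining terms are ``dominated by $(1-\alpha_t)\mathcal M_t^{\mathrm{Post}}$''. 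Your route is shorter and shows what each parameter does: $\widetilde\gamma_t'$ fixes the gradient step of $z$, $\hat\rho_t$ places $y_t$ on $[x_t,z_t]$ with the right ratio, and \eqref{para_alpha} matches the distance weight. The paper's template route instead explains where the formulas for $\gamma_t'$ and $\hat\rho_t$ come from.

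One correction to your Step 3 expectation: the $\|\mathbf G_t\|^2$ terms cancel identically.
\begin{itemize}
\item No lower bound on $2\theta-\alpha_t(1-\vartheta)^{-1}$ is needed. This quantity, and hence $\widetilde\gamma_t'$, is negative whenever $\alpha_t>2\theta(1-\vartheta)$, which can happen in early iterations, and nothing breaks.
\item The hypothesis $\vartheta<\frac{1-\theta}{2-\theta}$ enters only through $\gamma_t'=\widetilde\gamma_t'$, as you already noted at the outset.
\item The only sign condition the argument uses is $\hat\beta_t\in[0,1)$ for the Jensen step. This follows from $\mu\ge0$ and \eqref{eq:alpha-lb-polished}.
\end{itemize}
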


\begin{remark}[Trade-off by $\vartheta$]\label{rem:vartheta_robustness}
\Cref{energy_1} reveals how $\vartheta$ dictates a fundamental trade-off between error robustness and acceleration. For a fixed \textsc{SFO} realization $\mathbf{G}_t$, the ratio of the  compensating term $\mathcal T_C$ to the leading gradient error in $\mathcal T_E$ is exactly:
$$\vartheta\cdot
\frac{\phi(y_t)-\phi^*+\frac{\mu}{2}\|y_t-x^*\|^2}
{\bigl|\langle y_t-x^*,\,\nabla\phi(y_t)-\mathbf G_t\rangle\bigr|}.$$
Growth of $\vartheta$  within $[0, \bar{\vartheta})$ linearly increases this ratio, allowing $\mathcal T_C$ to absorb a larger fraction of the error and enhancing single-step noise tolerance. 

However, this robustness sacrifices convergence speed. A larger $\vartheta$ enforces a smaller step-size threshold $\bar{\gamma}(L;\theta,\vartheta)$ (required by \Cref{Key_indicator}), which consequently decelerates the compounded contraction $\lambda_t$ (by observing \Cref{High_pro_rate}). Conversely, setting $\vartheta=0$ entirely disables this compensation ($\mathcal T_C=0$), maximizing acceleration potential but leaving the recursion strictly vulnerable to gradient perturbations.
\end{remark}

Having identified the single-step contraction factor $\alpha_t$ in \Cref{energy_1}, it remains to bound its compounded effect $\textstyle\lambda_t = \prod_{i=1}^t(1-\Theta_i\alpha_i)$. The following lemma establishes that, prior to termination ($t<T_{\boldsymbol\varepsilon}$), the decay rate of the compounded contraction $\lambda_t$ is strictly controlled by a deterministic envelope conditional on the event $\{\mathcal I_t(\hat p)=1\}$ defined in \Cref{lemma3.1}.

\begin{lemma}\label{High_pro_rate}
Suppose \Cref{hyp_1_2,hyp_1_3,hyp_1_4} in \Cref{hyp_1} hold with $\{\hat\gamma_t\}$ evolving as \eqref{eq:gamma-dyn} and $\{\alpha_t\}$ generated by \eqref{para}.
Let $\lambda_t$ be defined in \eqref{dfn_cumulative_prod}. Fix $\hat p\in(p',p)$ and define
\[
\bar {N}_{\mathrm{good}}(t):=\bigl[(\hat p-\tfrac12)t-\tfrac D2\bigr]_+
\qquad \text{where}\qquad
D:=\max\Bigl\{\frac{\ln(\gamma_0/\bar\gamma)}{\ln(1/\nu)},\,0\Bigr\}.
\]
Then for  any $\boldsymbol\varepsilon>\boldsymbol\varepsilon'$,
the following events have probability zero:
\begin{itemize}
\item \textbf{General convex ($\mu=0$):} \begin{equation}\label{pro_lambda_t_rate_mu=0}
\PP\bigg\{
t<T_{\boldsymbol\varepsilon},\ \mathcal I_t(\hat p)=1,\ 
\lambda_t>4{\left(2+\alpha_0\sqrt{\bar\gamma/\gamma_0}\,\bar N_{\mathrm{good}}(t)\right)^{-2}}
\bigg\}=0;
\end{equation}
\item \textbf{Strongly convex ($\mu>0$):}\begin{equation}\label{pro_lambda_t_rate_mu>0}
    \PP\bigg\{
t<T_{\boldsymbol\varepsilon},\ \mathcal I_t(\hat p)=1,\ 
\lambda_t>~\Bigl(1-(1-\vartheta)\sqrt{2\theta\mu\bar\gamma}\Bigr)^{\bar N_{\mathrm{good}}(t)}
\bigg\}=0.
\end{equation}
\end{itemize}Here, $\bar\gamma=\bar\gamma(L;\theta,\vartheta)$ satisfies the requirements in \Cref{Key_indicator}.  \hfill \emph{\footnotesize (Proof in \Cref{prof_High_pro_rate})}
\end{lemma}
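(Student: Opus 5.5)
The plan has two parts. First, a deterministic counting argument shows that on $\{t<T_{\boldsymbol\varepsilon},\ \mathcal I_t(\hat p)=1\}$ at least $\bar N_{\mathrm{good}}(t)$ trials are accepted with $\hat\gamma_i\ge\bar\gamma$. Second, the recursion \eqref{para_alpha} turns each such acceptance into a definite gain in $\lambda_t$. Because both parts are pathwise statements on this event, the event displayed in the lemma is empty, so its probability is zero.

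\textbf{Step 1: counting.} I would use the standard argument of \cite{Xie2024}. Split the trials $1,\dots,t$ according to $\Lambda_i$. If $\Lambda_i=0$ and $I_i=1$, then \ref{hyp_1_4} gives $\Theta_i=1$, and $\hat\gamma$ is increased. If $\Lambda_i=0$ and $I_i=0$, the step may be rejected and $\hat\gamma$ decreased. When $\Lambda_i=1$, I would classify the trial as good ($I_i\Theta_i=1$, acceptance at a large step), as an unreliable trial, or as a reliable but rejected trial.

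Track $\log_{1/\nu}\hat\gamma$. It moves by $\pm1$ per trial (acceptances are capped at $\gamma_{\max}$, which only affects large steps), and it starts at $\log_{1/\nu}\hat\gamma_1\ge\log_{1/\nu}\bar\gamma-D$. Each small-step reliable trial raises it. Steps at or above the threshold level are balanced: any decrease from the large regime down into the small regime must be matched by earlier increases, so the number of small-step trials is at most $D$ plus twice the number of small-step unreliable trials, in the usual form.

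Combining this with $\sum_i I_i>\hat p t$, i.e.\ at most $(1-\hat p)t$ unreliable trials, I would follow the argument of \cite[Lemma~2.6]{Xie2024}. That argument yields $N_{\mathrm{good}}(t)\ge(\hat p-\tfrac12)t-\tfrac D2$, hence $N_{\mathrm{good}}(t)\ge \bar N_{\mathrm{good}}(t)$. I would reuse this argument, checking that it only needs \ref{hyp_1_3}, \ref{hyp_1_4} and \eqref{eq:gamma-dyn}.

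\textbf{Step 2: converting good steps into contraction.} On an accepted good trial $i$ we have $\gamma_i=\hat\gamma_i\ge\bar\gamma$ and $\alpha_i=\hat\alpha_i$.

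In the strongly convex case, \eqref{para_alpha} gives $\hat\alpha_i^2/\hat\gamma_i\ge 2\theta(1-\vartheta)^2\mu\hat\alpha_i$, which is the lower bound \eqref{eq:alpha-lb-polished}. Hence $\alpha_i\ge 2\theta(1-\vartheta)^2\mu\gamma_i$. The sharper bound $\alpha_i\ge(1-\vartheta)\sqrt{2\theta\mu\gamma_i}$ comes from the invariant $\alpha_t^2/\gamma_t\ge 2\theta(1-\vartheta)^2\mu$. This invariant holds at initialization by the choice of $\alpha_0$ and propagates: if $\alpha_{t-1}^2/\gamma_{t-1}\ge c$, then $\hat\alpha^2/\hat\gamma\ge(1-\hat\alpha)c+c\hat\alpha=c$. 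Then $\alpha_i\ge(1-\vartheta)\sqrt{2\theta\mu\bar\gamma}$ on good steps. Since every factor of $\lambda_t$ is at most $1$, $\lambda_t\le\bigl(1-(1-\vartheta)\sqrt{2\theta\mu\bar\gamma}\bigr)^{N_{\mathrm{good}}(t)}$, which gives \eqref{pro_lambda_t_rate_mu>0}.

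In the case $\mu=0$, on accepted trials $\alpha_t^2/\gamma_t=(1-\alpha_t)\alpha_{t-1}^2/\gamma_{t-1}$. Unrolling over accepted trials gives $\lambda_t=\alpha_{\tau}^2\gamma_0/(\gamma_{\tau}\alpha_0^2)$, where $\tau\le t$ is the last accepted trial. I would then use the classical estimate $1/\sqrt{\lambda_i}-1/\sqrt{\lambda_{i-1}}\ge \tfrac12\alpha_i/\sqrt{\lambda_i}\cdot$(const). More precisely, writing $u_i:=\sqrt{\gamma_i}/\alpha_i$ on accepted steps, one has $u_i^2=u_{i-1}^2/(1-\alpha_i)+$ drift. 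From this, $\lambda_i^{-1/2}\ge\lambda_{i-1}^{-1/2}+\tfrac12\alpha_0\sqrt{\gamma_i/\gamma_0}$. Summing over good steps, where $\gamma_i\ge\bar\gamma$, while non-good accepted steps contribute nonnegatively, gives $\lambda_t^{-1/2}\ge 1+\tfrac12\alpha_0\sqrt{\bar\gamma/\gamma_0}\,N_{\mathrm{good}}(t)$. This is \eqref{pro_lambda_t_rate_mu=0}.

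\textbf{Main obstacle.} I expect the main obstacle to be the $\mu=0$ telescoping in Step 2, because $\gamma$ varies between accepted steps. The identity $\lambda_i=\lambda_{i-1}(1-\alpha_i)$ combined with $\alpha_i^2/\gamma_i=\lambda_i\alpha_0^2/\gamma_0$ gives $\alpha_i=\alpha_0\sqrt{\gamma_i\lambda_i/\gamma_0}$. The increment of $\lambda^{-1/2}$ then follows from $1/\sqrt{1-\alpha}-1\ge\alpha/2$ for $\alpha\in(0,1)$. I would verify this carefully, together with the bookkeeping that the pre-termination condition $t<T_{\boldsymbol\varepsilon}$ is only needed to validate \ref{hyp_1_4}.
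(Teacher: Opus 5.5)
Your route is the paper's. The count $N_{\mathrm{good}}(t)\ge\bar N_{\mathrm{good}}(t)$ on $\{t<T_{\boldsymbol\varepsilon},\ \mathcal I_t(\hat p)=1\}$ is imported from \cite{Xie2024}, which is the paper's Lemma~\ref{lemma_3.2}. Your Step~2 reproduces Lemma~\ref{para_lemma_b}, since your $\lambda_i^{-1/2}$ equals the paper's $\frac{\alpha_0}{\sqrt{\gamma_0}}\cdot\frac{\sqrt{\gamma_i}}{\alpha_i}$, and Lemma~\ref{para_lemma_c}, via the invariant $\alpha_t^2/\gamma_t>2\theta(1-\vartheta)^2\mu$.

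There is one slip in Step~2. The inequality $(1-\alpha)^{-1/2}-1\ge\alpha/2$ only yields $\lambda_i^{-1/2}-\lambda_{i-1}^{-1/2}\ge\frac{\alpha_0}{2}\sqrt{\gamma_i(1-\alpha_i)/\gamma_0}$. To get your stated increment, write $\lambda_i^{-1/2}-\lambda_{i-1}^{-1/2}=\lambda_i^{-1/2}\bigl(1-\sqrt{1-\alpha_i}\bigr)$ and use $1-\sqrt{1-\alpha}\ge\alpha/2$. This is the paper's inequality $(1-x)^{-1/2}-1\ge x/(2\sqrt{1-x})$.

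The substantive problem is the check you promise in Step~1. Take $\Lambda_t=\mathbf 1\{\hat\gamma_t\ge\bar\gamma\}$ and the strict implication in \ref{hyp_1_4}. Then neither your intermediate inequality (small-step trials at most $D$ plus twice the unreliable ones) nor $N_{\mathrm{good}}(t)\ge(\hat p-\tfrac12)t-\tfrac D2$ follows from \ref{hyp_1_3}, \ref{hyp_1_4} and \eqref{eq:gamma-dyn}. For a counterexample, take $\gamma_0=\bar\gamma/\nu$, so $\hat\gamma_1=\bar\gamma$ and $D=1$, and take $I_i\equiv1$. Let every trial at $\hat\gamma_i=\bar\gamma$ be rejected, which is allowed since $\Lambda_i=1$. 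Let every trial at $\hat\gamma_i=\nu\bar\gamma$ be accepted, which \ref{hyp_1_4} forces. The step size then alternates between these two values. Half of the trials are small-step trials, none of them unreliable, and $N_{\mathrm{good}}(t)=0$.

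The crossing argument needs that a reliable trial never moves the step size from the large region into the small one. The simplest repair is to define large steps via the threshold $\nu\bar\gamma$. A reliable rejection then occurs only at $\hat\gamma_t\ge\bar\gamma$ and leaves $\hat\gamma_{t+1}\ge\nu\bar\gamma$, so the count goes through. The price is that $\nu\bar\gamma$ replaces $\bar\gamma$ in both envelopes. The paper's one-line reduction via $\Lambda_k\ge U_k$ passes over the same point.

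Note also that the paper's $D$ is positive only when $\gamma_0>\bar\gamma$; it pays only for an initial descent from above the threshold. Your claim $\log_{1/\nu}\hat\gamma_1\ge\log_{1/\nu}\bar\gamma-D$ is therefore false when $\gamma_0\le\bar\gamma$. A start below the threshold costs up to half a good step per level of depth, which $D$ does not budget.
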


\begin{remark}\Cref{High_pro_rate} confirms the central role of $\textstyle N_{\mathrm{good}}(t):=\textstyle\sum_{i=1}^t I_i\Theta_i\Lambda_i$ anticipated before \Cref{hyp_1}.
On the high-probability event $\textstyle \{\mathcal I_t(\hat p)=1\}$, if additionally has  $t<T_{\boldsymbol{\varepsilon}}$,  then $N_{\mathrm{good}}(t)$ admits the deterministic linear lower bound $\bar N_{\mathrm{good}}(t)$, which in turn yields the decay estimate for $\lambda_t$. This explicitly shows how the linear accumulation of good steps---ensured by \Cref{hyp_1}\ref{hyp_1_3} and \ref{hyp_1_4}---directly drives the convergence. Additionally, the bounds in \eqref{pro_lambda_t_rate_mu=0} and \eqref{pro_lambda_t_rate_mu>0} clearly illustrate the cost of the robustness mechanism discussed in \Cref{rem:vartheta_robustness}. In both cases, $\lambda_t$ shrinks faster when the terminal threshold $\bar{\gamma}$ is larger. \end{remark}

\subsection{Strongly-Convex}\label{sub_complexity_>0}
In this section, we establish high-probability iteration-complexity guarantees for strongly convex objectives under the stochastic oracles \textsc{SFO-2} and \textsc{SZO}. A salient feature of our analysis in this regime is that we impose \emph{no} vanishing-scale requirement on the oracle error: specifically, the \textsc{SZO} scaling quantity $\mathcal{E}_t$ is allowed to remain uniformly bounded away from zero and need not decay with $t$. Consequently, the oracle outputs may be persistently biased and heavy-tailed throughout the entire execution. Our results certify that, by appropriately configuring the hyperparameters $\theta$, $\vartheta$ and tolerance parameters $\epsilon_f^{(t)}$, $\epsilon_g^{(t)}$, the proposed method robustly attains the target function-value precision with high probability despite such non-vanishing noise. To specifically analyze the convergence of the suboptimality gap, we deactivate the gradient stopping criterion by formally setting $\varepsilon_\nabla=0$ in \Cref{Stopping_time}, which naturally isolates our focus to the function-only stopping time $T_{\boldsymbol{\varepsilon}}=T_{\varepsilon_\phi}$. For brevity, we abbreviate the function-value precision $\varepsilon_\phi$ to $\varepsilon$ hereafter.

The subsequent \Cref{pre_assumption_1} specifies both the requisite oracle parameters and the algorithmic hyperparameters. In particular, \textsc{SFO-2} accommodates any finite centered moment of order $1+\delta$ for $\delta>0$. This flexibility enables us to derive a broad spectrum of tail-probability guarantees, seamlessly bridging the heavy-tailed and light-tailed regimes by leveraging the Burkholder--Rosenthal \cite{rio2009moment} and Fuk--Nagaev \cite{FAN2017538} inequalities.

\begin{assumption}\label{pre_assumption_1}
 Suppose \textsc{SFO-2} and \textsc{SZO} are adopted with the scaling of \Cref{SZO} satisfying:
\[
{\mathcal{E}}_t
\equiv 1.
\]
For \Cref{algo}, suppose that \eqref{equ_epsilon_f} and \eqref{equ_epsilon_g_>0} hold,  and the user-specified parameters additionally satisfy:
\[\mu>0,\qquad\vartheta\in\left(0~,\tfrac{1-\theta}{2-\theta}\right),\qquad p:=1
- \frac{\upsilon_g}{((\epsilon_g')^2-\epsilon_g^2)^{1+\delta}}-\frac{\upsilon_f}{(\epsilon_f'-\epsilon_f)^{1+\varrho}}>\frac{1}{2}.
\]

Moreover, for any $S_f,S_g>0$, define $\varepsilon_\phi'$ by
\begin{equation}\label{epsilon_lower_mu>0}
\varepsilon'_\phi \;:=\;
\max\!\left\{\frac{(\epsilon_g')^2}{2\mu\vartheta^2
},~\frac{(1-\vartheta)\epsilon_f'}{\vartheta},~
\ 
\frac{C_3'+C_4S_f+C_5S_g}{
\left[
\bigl(1-(1-\vartheta)\sqrt{2\theta\mu\bar{\gamma}}\,\bigr)^{-(p-\frac12)}-1
\right]}
\right\},
\end{equation}
where $\bar{\gamma}=\min\bigl\{\frac{2\left(1-2\vartheta-\theta(1-\vartheta)\right)}{L(1-\vartheta)},\gamma_{\max}\bigr\}$,  $C_3'$, $C_4$, $C_5$ can be found in \Cref{tab:constants_summary}.
\end{assumption}

\begin{remark} By \eqref{dfn_gamma_max} one has $\gamma_{\max}\leq\frac{1}{2(1-\vartheta)^2\mu}<\frac{1}{2\theta(1-\vartheta)^2\mu}$, then $\bar{\gamma}\le \gamma_{\max}$ guarantees that $\varepsilon_\phi'$ is well-defined. 
\end{remark}

Under \Cref{pre_assumption_1}, we proved in \Cref{proposition_second} that \Cref{hyp_1} holds. Combined with \Cref{Big_threorem}, this yields the following complexity guarantee for \Cref{algo}.

 \begin{theorem}[High-probability complexity for strongly convex]\label{Complexity_mu>0}Suppose that \Cref{pre_assumption_1} holds. For any $S_f, S_g > 0$ and target precision $\varepsilon > \varepsilon'_\phi$, define $C_\kappa$ and $p'$ respectively as:$$C_\kappa := -\ln\!\Bigl(1-(1-\vartheta)\sqrt{2\theta\mu\bar{\gamma}}\Bigr), \qquad p' := \frac{1}{2} + \frac{1}{C_\kappa}\ln\!\left(1+\frac{C_3'+C_4S_f+C_5S_g}{\varepsilon}\right).$$For $\hat{p}\in(p',p)$, if the    iteration count $t$ satisfies
 $$t \ge \max\Bigg\{ \Big\lfloor\frac{D}{2\hat{p}-1}\Big\rfloor+1, ~\; \frac{1}{\hat{p}-p'}\left(\frac{1}{C_\kappa}\ln\!\Big(\frac{\Phi_0}{\varepsilon}\Big)+\frac{D}{2}\right) \Bigg\},$$then the $\varepsilon$-stopping time $T_{\varepsilon}$ satisfies the following high-probability guarantee:\begin{equation}\mathbb{P}\big\{~T_{\varepsilon}\le t~\big\} \ge 1-\exp\!\left(-\frac{(p-\hat{p})^2}{2p^2}\,t\right)-P_1(t,\delta,S_g)-\frac{\upsilon_f}{8^{\varrho}S_f^{1+\varrho}}\,t^{-\varrho}.\end{equation}\hfill \emph{\footnotesize (Proof in \Cref{proof_Comp_mu>0})}\end{theorem}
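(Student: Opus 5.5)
The plan is to instantiate \Cref{Big_threorem} with the ingredients that \Cref{proposition_second} supplies in the strongly convex regime, and then to make the envelope inversion and the tail probability explicit.

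\textbf{Step 1: verify \Cref{hyp_1}.} Under \Cref{pre_assumption_1} and for $\varepsilon>\varepsilon_\phi'$, \Cref{proposition_second} should give items \ref{hyp_1_2}--\ref{hyp_1_4}.
\begin{itemize}
\item The Lyapunov function is $\Phi_t$ from \Cref{def:Lyapunov_struct}, and the contraction is $\alpha_t$ from \eqref{para}.
\item The threshold is $\bar\gamma$ as stated, and $p$ is as defined, so $\mathbb P(I_t=1\mid\mathcal F_{t-1})\ge p$ follows from \eqref{equ_D_t_pre} and \eqref{equ_w^2_pre} by a union bound.
\item Here we are in the scaling-driven regime: $\mathcal R_i\le0$, so $B_t$ is empty with $\varepsilon_0=0$. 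The amplification $\mathcal V_i$ is built from \Cref{energy_1}. On $t<T_\varepsilon$ we have $\phi(y_t)-\phi^*>\varepsilon$. The first two terms of $\varepsilon_\phi'$ let $\mathcal T_C$ absorb the tolerances $\epsilon_g^{(t)}=(\epsilon_g')^2/(2\mu)$ and $\epsilon_f'$. By Young's inequality against $\frac{\mu}{2}\|y_t-x^*\|^2$, the remaining error $\mathcal T_E$ becomes a multiplicative factor of the form $\ln\mathcal V_i\lesssim (c_3+c_4\widetilde D_i+c_5\widetilde W_i')/\varepsilon$ after centering.
\end{itemize}

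\textbf{Step 2: envelope and inversion.} On $\{\mathcal I_t(\hat p)=1\}\cap\{t<T_\varepsilon\}$, \eqref{pro_lambda_t_rate_mu>0} gives
\[
\lambda_t\le \exp\bigl(-C_\kappa[(\hat p-\tfrac12)t-\tfrac D2]\bigr).
\]
Suppose the centered sums satisfy $\sum_i\widetilde D_i\le S_f t$ and $\sum_i \widetilde W_i'\le S_g t$. Then $\sum_i\ln\mathcal V_i\le t\ln\bigl(1+(C_3'+C_4S_f+C_5S_g)/\varepsilon\bigr)=C_\kappa(p'-\tfrac12)t$, where I use $\ln(1+x)$ rather than $x$ by bounding each $\mathcal V_i$ appropriately. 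This yields the envelope
\[
\mathcal H(t)=\exp\bigl(-C_\kappa[(\hat p-p')t-\tfrac D2]\bigr).
\]
It is strictly decreasing because $\hat p>p'$. The third term in $\varepsilon_\phi'$ is exactly the condition $p'<p$; this should be checked. Solving $\mathcal H(t)\le\varepsilon/\Phi_0$ gives the second term in the max. Requiring $\bar N_{\mathrm{good}}(t)>0$ gives $\bar T=\lfloor D/(2\hat p-1)\rfloor+1$. \Cref{Big_threorem} then yields $\mathbb P\{T_\varepsilon\le t+1\}\ge1-P_t^{\mathrm{Tail}}$. The indexing offset should be reconciled with the statement's $T_\varepsilon\le t$, using the convention that $T_\varepsilon$ counts from $t\ge0$ while trials start at $1$.

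\textbf{Step 3: tail.} I bound
\[
P_t^{\mathrm{Tail}}\le \mathbb P\{\mathcal I_t(\hat p)=0\}+\mathbb P\Bigl\{\textstyle\sum\widetilde W_i'>S_gt\Bigr\}+\mathbb P\Bigl\{\textstyle\sum\widetilde D_i>S_ft\Bigr\}.
\]
\begin{itemize}
\item The first term is handled by \Cref{lemma3.1}.
\item The $\widetilde D_i$ form a martingale difference sequence with respect to $\mathcal F_{i}$ given $\mathcal F_{i-\frac12}$, with $(1+\varrho)$-moment at most $\upsilon_f$. The von Bahr–Esseen-type bound $\mathbb E|\sum\widetilde D_i|^{1+\varrho}\le 2^{2-\varrho}\cdots\, t\,\upsilon_f$ (Rio/Burkholder), followed by Markov's inequality, gives the $\upsilon_f/(8^\varrho S_f^{1+\varrho})\,t^{-\varrho}$ term. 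Matching the constant $8^\varrho$ may require rescaling $S_f$ inside $C_4$.
\item The $\widetilde W_i'$ term, with $(1+\delta)$-moments for general $\delta>0$, uses Burkholder–Rosenthal when $\delta\ge1$ and Fuk–Nagaev otherwise. Together these define $P_1(t,\delta,S_g)$.
\end{itemize}

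\textbf{Main obstacle.} The delicate step is Step 1's conversion of the additive error $\mathcal T_E$ into a multiplicative factor $\mathcal V_i$ with $\ln\mathcal V_i$ linear in centered noise. In particular, the inner product $\langle y_t-x^*,\nabla\phi(y_t)-\mathbf G_t\rangle$ must be dominated by $\Phi_t$ using strong convexity and $\varepsilon$. This step is presumably done inside \Cref{proposition_second}, which I would invoke directly.
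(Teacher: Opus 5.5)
Your overall route is the paper's: instantiate \Cref{Big_threorem} with \Cref{proposition_second} (so $\varepsilon_0=0$ and $B_t=\emptyset$), take the envelope $\exp\bigl(-C_\kappa[(\hat p-p')t-\tfrac D2]\bigr)$, invert it, and bound $P_t^{\mathrm{Tail}}$ by \Cref{lemma3.1} plus two martingale tail bounds. Your observation that the third term of $\varepsilon'_\phi$ is exactly the condition $p'<p$ is correct. Three of your local justifications, however, would not go through as written.

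(a) The $\ln(1+\cdot)$ in $p'$ cannot come from bounding each $\mathcal V_i$ separately, because the individual $D_i$ and $W_i^2$ are unbounded. Using $\ln\mathcal V_i\le a_i$ instead would give a weaker $p'$, with $x$ in place of $\ln(1+x)$. The correct step is AM--GM, $\prod_{i\le t}(1+a_i)\le\bigl(1+\tfrac1t\sum_{i\le t}a_i\bigr)^t$ with $a_i=(C_3+C_4D_i+C_5W_i^2)/\varepsilon$. You then use $D_i\le\widetilde D_i+\epsilon_f'$ and $W_i^2\le\widetilde W_i'+(\epsilon_g')^2$ on the event that the two centered sums are at most $S_ft$ and $S_gt$.

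(b) Your tail tools are assigned the wrong way round. For $\delta<1$ the $\widetilde W_i'$ need not have finite conditional variance, so the variance-based Fuk--Nagaev bound is unavailable there. The $\delta\in(0,1]$ branch of $P_1$ comes from the von Bahr--Esseen/Rio inequality $\mathbb E\bigl|\sum_i X_i\bigr|^{1+\delta}\le 2^{1-\delta}\sum_i\mathbb E|X_i|^{1+\delta}$ plus Markov. Fuk--Nagaev gives the $\delta>1$ branch; both are packaged in \Cref{LEMMA.m_T}.

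(c) The first term of $\varepsilon'_\phi$ does not let $\mathcal T_C$ absorb $\epsilon_g^{(t)}$. That tolerance carries weight $1-\alpha_t$ and is pushed into $C_3$ inside $\mathcal V_t$ using $\Phi_t>\varepsilon$. The first term is used instead for item \ref{hyp_1_4} of \Cref{hyp_1}: it gives $W_t\le\epsilon_g'\le\vartheta\|\nabla\phi(\hat y_t)\|$, so that reliable small steps pass \eqref{Check_1}.

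On the $\widetilde D_i$ term, rescaling $S_f$ inside $C_4$ is not admissible, since $S_f$ is fixed in $p'$ and $\varepsilon'_\phi$ by the statement. Your suspicion about the constant is nonetheless justified. Applying von Bahr--Esseen to $\sum_{i\le t}\widetilde D_i$ along the interlaced filtration of \Cref{rem:halfstep_mds}, followed by Markov, gives $2^{1-\varrho}\upsilon_fS_f^{-(1+\varrho)}t^{-\varrho}$. The paper's own route (\Cref{LEMMA.m_T} on the zero-padded sequence of length $2t$ with threshold $S_f/2$) evaluates to $2^{2-\varrho}\upsilon_fS_f^{-(1+\varrho)}t^{-\varrho}$. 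The factor $8^{-\varrho}$ appears to come from replacing $(S_f/2)^{-(1+\varrho)}$ by $2^{-(1+\varrho)}S_f^{-(1+\varrho)}$. So neither argument yields the stated constant. What your plan actually proves is the bound with $2^{1-\varrho}$ in place of $8^{-\varrho}$, and you should state it that way rather than tune $C_4$.

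Finally, the passage from $T_\varepsilon\le t+1$ in \Cref{Big_threorem} to $T_\varepsilon\le t$, which the paper makes silently, is not a matter of indexing convention. On $\{t<T_\varepsilon\}$ one already has $\phi(x_{t+1})-\phi^*>\varepsilon$, because $x_{t+1}$ is either some accepted $\hat x_{s+1}$ with $s\le t$ or $x_1=\hat y_1$. Running the contradiction of \Cref{Big_threorem} under $t<T_\varepsilon$ therefore gives $T_\varepsilon\le t$ directly.
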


\begin{remark}\label{Remark_2}
The conclusion in \Cref{Complexity_mu>0} admits the following interpretation.

\begin{itemize}[leftmargin=0.6cm]

\item \textbf{Feasibility of the free parameter \(\hat p\).}
The condition \(\varepsilon>\varepsilon'_\phi\) ensures that
\[
p'
=
\frac12+
\frac{1}{C_\kappa}
\ln\!\Bigl(1+\frac{C_3'+C_4S_f+C_5S_g}{\varepsilon}\Bigr)
\in \Bigl(\frac12,p\Bigr),
\]
so the admissible interval \((p',p)\) is nonempty.
Hence one may choose any \(\hat p\in(p',p)\), for which
\[
\Delta
:=
C_\kappa\Bigl(\hat p-\frac12\Bigr)
-
\ln\!\Bigl(1+\frac{C_3'+C_4S_f+C_5S_g}{\varepsilon}\Bigr)
>0.
\]
This positivity is exactly what makes the envelope function
\(\mathcal H_{\bar\gamma,\hat p}(t)\) strictly decreasing and leads to the explicit
hitting-time bound in \Cref{Complexity_mu>0}.

\item \textbf{Joint trade-off induced by \((\theta,\vartheta,\epsilon_f',\epsilon_g')\).}
The final guarantee reflects the interplay between early-stage rate and late-stage attainable precision.
Large \(\epsilon_f'\) and \(\epsilon_g'\) increase the reliability probability
\[
p=1-\frac{\upsilon_g}{\bigl((\epsilon_g')^2-\epsilon_g^2\bigr)^{1+\delta}}
-\frac{\upsilon_f}{(\epsilon_f'-\epsilon_f)^{1+\varrho}},
\]
while a smaller Armijo parameter \(\theta\) enlarges the conservative threshold \(\bar\gamma(L;\theta,\vartheta)\); both effects make acceptance less fragile under biased and heavy-tailed oracle feedback. As a result, the method is less likely to suffer from premature rejection and excessive backtracking in the early stage. However, this relaxation does not by itself improve robustness in the long-run sense. Instead, it enlarges the admissible threshold \(\varepsilon'_\phi\), thereby restricting further exploration of finer attainable precision levels and potentially weakening late-stage stability if poor trials are accepted too often.

The parameter \(\vartheta\) also plays a two-sided role in the final complexity bound.
Its beneficial effect is reflected through the lower threshold \(\varepsilon'_\phi\): within the pre-truncation regime \(\vartheta<\bar\vartheta(\theta)\), the terms
${(2\mu\vartheta^2)^{-1}}{(\epsilon_g')^2}$ and 
${(1-\vartheta)}{\vartheta}^{-1}\epsilon_f'$
decrease as \(\vartheta\) increases, which corresponds to the stronger compensation mechanism in Lyapunov analysis \Cref{energy_1} and leads to a smaller precision threshold $\varepsilon'_\phi$.
On the other hand, increasing \(\vartheta\) also decreases \(\bar\gamma\), and hence may weaken the scale \(C_\kappa\).
Therefore, \(\vartheta\) contributes to the same early-progress versus late-precision trade-off from both sides of the final bound: it may improve the attainable neighborhood, while potentially slowing the asymptotic rate.

\item \textbf{Interpretation of the complexity bound.}
\Cref{Complexity_mu>0} directly yields, for any target precision \(\varepsilon>\varepsilon'_\phi\),
\begin{equation}\label{equ_complexity_mu>0}
    T_\varepsilon
=
\mathcal O\!\left(
\frac{1}{
C_\kappa\!\left(\hat p-\tfrac12\right)
-\ln\!\bigl(1+\tfrac{C_3'+C_4S_f+C_5S_g}{\varepsilon}\bigr)
}
\ln\!\frac{\Phi_0}{\varepsilon}
\right),
\end{equation}
with failure probability of order \(\mathcal O(t^{-\min\{\delta,\varrho\}})\), up to an exponentially small term.

A simpler order is available on a stricter precision lower bound. Indeed, for any fixed
\(\eta\in(0,1)\), if additionally 
\[
\varepsilon\ \ge\
\varepsilon^{(\eta)}_\phi
:=
\max\!\left\{
\varepsilon'_\phi,\,
\frac{C_3'+C_4S_f+C_5S_g}{
\left[
\bigl(1-(1-\vartheta)\sqrt{2\theta\mu\bar{\gamma}}\,\bigr)^{-\eta(\hat p-\frac12)}-1
\right]}
\right\},
\]
then the denominator in \eqref{equ_complexity_mu>0} is bounded below by \(C_\kappa(\hat p-\tfrac12)(1-\eta)\). 
In particular, when 
$\bar{\gamma}
=
\frac{2\left(1-2\vartheta-\theta(1-\vartheta)\right)}{L(1-\vartheta)}
<\gamma_{\max}$,
so that \(\bar\gamma=\Theta(1/L)\), one has
\(C_\kappa=\Theta(\kappa^{-1/2})\), since \(-\ln(1-x)=\Theta(x)\) for small \(x\).
Hence the bound further simplifies to
\[
T_\varepsilon
=
\mathcal O\!\left(
\frac{\sqrt{\kappa}}{(\hat{p}-\tfrac{1}{2})(1-\eta)}
\ln\!\frac{\Phi_0}{\varepsilon}
\right).
\]
Accordingly, when \(\hat p\in(1/2,p)\) and \(\eta\in(0,1)\) are treated as fixed confidence parameters, this recovers the headline accelerated order
$T_\varepsilon
=
\mathcal O\!\left(
\sqrt{\kappa}\ln\!\frac{1}{\varepsilon}
\right)$,
which is the form reported in the Introduction.
\end{itemize}
\end{remark}

\subsection{General-Convex}\label{sec_mu=0_cons}
In this subsection, we derive high-probability iteration-complexity guarantees for
\(\phi\in\mathcal C_{0,L}\) under \textsc{SFO-1} together with \textsc{SZO}, with the zeroth-order scaling specialized to \(\mathcal E_t=\hat\alpha_t\), so that the admissible function-value noise level decays at the critical rate \(\Theta(t^{-1})\) (guaranteed by Lemma~\ref{para_lemma_a} and \ref{para_lemma_b}).
In this general-convex regime, we focus on the weakest finite-moment setting for the first-order oracle, namely \(\delta\in(0,1]\).

Compared with the strongly convex case, where the analysis relies on the stronger \textsc{SFO-2} model, this allows a strictly weaker first-order oracle concentration.
The price is that, in the absence of strong convexity, the persistent bias term
\(\langle y_t-x^*,\nabla\phi(y_t)-\mathbf G_t\rangle\) induced by \textsc{SFO-1}
can no longer be absorbed appropriately by the Lyapunov recursion, despite the presence of compensation $\mathcal{T}_C$; to control it, we impose a uniform boundedness assumption on the trajectory quantities \(\|y_t-z_t\|\) and \(\|z_t-x^*\|\), following the same general spirit as in~\cite{Xie2024}. The bound $B$ \eqref{Bounded-iterats} enters the final guarantee through the constants \(\varepsilon'_\phi\) and \(\varepsilon_0\).

\begin{assumption}\label{pre_assumption_2}
 Suppose \textsc{SFO-1} and \textsc{SZO} are adopted with the parameters satisfying:
\[
{\mathcal{E}}_t= \hat{\alpha}_t, \qquad \delta\in(0,1].
\]
For \Cref{algo}, suppose that \eqref{equ_epsilon_f} and \eqref{equ_epsilon_g_0} hold,  and that the user-specified parameters additionally satisfy:
\begin{align*}
\mu=0,\qquad \vartheta\in\left(0,\frac{1-\theta}{2-\theta}\right),\qquad p:=1
- \frac{\upsilon_g}{(\epsilon_g'-\epsilon_g)^{1+\delta}}-\frac{\upsilon_f}{(\epsilon_f'-\epsilon_f)^{1+\varrho}}>\frac{1}{2}.
\end{align*}

Let \(x^*\) be an arbitrary minimizer of the convex objective \(\phi\).
Assume moreover that the trajectory quantities associated with \Cref{z_t} are uniformly bounded, namely,\begin{equation}
    \sup_{t\ge1}\max\Bigl\{
\Vert y_t-z_t\Vert,\ \Vert z_t-x^*\Vert
\Bigr\}
\ \le\ B.\label{Bounded-iterats}
\end{equation}

Moreover, for any positive $S_f,S_g>0$ and $\hat{p}\in(1/2,p)$,   define  $\boldsymbol{\varepsilon}'=(\varepsilon_\phi',\varepsilon_\nabla')$ and $\varepsilon_0$  by
\begin{equation}\label{epsilon_lower_1_mu=0}
\varepsilon'_\phi
:=\bigl(B\epsilon_g'+2(1-\vartheta)\epsilon_f'\bigr)\vartheta^{-1},
\qquad
\varepsilon_\nabla'
:=\epsilon_g'\vartheta^{-1},\qquad \varepsilon_0:=C_0C_\lambda\left(\frac{2B}{1-\vartheta}(\epsilon_g+S_g)+\epsilon_f+S_f\right),
\end{equation}
 where $C_0=\rho(1+\rho)$, $\rho=\alpha_0\sqrt{\gamma_{\max}/\gamma_0}$,   $C_\lambda=\frac{16\gamma_0}{\bar{\gamma}\alpha_0^2}(\hat{p}-\frac{1}{2})^{-2}$, $\bar{\gamma}=\min\bigl\{ \frac{2(1-2\vartheta-\theta(1-\vartheta))}{L(1-\vartheta)},\gamma_{\max}\bigr\}$ are constants. 
\end{assumption}

\begin{theorem}\label{complexity_mu=0_1}  Suppose that \Cref{pre_assumption_2} holds and  $(\varepsilon_\phi,\varepsilon_\nabla)=\boldsymbol{\varepsilon}\succ\boldsymbol{\varepsilon}'$, $\varepsilon_\phi>\varepsilon_0$. If the reliability parameter $\hat{p}$ and the iteration count $t$ satisfy $\hat{p} \in (1/2, p)$ and \[t>\max\left\{\Big\lceil\frac{D}{2\hat{p}-1}\Big\rceil,~\sqrt{\frac{C_\lambda\Phi_0}{\varepsilon_\phi-\varepsilon_0}}\;\right\},\] then the $\boldsymbol{\varepsilon}$-stopping time $T_{\boldsymbol{\varepsilon}}$ satisfies the following high-probability guarantee:
     \begin{align}
        \mathbb P\Big\{ T_{\boldsymbol{\varepsilon}}\leq t\Big\}\ge 1-\exp\left(-\frac{(p-\hat{p})^2}{2p^2}t\right) 
-\frac{2^{1-\delta}}{(2+\delta)}\frac{\upsilon_g}{S_g^{1+\delta}} \,t^{-\delta} -\frac{2^{1-\varrho}}{(2+\varrho)} \frac{\upsilon_f}{S_f^{1+\varrho}} \,t^{-\varrho}.
    \end{align}\hfill \begin{small}
         \hfill \emph{\footnotesize (Proof in \Cref{proof_comp_mu=0_2})}
    \end{small} 
\end{theorem}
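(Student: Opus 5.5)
The plan is to verify \Cref{hyp_1} for \texttt{RAAS} in the residual-driven regime ($\mathcal V_t\equiv1$) and then invoke \Cref{Big_threorem} with the envelope $\mathcal H_{\bar\gamma,\hat p}(t)=C_\lambda/t^2$ and threshold $\bar T=\lceil D/(2\hat p-1)\rceil$.

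\textbf{Step 1: items \ref{hyp_1_2}--\ref{hyp_1_4}.} Starting from \Cref{energy_1} with $\mu=0$, I split $\mathcal T_E$ into conditional means and centered fluctuations. The bias part $\frac{\alpha_t}{1-\vartheta}\langle y_t-x^*,\nabla\phi(y_t)-\mathbf G_t\rangle$ is bounded by $\frac{2B\alpha_t}{1-\vartheta}W_t$ using \eqref{Bounded-iterats}, since $\|y_t-x^*\|\le\|y_t-z_t\|+\|z_t-x^*\|\le 2B$. The zeroth-order differences are bounded by $D_t$. The terms $\epsilon_g^{(t)}=\epsilon_g'\|y_t-x_t\|$ and $\epsilon_f^{(t)}=\epsilon_f'\hat\alpha_t$ are deterministic and must be absorbed into $\mathcal T_C=\frac{\vartheta\alpha_t}{1-\vartheta}(\phi(y_t)-\phi^*)$ for $t<T_{\boldsymbol\varepsilon}$. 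Since $\phi(y_t)-\phi^*>\varepsilon_\phi>\varepsilon_\phi'=(B\epsilon_g'+2(1-\vartheta)\epsilon_f')/\vartheta$, this absorption should work; the $(1-\alpha_t)$-weighted $\epsilon_g^{(t)}$ term is handled via $\|y_t-x_t\|\le\alpha_t$-proportional multiples of $\|z_t-x_t\|$ from \eqref{eq:yhat_zt}.

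This leaves $\Phi_{t+1}\le(1-\alpha_t)\Phi_t+\alpha_t\mathcal R_t$, where
\[
\mathcal R_t\lesssim \rho\Bigl(\tfrac{2B}{1-\vartheta}W_t+\alpha_t^{-1}D_t\Bigr),
\]
with $D_t/\hat\alpha_t$ normalized by the scaling $\mathcal E_t=\hat\alpha_t$. The factor $C_0=\rho(1+\rho)$ comes from comparing $\hat\alpha_t$, $\alpha_t$, and $\alpha_{t-1}$ via \Cref{para_lemma}. Item \ref{hyp_1_3} follows from \eqref{equ_D_t_pre}, \eqref{equ_W_t_pre}, and a union bound giving $p$. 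For item \ref{hyp_1_4}, I would show that with $\hat\gamma_t<\bar\gamma$, reliable oracles, and $L$-smoothness, \eqref{Check_1} holds. The condition $\vartheta<\bar\vartheta$ makes $\bar\gamma>0$, and \eqref{Check_2} holds by the convexity argument preceding \eqref{equ_W_t_pre}.

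\textbf{Step 2: event $A_t$.} With $\mathcal V\equiv1$, $A_t=\{\lambda_t>C_\lambda t^{-2}\}$. By \Cref{High_pro_rate}, on $\{\mathcal I_t(\hat p)=1,\,t<T_{\boldsymbol\varepsilon}\}$ we have $\lambda_t\le 4(\alpha_0\sqrt{\bar\gamma/\gamma_0}\,\bar N_{\mathrm{good}}(t))^{-2}$. For $t\ge D/(2\hat p-1)$, the bound $\bar N_{\mathrm{good}}(t)\ge(\hat p-\tfrac12)t/2$ gives exactly $C_\lambda t^{-2}$. Hence $\mathbb P(A_t)$ is bounded by the Azuma term from \Cref{lemma3.1}.

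\textbf{Step 3: event $B_t$, the main obstacle.} I must show that $\sum_i a_i^{(t)}\mathcal R_i\le\varepsilon_0$ with polynomial tail. Write $W_i=\mathbb E_i W_i+\widetilde W_i$ and similarly for $D_i$. The mean parts contribute at most $\sum_i a_i^{(t)}(\cdot)\le(\text{const})(\epsilon_g,\epsilon_f)$, since $\sum_i a_i^{(t)}\le1$. The difficulty is that the weights $a_i^{(t)}=\lambda_t\Theta_i\alpha_i/\lambda_i$ depend on the future, so the centered sum is not directly a martingale.

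I would bound $a_i^{(t)}\le\alpha_i\lambda_t/\lambda_i$. On the good event, $\lambda_t\lesssim C_\lambda t^{-2}$, while $\lambda_i\gtrsim i^{-2}$ deterministically and $\alpha_i\lesssim 1/i$ (both from \Cref{para_lemma}). This gives $a_i^{(t)}\lesssim C_\lambda\, i/t^2$ with predictable deterministic weights. The problem then reduces to bounding $\mathbb P\{\sum_i i\,\widetilde W_i>S_g t^2\}$ and the analogous sum for $D$.

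These are martingale sums with $(1+\delta)$-moments, $\delta\le1$. A von Bahr--Esseen/Burkholder-type bound gives
\[
\mathbb E\Bigl|\sum_i i\,\widetilde W_i\Bigr|^{1+\delta}\le 2^{1-\delta}\upsilon_g\sum_i i^{1+\delta}\le 2^{1-\delta}\upsilon_g\,\frac{t^{2+\delta}}{2+\delta}.
\]
Markov's inequality then yields $\frac{2^{1-\delta}}{2+\delta}\frac{\upsilon_g}{S_g^{1+\delta}}t^{-\delta}$, matching the stated tail; the $D$ term is analogous with $\varrho$.

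\textbf{Step 4: conclusion.} Combining the tails into $P_t^{\mathrm{Tail}}$ and applying \Cref{Big_threorem} with $\mathcal H^{-1}((\varepsilon_\phi-\varepsilon_0)/\Phi_0)=\sqrt{C_\lambda\Phi_0/(\varepsilon_\phi-\varepsilon_0)}$ gives the statement. The index shift from $t+1$ to $t$ should be absorbed by the strict inequality on $t$.
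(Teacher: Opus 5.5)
Your architecture matches the paper's: \Cref{proposition_fisrst_1} fed into \Cref{Big_threorem}, with $\mathcal V_t\equiv1$ and $\mathcal H_{\bar\gamma,\hat p}(t)=C_\lambda t^{-2}$, $A_t$ handled by \Cref{High_pro_rate} and \Cref{lemma3.1}, and $B_t$ by deterministic weights of order $i/t^2$ plus von Bahr--Esseen. But Step~3 fails in the order you wrote it. You center first, bound the mean part by $\sum_i a_i^{(t)}=1-\lambda_t\le1$, and then replace the future-dependent weights $a_i^{(t)}$ by deterministic upper bounds inside $\sum_i a_i^{(t)}\widetilde W_i$. Since $\widetilde W_i$ is signed, $a_i^{(t)}\le b_i$ does not give $\sum_i a_i^{(t)}\widetilde W_i\le\sum_i b_i\widetilde W_i$. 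As in \Cref{lem:Mt_poly_mu0_compact}, you must dominate first, using $W_i\ge0$ and $D_i\ge0$. This gives $\sum_i a_i^{(t)}W_i\le\sum_i b_iW_i$ with $b_i:=C_0C_\lambda\,i\,t^{-2}$, valid on $\{t_0\le t<T_{\boldsymbol\varepsilon},\ \mathcal I_t(\hat p)=1\}$; only then do you center with these deterministic weights. The mean contribution becomes $\epsilon_g\sum_i b_i\le C_0C_\lambda\epsilon_g$ rather than $\epsilon_g$. That is exactly why $\varepsilon_0$ carries $C_0C_\lambda$ in front of $\epsilon_g,\epsilon_f$ as well as $S_g,S_f$; your bookkeeping claims a smaller budget than you can justify.

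Your weight bound also rests on a false claim. $\alpha_i\lesssim 1/i$ holds neither deterministically nor on $\{\mathcal I_t(\hat p)=1\}$, which only constrains the total count up to $t$. For example, after $m$ rejections followed by $m$ acceptances climbing back toward $\gamma_{\max}$, the sum $\sum_{j\le 2m}\Theta_j\sqrt{\gamma_j}$ is a bounded geometric sum, so $\alpha_{2m}$ stays bounded below uniformly in $m$. What is deterministic is the ratio. By Lemma~\ref{para_lemma_a} with $\mu=0$, $\alpha_i=(\alpha_0/\sqrt{\gamma_0})\sqrt{\gamma_i\lambda_i}$, so $\Theta_i\alpha_i/\lambda_i\le(\alpha_0/\sqrt{\gamma_0})\sqrt{\gamma_i}\,\lambda_i^{-1/2}\le\rho(1+\rho i)\le C_0\,i$, using the lower bound on $\lambda_i$ in Lemma~\ref{para_lemma_b}. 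In words, a large $\alpha_i$ forces $\lambda_i$ close to $1$. That is where $C_0=\rho(1+\rho)$ comes from; it does not enter $\mathcal R_t$, which contains no $\rho$.

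Your sketch of item~\ref{hyp_1_4} of \Cref{hyp_1} (small step, reliable oracle, $L$-smoothness) does not yield \eqref{Check_1}, and you never use $\varepsilon_\nabla\ge\varepsilon_\nabla'$. Reliability only gives $|E_t(\hat x_{t+1})-E_t(\hat y_t)|\le D_t\le\epsilon_f^{(t)}$, so the tolerance can be entirely consumed. You therefore need the exact inequality $\phi(\hat y_t-\hat\gamma_t\mathbf G_t)\le\phi(\hat y_t)-\theta\hat\gamma_t\|\mathbf G_t\|^2$. If $\|\nabla\phi(\hat y_t)\|$ is small compared with $\epsilon_g'$, a reliable $\mathbf G_t$ can satisfy $\langle\nabla\phi(\hat y_t),\mathbf G_t\rangle<0$, and then this fails for every $\hat\gamma_t>0$. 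The missing step is that $t<T_{\boldsymbol\varepsilon}$ gives $\|\nabla\phi(\hat y_t)\|>\varepsilon_\nabla\ge\epsilon_g'/\vartheta$. Hence $W_t\le\vartheta\|\nabla\phi(\hat y_t)\|$, and this relative bound is what $\bar\gamma$ is calibrated against.

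Three smaller points:
\begin{itemize}
\item The condition $t\ge D/(2\hat p-1)$ only gives $\bar N_{\mathrm{good}}(t)\ge0$. Getting $\bar N_{\mathrm{good}}(t)\ge\tfrac12(\hat p-\tfrac12)t$ requires $t\ge D/(\hat p-\tfrac12)$, which is the $\bar T$ actually used in \Cref{proposition_fisrst_1}; the threshold in the displayed statement is off by a factor $2$.
\item The bound $\sum_{i\le t}i^{1+\delta}\le t^{2+\delta}/(2+\delta)$ is false, since the sum exceeds $\int_0^t x^{1+\delta}\,dx$. Using $t^{2+\delta}/(2+\delta)+t^{1+\delta}$ instead adds a lower-order $t^{-1-\delta}$ term to each tail (the paper makes the same slip).
\item The shift from $t+1$ to $t$ is not absorbed by the strict inequality on $t$. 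Rather, the contradiction argument in \Cref{Big_threorem} only uses $t<T_{\boldsymbol\varepsilon}$, so it gives $\mathbb P\{T_{\boldsymbol\varepsilon}\le t\}\ge1-P_t^{\mathrm{Tail}}$ directly.
\end{itemize}
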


\begin{remark}\label{Remark_mu=0_1}
The conclusion of \Cref{complexity_mu=0_1} admits the following interpretation.

\begin{itemize}[leftmargin=0.6cm]
\item \textbf{Weaker SFO versus \(B\)-dependent constants.}
Relative to the strongly convex analysis, \Cref{complexity_mu=0_1} is established under the weaker first-order oracle model \textsc{SFO-1}, rather than \textsc{SFO-2}.
More precisely, \textsc{SFO-1} requires only bounded mean error together with a centered \((1+\delta)\)-moment of \(W_t\), with \(\delta\in(0,1]\), and thus allows tails only slightly better than first-moment integrable.
By contrast, \textsc{SFO-2} assumes bounded mean-square error together with a centered \((1+\delta)\)-moment of \(W_t^2\), which is essentially a variance-type control plus a slightly stronger tail requirement.
The price of working under the weaker \textsc{SFO-1} model is the additional boundedness assumption \eqref{Bounded-iterats}.
Its influence is entirely explicit in the final guarantee through
\[
\varepsilon'_\phi
=
\bigl(B\epsilon_g'+2(1-\vartheta)\epsilon_f'\bigr)\vartheta^{-1},
\qquad
\varepsilon_0
=
C_0C_\lambda\left(\frac{2B}{1-\vartheta}(\epsilon_g+S_g)+\epsilon_f+S_f\right).
\]
Hence any sharper valid a priori or a posteriori uniform bound $B$ on the trajectory immediately yields a sharper neighborhood threshold and a sharper complexity bound.

\item \textbf{Complexity order and probability tail.}
For any target pair \((\varepsilon_\phi,\varepsilon_\nabla)\succ(\varepsilon'_\phi,\varepsilon'_\nabla)\) with \(\varepsilon_\phi>\varepsilon_0\), \Cref{complexity_mu=0_1} directly yields that with failure probability of order \(\mathcal O(t^{-\min\{\delta,\varrho\}})\), 
\[
T_{\boldsymbol\varepsilon}
=
\mathcal O\!\left(
\frac{1}{ \hat p-\tfrac12}\max\!\left\{
\frac{D}{2},\
\frac{4\gamma_0^{1/2}}{\alpha_0{\bar\gamma}^{1/2}}
\sqrt{\frac{\Phi_0}{\varepsilon_\phi-\varepsilon_0}}\;
\right\}
\right),
\]
where we used the definition of 
$C_\lambda$.
In particular, if one focuses on the dependence on the target precision gap \(\varepsilon_\phi-\varepsilon_0\) and on the smoothness constant \(L\), while treating \(\hat p\in(1/2,p)\) as a fixed confidence parameter and suppressing the fixed quantities \(\Phi_0,\alpha_0,\gamma_0,\theta,\vartheta\), the above bound yields the headline order
\[
T_{\boldsymbol\varepsilon}
=
\mathcal O\!\left(
\frac{1}{\hat p-\tfrac12}
\sqrt{\frac{L}{\varepsilon_\phi-\varepsilon_0}}\;
\right).
\]
Accordingly, when only the leading dependence on \(L\) and \(\varepsilon_\phi-\varepsilon_0\) is emphasized, this reduces to
$T_{\boldsymbol\varepsilon}
=
\mathcal O\!\left(
\sqrt{\frac{L}{\varepsilon_\phi-\varepsilon_0}}\;
\right)$,
which is the form reported in the Introduction.
\item \textbf{Trade-offs induced by the hyperparameters.}
As in the strongly convex regime, the parameters $(\theta,\vartheta,\epsilon_f',\epsilon_g')$ mediate a fundamental trade-off between early progress and terminal precision. More permissive settings (larger $\epsilon_f',\epsilon_g'$ or smaller $\theta$) accelerate initial descent by avoiding rejections, but inevitably inflate the precision floors ($\varepsilon'_\phi, \varepsilon'_\nabla$) and the residual accumulation $\varepsilon_0$. The compensation parameter $\vartheta$ plays a similar role here: it explicitly scales these error thresholds via $\vartheta^{-1}$ and $(1-\vartheta)^{-1}$, while implicitly bounding the maximum step size $\bar\gamma$. Ultimately, practical tuning must balance aggressive early-stage acceptance against the need for a tight final convergence neighborhood.

\end{itemize}
\end{remark}

\section{Experiments}

\subsection{Practical Implementation: Heuristic Switch}\label{section_experiment}
As established theoretically, the hyperparameters govern a fundamental trade-off between rapid initial descent and terminal precision. This naturally motivates a two-stage adaptive tuning strategy to achieve an optimal balance between transient acceleration and asymptotic stability: permissive acceptance and aggressive momentum in the early stages, followed by strict, conservative refinement in the late stages. To validate this concept, our heuristic tunes only $\theta$ (Armijo strictness) and $\vartheta$ (momentum intervention), keeping other parameters fixed.

To operationalize this strategy, we introduce a lightweight stagnation-detection mechanism (\Cref{algo_adaptive_switch}) built upon our bidirectional step-size control. As the trial step size naturally expands from a conservative initialization $\hat{\gamma}_1$, we continuously track its historical maximum, $\gamma_{\mathrm{rec}} := \max_{1\le s\le t}\hat{\gamma}_s$. We detect algorithmic stalling via a counter, $k_{\mathrm{stag}}$, which records the number of consecutive trials where $\hat{\gamma}_t \le \gamma_{\mathrm{rec}}$. A large $k_{\mathrm{stag}}$ actively indicates that the algorithm has reached the convergence neighborhood dictated by the oracle inexactness. In this late regime, oracle inexactness begins to dominate the optimization progress, and the suppression of step-size growth stems directly from these imprecise estimates. To mitigate noise-induced divergence caused by such high relative inaccuracy, we transition the algorithm to a conservative regime by tightening the search conditions and restricting the extrapolation magnitude. Consequently, we trigger a \emph{Stagnation Switch} at predefined thresholds $N_\vartheta$ and $N_\theta$:

\begin{itemize}
\item \textbf{Momentum stabilization ($k_{\mathrm{stag}} \ge N_\vartheta$):} 
We switch $\vartheta$ from its initial value $\vartheta_{\mathrm{init}}(<\bar{\vartheta})$ to a safeguard value $\vartheta_{\mathrm{safe}} \approx 1$. This suppresses extrapolation, effectively transitioning the algorithm into a stable, momentum-free regime (analogous to \texttt{SASS}~\cite{Xie2024}).
\item \textbf{Tightening acceptance ($k_{\mathrm{stag}} \ge N_\theta$):} 
We switch $\theta$ from its initial value $\theta_{\mathrm{init}}(<0.5)$ to a tighter value (e.g., $\theta_{\mathrm{safe}} = 0.5$). This reduces the relaxation in the descent test, preventing the algorithm from stabilizing at a coarse suboptimality floor induced by an overly loose search criterion.
\end{itemize}
\begin{algorithm}[t]
\caption{RAAS with Heuristic Stagnation Switch for $(\theta,\vartheta)$}
\label{algo_adaptive_switch}
\begin{algorithmic}[1]
\State \textbf{Input:} $\mu,\theta_{\mathrm{init}},\vartheta_{\mathrm{init}}$, thresholds $N_\vartheta,N_\theta$.
\State \textbf{Targets:} $\vartheta_{\mathrm{safe}}\approx 1$, $\theta_{\mathrm{safe}}=0.5$.
\State \textbf{Init:} $x_0,\hat{\gamma}_1$, $\gamma_{\mathrm{rec}}=0$, $k_{\mathrm{stag}}=0$, flags $S_\vartheta=\text{False}$, $S_\theta=\text{False}$.
\For{$t=1,2,\ldots$}
    \Statex \textcolor{inkgreen}{\quad \hdashrule[0.5ex]{0.85\linewidth}{0.5pt}{1mm}} \Comment{\textit{Switch}}
    \If{$\hat{\gamma}_t>\gamma_{\mathrm{rec}}$}
        \State $\gamma_{\mathrm{rec}}\gets \hat{\gamma}_t$; \quad $k_{\mathrm{stag}}\gets 0$.
    \Else
        \State $k_{\mathrm{stag}}\gets k_{\mathrm{stag}}+1$.
        \State \textbf{if} $k_{\mathrm{stag}}\ge N_\vartheta$ \textbf{and not} $S_\vartheta$ \textbf{then} $\vartheta_t\gets \vartheta_{\mathrm{safe}};\ S_\vartheta\gets\text{True}$.
        \State \textbf{if} $k_{\mathrm{stag}}\ge N_\theta$ \textbf{and not} $S_\theta$ \textbf{then} $\theta_t\gets \theta_{\mathrm{safe}};\ S_\theta\gets\text{True}$.
    \EndIf
    \Statex \textcolor{inkgreen}{\quad \hdashrule[0.5ex]{0.85\linewidth}{0.5pt}{1mm}}

    \State \textbf{RAAS core (with dynamic $\theta_t,\vartheta_t$):}
    \State Compute $\hat{y}_t = x_t + \hat{\rho}_t(\dots,\vartheta_t,\theta_t)(\bar{x}_t-x_{t-1})$.
    \State Query the oracles and check acceptance conditions (Condition~\eqref{Check_1} and, if enabled, Condition~\eqref{Check_2}).
    \If{Step accepted}
        \State $x_{t+1}\gets \hat{x}_{t+1}$; \quad $\bar{x}_{t+1}\gets \hat{y}_t-\gamma_t'(\dots,\vartheta_t,\theta_t)\mathbf{G}_t$.
        \State Update $\hat{\gamma}_{t+1}$ (growth).
    \Else
        \State Execute the standard rejection and decay step.
    \EndIf
\EndFor
\end{algorithmic}
\end{algorithm}

\subsection{Instantiation of Oracles and Test Problems}\label{subsec:oracle_inst}
In the experiments, both stochastic oracles are instantiated by adding state-independent random perturbations to the exact oracle outputs. For notational brevity, we denote the queried \textsc{SFO} point generically by \(y\), and the three \textsc{SZO} queried points involved in the acceptance test generically by \((x, y, x^+)\).

\medskip
\noindent\textbf{Biased heavy-tailed \textsc{SFO}.}
We instantiate the stochastic first-order oracle by
\begin{equation}\label{eq:exp_oracle_t_biased}
\mathbf{G}(y,\Xi)=\nabla \phi(y)+b+\zeta,
\qquad
\zeta_i \overset{\mathrm{i.i.d.}}{\sim} \sigma_g \cdot \mathbf{t}_{k_g},
\end{equation}
where $\mathbf{t}_{k_g}$ is the standard Student-$t$ distribution with $k_g$ degrees of freedom, and the noise is sampled independently across coordinates and oracle calls. Throughout the experiments, we set $k_g=2.1$, which yields a severe heavy-tailed gradient perturbation while ensuring $\mathbb{E}\|\zeta\|^2<\infty$. The bias is fixed within each run and generated once at initialization:
\[
b := \mathrm{BiasRel}\cdot \,\sigma_g\sqrt{\frac{d\,k_g}{k_g-2}}\,\frac{u}{\|u\|_2},
\qquad
u\sim\mathcal N(0,I_d).
\]
Thus, $\mathrm{BiasRel}$ explicitly controls the bias magnitude relative to the typical scale of the gradient noise.

\medskip
\noindent\textbf{Heavy-tailed \textsc{SZO}.} We perturb function values by 
\begin{equation}\label{eq:exp_szo_t} 
f(z)=\phi(z)+E(z), \qquad E(z)\overset{\mathrm{i.i.d.}}{\sim}\sigma_f\cdot \mathbf{t}_{k_f}, 
\end{equation} 
for $z\in\{x, y, x^+\}$, with independence across query points and oracle calls. Since our acceptance rule depends exclusively on function-value differences, any deterministic constant bias in $E(\cdot)$ cancels out; accordingly, we set this bias to zero in the experiments. We employ the same heavy-tailed parameter $k_f=2.1$.

\medskip
\noindent\textbf{Test problems.}
We consider two convex problems to validate our theoretical claims under both general and strongly convex regimes.

\noindent\emph{(i) General convex quadratic.}
We minimize
\[
\phi(x)=\frac12 x^\top Qx+b^\top x
\]
over $\mathbb{R}^d$, where $Q\succeq 0$ has a nontrivial nullspace.
We set $d=1000$ and $L=5$.
The matrix $Q$ is generated from a random orthogonal basis with eigenvalues linearly spaced in $[0,L]$, with the smallest $\lfloor d/10\rfloor$ eigenvalues set to zero.
We then sample a minimizer $x^*$ and set $b=-Qx^*$, so that $\nabla\phi(x^*)=0$ and $\phi^*=\phi(x^*)$ is available in closed form.
The initial point is sampled as $x_0\sim\mathcal N(0,I_d)$.
In Figure~\ref{fig:general_convex_comparison}, we test six noise settings:
\[
(\sigma_g,\sigma_f)\in
\{(0.1,0),(0.1,1),(0.1,2),(1.5,5),(1.5,10),(1.5,20)\}.
\]

\medskip
\noindent\emph{(ii) Logistic regression.}
We consider the $\ell_2$-regularized binary logistic objective
\[
\phi(x)=\frac1n \sum_{i=1}^n \log\bigl(1+\exp(-y_i a_i^\top x)\bigr)
+\frac{\lambda}{2}\|x\|^2,
\]
with $n=6000$, $d=500$, and $\lambda=0.1$. We generate features $a_i\sim\mathcal N(0,I_d)$ and labels from a logistic model induced by a ground-truth vector $w^*$. The regularization term makes the objective $\mu$-strongly convex with $\mu=\lambda$, while its smoothness constant is
\[
L=\frac14\,\lambda_{\max}\!\Bigl(\frac1n A^\top A\Bigr)+\lambda,
\]
so that the condition number is $\kappa=L/\mu=L/\lambda$. For the generated Gaussian instances, this yields a mildly conditioned problem, with $\kappa\approx 5.1$. Since $\phi^*$ is not available in closed form, we compute a high-precision reference value using L-BFGS with tolerance $10^{-14}$ and an iteration cap of $500$.
In Figure~\ref{fig:logreg}, we fix $\sigma_g=0.1$ and vary
\[
\mathrm{BiasRel}\in\{0,0.1,0.15\},
\qquad
\sigma_f\in\{0,0.1,0.2\}.
\]

\subsection{Baselines and Experimental Protocol}\label{subsec:baselines_protocol}

\begin{table}[t]
\centering
\small
\setlength{\tabcolsep}{5pt}
\renewcommand{\arraystretch}{1.2}
\begin{tabular}{m{1.05cm} l p{5.4cm} p{5.7cm}}   
\toprule
Oracle & Method & Key mechanism / feature & Main hyperparameters \\
\midrule

\multirow{3}{=}{\centering\textbf{SFO}\\ Only}
& \texttt{SGD}
& Standard gradient descent baseline with constant step size.
& Step size $\eta$. \\

& \texttt{cons-NAG}~\cite{Nesterov2013}
& Constant-step Nesterov acceleration.
& Step size $\eta$, momentum $\beta=0.9$. \\

& \texttt{Acc-Clip}~\cite{gorbunov2020accelerated}
& Accelerated momentum method with gradient clipping.
& Step size $\eta$, momentum $\beta=0.9$, clipping threshold $\tau_{\mathrm{clip}}$. \\

\midrule

\multirow{5}{=}{\centering\textbf{SFO}\\ \&\\ \textbf{SZO}}
& \texttt{RAAS}
& \Cref{algo} without heuristic switch.
& $\theta$, $\vartheta$, $\epsilon_f'$, $\nu$, $\gamma_0$. \\

& \texttt{RAAS-Single}
& \Cref{algo_adaptive_switch} with the $\vartheta$-switch only.
& $N_\vartheta=20$, $\vartheta_{\mathrm{safe}}\approx 1$. \\

& \texttt{RAAS-Double}
& \Cref{algo_adaptive_switch} with both the $\vartheta$-switch and $\theta$-switch.
& $(N_\vartheta,N_\theta)=(20,50)$, $\vartheta_{\mathrm{safe}}\approx 1$, $\theta_{\mathrm{safe}}=\tfrac12$. \\

& \texttt{adp-NAG}~\cite{localLip,stoFISTA}
&  Accelerated adaptive search baseline corresponding with restricted \texttt{RAAS}. 
& \texttt{RAAS} parameters under fixed $(\theta,\vartheta)=(\tfrac12,0)$ with Condition~\eqref{Check_2} disabled. \\

& \texttt{SASS}~\cite{Xie2024}
& Adaptive step search with generalized Armijo acceptance; no momentum.
& \texttt{RAAS} parameters under fixed $\vartheta= 1$, with Condition~\eqref{Check_2} disabled. \\

\bottomrule
\end{tabular}
\caption{Compared methods grouped by oracle usage and their principal hyperparameters.}
\label{tab:methods_summary}
\end{table}

We compare \texttt{RAAS} and its heuristic switch variants against representative fixed-step, clipped, and adaptive-search baselines; see \Cref{tab:methods_summary}. To ensure a fair comparison regarding computational budget, \texttt{RAAS}, \texttt{RAAS-Single}, \texttt{RAAS-Double}, \texttt{SASS}, and \texttt{adp-NAG} all consume exactly one stochastic gradient call per iteration. Accordingly, the iteration count serves as a unified \textsc{SFO} cost measure across all methods. It is noted that adaptive-search methods utilize additional \textsc{SZO} queries to test acceptance, whereas \texttt{SGD}, \texttt{cons-NAG}, and \texttt{Acc-Clip} do not; thus, our protocol equalizes the gradient-oracle budget while allowing zeroth-order overhead for adaptivity.

For each method and noise setting, we tune the initial step size and the method-specific hyperparameters listed in \Cref{tab:methods_summary}. For \texttt{RAAS} and its variants, we employ a constant tolerance parameter $\epsilon_f^{(t)} \equiv \epsilon_g^{(t)} \equiv \epsilon_f'$ throughout the run. This aligns with our experimental oracle model, wherein noise scales are state-independent and non-decaying in both convex and strongly-convex problems. Furthermore, to conduct a strictly controlled comparison, we share $(\gamma_0,\nu,\epsilon_f')$ across \texttt{RAAS}, its switch variants, \texttt{adp-NAG}, and \texttt{SASS} whenever feasible. This parameter-sharing strategy explicitly isolates the empirical effects of the acceptance rule, the momentum mechanism, and the heuristic switches.

To rigorously eliminate artifacts caused by varying sampling paths, we pre-generate a noise tape of length $5T$ for each seed, forcing every method to consume an identical sequence of noise realizations at each iteration. We execute $R=5$ independent runs with seeds $\{42,43,44,45,46\}$. Across these runs, the problem instance and initialization remain fixed. All methods are evaluated over a fixed horizon of $T=500$ iterations. As the primary evaluation metric, we report the optimality gap $\phi(x_t)-\phi^*$ on a logarithmic scale, plotting the mean trajectory bounded by $\pm$ one standard deviation.

\subsection{Results and Discussion}\label{subsec:exp_results}

\subsubsection{General convex quadratic}

\begin{figure}[t]
    \centering

    \begin{subfigure}[b]{0.32\textwidth}
        \centering
        \includegraphics[width=\linewidth]{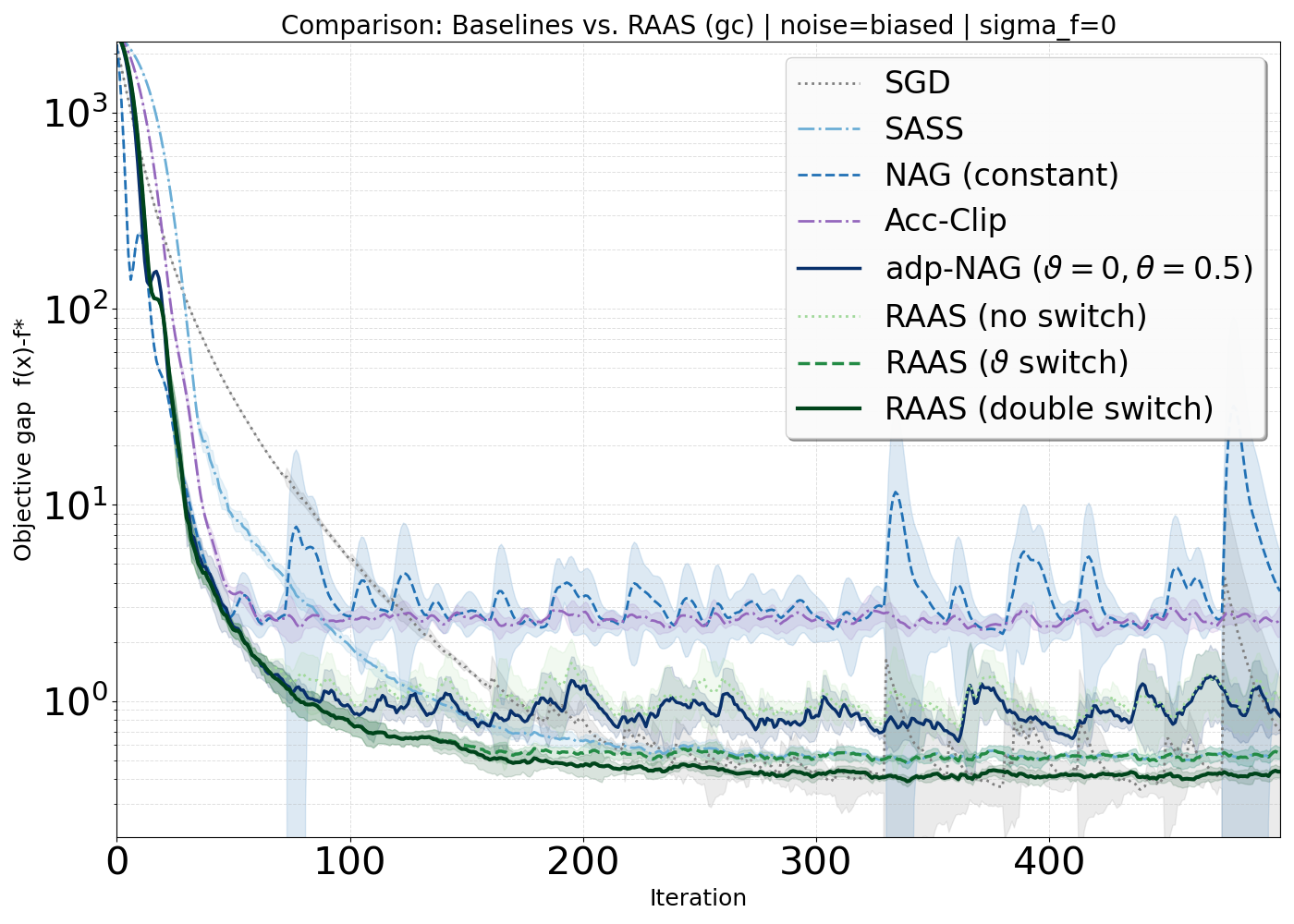}
        \caption{$\sigma_g=0.1$, $\sigma_f=0$. }
        \label{fig:gc_a}
    \end{subfigure}
    \hfill
    \begin{subfigure}[b]{0.32\textwidth}
        \centering
        \includegraphics[width=\linewidth]{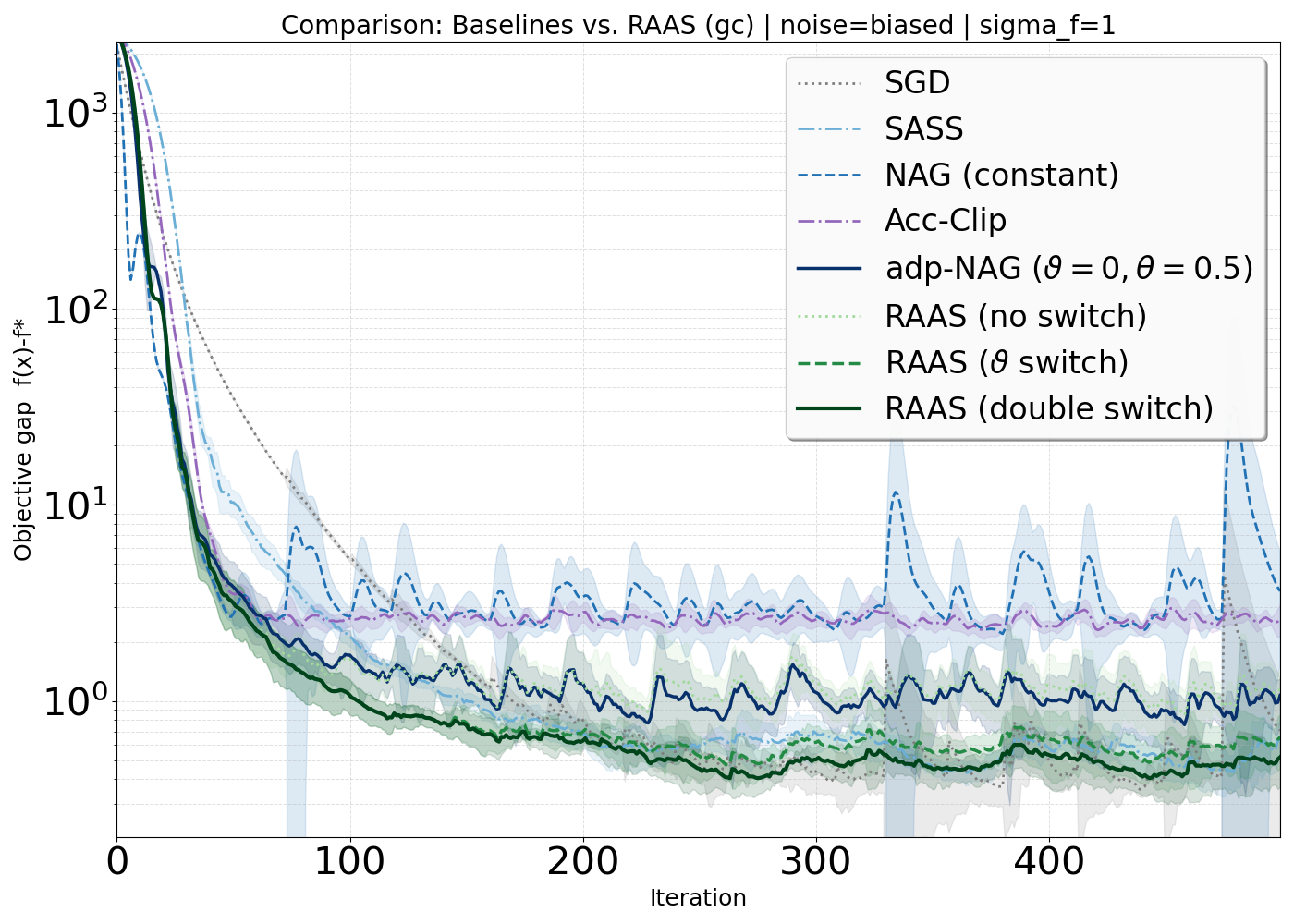}
        \caption{$\sigma_g=0.1$, $\sigma_f=1$.}
        \label{fig:gc_b}
    \end{subfigure}
    \hfill
    \begin{subfigure}[b]{0.32\textwidth}
        \centering
        \includegraphics[width=\linewidth]{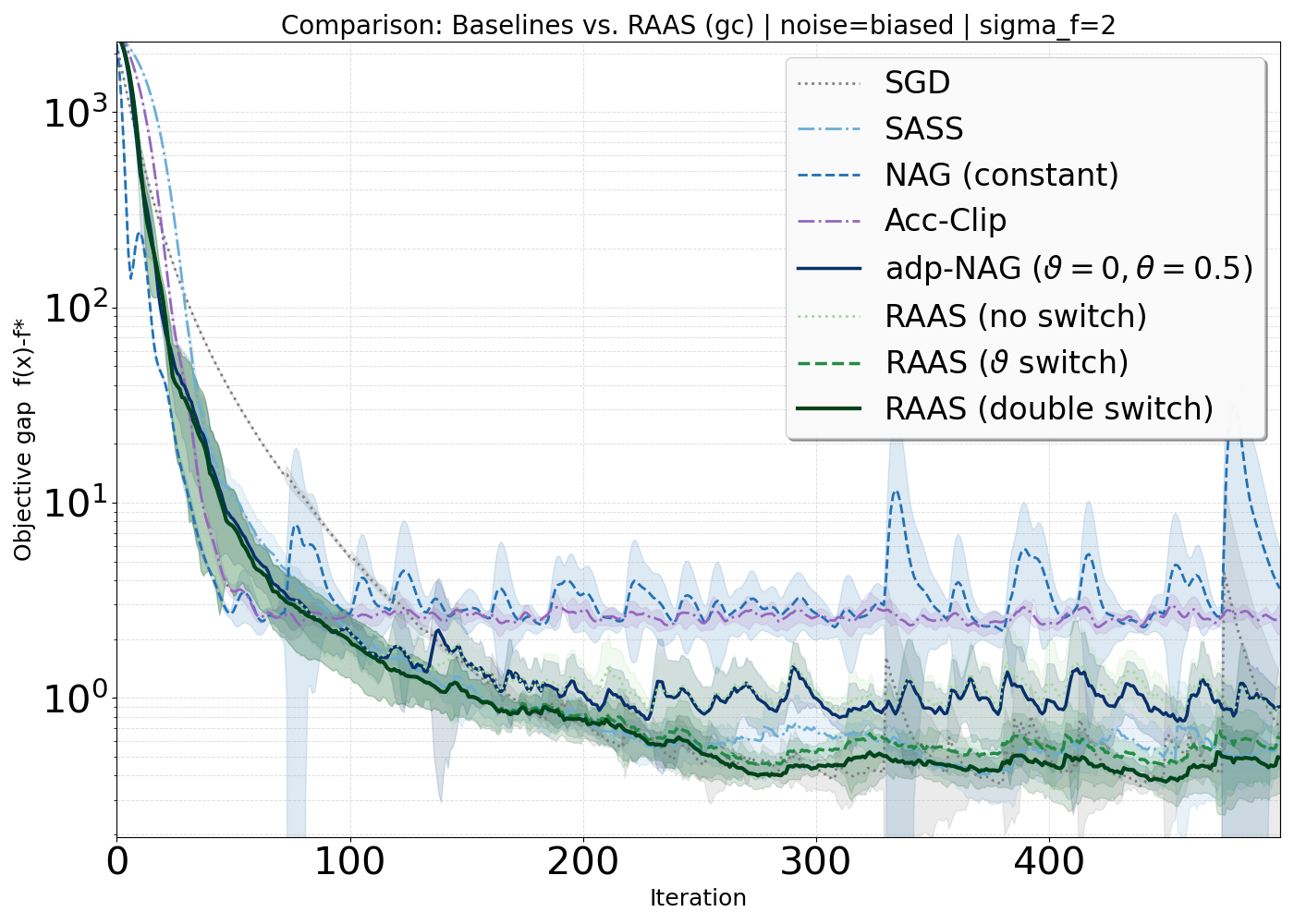}
        \caption{$\sigma_g=0.1$, $\sigma_f=2$. }
        \label{fig:gc_c}
    \end{subfigure}

    \vspace{0.25cm}

   
    \begin{subfigure}[b]{0.32\textwidth}
        \centering
        \includegraphics[width=\linewidth]{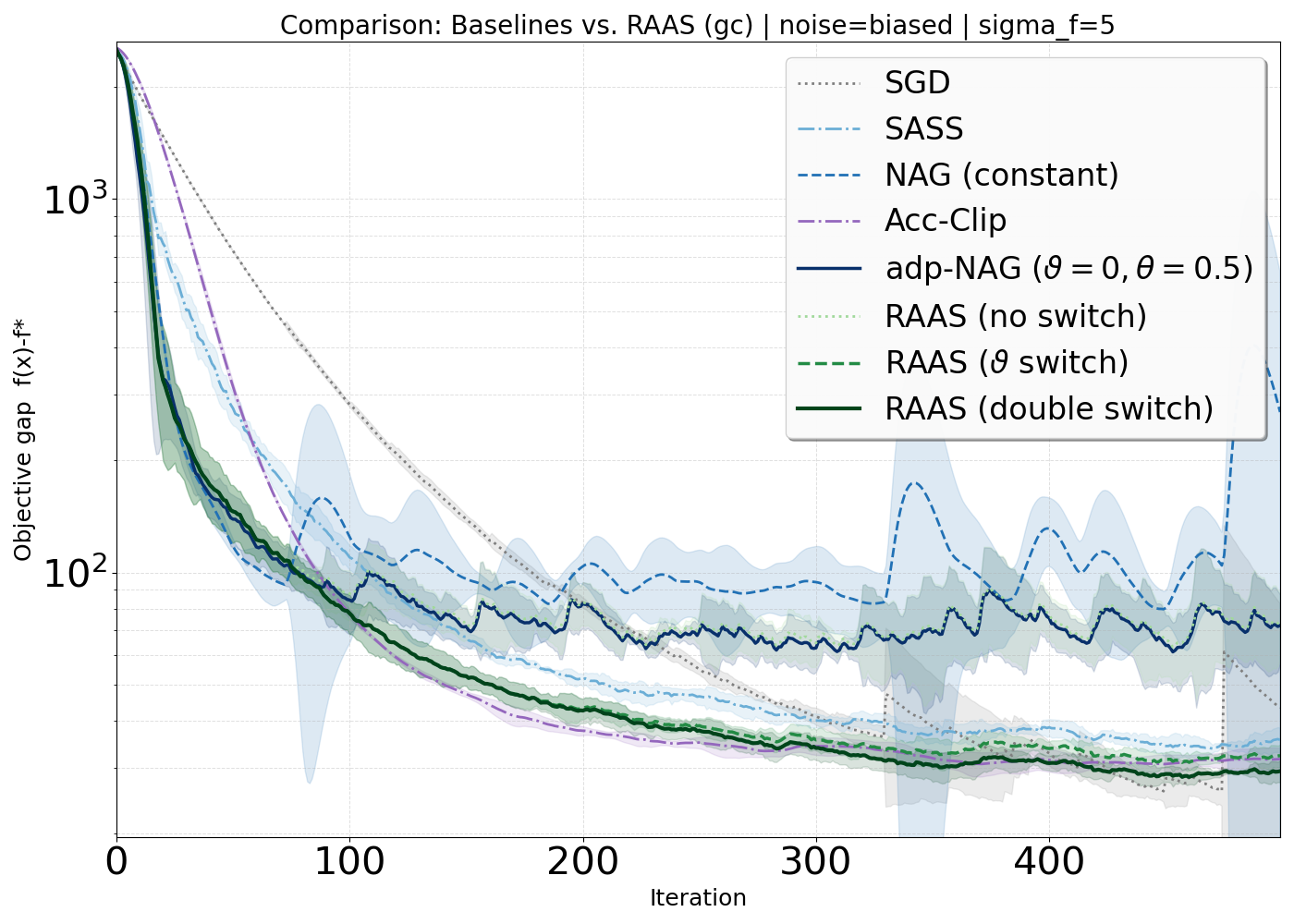}
        \caption{$\sigma_g=1.5$, $\sigma_f=5$.}
        \label{fig:gc_d}
    \end{subfigure}
    \hfill
    \begin{subfigure}[b]{0.32\textwidth}
        \centering
        \includegraphics[width=\linewidth]{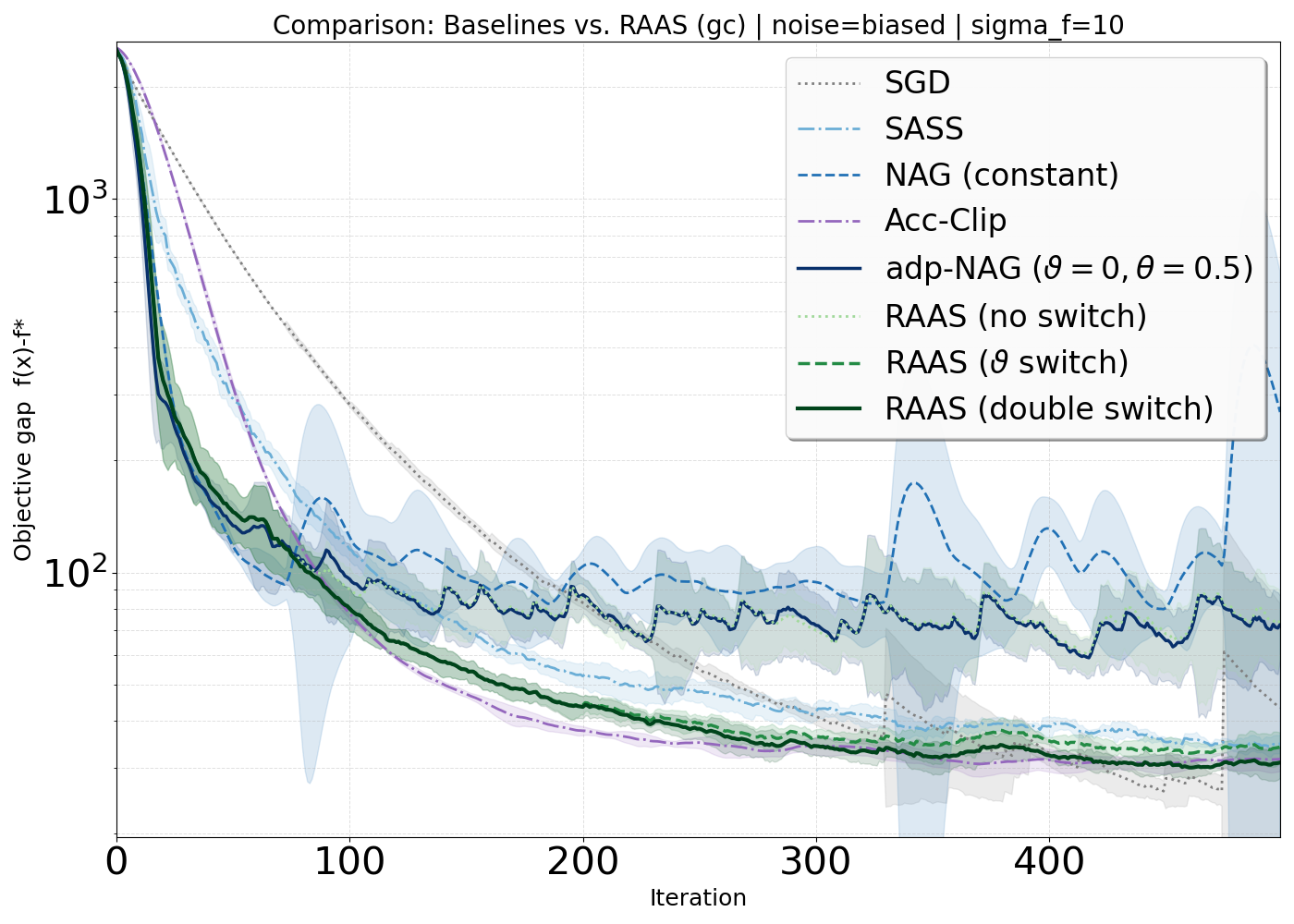}
        \caption{$\sigma_g=1.5$, $\sigma_f=10$.}
        \label{fig:gc_e}
    \end{subfigure}
     \hfill
     \begin{subfigure}[b]{0.32\textwidth}
        \centering
        \includegraphics[width=\linewidth]{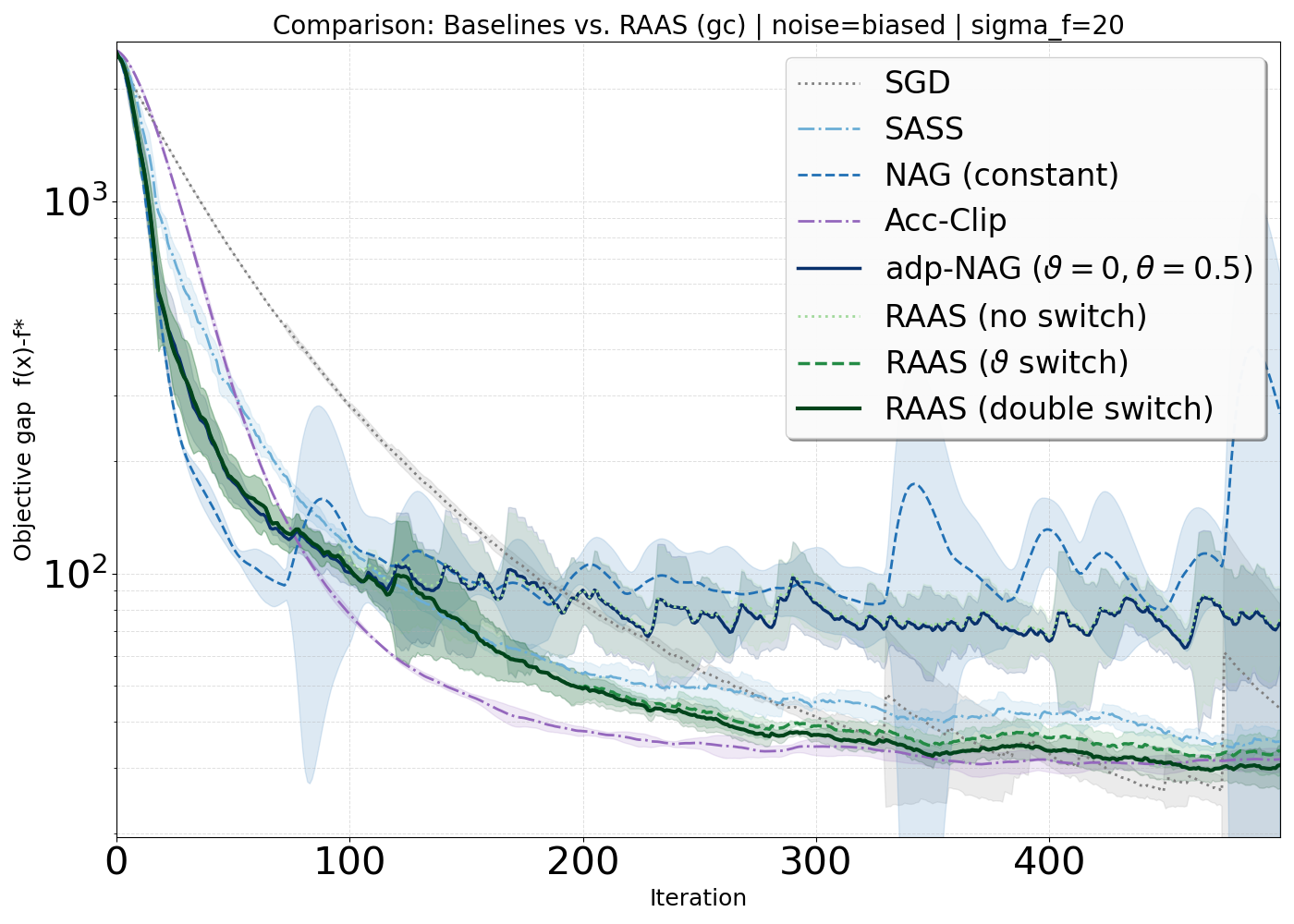}
        \caption{$\sigma_g=1.5$, $\sigma_f=20$. }
        \label{fig:gc_f}
    \end{subfigure}

    \caption{General convex quadratic across different noise settings.}
    \label{fig:general_convex_comparison}
\end{figure}

Curves in Figure~\ref{fig:general_convex_comparison} illustrate the mean $\pm$ one standard deviation over $R=5$ runs. Figures~\ref{fig:gc_a}--\ref{fig:gc_c} fix $\sigma_g=0.1$ and vary $\sigma_f\in\{0,1,2\}$, using the common algorithmic parameters:
\[
\epsilon_f'=0.6,\qquad \gamma_0=0.15L^{-1},\qquad (\nu,\theta,\vartheta)=(0.9,0.4,0.1).
\]
Figures~\ref{fig:gc_d}--\ref{fig:gc_f} fix $\sigma_g=1.5$ and vary $\sigma_f\in\{5,10,20\}$, using:
\[
\epsilon_f'=40,\qquad \gamma_0=0.012L^{-1},\qquad (\nu,\theta,\vartheta)=(0.98,0.45,0.4).
\]

Several consistent performance patterns emerge across all six panels, primarily highlighting the stark contrast between purely first-order methods and zeroth-order-assisted adaptive strategies. First, among methods relying solely on \textsc{SFO} feedback, the classical baselines reveal a fundamental robustness-acceleration dilemma: classical \texttt{SGD} maintains trajectory stability but yields the poorest asymptotic descent; in pursuit of acceleration, \texttt{cons-NAG} injects the classical Nesterov momentum into \texttt{SGD}, but consequently amplifies inexact gradients, exhibiting severe oscillations and the widest confidence bands---an instability that becomes particularly pronounced under heavy-tailed gradient noise. While \texttt{Acc-Clip} effectively mitigates this momentum-induced instability, it does so at a steep cost: gradient clipping introduces systematic bias, causing the algorithm to prematurely stagnate and settle at a significantly higher suboptimality floor.

Second, utilizing zeroth-order (\textsc{SZO}) feedback to explicitly evaluate step acceptance fundamentally resolves the aforementioned robustness issue. As a broad category, the adaptive-search methods (\texttt{SASS}, \texttt{adp-NAG}, and the \texttt{RAAS} family) universally display smoother trajectories and tighter confidence intervals compared to their \textsc{SFO}-only counterparts. Within this adaptive framework, the empirical results illustrate the distinct roles of momentum and the safeguard mechanism. The momentum-free baseline \texttt{SASS} provides exceptional stability but suffers from sluggish transient progress. Integrating momentum (as in \texttt{adp-NAG} and the vanilla \texttt{RAAS}) restores rapid initial descent. Furthermore, while unconstrained momentum introduces mild late-stage dispersion under severe noise, the heuristic switches in \texttt{RAAS} effectively suppress this variance, driving the algorithm to a tighter terminal error neighborhood. Overall, the quadratic experiment not only establishes the global superiority of \textsc{SZO}-assisted adaptation over pure \textsc{SFO} methods in handling heavy-tailed noise, but also validates the efficacy of the stage-wise refinement within the \texttt{RAAS} family.

\subsubsection{Strongly convex logistic regression ($\sigma_g\equiv0.1$)}

\begin{figure}[t]
\centering

\begin{subfigure}[t]{0.32\linewidth}
  \centering
  \includegraphics[width=\linewidth]{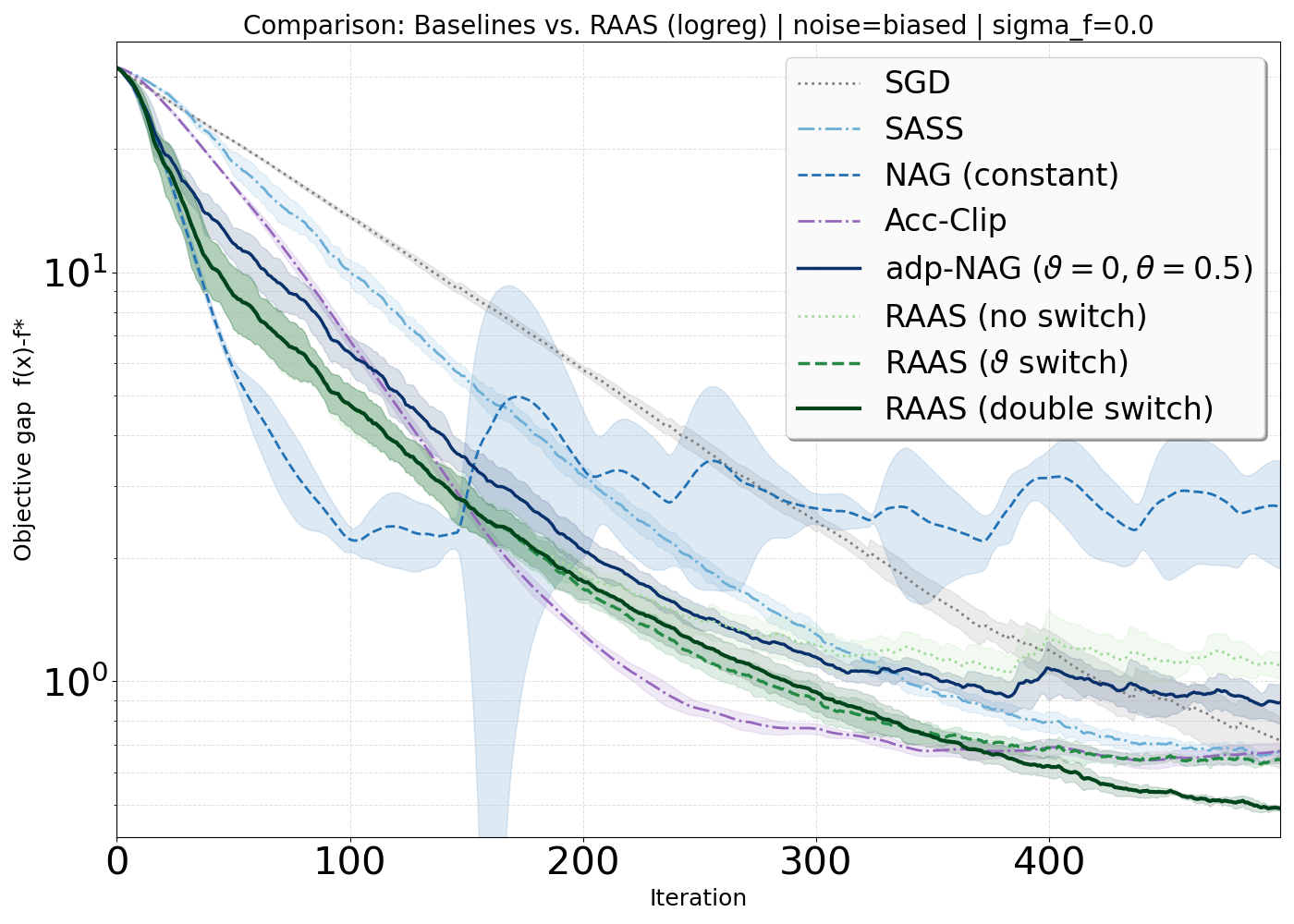}
  \caption{ $\text{BiasRel}=0$, $\sigma_f=0$.}
  \label{fig:log_1}
\end{subfigure}
\hfill
\begin{subfigure}[t]{0.32\linewidth}
  \centering
  \includegraphics[width=\linewidth]{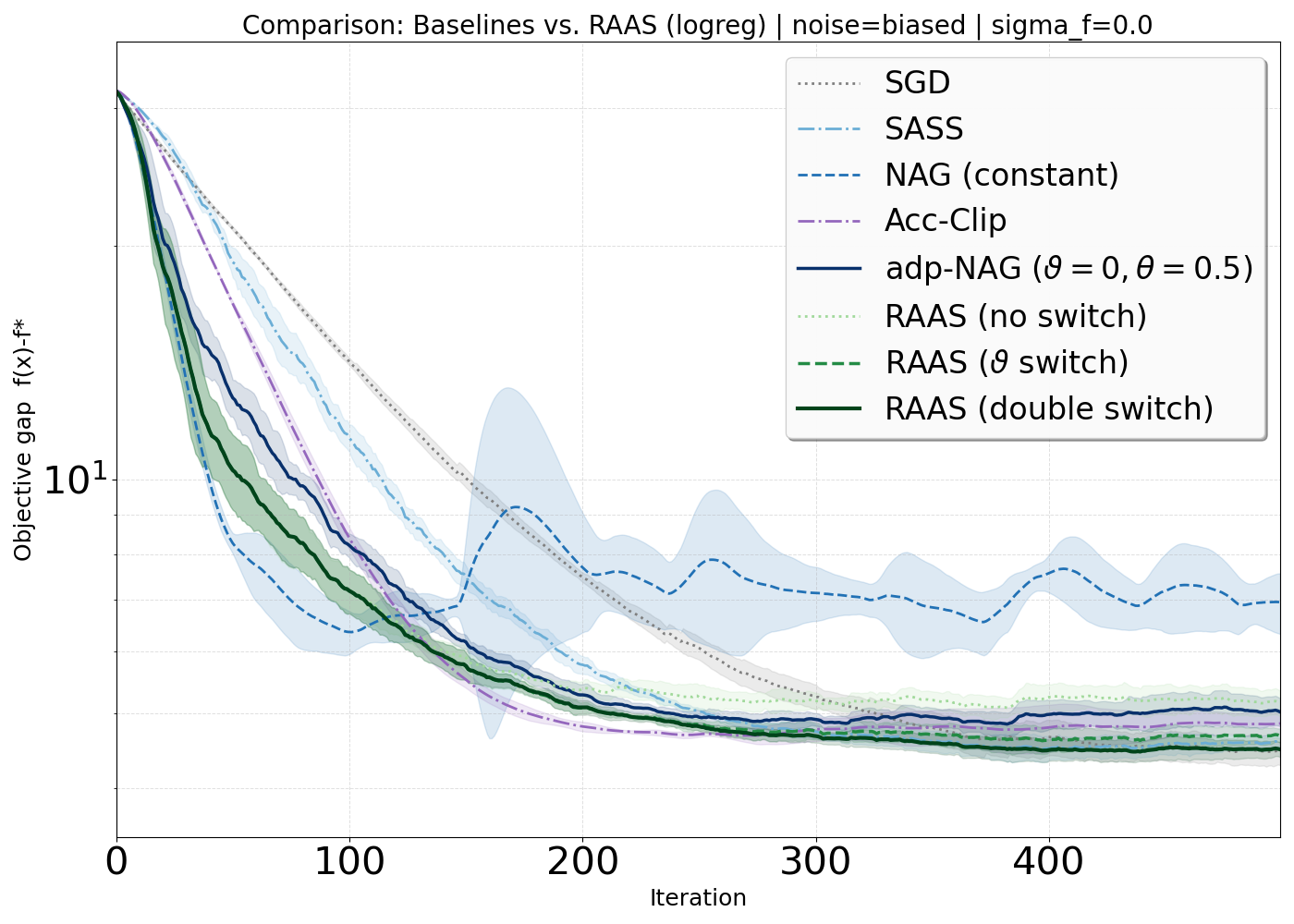}
  \caption{ $\text{BiasRel}=0.1$, $\sigma_f=0$.}
  \label{fig:log_2}
\end{subfigure}
\hfill
\begin{subfigure}[t]{0.32\linewidth}
  \centering
  \includegraphics[width=\linewidth]{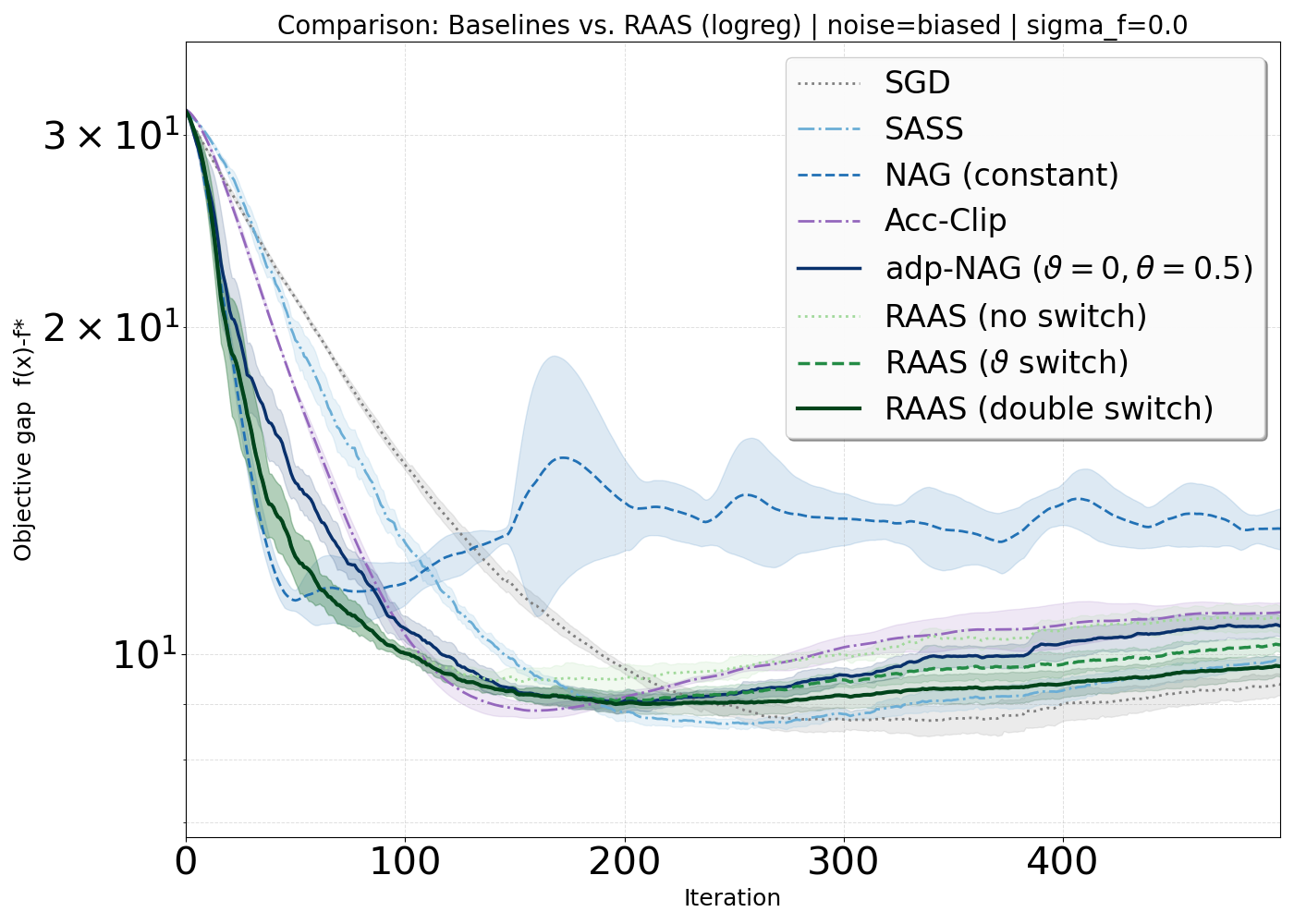}
  \caption{  $\text{BiasRel}=0.15$, $\sigma_f=0$.}
  \label{fig:log_3}
\end{subfigure}

\vspace{0.25cm}

\begin{subfigure}[t]{0.32\linewidth}
  \centering
  \includegraphics[width=\linewidth]{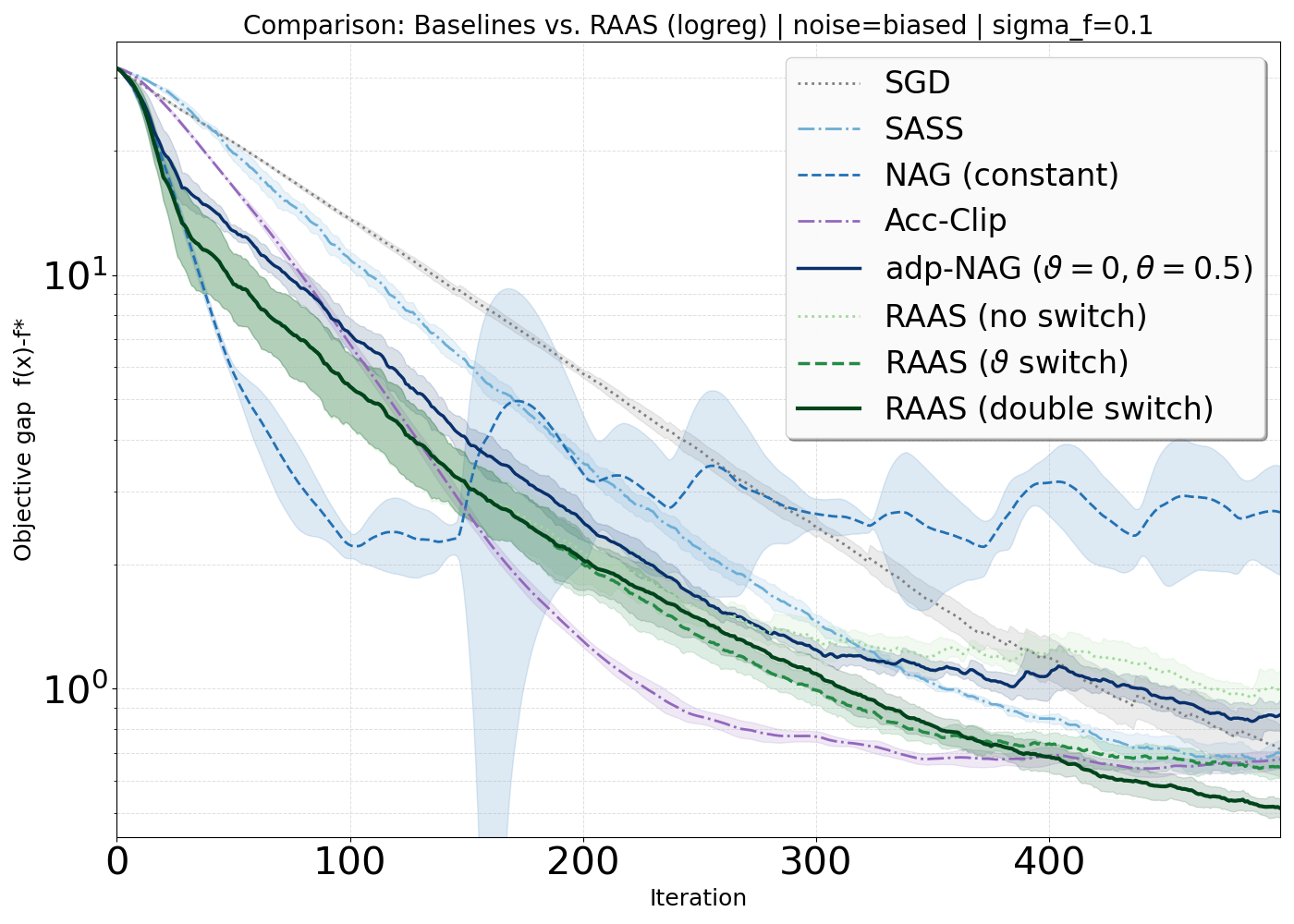}
  \caption{ $\text{BiasRel}=0$, $\sigma_f=0.1$.}
  \label{fig:log_7}
\end{subfigure}
\hfill
\begin{subfigure}[t]{0.32\linewidth}
  \centering
  \includegraphics[width=\linewidth]{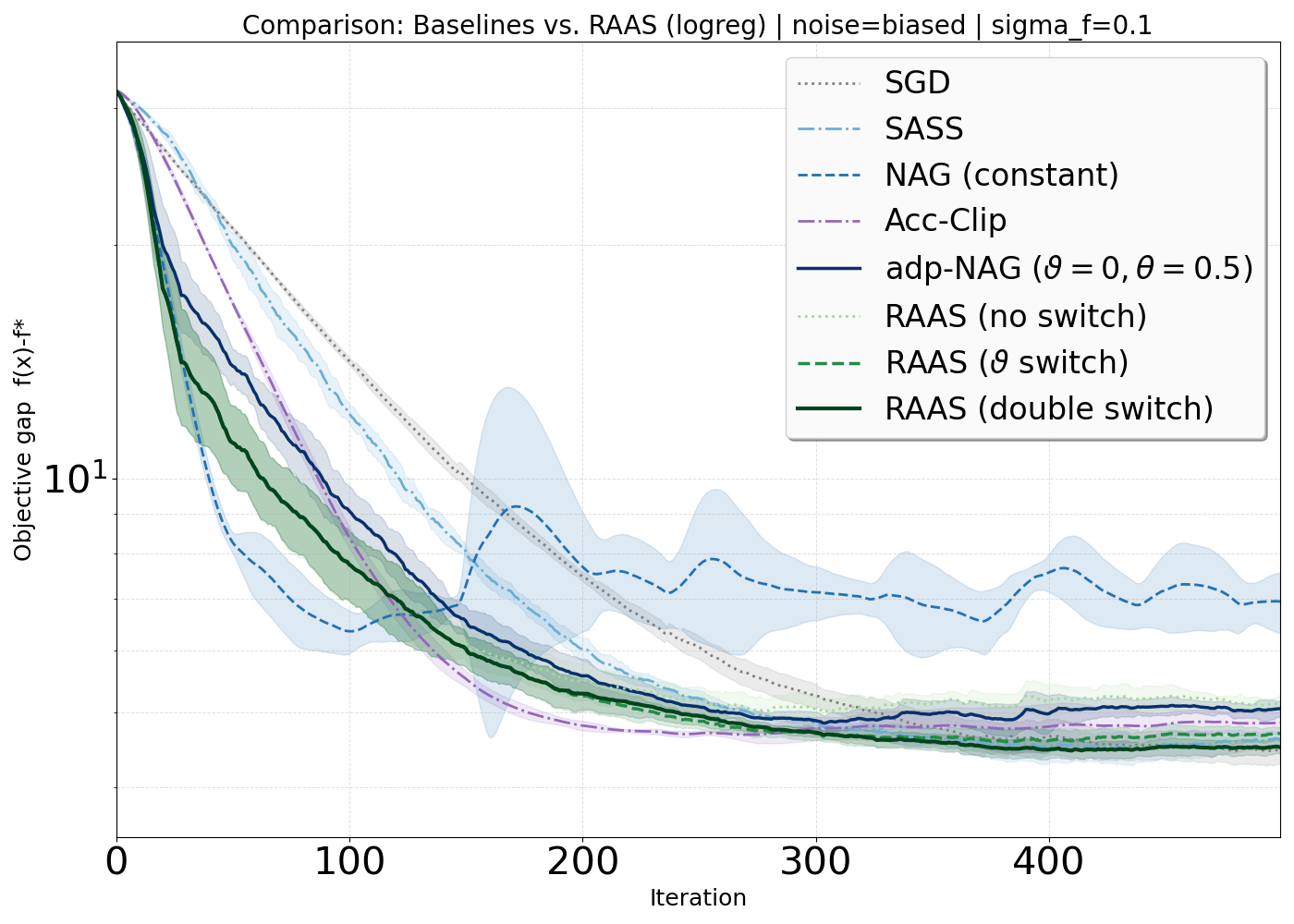}
  \caption{ $\text{BiasRel}=0.1$, $\sigma_f=0.1$.}
  \label{fig:log_8}
\end{subfigure}
\hfill
\begin{subfigure}[t]{0.32\linewidth}
  \centering
  \includegraphics[width=\linewidth]{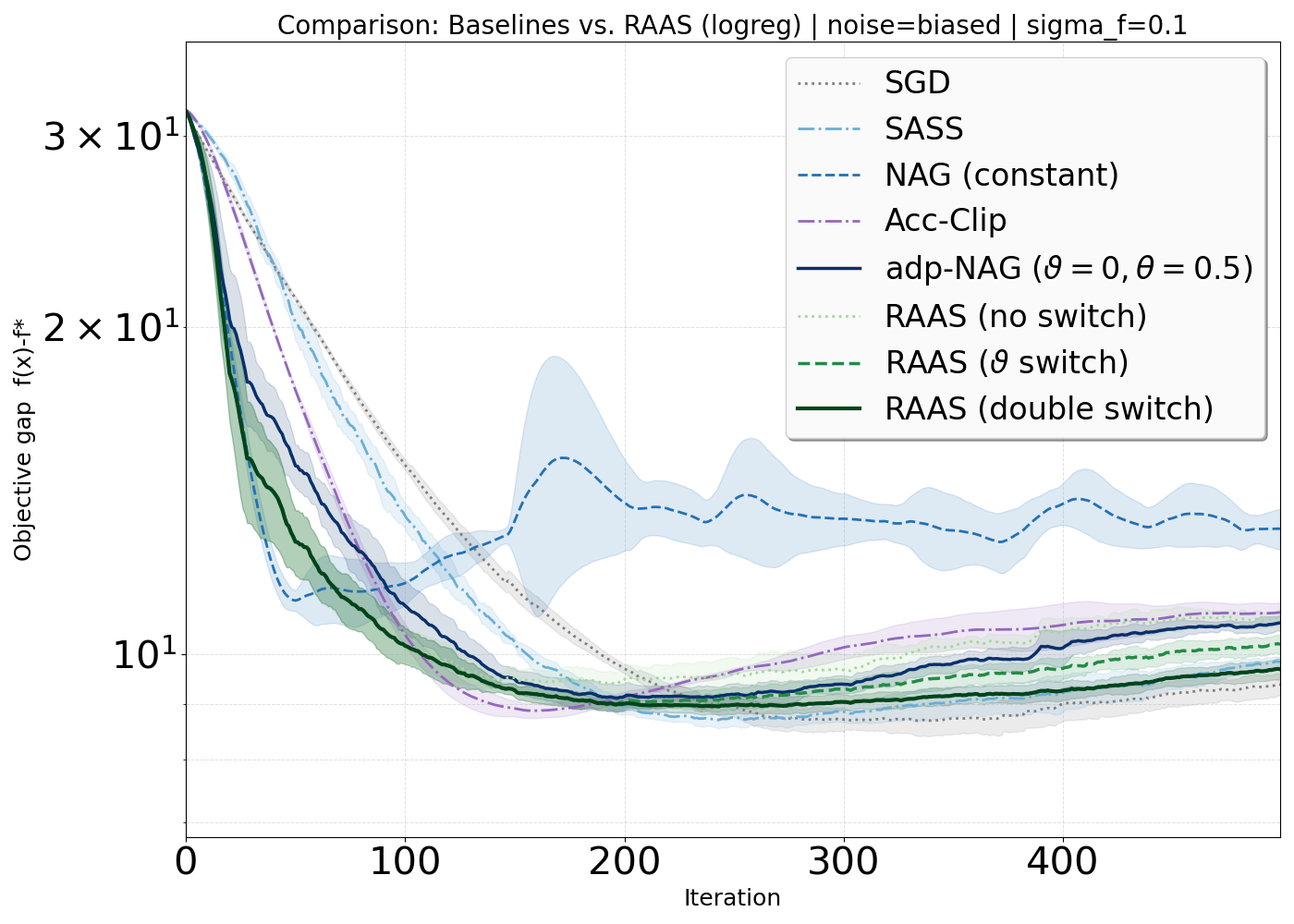}
  \caption{ $\text{BiasRel}=0.15$, $\sigma_f=0.1$.}
  \label{fig:log_9}
\end{subfigure}

\begin{subfigure}[t]{0.32\linewidth}
  \centering
  \includegraphics[width=\linewidth]{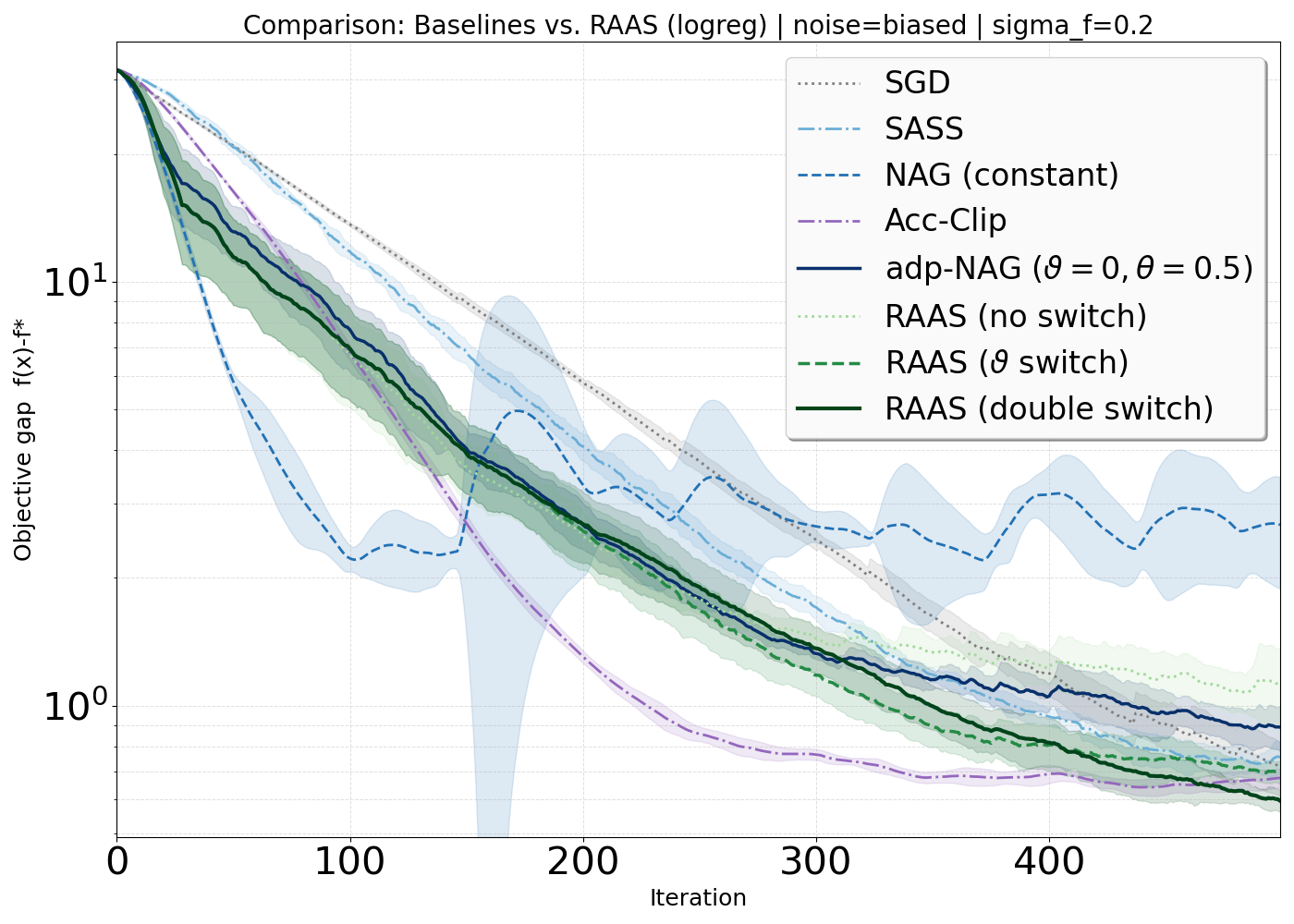}
  \caption{ $\text{BiasRel}=0$, $\sigma_f=0.2$.}
  \label{fig:log_4}
\end{subfigure}
\hfill
\begin{subfigure}[t]{0.32\linewidth}
  \centering
  \includegraphics[width=\linewidth]{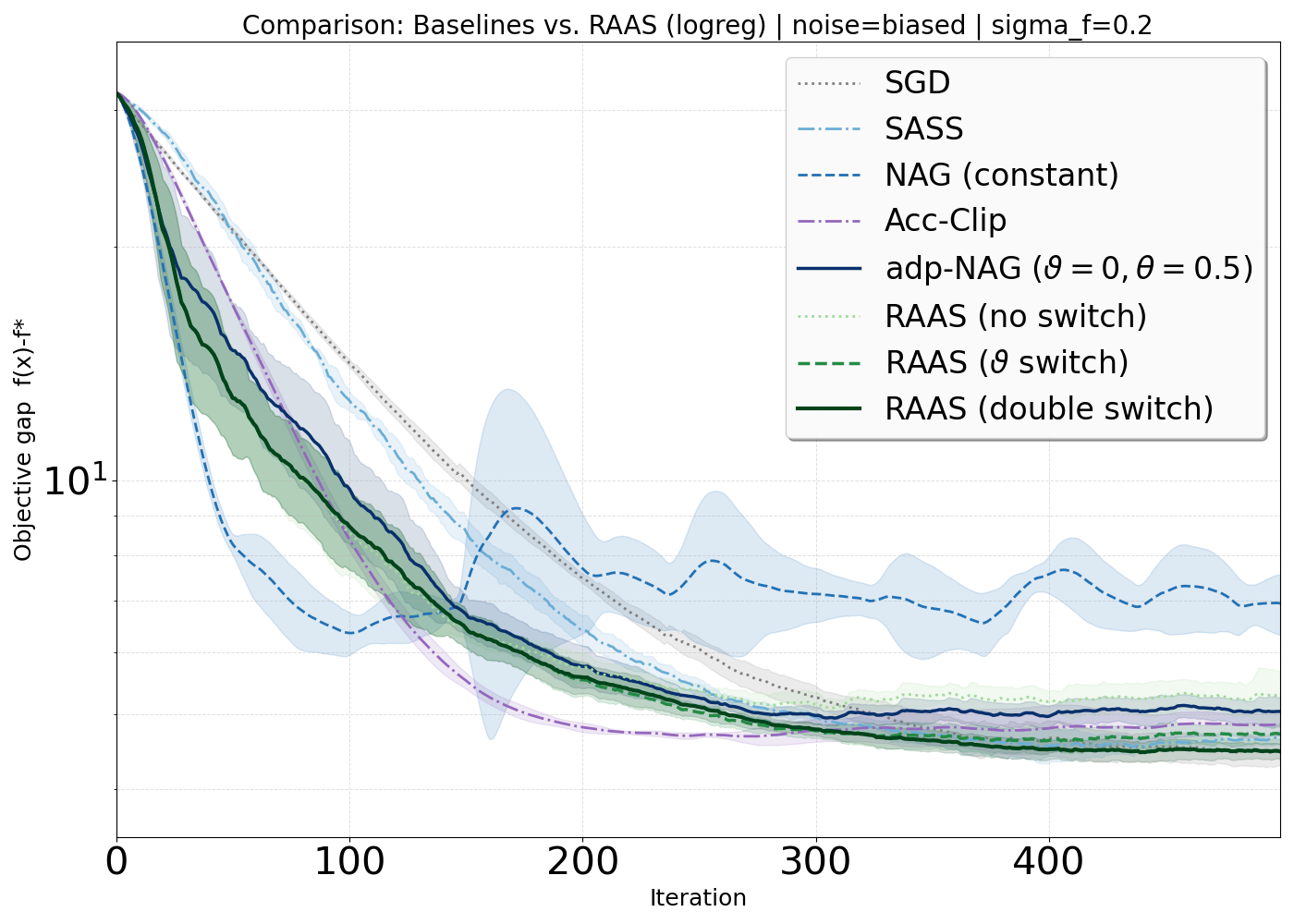}
  \caption{ $\text{BiasRel}=0.1$, $\sigma_f=0.2$.}
  \label{fig:log_5}
\end{subfigure}
\hfill
\begin{subfigure}[t]{0.32\linewidth}
  \centering
  \includegraphics[width=\linewidth]{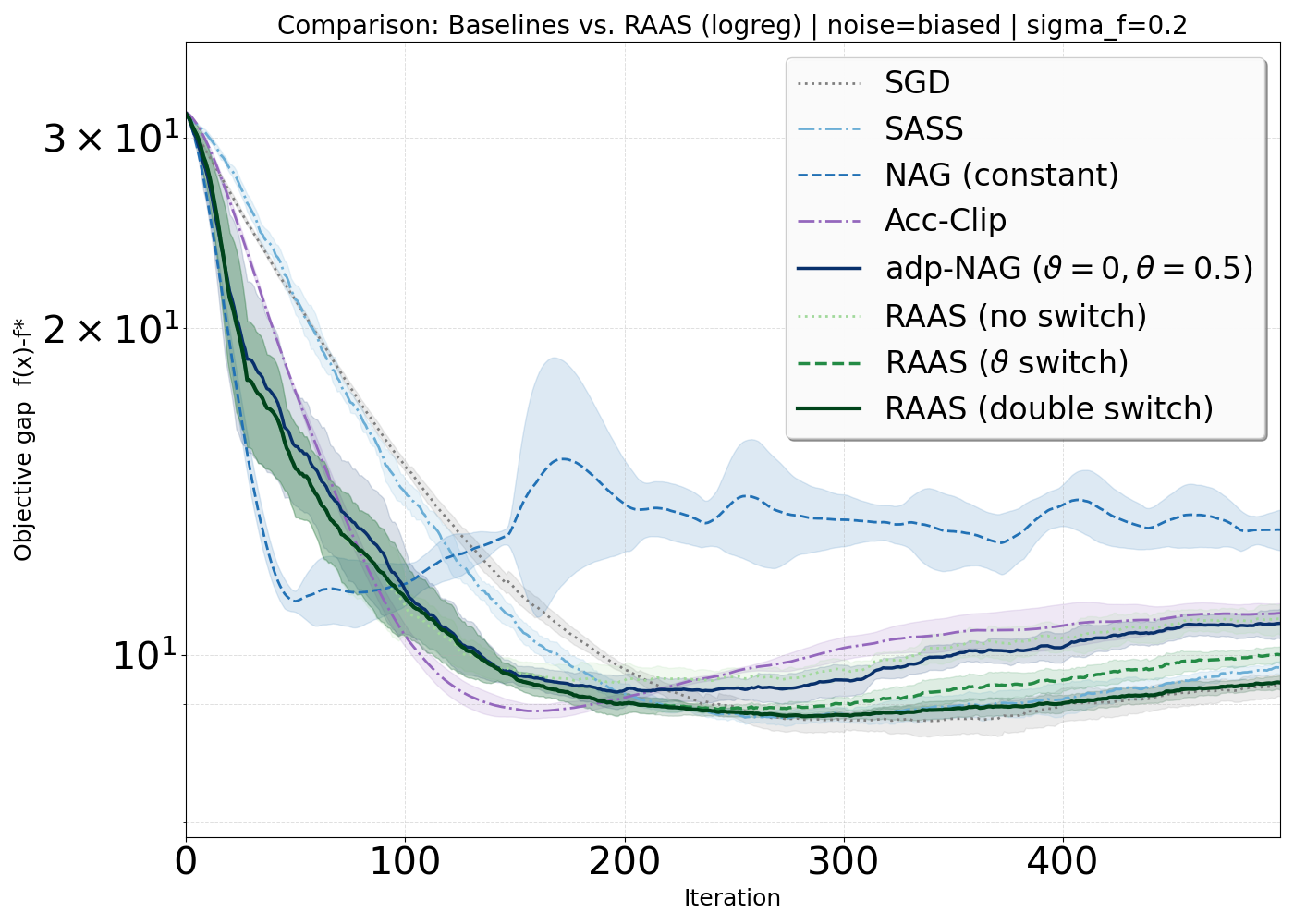}
  \caption{ $\text{BiasRel}=0.15$, $\sigma_f=0.2$.}
  \label{fig:log_6}
\end{subfigure}
\vspace{0.25cm}
\caption{Strongly convex logistic regression across different noise settings.}
\label{fig:logreg}
\end{figure}

In Figure~\ref{fig:logreg}, we fix $\sigma_g=0.1$ and vary the stressors $\mathrm{BiasReal}\in\{0,0.1,0.15\}$ and $\sigma_f\in\{0,0.1,0.2\}$, using common parameters:
\[
\epsilon_f'=0.5,\qquad\gamma_0=0.01/L,\qquad(\nu,\theta,\vartheta)=(0.95,0.35,0.4).
\]

The main qualitative conclusions remain valid across all panels, reinforcing the behavioral hierarchy observed in the quadratic experiment. The non-adaptive Nesterov momentum baseline \texttt{cons-NAG} consistently struggles with the persistent gradient bias, displaying highly erratic convergence and severe instability. In stark contrast, the momentum-free adaptive step-search baseline \texttt{SASS} demonstrates remarkable resilience to both bias and heavy-tailed noise; it maintains exceptionally smooth trajectories and reliably navigates to a highly competitive terminal error neighborhood, showcasing excellent asymptotic convergence capabilities. However, consistent with previous observations, its transient progress remains visibly slower than the momentum-integrated methods. Bridging this gap, the \texttt{RAAS} family constitutes the strongest group overall: across most configurations, it matches or outperforms \texttt{Acc-Clip} and \texttt{adp-NAG} in initial descent speed, while successfully inheriting the asymptotic stability of momentum-free search to consistently maintain a tighter late-stage variance.

When $\mathrm{BiasReal}=0$, all \texttt{RAAS} variants maintain a downward trajectory throughout the displayed horizon, with the safeguarded variants achieving a modest but consistent improvement over the vanilla (no-switch) version. As the gradient bias intensifies to $0.1$ and $0.15$, every method is inevitably bounded by a higher terminal error neighborhood. Under these severe conditions, the critical role of the safeguard mechanisms becomes exceptionally clear. We hypothesize that the $\vartheta$-switch prevents unconstrained momentum from persistently accumulating and amplifying the biased feedback, thereby avoiding an inflated error floor. Consequently, the double-switch version consistently settles into the deepest precision neighborhood. Increasing the function-value noise $\sigma_f$ primarily widens the uncertainty bands for the acceptance-based methods and slightly raises their asymptotic error limits, but it does not alter the overall algorithmic hierarchy. Notably, \texttt{RAAS-Double} remains strictly dominant or highly competitive throughout all tested biased and noisy regimes.

Taken together, these two experiments substantiate our empirical claims: relative to classical fixed-step acceleration, acceptance-based robustification successfully immunizes the algorithm against heavy-tailed noise explosions; relative to momentum-free adaptive search, the synchronized momentum mechanism accelerates transient progress; and the stagnation-triggered switches effectively safeguard against late-stage bias and noise amplification, achieving a superior balance between rapid acceleration and asymptotic robustness.

\section{Conclusion}\label{sec:conclusion}
In this work, we successfully integrated Nesterov-type momentum into the stochastic adaptive step search framework, exploring its acceleration capabilities and stability boundaries within environments governed strictly by finite-moment oracles. Although our high-probability complexity guarantees for \texttt{RAAS} cannot accommodate the arbitrarily corrupted, Byzantine first-order feedback (or more precisely PRFO) permitted in classical momentum-free framework~\cite{Albert2021,Xie2024,scheinberg2025stochasticadaptiveoptimizationunreliable}, our parameterized scheme substantially unlocks the framework's potential to secure acceleration in hostile stochastic settings potentially plagued by persistent bias and heavy-tailed fluctuations. Driven by the imperative to enhance robustness, our parameterization deeply reconstructs the traditional momentum update. This structural adaptation seamlessly couples the momentum extrapolation with our noise-tolerant generalized acceptance rule, while equipping the algorithm with the flexibility to actively modulate the degree of momentum intervention.

 A central consequence of this integration is that the mechanisms of noise tolerance (parameterized by $\epsilon_f', \epsilon_g', \theta$) and momentum intervention ($\vartheta$) dictate a profound structural trade-off: aggressive configurations---namely, elevated tolerances and strong momentum---liberate early-stage exploration but inherently inflate the strict lower bounds ($\boldsymbol{\varepsilon}'$, $\varepsilon_0$) on attainable precision, thereby restricting late-stage convergence. Building upon this, our heuristic adaptive stagnation switch in \Cref{algo_adaptive_switch} dynamically regulates $(\theta, \vartheta)$ to seamlessly bridge vigorous early-stage exploration and fine-grained late-stage refinement. Finally, this adaptive philosophy naturally extends to the tolerances $(\epsilon_g', \epsilon_f')$, highlighting a promising direction where they progressively decay across iterations towards prior estimates $(\tilde\epsilon_g, \tilde\epsilon_f)$ of the intrinsic oracle parameters $(\epsilon_g, \epsilon_f)$.

\medskip
\noindent\textbf{Future work.} Several directions remain open. A first natural step is to extend the present framework beyond smooth minimization of $\phi$ to composite optimization of $\phi+h$, where acceptance, proximal structure, and momentum must be coordinated simultaneously. A second and more structural question is whether one can construct a single energy function that unifies the analyses of \texttt{SASS} and \texttt{RAAS}, thereby removing the remaining theoretical gap between the momentum-free and accelerated regimes (i.e., $\bar\vartheta<\vartheta\leq1$). Finally, it would be very interesting to explore whether, within the present framework, suitably designed multi-stage tuning strategies for \((\theta,\vartheta)\) beyond \Cref{algo}  can play a role analogous to \emph{adaptive restart} in other acceleration frameworks, combining aggressive transient acceleration with robust late-stage refinement under persistent oracle uncertainty.

\bibliographystyle{plain}
\bibliography{Ref}
\newpage
\appendix

\section{Proofs for \Cref{SECTION_3}}
\subsection{Proofs for \Cref{subsec:frame}}
\begin{proof}[\textbf{\emph{Proof of \Cref{Big_threorem}}}]\label{proof_bigtheorem}
Consider fixed $t$ such that $t \ge \bar{T}$ and
\begin{equation}\label{eq:t-lower-bound}
t \;\ge\; \mathcal{H}_{\bar{\gamma},\hat{p}}^{-1}\!\left(\frac{\varepsilon_\phi-\varepsilon_0}{\Phi_0}\right).
\end{equation}

We aim to show that the stopping condition is met (i.e., $t+1 \ge T_{\boldsymbol{\varepsilon}}$) with  probability at least $1 - P^{\mathrm{Tail}}_t$.
 By \Cref{hyp_1_5} of \Cref{hyp_1}, $\mathbb{P}\{(A_t \cup B_t)^c\} \ge 1 - P^{\mathrm{Tail}}_t$. Suppose, for the sake of contradiction, that on the event $(A_t \cup B_t)^c$, the algorithm has \emph{not} stopped, i.e., $t+1 < T_{\boldsymbol{\varepsilon}}$. Then, by the definition of \(T_{\boldsymbol{\varepsilon}}\) in \Cref{Stopping_time}, one has
\[
\varepsilon_\phi<\min\{\phi(\hat x_{t+1}),\phi(\hat y_t)\}-\phi^* \le \phi(x_{t+1})-\phi^* \le \Phi_{t+1}.
\]
However, we can derive an upper bound for $\Phi_{t+1}$ by recursively applying the one-step inequalities \Cref{hyp_1_2}. Unrolling the recurrence from $t$ down to $0$ separates the Lyapunov function evolution into a multiplicative scaling of the initial value $\Phi_0$ and an additive accumulation of residuals. Substituting the bounds imposed by the event $(A_t \cup B_t)^c$ (specifically, applying the definitions of $A_t$ and $B_t$ in \ref{hyp_1_5} to the multiplicative and additive terms, respectively) yields:
\begin{align*}
\Phi_{t+1}
&~\le~
\Phi_0\,\lambda_t^{}
\exp\!\left(
\sum_{i=1}^t \Theta_i \ln \mathcal{V}_i(W_i,{D}_i;\varepsilon_\phi)
\right) 
+\sum_{i=1}^t \,a_i^{(t)}\exp\!\left(
\sum_{j=i+1}^t \Theta_j \ln \mathcal{V}_j(W_j,{D}_j;\varepsilon_\phi)
\right) \,\mathcal{R}_i(W_i,{D}_i;\varepsilon_\phi) \\
&~\le~
\Phi_0\,\lambda_t
\exp\!\left(
\sum_{i=1}^t \Theta_i \ln \mathcal{V}_i(W_i,{D}_i;\varepsilon_\phi)
\right)
+
\varepsilon_0 ~\le~
\Phi_0\,\mathcal{H}_{\bar{\gamma},\hat{p}}(t)
+
\varepsilon_0,
\end{align*}    

Combining these inequalities yields
\[
\varepsilon_\phi \;<\; \Phi_0\,\mathcal{H}_{\bar{\gamma},\hat{p}}(t) \;+\; \varepsilon_0
\qquad\iff\qquad
t \;<\; \mathcal{H}_{\bar{\gamma},\hat{p}}^{-1}\!\left(\frac{\varepsilon_\phi-\varepsilon_0}{\Phi_0}\right),
\]
where the equivalence holds because $\mathcal{H}_{\bar{\gamma},\hat{p}}(t)$ is strictly decreasing for $t \ge \bar{T}$.
This contradicts the premise \eqref{eq:t-lower-bound}.
Thus, on the event $(A_t \cup B_t)^c$, we must have $t+1 \ge T_{\boldsymbol{\varepsilon}}$, which completes the proof.
\end{proof}

\subsection{Proof for \Cref{subsection_Lyap}}

{\setlength{\abovedisplayskip}{3pt}
 \setlength{\belowdisplayskip}{3pt}
 \setlength{\abovedisplayshortskip}{2pt}
 \setlength{\belowdisplayshortskip}{2pt}
\begin{lemma}\label{Para_lemma}
Let $\{\gamma_t\}_{t\ge0}$ be generated by \eqref{eq:gamma-dyn}, and let $\{\alpha_t\}_{t\ge0}$ be defined by \Cref{para}. Define
\[
\lambda_t:=\prod_{i=1}^t(1-\Theta_i\alpha_i),\qquad t\ge1.
\]
Then the following statements hold.

\begin{enumerate}[
  label=\textbf{\Alph*.},
  leftmargin=*,
  itemsep=0pt,
  topsep=0pt,
  parsep=0pt,
  partopsep=0pt,
  ref=\thelemma(\alph*)
]

\item\label{para_lemma_a} \textbf{Monotonicity.}
For every $t\ge1$,
\[
\frac{\gamma_t\lambda_t}{\alpha_t^2}
\;\le\;
\frac{\gamma_{t-1}\lambda_{t-1}}{\alpha_{t-1}^2}
\;\le\;\cdots\;\le\;
\frac{\gamma_0}{\alpha_0^2}.
\]
Moreover, if $\mu=0$, then all inequalities above become equalities.

\item\label{para_lemma_b} \textbf{Quadratic envelope in the general convex case.}
If $\mu=0$, then for every $t\ge1$,
\[
\left(1+\frac{\alpha_0}{\sqrt{\gamma_0}}\sum_{i=1}^t\Theta_i\sqrt{\gamma_i}\right)^{-2}
\;\le\;
\lambda_t
\;\le\;
\left(1+\frac{\alpha_0}{2\sqrt{\gamma_0}}\sum_{i=1}^t\Theta_i\sqrt{\gamma_i}\right)^{-2}.
\]

\item\label{para_lemma_c} \textbf{Exponential envelope in the strongly convex case.}
If $\mu>0$, then for every $t\ge1$,
\[
\lambda_t
\;\le\;
\exp\!\left(
\sum_{i=1}^t
\Theta_i\ln\bigl(1-(1-\vartheta)\sqrt{2\theta\mu\gamma_i}\bigr)
\right).
\]

\item\label{para_lemma_d} \textbf{Uniform upper bound for \(\alpha_t\).}
For every $t\ge0$,
$\alpha_t\le \bar\alpha<1$ where
\[
\bar\alpha
:=
\max\left\{
\alpha_0,\;
\frac{2}{\sqrt{(1-c_\mu\nu)^2+4\nu}+(1-c_\mu\nu)}
\right\},
\qquad
c_\mu:=\theta\,\mathrm{sgn}(\mu),
\]
with $\mathrm{sgn}(\mu)=0$ if $\mu=0$ and $\mathrm{sgn}(\mu)=1$ if $\mu>0$.
\end{enumerate}
\end{lemma}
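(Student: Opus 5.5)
The plan is to work directly from the defining quadratic \eqref{para_alpha}, treating accepted and rejected trials separately, and to obtain all four parts from one identity. On a rejected trial ($\Theta_t=0$) we have $\gamma_t=\gamma_{t-1}$, $\alpha_t=\alpha_{t-1}$ and $\lambda_t=\lambda_{t-1}$, so every claimed inequality is trivially preserved at that index. On an accepted trial, \eqref{para_alpha} with $\hat\gamma_t=\gamma_t$ and $\hat\alpha_t=\alpha_t$ reads
\[
\frac{\alpha_t^2}{\gamma_t}=(1-\alpha_t)\frac{\alpha_{t-1}^2}{\gamma_{t-1}}+2\theta(1-\vartheta)^2\mu\alpha_t\;\ge\;(1-\alpha_t)\frac{\alpha_{t-1}^2}{\gamma_{t-1}},
\]
with equality when $\mu=0$. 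Dividing by $\lambda_t=(1-\alpha_t)\lambda_{t-1}$ gives $\alpha_t^2/(\gamma_t\lambda_t)\ge\alpha_{t-1}^2/(\gamma_{t-1}\lambda_{t-1})$, and inverting yields \textbf{A}. Before this, I would run an induction showing $\alpha_t\in(0,1)$. The positive root of $a^2/\gamma-2\theta(1-\vartheta)^2\mu a=(1-a)c$ lies in $(0,1)$ because the left side minus the right side is negative at $a=0$ and positive at $a=1$; the latter holds since $\gamma\le\gamma_{\max}$ forces $1/\gamma>2\theta(1-\vartheta)^2\mu$. This induction is exactly what \textbf{D} refines.

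For \textbf{B} ($\mu=0$), \textbf{A} gives $\alpha_t=\alpha_0\sqrt{\gamma_t\lambda_t/\gamma_0}$. I would use the standard telescoping on $1/\sqrt{\lambda_t}$. On an accepted step,
\[
\frac{1}{\sqrt{\lambda_t}}-\frac{1}{\sqrt{\lambda_{t-1}}}=\frac{1}{\sqrt{\lambda_t}}\bigl(1-\sqrt{1-\alpha_t}\bigr).
\]
Since $\alpha/2\le 1-\sqrt{1-\alpha}\le\alpha$ for $\alpha\in(0,1)$, this increment lies between $\tfrac{\alpha_0}{2}\sqrt{\gamma_t/\gamma_0}$ and $\alpha_0\sqrt{\gamma_t/\gamma_0}$. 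Summing from $\lambda_0=1$ and inverting the square gives both bounds.

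For \textbf{C} ($\mu>0$), it suffices to show $\alpha_t\ge(1-\vartheta)\sqrt{2\theta\mu\gamma_t}$ on accepted trials, because then $1-\alpha_t\le 1-(1-\vartheta)\sqrt{2\theta\mu\gamma_t}$ and taking logs and summing gives the bound. I would prove this by induction, using the initialization $\alpha_0>(1-\vartheta)\sqrt{2\theta\mu\gamma_0}$. Set $q_t:=\alpha_t^2/\gamma_t$ and $m:=2\theta(1-\vartheta)^2\mu$. If $q_{t-1}\ge m$, the quadratic gives $q_t=(1-\alpha_t)q_{t-1}+m\alpha_t\ge m$, a convex combination of quantities at least $m$. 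Then $\alpha_t^2\ge m\gamma_t$, which is the claim. This same inequality underlies $\hat\beta_t\in[0,1)$ cited as \eqref{eq:alpha-lb-polished}.

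For \textbf{D}, which I expect to be the fiddliest step, I would write $\alpha_t=R^+(r,s)$, the positive root of $a^2=(1-a)\,r\,\alpha_{t-1}^2+s\,a$ with $r=\gamma_t/\gamma_{t-1}\le\nu^{-1}$ and $s=m\gamma_t$. When $\mu>0$, $\gamma_{\max}=1/(2(1-\vartheta)^2\mu)$ gives $s\le\theta$; when $\mu=0$, $s=0$. The root is increasing in $r$, in $s$, and in $\alpha_{t-1}$. Hence, if $\alpha_{t-1}\le\bar\alpha$, then $\alpha_t$ is at most the root of $a^2=(1-a)\nu^{-1}\bar\alpha^2+c_\mu a$.

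The remaining task is to check that $\bar a:=2/\bigl(\sqrt{(1-c_\mu\nu)^2+4\nu}+1-c_\mu\nu\bigr)$ is the fixed point of the map $b\mapsto R^+(\nu^{-1}b^2, c_\mu)$. Concretely, $\bar a$ should solve $\nu a^2=(1-a)a^2+c_\mu\nu a$, i.e.\ $a^2+(\nu-1-c_\mu\nu)a\cdot(\ldots)$. I would verify this fixed-point equation by rationalizing the formula, and then show that the map sends $[0,\bar a]$ into itself. With both facts, induction from $\alpha_0\le\bar\alpha$ yields $\alpha_t\le\bar\alpha$. Finally, $\bar a<1$ follows because $\sqrt{(1-c_\mu\nu)^2+4\nu}>1+c_\mu\nu$, which holds since $4\nu>4c_\mu\nu$ when $c_\mu=\theta<1$.
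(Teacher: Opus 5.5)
Your Parts A--C coincide with the paper's proof; in B, telescoping $1/\sqrt{\lambda_t}$ is the paper's telescoping of $\sqrt{\gamma_t}/\alpha_t$, since the two differ by the constant factor $\alpha_0/\sqrt{\gamma_0}$. Part D, however, rests on a false identity. Write $R^+(B,c)$ for the positive root of $a^2=(1-a)B+ca$, and set $g(b):=R^+(\nu^{-1}b^2,c_\mu)$. The number $\bar a=2/\bigl(\sqrt{(1-c_\mu\nu)^2+4\nu}+1-c_\mu\nu\bigr)$ is not a fixed point of $g$. Rationalizing shows it is the positive root of $\nu a^2=(1-a)+c_\mu\nu a$, i.e.\ $\bar a=R^+(\nu^{-1},c_\mu)=g(1)$. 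This is the value obtained by bounding $\alpha_{t-1}^2$ by $1$, not by $a^2$. For instance, with $\mu=0$ and $\nu=\tfrac12$ one has $\bar a=\sqrt3-1\approx0.732$. Your fixed-point equation $\nu a^2=(1-a)a^2$ instead has positive solution $a=1-\nu=\tfrac12$. So the rationalization you plan cannot close: the invariance of $[0,\bar a]$ is still true, but not for the reason you give.

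The repair is short and is exactly the paper's argument. Your first induction already gives $\alpha_{t-1}\in(0,1)$, so on an accepted trial $r\alpha_{t-1}^2\le\nu^{-1}$ and $s\le c_\mu$. Monotonicity of $R^+$ in both arguments (valid because $s\le\theta<1$) then gives $\alpha_t\le R^+(\nu^{-1},c_\mu)=\bar a$ directly, with no invariant-interval argument needed.

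If you want to keep your fixed-point idea, use the true fixed point $a^*=\tfrac12\bigl(1-\nu+\sqrt{(1-\nu)^2+4c_\mu\nu}\bigr)$. The quadratic defining $g(b)$ is convex, negative at $a=0$, and equals $\tfrac{b}{\nu}\bigl(b^2-(1-\nu)b-c_\mu\nu\bigr)$ at $a=b$. Hence $g(b)\le b$ exactly when $b\ge a^*$. Since $g$ is increasing, induction gives $\alpha_t\le\max\{\alpha_0,a^*\}$. This is sharper than the stated bound: $a^*<1$ (as $c_\mu<1$), so $a^*=g(a^*)\le g(1)=\bar a$.
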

}

\begin{proof}
Throughout the proof, set
 \[C:=2\theta(1-\vartheta)^2\mu.\]
\medskip
\noindent\textbf{Part A.}
If $\Theta_t=0$, then $(\alpha_t,\gamma_t,\lambda_t)=(\alpha_{t-1},\gamma_{t-1},\lambda_{t-1})$ and there is nothing to prove.
Assume $\Theta_t=1$, so $(\alpha_t,\gamma_t,\lambda_t)=(\hat\alpha_t,\hat\gamma_t,\lambda_{t-1}(1-\hat\alpha_t))$.
By \eqref{para_alpha},
\begin{align*}
\frac{\alpha_t^2}{\gamma_t(1-\alpha_t)}
\;=\;\frac{\hat\alpha_t^2}{\hat\gamma_t(1-\hat\alpha_t)}
\;=\;\frac{\alpha_{t-1}^2}{\gamma_{t-1}}+\frac{C\,\alpha_t}{1-\alpha_t}
\;\ge\;\frac{\alpha_{t-1}^2}{\gamma_{t-1}},
\end{align*}
with strict inequality if $\mu>0$ and equality if $\mu=0$.
Taking reciprocals and multiplying by $\lambda_{t-1}$ yields
\[
\frac{\gamma_t(1-\alpha_t)}{\alpha_t^2}\;\le\; \frac{\gamma_{t-1}}{\alpha_{t-1}^2}
\quad\Longrightarrow\quad
\frac{\gamma_t\lambda_t}{\alpha_t^2}
\;=\;\frac{\gamma_t(1-\alpha_t)\lambda_{t-1}}{\alpha_t^2}
\;\le \;\frac{\gamma_{t-1}\lambda_{t-1}}{\alpha_{t-1}^2}.
\]
Iterating over $t$ proves the chain.

\medskip
\noindent\textbf{Part B.}
Assume $\mu=0$, hence $C=0$ and \eqref{para_alpha} reduces to
\[
\frac{\alpha_t^2}{\gamma_t}\;=\;(1-\alpha_t)\frac{\alpha_{t-1}^2}{\gamma_{t-1}}
\qquad(\Theta_t=1).
\]
Equivalently,
\begin{align*}
\frac{\sqrt{\gamma_t}}{\alpha_t}
\;=\;\frac{\sqrt{\gamma_{t-1}}}{\alpha_{t-1}}\cdot\frac{1}{\sqrt{1-\alpha_t}}
\quad\Longrightarrow\quad
\frac{\sqrt{\gamma_t}}{\alpha_t}-\frac{\sqrt{\gamma_{t-1}}}{\alpha_{t-1}}
\;=\;\frac{\sqrt{\gamma_{t-1}}}{\alpha_{t-1}}
\Big(\frac{1}{\sqrt{1-\alpha_t}}-1\Big).
\end{align*}
Using, for $x\in[0,1)$,
\[
\frac{1}{\sqrt{1-x}}-1
\;=\;\frac{x}{(1+\sqrt{1-x})\sqrt{1-x}}
\;\le\;\frac{x}{\sqrt{1-x}},
\qquad
\frac{1}{\sqrt{1-x}}-1
\;\ge\; \frac{x}{2\sqrt{1-x}},
\]
we obtain
\[
\frac{\sqrt{\gamma_t}}{\alpha_t}-\frac{\sqrt{\gamma_{t-1}}}{\alpha_{t-1}}
\;\le\; \frac{\sqrt{\gamma_{t-1}}}{\alpha_{t-1}}\frac{\alpha_t}{\sqrt{1-\alpha_t}}
\;=\;\sqrt{\gamma_t},
\qquad
\frac{\sqrt{\gamma_t}}{\alpha_t}-\frac{\sqrt{\gamma_{t-1}}}{\alpha_{t-1}}
\;\ge\; \frac{1}{2}\sqrt{\gamma_t}.
\]
Therefore (recalling the increment is $0$ when $\Theta_t=0$),
\[
\frac{\sqrt{\gamma_t}}{\alpha_t}
\;\le\; \frac{\sqrt{\gamma_0}}{\alpha_0}+\sum_{i=1}^t\Theta_i\sqrt{\gamma_i},
\qquad
\frac{\sqrt{\gamma_t}}{\alpha_t}
\;\ge\; \frac{\sqrt{\gamma_0}}{\alpha_0}+\frac12\sum_{i=1}^t\Theta_i\sqrt{\gamma_i}.
\]
Meanwhile Part A with $\mu=0$ gives the identity
\[
\frac{\gamma_t\lambda_t}{\alpha_t^2}\;\equiv\; \frac{\gamma_0}{\alpha_0^2}
\quad\Longrightarrow\quad
\lambda_t\;=\;\frac{\gamma_0}{\alpha_0^2}\frac{\alpha_t^2}{\gamma_t}
\;=\;\Big(\frac{\sqrt{\gamma_0}/\alpha_0}{\sqrt{\gamma_t}/\alpha_t}\Big)^2.
\]
Plugging the two-sided bounds on $\sqrt{\gamma_t}/\alpha_t$ into the above identity yields the desired
two-sided bound for $\lambda_t$.

\medskip
\noindent\textbf{Part C.}
Assume $\mu>0$.
When $\Theta_t=1$, $\alpha_t$ is the unique root in $(0,1)$ of
\[
F_t(\alpha):=\;\frac{\alpha^2}{\gamma_t}-(1-\alpha)\frac{\alpha_{t-1}^2}{\gamma_{t-1}}-C\alpha.
\]
Indeed, $F_t(0)=-\frac{\alpha_{t-1}^2}{\gamma_{t-1}}<0$ and
\[
F_t(1)\;=\;\frac{1}{\gamma_t}-C\;\ge\; \frac{1}{\gamma_{\max}}-C
\;\ge\; 2(1-\vartheta)^2\mu-2\theta(1-\vartheta)^2\mu
\;=\;2(1-\vartheta)^2(1-\theta)\mu\;>0,
\]
using $\gamma_t\le\gamma_{\max}\le\frac{1}{2(1-\vartheta)^2\mu}$ and $\theta<1$.
Moreover, rearranging \eqref{para_alpha} gives
\begin{equation}\label{eq:alpha-lb-polished}
\frac{\alpha_t^2}{\gamma_t}
\;=\;(1-\alpha_t)\frac{\alpha_{t-1}^2}{\gamma_{t-1}}+C\alpha_t
\;>\; (1-\alpha_t)C+C\alpha_t\;=\;C,
\end{equation}
where we used $\frac{\alpha_{t-1}^2}{\gamma_{t-1}}>C$ (true at $t=0$ and preserved by \eqref{eq:alpha-lb-polished}).
Hence $\alpha_t>\sqrt{C\gamma_t}=(1-\vartheta)\sqrt{2\theta\mu\gamma_t}$.
Therefore,
\[
\lambda_t
\;=\;\prod_{i=1}^t(1-\Theta_i\alpha_i)
\;\le \;\prod_{i=1}^t\Bigl(1-\Theta_i(1-\vartheta)\sqrt{2\theta\mu\gamma_i}\Bigr)
\;=\;\exp\!\Bigl(\sum_{i=1}^t\Theta_i\ln\bigl(1-(1-\vartheta)\sqrt{2\theta\mu\gamma_i}\bigr)\Bigr).
\]

\medskip
\noindent\textbf{Part D.}
If $\Theta_t=0$, then $\alpha_t=\alpha_{t-1}$ and the claim propagates. 

Assume $\Theta_t=1$.
Rewrite the first part of \eqref{para_alpha} as
\[
\alpha_t^2+(b_t-c_t)\alpha_t-b_t=0,
\qquad
b_t:=\frac{\gamma_t}{\gamma_{t-1}}\alpha_{t-1}^2,\quad
c_t:=C\gamma_t.
\] Here, $b_t> c_t$ can be derived by \eqref{eq:alpha-lb-polished}. Thus, letting $R^+(b,c)$ denote the positive root:
\[
R^+(b,c):=\frac{-(b-c)+\sqrt{(b-c)^2+4b}}{2},
\qquad\text{so that}\qquad
\alpha_t=R^+(b_t,c_t).
\]  
By $\gamma_t\le \gamma_{t-1}/\nu$ and $\alpha_{t-1}<1$, we have $0\le b_t\le 1/\nu$.
Also $\gamma_t\le\gamma_{\max}$ implies
\[
0\;\le\; c_t\;=\;C\gamma_t\;\le\; C\gamma_{\max}\;\le\; \mathrm{sgn}(\mu)\,\theta\;=:\;c_\mu,
\]
where $c_\mu=\theta$ for $\mu>0$ and $c_\mu=0$ for $\mu=0$. 
On $b\ge0,c\ge0$, one checks $\partial_bR^+(b,c)\ge0$ and $\partial_cR^+(b,c)\ge0$ (monotonicity of the positive root),
hence
\[
\alpha_t=R^+(b_t,c_t)\le R^+(1/\nu,c_\mu)
=\frac{-(1/\nu-c_\mu)+\sqrt{(1/\nu-c_\mu)^2+4/\nu}}{2}
=\frac{2}{(1-c_\mu\nu)+\sqrt{(1-c_\mu\nu)^2+4\nu}}
<1.
\]
Therefore $\alpha_t\le \bar\alpha:=\max\{\alpha_0, R^+(1/\nu,c_\mu)\}<1$ for all $t$.
\end{proof}

\begin{lemma}[\cite {Xie2024} Lemma~3.5]\label{lemma_3.2}
    For any positive integer $t$ and any $\hat{p}\in\left(\tfrac{1}{2},1\right]$, we have
\[
\mathbb{P}\!\left\{
T_{\boldsymbol{\varepsilon}} > t \;\text{ and }\;
\sum_{k=1}^{t} I_k \;\ge\; \hat{p}t \;\text{ and }\;
\sum_{k=1}^{t} \Lambda_k \Theta_k I_k \;<\;
\Bigl(\hat{p}-\tfrac{1}{2}\Bigr) t - \tfrac{D}{2}
\right\} = 0,
\]
where
$
D \;=\; \max\!\left\{ -\frac{\ln \gamma_0 - \ln \bar{\gamma}}{\ln \nu},\, 0 \right\}.
$
\end{lemma}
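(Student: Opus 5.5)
We will prove the statement by a deterministic counting argument on the sample path, working on the event $\{T_{\boldsymbol\varepsilon}>t\}\cap\{\sum_{k\le t}I_k\ge\hat pt\}$. The only inputs are the step-size dynamics \eqref{eq:gamma-dyn} and property \Cref{hyp_1_4}, namely $(1-\Lambda_k)I_k\le\Theta_k$ for $k<T_{\boldsymbol\varepsilon}$. We partition the trials $k\le t$ into four classes:
\begin{itemize}
\item (a) $\Lambda_k\Theta_kI_k=1$, the good steps;
\item (b) $\Lambda_k=1$, $I_k=1$, $\Theta_k=0$;
\item (c) $\Lambda_k=0$, $I_k=1$, which are accepted by \Cref{hyp_1_4};
\item (d) $I_k=0$.
\end{itemize}
Since $\sum_k I_k\ge\hat pt$, at most $(1-\hat p)t$ trials fall in class (d). Let $n_a,n_b,n_c$ be the class sizes.

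We track $\log_{1/\nu}\hat\gamma_k$. Each accepted step at a small step size (that is, $\Lambda_k=0$, and in particular every class (c) trial) multiplies $\hat\gamma$ by exactly $\nu^{-1}$. This holds because $\bar\gamma\le\gamma_{\max}$ and $\hat\gamma_k<\bar\gamma$, so $\hat\gamma_k/\nu$ is not capped, or, if it is capped, the new value is still at least $\bar\gamma$. Each rejection multiplies by $\nu$. Define the lifted counter $s_k:=\lfloor\log_{1/\nu}(\hat\gamma_k/\bar\gamma)\rfloor$ in its small-step region. Then:
\begin{itemize}
\item every small-step trial with $\Theta_k=1$ raises $s$ by one;
\item every rejection lowers $\hat\gamma$ by one level;
\item large-step trials cannot fall below the threshold by more than one level per rejection.
\end{itemize}
The standard argument of \cite{Xie2024} compares, over the trials at small steps, the number of up-moves with the number of down-moves. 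Since the level starts at $\log_{1/\nu}(\hat\gamma_1/\bar\gamma)\ge -D$ (using $\hat\gamma_1=\nu\gamma_0$, with the $\pm1$ absorbed), the number of small-step trials with $I_k=1$ is at most the number of rejections plus $D$. Rejections only occur in classes (b) and (d). Class (c) trials at small step sizes therefore satisfy $n_c\le n_b+n_d+D$, up to the accounting of trials that cross the threshold. Likewise, each class (b) trial is a large-step rejection, which must be "paid back" by an acceptance at large step size (class (a), or large-step unreliable accepted) before another large-step rejection can move the step down. We will show $n_b\le n_a+n_d$, roughly, by matching each large-step rejection with either a subsequent or preceding large-step acceptance or a step that drops below $\bar\gamma$ and must be recovered.

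Combining these bounds gives
\[
\hat pt\le n_a+n_b+n_c\le n_a+n_b+(n_b+n_d+D).
\]
It then remains to bound $n_b$ and $n_d$ in terms of $n_a$ and $(1-\hat p)t$. The target inequality is $2n_a\ge(2\hat p-1)t-D$, which is equivalent to $n_a\ge(\hat p-\tfrac12)t-\tfrac D2$. This is exactly the claimed zero-probability statement. The cleanest route is to argue with a potential: one unit per level for small steps, plus a careful cap at $\gamma_{\max}$. We would follow the proof of \cite[Lemma~3.5]{Xie2024} essentially verbatim. That proof uses only the multiplicative step-size rule and the implication $(1-\Lambda_k)I_k\le\Theta_k$, both of which hold here for $k<T_{\boldsymbol\varepsilon}$.

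The main obstacle is this bookkeeping step. The delicate point is that unreliable steps ($I_k=0$) can be accepted at any step size and then push $\hat\gamma$ upward. We need to check that such steps are correctly charged against the $(1-\hat p)t$ budget. We also need to check that the $\gamma_{\max}$ cap, with $\bar\gamma\le\gamma_{\max}$, does not break the level-counting.
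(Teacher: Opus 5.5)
\textbf{There is a genuine gap, and it is structural rather than a bookkeeping detail.} You say two inputs suffice: the step-size rule \eqref{eq:gamma-dyn} and the implication $(1-\Lambda_k)I_k\le\Theta_k$. They do not imply the bound.

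Take $\hat\gamma_1=\bar\gamma$, so $D=1$. Let every trial be reliable, and let every trial with $\Lambda_k=1$ be rejected. The trial step then alternates $\bar\gamma,\nu\bar\gamma,\bar\gamma,\dots$. The trials at $\nu\bar\gamma$ are forced to be accepted and return to $\min\{\bar\gamma,\gamma_{\max}\}=\bar\gamma$, so both of your inputs hold. Yet $\sum_{k\le t}\Lambda_k\Theta_kI_k=0$ while $\sum_{k\le t}I_k=t$.

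For $\hat p=1$ and any $t\ge2$ this path lies in the event of the statement. It has positive probability, for example when the $I_k$ are i.i.d.\ Bernoulli$(p)$, exactly the small trials are accepted, and the objective values keep $T_{\boldsymbol{\varepsilon}}>t$.

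This is precisely where your matching $n_b\le n_a+n_d$ breaks. Each class-(b) rejection is repaid by a class-(c) climb, never by a good step. A count of this type only works if there is an overlap at the threshold: reliable trials at the threshold step itself must be both forced to succeed and counted as large. That requires two things:
\begin{itemize}
\item a non-strict version of property \ref{hyp_1_4} of \Cref{hyp_1};
\item a threshold lying on the lattice $\{\hat\gamma_1\nu^j\}$, compatible with the cap. Otherwise the same alternation occurs between $(\bar\gamma,\bar\gamma/\nu)$ and $(\nu\bar\gamma,\bar\gamma)$.
\end{itemize}
Here acceptance is forced only strictly below $\bar\gamma$, so you cannot invoke \cite[Lemma~3.5]{Xie2024} ``verbatim''. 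The paper's own proof is just that citation plus $\Lambda_k\ge U_k$, so it does not close this hole either.

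\textbf{Two further steps also fail.}

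First, your claim that the initial level $\log_{1/\nu}(\hat\gamma_1/\bar\gamma)$ is at least $-D$ is backwards. That level equals $\log_{1/\nu}(\gamma_0/\bar\gamma)-1\le D-1$, and $D=0$ whenever $\hat\gamma_1\le\nu\bar\gamma$, which is the usual conservative start. In that case, already for $t=1$ the event in the statement reduces to $\{T_{\boldsymbol{\varepsilon}}>1,\,I_1=1\}$, because $\Lambda_1=0$. The initial climb from below is charged nowhere.

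Second, your two intermediate bounds, even if granted, only give $3n_a\ge(4\hat p-3)t-D$. On the event one has $(2\hat p-1)t\le n_a+n_b+n_c-n_d$, so the inequality you actually need is the single estimate $n_b+n_c\le n_a+n_d+D$.

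\textbf{What does hold is a potential argument with a lowered threshold.} Let $\psi_k$ be the number of unit level moves needed to bring $\hat\gamma_k$ into the band $[\nu\bar\gamma,\bar\gamma)$.
\begin{itemize}
\item Every reliable rejection (necessarily at $\hat\gamma_k\ge\bar\gamma$) lowers $\psi$ by exactly one.
\item Every reliable trial below $\nu\bar\gamma$ (necessarily accepted, and never capped) lowers $\psi$ by exactly one.
\item Every other trial raises $\psi$ by at most one. The cap is harmless since $\bar\gamma\le\gamma_{\max}$.
\end{itemize}
This yields $\sum_{k\le t}\mathbf 1\{\hat\gamma_k\ge\nu\bar\gamma\}\Theta_kI_k\ge(\hat p-\tfrac12)t-\tfrac{\psi_1}{2}$. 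Here $\psi_1\le D$ when $\hat\gamma_1\ge\nu\bar\gamma$. Otherwise $\psi_1=\lceil\log_{1/\nu}(\nu\bar\gamma/\hat\gamma_1)\rceil$ while $D=0$.

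So a provable version lowers the threshold in $\Lambda_k$ to $\nu\bar\gamma$ and lets $D$ account for starts below the band.
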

\begin{proof}
The result follows directly from Lemma~3.5 in \cite{Xie2024}, combined with the fact that $\Lambda_k \ge U_k$ for all $k\in[t]$. Here, the variable "$U_k$" in \cite{Xie2024} corresponds to $\mathbf{1}\{\min\{\hat{\gamma}_t,\hat{\gamma}_{t+1}\}\ge\bar{\gamma}\}$ in our context.
\end{proof}

\medskip
\begin{proof}[\textbf{\emph{Proof of \Cref{High_pro_rate}}}]\label{prof_High_pro_rate}
Write $\mathcal I_t:=\mathcal I_t(\hat p)$, $N_{\mathrm{good}}(t):=\sum_{i=1}^t\Theta_i\Lambda_i I_i$ and  $\bar N_{\mathrm{good}}(t)=\bigl[(\hat p-\tfrac12)t-\tfrac D2\bigr]_+$for brevity.

\medskip
\noindent\textbf{ Case $\mu=0$.}
By Lemma~\ref{para_lemma_b}, for all $t<T_{\boldsymbol\varepsilon}$,
\[
\lambda_t
\ \le\ 
\Bigl(1+\frac{\alpha_0}{2\sqrt{\gamma_0}}\sum_{i=1}^t\Theta_i\sqrt{\gamma_i}\Bigr)^{-2}.
\]
Using $\Lambda_i=1\Rightarrow\gamma_i\ge\bar\gamma$ and $I_i\le 1$,
\[
\sum_{i=1}^t\Theta_i\sqrt{\gamma_i}
\ \ge\
\sqrt{\bar\gamma}\sum_{i=1}^t\Theta_i\Lambda_i I_i=\sqrt{\bar\gamma}\,N_{\mathrm{good}}(t),
\]
hence on $\{t<T_{\boldsymbol\varepsilon}\}$,
\[
\lambda_t
\ \le\
\Bigl(1+\frac{\alpha_0}{2}\sqrt{\frac{\bar\gamma}{\gamma_0}}
N_{\mathrm{good}}(t)\Bigr)^{-2}.
\]
By \Cref{lemma_3.2}, we have
\[
\PP\Bigl\{
t<T_{\boldsymbol\varepsilon},\ \mathcal I_t=1,\ 
N_{\mathrm{good}}(t)<(\hat p-\tfrac12)t-\tfrac D2
\Bigr\}=0,
\]
so on the event $\{t<T_{\boldsymbol\varepsilon},\ \mathcal I_t=1\}$,
\(
\sum_{i=1}^t\Theta_i\Lambda_i I_i=N_{\mathrm{good}}(t)\ge \bar N_{\mathrm{good}}(t)
\)
almost surely. Therefore,
\[
\lambda_t
\ \le\
\Bigl(1+\frac{\alpha_0}{2}\sqrt{\frac{\bar\gamma}{\gamma_0}}\,\bar N_{\mathrm{good}}(t)\Bigr)^{-2}
=
\frac{4}{\bigl(2+\alpha_0\sqrt{\bar\gamma/\gamma_0}\,\bar N_{\mathrm{good}}(t)\bigr)^2}
\quad\text{a.s. on }\{t<T_{\boldsymbol\varepsilon},\ \mathcal I_t=1\}.
\]
This is exactly the statement \eqref{pro_lambda_t_rate_mu=0}.

\medskip
\noindent\textbf{ Case $\mu>0$.}
By Lemma~\ref{para_lemma_c},
\[
\lambda_t
\ \le\
\exp\Bigl(\sum_{i=1}^t\Theta_i\log\bigl(1-(1-\vartheta)\sqrt{2\theta\mu\gamma_i}\,\bigr)\Bigr).
\]
Let $C:=2\theta\mu(1-\vartheta)^2$. Since $\gamma\mapsto \log(1-\sqrt{C\gamma})$ is decreasing and nonpositive,
\[
\sum_{i=1}^t\Theta_i\log(1-\sqrt{C\gamma_i})
\ \le\
\sum_{i=1}^t\Theta_i\Lambda_i I_i\,\log(1-\sqrt{C\bar\gamma})=N_{\mathrm{good}}(t)\,\log(1-\sqrt{C\bar\gamma}).
\]
On $\{t<T_{\boldsymbol\varepsilon},\ \mathcal I_t=1\}$, \Cref{lemma_3.2} implies
\(
\sum_{i=1}^t\Theta_i\Lambda_i I_i=N_{\mathrm{good}}(t)\ge \bar N_{\mathrm{good}}(t)
\)
a.s., hence
\[
\lambda_t
\ \le\
\exp\!\Bigl(\bar N_{\mathrm{good}}(t)\log(1-\sqrt{C\bar\gamma})\Bigr)
=
\Bigl(1-(1-\vartheta)\sqrt{2\theta\mu\bar\gamma}\Bigr)^{\bar N_{\mathrm{good}}(t)}
\quad\text{a.s. on }\{t<T_{\boldsymbol\varepsilon},\ \mathcal I_t=1\}.
\]
Equivalently, \eqref{pro_lambda_t_rate_mu>0} holds, completing the proof.
\end{proof}

\medskip

\begin{proof}[\textbf{\emph{Proof of \Cref{energy_1}}}]\label{proof_energy}~ We proceed by constructing an intermediate mixed energy state $\mathcal M_t^{\mathrm{Post}}$ to decouple the gradient update from the momentum extrapolation. The proof comprises three main steps: bounding the prior energy, absorbing the descent via the relaxed Armijo condition, and structurally matching the coefficients.

\textbf{Notations.} For the SZO feedback at $t$-th query, recall the function value  error defined in  \Cref{subsection_3.1}:   
\[
E_t(u):=f_t(u)-\phi(u)\qquad (u\in\{ x_t,\hat y_t,\hat x_{t+1}\}),
\]
so that $f_t(u)=\phi(u)+E_t(u)$.
Condition on $\Theta_t=1$ (both \eqref{Check_1}--\eqref{Check_2} hold). 
We drop all hat-notations at step $t$:
\[
y_t:=\hat y_t,\quad x_{t+1}:=\hat x_{t+1},\quad \gamma_t:=\hat\gamma_t, \quad \hat{\rho}_t=\rho_t.
\]

\medskip
\textbf{Rewriting the two checks in terms of $\phi$.}
From Condition~\eqref{Check_1} and $f(\cdot)=\phi(\cdot)+E(\cdot)$,
\begin{equation}\label{eq:check1-phi}
\phi(x_{t+1})
~\le~
\phi(y_t)-\theta\gamma_t\|\mathbf G_t\|^2+\epsilon_f^{(t)}+E_t(y_t)-E_t(x_{t+1}).
\end{equation}
From Condition~\eqref{Check_2},
\begin{equation}\label{eq:check2-phi}
\langle \mathbf G_t,\,y_t-x_t\rangle
~\ge~
\phi(y_t)-\phi(x_t)-\epsilon_g^{(t)}-\epsilon_f^{(t)}+E_t(y_t)-E_t(x_t).
\end{equation}

\medskip
\textbf{Energy template.}
Fix an optimizer $x^*\in\arg\min \phi$ and introduce two \emph{mixed energy} forms
\[
\mathcal M_t^{\mathrm{Post}}
:=\frac12\|c_t(y_t-x_t)+d_t(y_t-x^*)\|^2,
\qquad
\mathcal M_t^{\mathrm{Prior}}
:=\frac12\|a_{t-1}(\bar x_t-x_{t-1})+b_{t-1}(x_t-x^*)\|^2,
\]
and set
\[
\Phi_{t+1}
:=\phi(x_{t+1})-\phi^*+\mathcal M_{t+1}^{\mathrm{Prior}},
\qquad
\mathcal M_{t+1}^{\mathrm{Prior}}
=\frac12\|a_t(\bar x_{t+1}-x_t)+b_t(x_{t+1}-x^*)\|^2.
\]
Here we assume $a_t,b_t,c_t,d_t$ are $\mathcal F_t$-measurable scalars to be specified, thus $\mathcal{M}_{t}^{\mathrm{Prior}}\in\mathcal{F}_{t-1}$ and $\mathcal{M}_t^{\mathrm{Post}}\in\mathcal{F}_t$.

We will enforce the two relations
\begin{align}
\Phi_{t+1}
~&\le~
(1-\alpha_t)\bigl(\phi(x_t)-\phi^*+\mathcal M_t^{\mathrm{Post}}\bigr)
~-\mathcal T_C+\mathcal T_E,\label{eq:goal1}\\
\mathcal M_t^{\mathrm{Post}}
~&\le~
\mathcal M_t^{\mathrm{Prior}},\label{eq:goal2}
\end{align}
which together yield \eqref{energy_Ite} after identifying $\Phi_t:=\phi(x_t)-\phi^*+\mathcal M_t^{\mathrm{Prior}}$.

\medskip
\textbf{Step 1: bounding $\mathcal M_{t+1}^{\mathrm{Prior}}$ and isolating the compensation term.}
Using $\bar x_{t+1}=y_t-\gamma_t'\mathbf G_t$ and $x_{t+1}=y_t-\gamma_t\mathbf G_t$ (successful step),
\begin{align*}
\mathcal M_{t+1}^{\mathrm{Prior}}~
&=~\frac12\|a_t(y_t-x_t)+b_t(y_t-x^*)-(a_t\gamma_t'+b_t\gamma_t)\mathbf G_t\|^2\\
&=~\frac{a_t^2}{2}\|y_t-x_t\|^2+\frac{b_t^2}{2}\|y_t-x^*\|^2
+a_tb_t\langle y_t-x^*,y_t-x_t\rangle
+\frac12(a_t\gamma_t'+b_t\gamma_t)^2\|\mathbf G_t\|^2\\
&~\quad
-(a_t\gamma_t'+b_t\gamma_t)b_t\langle y_t-x^*,\mathbf G_t\rangle
-a_t(a_t\gamma_t'+b_t\gamma_t)\langle y_t-x_t,\mathbf G_t\rangle.
\end{align*}
For the $y_t-x^*$ term, by $\mu$-strong convexity of $\phi$,
\[
\big\langle y_t-x^*,\nabla\phi(y_t)\big\rangle
\;\ge\;
\phi(y_t)-\phi^*+\frac{\mu}{2}\|y_t-x^*\|^2,
\]
hence
\begin{align*}
-\langle y_t-x^*,\mathbf G_t\rangle~
&=~
-\langle y_t-x^*,\nabla\phi(y_t)\rangle
-\langle y_t-x^*,\mathbf G_t-\nabla\phi(y_t)\rangle\\
~&\le~
-\Bigl(\phi(y_t)-\phi^*+\frac{\mu}{2}\|y_t-x^*\|^2\Bigr)
-\langle y_t-x^*,\mathbf G_t-\nabla\phi(y_t)\rangle.
\end{align*}
For the $y_t-x_t$ term, apply \eqref{eq:check2-phi}:
\[
-\langle y_t-x_t,\mathbf G_t\rangle
~\le~
-(\phi(y_t)-\phi(x_t))
+\epsilon_g^{(t)}+\epsilon_f^{(t)}+E_t(x_t)-E_t(y_t).
\]
Plugging these two bounds into $\mathcal M_{t+1}^{\mathrm{Prior}}$ gives
\begin{align}
\mathcal M_{t+1}^{\mathrm{Prior}}~
&\le~
\frac{a_t^2}{2}\|y_t-x_t\|^2+\frac12\Bigl[b_t^2-(a_t\gamma_t'+b_t\gamma_t)\mu b_t\Bigr]\|y_t-x^*\|^2
+a_tb_t\langle y_t-x^*,y_t-x_t\rangle
\notag\\
&~\quad
-(a_t\gamma_t'+b_t\gamma_t)b_t\bigl(\phi(y_t)-\phi^*\bigr)
-a_t(a_t\gamma_t'+b_t\gamma_t)\bigl(\phi(y_t)-\phi(x_t)\bigr)
+\frac12(a_t\gamma_t'+b_t\gamma_t)^2\|\mathbf G_t\|^2
\notag\\
&~\quad
+\underbrace{(a_t\gamma_t'+b_t\gamma_t)b_t
\langle y_t-x^*,\nabla\phi(y_t)-\mathbf G_t\rangle}_{\text{gradient error}}
\notag\\
&~\quad
+\underbrace{a_t(a_t\gamma_t'+b_t\gamma_t)\bigl(\epsilon_g^{(t)}+\epsilon_f^{(t)}+E_t(x_t)-E_t(y_t)\bigr)}_{\eqref{eq:check2-phi}}.
\label{eq:Mprior-bound}
\end{align}
Now split the $\phi(y_t)-\phi^*$ and $\mu\|y_t-x^*\|^2$ contributions into $(1-\vartheta)$ and $\vartheta$ parts:
\begin{align}
-(a_t\gamma_t'+b_t\gamma_t)b_t\Bigl(\phi(y_t)-\phi^*+\frac{\mu}{2}\|y_t-x^*\|^2\Bigr)
~&=~
-(1-\vartheta)(a_t\gamma_t'+b_t\gamma_t)b_t(\cdots)
-\vartheta(a_t\gamma_t'+b_t\gamma_t)b_t(\cdots)\notag\\
~&=~
-(1-\vartheta)(a_t\gamma_t'+b_t\gamma_t)b_t(\cdots)
-\mathcal T_C,\label{eq:TC-def-pre}
\end{align}
where we define the \emph{compensation term}
\begin{equation}\label{eq:TC-def}
\mathcal T_C
:=~
\vartheta(a_t\gamma_t'+b_t\gamma_t)b_t
\Bigl(\phi(y_t)-\phi^*+\frac{\mu}{2}\|y_t-x^*\|^2\Bigr)\;\ge 0.
\end{equation}

\medskip
\textbf{Step 2: bounding $\Phi_{t+1}$ using \eqref{Check_1}.}
From \eqref{eq:check1-phi},
\[
\phi(x_{t+1})-\phi^*
~\le~
\phi(y_t)-\phi^*-\theta\gamma_t\|\mathbf G_t\|^2+\epsilon_f^{(t)}+E_t(y_t)-E_t(x_{t+1}),
\]
thus combining with \eqref{eq:Mprior-bound} yields
\begin{align}
\Phi_{t+1}
~&=~
\phi(x_{t+1})-\phi^*+\mathcal M_{t+1}^{\mathrm{Prior}}\notag\\
~&\le~
\phi(y_t)-\phi^*
-\theta\gamma_t\|\mathbf G_t\|^2
+\epsilon_f^{(t)}+E_t(y_t)-E_t(x_{t+1})
+\mathcal M_{t+1}^{\mathrm{Prior}}\notag\\
~&\le~
\Bigl(1-(a_t\gamma_t'+b_t\gamma_t)\bigl(a_t+(1-\vartheta)b_t\bigr)\Bigr)\phi(y_t)
+(a_t\gamma_t'+b_t\gamma_t)a_t\,\phi(x_t)
-\Bigl(1-(a_t\gamma_t'+b_t\gamma_t)(1-\vartheta)b_t\Bigr)\phi^*
\notag\\
~&\quad~
+\frac{a_t^2}{2}\|y_t-x_t\|^2
+\frac12\Bigl[b_t^2-(1-\vartheta)(a_t\gamma_t'+b_t\gamma_t)\mu b_t\Bigr]\|y_t-x^*\|^2
+a_tb_t\langle y_t-x^*,y_t-x_t\rangle\notag\\
~&\quad~
+\Bigl(\frac12(a_t\gamma_t'+b_t\gamma_t)^2-\theta\gamma_t\Bigr)\|\mathbf G_t\|^2
-\mathcal T_C
+\widetilde{\mathcal T}_E,
\label{eq:Phi-main}
\end{align}
where $\widetilde{\mathcal T}_E$ collects all remaining error and tolerance terms:
\begin{align}
\widetilde{\mathcal T}_E
&:=
(a_t\gamma_t'+b_t\gamma_t)b_t\langle y_t-x^*,\nabla\phi(y_t)-\mathbf G_t\rangle
+a_t(a_t\gamma_t'+b_t\gamma_t)\bigl(\epsilon_g^{(t)}+\epsilon_f^{(t)}+E_t(x_t)-E_t(y_t)\bigr)\notag\\
&~~\quad
+\epsilon_f^{(t)}+E_t(y_t)-E_t(x_{t+1}).\label{eq:TtildeE}
\end{align}

\medskip
\textbf{Step 3: choosing coefficients and $\gamma_t'$ (structure matching).}
Choose
\begin{equation}\label{eq:coeff-choice}
a_t:=\frac{1-\alpha_t}{\sqrt{2\theta\gamma_t}},
\qquad
b_t:=\frac{\alpha_t}{(1-\vartheta)\sqrt{2\theta\gamma_t}},
\qquad
a_t\gamma_t'+b_t\gamma_t:=\sqrt{2\theta\gamma_t}.
\end{equation}
Then\begin{equation}\label{para_specification}
    (a_t\gamma_t'+b_t\gamma_t)\bigl(a_t+(1-\vartheta)b_t\bigr)=1,
\qquad
(a_t\gamma_t'+b_t\gamma_t)a_t=1-\alpha_t,
\qquad
\frac12(a_t\gamma_t'+b_t\gamma_t)^2=\theta\gamma_t,
\end{equation}

so the $\phi(y_t)$ and $\|\mathbf G_t\|^2$ terms in \eqref{eq:Phi-main} match exactly the desired factor $(1-\alpha_t)$.

Solving $a_t\gamma_t'+b_t\gamma_t=\sqrt{2\theta\gamma_t}$ under \eqref{eq:coeff-choice} gives
\begin{equation}\label{eq:gamma-prime-clean}
\gamma_t'
~=~
\frac{2\theta-\frac{\alpha_t}{1-\vartheta}}{1-\alpha_t}\,\gamma_t.
\end{equation}
Moreover, under $0\le \vartheta<\frac{1-\theta}{2-\theta}$ we have
\(
(1-\vartheta)^{-1} < 2-\theta
\),
hence for any $\alpha_t\in(0,1)$,
\[
2\theta-\frac{\alpha_t}{1-\vartheta}
~\ge~
2\theta-(2-\theta)\alpha_t
~=~
2\theta+(\theta-2)\alpha_t,
\]
so the $\max\{\cdot,\cdot\}$ in \eqref{gamma_t_prime} indeed selects the branch \eqref{eq:gamma-prime-clean}.
Thus \eqref{eq:gamma-prime-clean} is consistent with \Cref{Momentum_para}.

\medskip
\textbf{Step 4: choosing $c_t,d_t$.}
Let $d_t:=b_{t-1}$ and define
\[
\beta_t:=2(1-\vartheta)^2\theta\mu\gamma_t\,\alpha_t^{-1}\in[0,1),
\qquad
c_t:=d_t\cdot \frac{(1-\vartheta)(1-\alpha_t)}{\alpha_t(1-\beta_t)}.
\]
Then taking \eqref{para_specification} into \eqref{eq:Phi-main}, the terms (except for $\Vert \mathbf{G}_t\Vert^2$, $-\mathcal{T}_C$ and $\widetilde{\mathcal{T}}_E$) are dominated by $(1-\alpha_t)\mathcal M_t^{\mathrm{Post}}$ if we take $c_t$, $d_t$ into $\mathcal M_t^{\mathrm{Post}}
:=\frac12\|c_t(y_t-x_t)+d_t(y_t-x^*)\|^2$ and provide that
\begin{equation}\label{eq:alpha-rec}
\frac{\alpha_t^2}{\gamma_t}
=(1-\alpha_t)\frac{\alpha_{t-1}^2}{\gamma_{t-1}}
+2\theta(1-\vartheta)^2\mu\alpha_t,
\end{equation}
which is exactly the coupling recursion \eqref{para_alpha} when $\Theta_t=1$  (and its well-definedness is ensured by \Cref{Para_lemma}).

\medskip
\textbf{Step 5: choosing $\rho_t$ to ensure $\mathcal M_t^{\mathrm{Post}}\le \mathcal M_t^{\mathrm{Prior}}$.}
With the above choices, define
\[
\rho_t
:=
\frac{(1-\alpha_{t-1})(1-\beta_t)\alpha_t}{\alpha_{t-1}\Bigl(1-\alpha_t+\alpha_t(1-\beta_t)(1-\vartheta)^{-1}\Bigr)}
\qquad
\text{so that}\qquad
y_t\;=\;x_t+\rho_t(\bar x_t-x_{t-1}),
\]
which is identical to \Cref{Momentum_para} (after dropping hats).
Then one verifies that the two vectors inside the norms defining
$\mathcal M_t^{\mathrm{Post}}$ and $\mathcal M_t^{\mathrm{Prior}}$ coincide, hence
\(
\mathcal M_t^{\mathrm{Post}}=\mathcal M_t^{\mathrm{Prior}}
\)
and \eqref{eq:goal2} holds.

\medskip
\textbf{Step 6: Obtain \eqref{energy_Ite}.}
Define the auxiliary point
\[
z_t
:=
x_t+\frac{(1-\vartheta)(1-\alpha_{t-1})}{\alpha_{t-1}}(\bar x_t-x_{t-1}),
\]
so that
\(
\mathcal M_t^{\mathrm{Prior}}=\frac{\alpha_{t-1}^2}{4\theta\gamma_{t-1}(1-\vartheta)^2}\|z_t-x^*\|^2
\)
and likewise
\(
\mathcal M_{t+1}^{\mathrm{Prior}}=\frac{\alpha_t^2}{4\theta\gamma_t(1-\vartheta)^2}\|z_{t+1}-x^*\|^2
\).
With Step 3--5 plugged into \eqref{eq:Phi-main}, we get
\[
\Phi_{t+1}
\;\le\;
(1-\alpha_t)\bigl(\phi(x_t)-\phi^*+\mathcal M_t^{\mathrm{Prior}}\bigr)
-\mathcal T_C+\widetilde{\mathcal T}_E.
\]
Finally, using $(a_t\gamma_t'+b_t\gamma_t)b_t=\alpha_t/(1-\vartheta)$ and
$(a_t\gamma_t'+b_t\gamma_t)a_t=1-\alpha_t$, rewrite \eqref{eq:TC-def} and \eqref{eq:TtildeE} as
\begin{align*}
\mathcal T_C
&=
\frac{\vartheta\alpha_t}{1-\vartheta}
\Bigl(\phi(y_t)-\phi^*+\frac{\mu}{2}\|y_t-x^*\|^2\Bigr),\\
\mathcal T_E
&:=
\widetilde{\mathcal T}_E\\
&=
\frac{\alpha_t}{1-\vartheta}\langle y_t-x^*,\nabla\phi(y_t)-\mathbf G_t\rangle
+(1-\alpha_t)\Bigl(\epsilon_g^{(t)}+\epsilon_f^{(t)}+E_t(x_t)-E_t(y_t)\Bigr)
+\epsilon_f^{(t)}+E_t(y_t)-E_t(x_{t+1})\\
&=
\frac{\alpha_t}{1-\vartheta}\langle y_t-x^*,\nabla\phi(y_t)-\mathbf G_t\rangle
+(1-\alpha_t)\Bigl(\epsilon_g^{(t)}+2\epsilon_f^{(t)}+E_t(x_t)-E_t(x_{t+1})\Bigr)
+\alpha_t\Bigl(\epsilon_f^{(t)}+E_t(y_t)-E_t(x_{t+1})\Bigr),
\end{align*}
which matches exactly the statement in \Cref{energy_1}. This completes the proof.
\end{proof}

\subsection{Proof for \Cref{sub_complexity_>0}}\label{sec_mu>0}
\begin{propo}\label{proposition_second}
Under \Cref{pre_assumption_1}, let the trial step-size sequence $\{\hat{\gamma}_t\}$ be generated by \Cref{algo}, and let $\{\alpha_t\}$ be determined from $\{\hat{\gamma}_t\}$ and $(\theta,\vartheta,\mu)$ as defined in \Cref{para}. Suppose the Lyapunov sequence $\{\Phi_t\}$ is defined by \eqref{Energy}. We define the indicator $\Lambda_t$ through the threshold
\begin{equation}\label{I_t_mu>0}
\bar{\gamma}
:= 
\min\left\{
\frac{2\bigl(1-2\vartheta-\theta(1-\vartheta)\bigr)}{L(1-\vartheta)},
\gamma_{\max}
\right\}.
\end{equation}

If the ingredients of \Cref{hyp_1} are specified by
\begin{align*}
p'
&=
\frac{1}{2}
+
\frac{1}{C_\kappa}
\ln\!\left(
1+\frac{C_3'+C_4S_f+C_5S_g}{\varepsilon}
\right),
\\
p
&=
1
-
\frac{\upsilon_g}{\bigl((\epsilon_g')^2-\epsilon_g^2\bigr)^{1+\delta}}
-
\frac{\upsilon_f}{(\epsilon_f'-\epsilon_f)^{1+\varrho}},
\\
\mathcal{V}_t(W_t,D_t;\varepsilon)
&=
1
+
\frac{1}{\varepsilon}\bigl(C_3+C_4D_t+C_5W_t^2\bigr),
\\
\mathcal{R}_t(W_t,D_t;\varepsilon)
&=
\epsilon_f'
-
\frac{\vartheta}{1-\vartheta}\,\varepsilon,
\end{align*}
then \Cref{hyp_1_2,hyp_1_3,hyp_1_4,hyp_1_5} hold for every $\varepsilon>\varepsilon_\phi'$ (with $\varepsilon_\phi'$ defined in \eqref{epsilon_lower_mu>0}) and every
\[
\left\lceil \frac{D}{\hat p-\frac{1}{2}} \right\rceil
\le t < T_\varepsilon,
\]
with $\varepsilon_0=0$ and
\begin{align*}
\mathcal{H}_{\bar{\gamma},\hat p}(t)
&=
\left(
1-(1-\vartheta)\sqrt{2\theta\mu\bar{\gamma}}
\right)^{(\hat p-\frac{1}{2})t-\frac{D}{2}}
\left(
1+\frac{C_3'+C_4S_f+C_5S_g}{\varepsilon}
\right)^t,
\\
P_t^{\mathrm{Tail}}
&=
\exp\!\left(
-\frac{(p-\hat p)^2}{2p^2}\,t
\right)
+
P_1(t,\delta,S_g)
+
\frac{\upsilon_f}{8^{\varrho}S_f^{1+\varrho}}\,t^{-\varrho}.
\end{align*}
Here $S_f,S_g>0$ are free positive parameters. The quantities $C_\kappa$, $C_3$, $C_3'$, $C_4$, $C_5$, and $P_1(t,\delta,S_g)$ are listed in \Cref{tab:constants_summary}.
\end{propo}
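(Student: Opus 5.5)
We verify the four items of \Cref{hyp_1} in turn for $t<T_\varepsilon$, using the function-only stopping time ($\varepsilon_\nabla=0$). Before stopping we have $\phi(y_t)-\phi^*>\varepsilon$ and $\phi(x_{t+1})-\phi^*>\varepsilon$.

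\textbf{Item \ref{hyp_1_2}.} On rejected trials, $\Phi_{t+1}=\Phi_t$ holds trivially, since $(x_{t+1},\bar x_{t+1},\gamma_t,\alpha_t)$ are all carried forward.

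On accepted trials we start from \Cref{energy_1} and bound $\mathcal T_E-\mathcal T_C$ term by term.
\begin{itemize}
\item For the gradient-error term, Young's inequality gives
\[
\tfrac{\alpha_t}{1-\vartheta}\langle y_t-x^*,\nabla\phi(y_t)-\mathbf G_t\rangle\le \tfrac{\alpha_t}{1-\vartheta}\bigl(\tfrac{\vartheta\mu}{2}\|y_t-x^*\|^2+\tfrac{1}{2\vartheta\mu}W_t^2\bigr).
\]
The quadratic part is absorbed by the $\mu$-part of $\mathcal T_C$.
\item The function-value part of $\mathcal T_C$, namely $\tfrac{\vartheta\alpha_t}{1-\vartheta}(\phi(y_t)-\phi^*)$, is at least $\tfrac{\vartheta\alpha_t}{1-\vartheta}\varepsilon$. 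Its $\alpha_t$-share pays for the residual $\epsilon_f'$ and yields $\mathcal R_t=\epsilon_f'-\tfrac{\vartheta}{1-\vartheta}\varepsilon$, which is $\le0$ because $\varepsilon>(1-\vartheta)\epsilon_f'/\vartheta$.
\item The remaining random quantities are $\alpha_t(E_t(y_t)-E_t(x_{t+1}))\le \alpha_t D_t$, the term $\alpha_t W_t^2/(2\vartheta\mu(1-\vartheta))$, and $(1-\alpha_t)(\epsilon_g^{(t)}+2\epsilon_f'+D_t)$. We make all of them multiplicative by using $\Phi_t\ge\phi(x_t)-\phi^*>\varepsilon$, which gives $X\le (1-\alpha_t)\tfrac{X}{(1-\alpha_t)\varepsilon}\Phi_t$.
\item \Cref{Para_lemma}\ref{para_lemma_d} supplies $(1-\alpha_t)^{-1}\le(1-\bar\alpha)^{-1}$.
\end{itemize}
This produces $\mathcal V_t=1+\varepsilon^{-1}(C_3+C_4D_t+C_5W_t^2)$. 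Here $C_3$ collects $\epsilon_g^{(t)}=(\epsilon_g')^2/(2\mu)$ and $2\epsilon_f'$, $C_4$ collects the $D_t$ coefficients with $(1-\bar\alpha)^{-1}$, and $C_5$ collects the coefficient of $W_t^2$. If needed, the first two entries of $\varepsilon'_\phi$ are used to let the $\vartheta$-compensation absorb $\epsilon_g'$-type constants.

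\textbf{Items \ref{hyp_1_3} and \ref{hyp_1_4}.} For \ref{hyp_1_3}, a union bound over \eqref{equ_D_t_pre} and \eqref{equ_w^2_pre} (conditioning through the tower property) gives $\mathbb P(I_t=1\mid\mathcal F_{t-1})\ge p$.

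For \ref{hyp_1_4}, assume $I_t=1$ and $\hat\gamma_t<\bar\gamma$.
\begin{itemize}
\item Condition \eqref{Check_2} holds by the strong convexity computation preceding \eqref{equ_w^2_pre}, combined with $D_t\le\epsilon_f'$.
\item For \eqref{Check_1}, $L$-smoothness gives $\phi(\hat x_{t+1})\le\phi(\hat y_t)-\hat\gamma_t\langle\nabla\phi,\mathbf G_t\rangle+\tfrac L2\hat\gamma_t^2\|\mathbf G_t\|^2$. Splitting $\nabla\phi=\mathbf G_t+(\nabla\phi-\mathbf G_t)$ and applying Young leaves an error of order $\hat\gamma_t W_t^2$.
\item The threshold $\bar\gamma$ is chosen so that $1-\tfrac L2\hat\gamma_t$ minus the Young share is at least $\theta$. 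The factor $1-2\vartheta-\theta(1-\vartheta)$ suggests Young is applied with a $\vartheta$-dependent weight.
\item The leftover error $\hat\gamma_t W_t^2\lesssim \gamma_{\max}(\epsilon_g')^2$ must fit inside $\epsilon_f'$, or else be dominated using $\|\nabla\phi(\hat y_t)\|$ being large before stopping.
\end{itemize}
I expect this dominance step to require care; I would try absorbing it through $\|\nabla\phi(\hat y_t)\|^2\ge 2\mu\varepsilon>(\epsilon_g')^2/\vartheta^2$, which is exactly the first entry of $\varepsilon'_\phi$.

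\textbf{Item \ref{hyp_1_5}, the main obstacle.} Since $\mathcal R_i\le0$, the event $B_t$ is empty with $\varepsilon_0=0$. For $A_t$, we combine \Cref{High_pro_rate} on $\{\mathcal I_t(\hat p)=1\}$ (whose complement costs the exponential term via \Cref{lemma3.1}) with
\[
\textstyle\exp(\sum_i\Theta_i\ln\mathcal V_i)\le\prod_{i\le t}\mathcal V_i\le\bigl(1+\tfrac1{t\varepsilon}\sum_i(C_3+C_4D_i+C_5W_i^2)\bigr)^t
\]
by AM--GM. It then suffices to show $\sum_i D_i\le t(\epsilon_f+S_f)$ and $\sum_iW_i^2\le t(\epsilon_g^2+S_g)$ except on small events, with $C_3'=C_3+C_4\epsilon_f+C_5\epsilon_g^2$.
\begin{itemize}
\item The $D$-part is a martingale sum $\sum_i\widetilde D_i$ with bounded $(1+\varrho)$-moments. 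The von Bahr--Esseen/Burkholder bound plus Markov give the $\upsilon_f t^{-\varrho}/(8^\varrho S_f^{1+\varrho})$ tail.
\item The $W^2$-part uses $\widetilde W_i'$ with moment order $1+\delta$. For $\delta\le1$ the same argument applies; for $\delta>1$ one uses Burkholder--Rosenthal or Fuk--Nagaev, yielding $P_1(t,\delta,S_g)$.
\end{itemize}
Choosing $t\ge\lceil D/(\hat p-\frac12)\rceil$ makes $\bar N_{\mathrm{good}}(t)=(\hat p-\frac12)t-\frac D2$, so the envelope is $\mathcal H$ as stated. Its strict decrease follows from $\hat p>p'$.
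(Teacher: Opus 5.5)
Your outline tracks the paper's proof item by item:
\begin{itemize}
\item the lower bounds $\Phi_t\ge\phi(x_t)-\phi^*>\varepsilon$ and $\phi(y_t)-\phi^*>\varepsilon$;
\item the Young split in \ref{hyp_1_2}, whose quadratic part cancels exactly against the $\mu$-part of $\mathcal T_C$;
\item $C_4=(1-\bar\alpha)^{-1}$ and $C_5=\bar\alpha/(2\mu(1-\bar\alpha)\vartheta(1-\vartheta))$, obtained from $\alpha_t/(1-\alpha_t)\le\bar\alpha/(1-\bar\alpha)$;
\item the union bound for \ref{hyp_1_3};
\item AM--GM plus martingale moment bounds for \ref{hyp_1_5}.
\end{itemize}
Centring with $\epsilon_f,\epsilon_g^2$ instead of the paper's $\epsilon_f',(\epsilon_g')^2$ is legitimate. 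It only shrinks the product bound, so it implies the envelope with the tabulated $C_3'$. There is one genuine gap and two smaller issues.

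\textbf{The Armijo part of \ref{hyp_1_4}.} Your first option cannot work: you propose an additive Young remainder of order $\hat\gamma_tW_t^2$ absorbed into $\epsilon_f'$. On $\{I_t=1\}$ you only know $D_t\le\epsilon_f'$, so $E_t(\hat x_{t+1})-E_t(\hat y_t)$ may use up the whole tolerance. You therefore need $\phi(\hat x_{t+1})\le\phi(\hat y_t)-\theta\hat\gamma_t\|\mathbf G_t\|^2$ with no additive slack.

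The fallback you mention is the actual argument, and it is the paper's. For $t<T_\varepsilon$ one has $\phi(\hat y_t)-\phi^*>\varepsilon$ at the trial point, whether or not the trial is accepted. Strong convexity ($\|\nabla\phi(y)\|^2\ge2\mu(\phi(y)-\phi^*)$) and the first entry of $\varepsilon'_\phi$ then give $\|\nabla\phi(\hat y_t)\|>\epsilon_g'/\vartheta\ge W_t/\vartheta$. Hence $(1-\vartheta)W_t\le\vartheta\|\mathbf G_t\|$ and $\langle\nabla\phi(\hat y_t),\mathbf G_t\rangle\ge\frac{1-2\vartheta}{1-\vartheta}\|\mathbf G_t\|^2$. $L$-smoothness then yields
\[
\phi(\hat x_{t+1})\le\phi(\hat y_t)-\hat\gamma_t\Bigl(\frac{1-2\vartheta}{1-\vartheta}-\frac{L\hat\gamma_t}{2}\Bigr)\|\mathbf G_t\|^2 .
\]
The bracket is at least $\theta$ precisely when $\hat\gamma_t\le\frac{2(1-2\vartheta-\theta(1-\vartheta))}{L(1-\vartheta)}$, which is where $\bar\gamma$ comes from; the paper cites Lemma~4.3 of \cite{Albert2021} for this step. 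So the gradient error is absorbed into the $\|\mathbf G_t\|^2$ term through a relative bound, never into $\epsilon_f'$.

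\textbf{Smaller points.} First, \Cref{hyp_1} requires $p\in(p',1]$, and you never check this. It is exactly the role of the third entry of $\varepsilon'_\phi$. With $K:=C_3'+C_4S_f+C_5S_g$, the condition $\varepsilon>K/[(1-(1-\vartheta)\sqrt{2\theta\mu\bar\gamma})^{-(p-\frac12)}-1]$ is equivalent to $(p-\frac12)C_\kappa>\ln(1+K/\varepsilon)$, i.e.\ $p'<p$.

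Second, von Bahr--Esseen (constant $2^{1-\varrho}$) plus Markov gives $\mathbb P\{\sum_{i\le t}\widetilde D_i>S_ft\}\le 2^{1-\varrho}\upsilon_fS_f^{-(1+\varrho)}t^{-\varrho}$. It does not give the $\upsilon_f/(8^{\varrho}S_f^{1+\varrho})\,t^{-\varrho}$ you assert. The paper's interlaced computation (Lemma~\ref{LEMMA.m_T} with $2t$ terms and threshold $S_f/2$) actually produces $2^{2-\varrho}\upsilon_fS_f^{-(1+\varrho)}t^{-\varrho}$. The $8^{-\varrho}$ comes from writing $(S_f/2)^{-(1+\varrho)}$ as $2^{-(1+\varrho)}S_f^{-(1+\varrho)}$. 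A single rare jump of size slightly above $S_ft$ shows the constant cannot in general be taken below $1$. Carry the constant you can actually prove; the $t^{-\varrho}$ rate is unaffected.
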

\begin{proof}
\begingroup
\setlength{\abovedisplayskip}{4pt}
\setlength{\belowdisplayskip}{4pt}
\setlength{\abovedisplayshortskip}{2pt}
\setlength{\belowdisplayshortskip}{2pt}
\setlength{\jot}{2pt}

Write $\varepsilon:=\varepsilon_\phi$ and recall that $T_\varepsilon$ is defined with $\varepsilon_\nabla=0$. Fix $t<T_\varepsilon$, and let $s\le t-1$ be the last successful iteration before $t$ (set $s=0$ if none). Then $x_t=\hat x_{s+1}$ and, since $s<T_\varepsilon$,
\[
\phi(x_t)-\phi^*=\phi(\hat x_{s+1})-\phi^*>\varepsilon.
\]
Hence, by \eqref{Energy},
\begin{equation}\label{eq:key-lb-polished-short}
\Phi_t\ge \phi(x_t)-\phi^*>\varepsilon>\varepsilon_\phi'.
\end{equation}
Moreover, on $\{\Theta_t=1\}$ we have $\hat y_t=y_t$, $\hat x_{t+1}=x_{t+1}$, and therefore
\begin{equation}\label{eq:yt-gap-short}
t<T_\varepsilon \quad\Longrightarrow\quad \phi(y_t)-\phi^*>\varepsilon.
\end{equation}

\smallskip
\noindent\textbf{\ref{hyp_1_2}.}
$(1-\Theta_t)\Phi_{t+1}=(1-\Theta_t)\Phi_t$ is immediate. On $\{\Theta_t=1\}$, \Cref{energy_1} with $\epsilon_g^{(t)}=(\epsilon_g')^2/(2\mu)$ and $\epsilon_f^{(t)}=\epsilon_f'$ yields
\begin{align*}
\Phi_{t+1}
&\le (1-\alpha_t)\Phi_t
 +(1-\alpha_t)\bigl(C_3+E_t(x_t)-E_t(x_{t+1})\bigr)
 +\alpha_t\bigl(\epsilon_f'+E_t(y_t)-E_t(x_{t+1})\bigr) \\
&\quad
 +\frac{\alpha_t}{1-\vartheta}
 \left(
   \langle y_t-x^*,\,\mathbf{G}_t-\nabla\phi(y_t)\rangle
   -\vartheta\bigl(\phi(y_t)-\phi^*\bigr)
   -\frac{\vartheta\mu}{2}\|y_t-x^*\|^2
 \right),
\end{align*}
where $C_3=\frac{(\epsilon_g')^2}{2\mu}+2\epsilon_f'$. By Young's inequality,
\[
\langle y_t-x^*,\,\mathbf{G}_t-\nabla\phi(y_t)\rangle
\le
\frac{\vartheta\mu}{2}\|y_t-x^*\|^2+\frac{W_t^2}{2\vartheta\mu},
\]
and the oracle differences are bounded by $D_t$. Using \eqref{eq:yt-gap-short}, we obtain
\begin{align*}
\Phi_{t+1}
&\le (1-\alpha_t)\Phi_t
 +(1-\alpha_t)(C_3+D_t)
 +\alpha_t(\epsilon_f'+D_t)
 +\frac{\alpha_t}{1-\vartheta}\left(\frac{W_t^2}{2\vartheta\mu}-\vartheta\varepsilon\right) \\
&\le
(1-\alpha_t)
\left[1+\frac{1}{\varepsilon}\bigl(C_3+C_4D_t+C_5W_t^2\bigr)\right]\Phi_t
+\alpha_t\left(\epsilon_f'-\frac{\vartheta}{1-\vartheta}\,\varepsilon\right),
\end{align*}
where the second step uses \eqref{eq:key-lb-polished-short} and Lemma~\ref{para_lemma_d}, namely
\[
\frac{\alpha_t}{1-\alpha_t}\le \frac{\bar\alpha}{1-\bar\alpha},
\qquad
C_4=\frac{1}{1-\bar\alpha},
\qquad
C_5=\frac{\bar\alpha}{2\mu(1-\bar\alpha)\vartheta(1-\vartheta)}.
\]
This verifies \Cref{hyp_1_2}.

\smallskip
\noindent\textbf{\ref{hyp_1_3}.}
By the union bound and the tower property,
\[
\PP\{I_t=1\mid\mathcal F_{t-1}\}
\ge
1-\PP\{W_t>\epsilon_g'\mid\mathcal F_{t-1}\}
-\EE\!\left[\PP\{D_t>\epsilon_f'\mid\mathcal F_{t-\frac12}\}\,\middle|\,\mathcal F_{t-1}\right].
\]
Let $\widetilde W_t':=W_t^2-\EE_t[W_t^2]$. Then conditional Markov's inequality and \textsc{SFO-2} give
\[
\PP\{W_t>\epsilon_g'\mid\mathcal F_{t-1}\}
\le
\frac{\EE_t[|\widetilde W_t'|^{1+\delta}]}
{\bigl((\epsilon_g')^2-\EE_t[W_t^2]\bigr)^{1+\delta}}
\le
\frac{\upsilon_g}{\bigl((\epsilon_g')^2-\epsilon_g^2\bigr)^{1+\delta}}.
\]
Similarly, with $\widetilde D_t:=D_t-\EE_{t+\frac12}[D_t]$, \textsc{SZO} yields
\[
\PP\{D_t>\epsilon_f'\mid\mathcal F_{t-\frac12}\}
\le
\frac{\EE_{t+\frac12}[|\widetilde D_t|^{1+\varrho}]}
{\bigl(\epsilon_f'-\EE_{t+\frac12}[D_t]\bigr)^{1+\varrho}}
\le
\frac{\upsilon_f}{(\epsilon_f'-\epsilon_f)^{1+\varrho}}.
\]
Therefore
\[
\PP\{I_t=1\mid\mathcal F_{t-1}\}
\ge
1-\frac{\upsilon_g}{\bigl((\epsilon_g')^2-\epsilon_g^2\bigr)^{1+\delta}}
 -\frac{\upsilon_f}{(\epsilon_f'-\epsilon_f)^{1+\varrho}}
=:p>\frac12.
\]
Since
\[
C_\kappa=\ln\!\left(\frac{1}{1-(1-\vartheta)\sqrt{2\theta\mu\bar\gamma}}\right)>0,
\]
the condition $\varepsilon>\varepsilon_\phi'$ guarantees
\[
p'
=
\frac12+\frac{1}{C_\kappa}\ln\!\left(1+\frac{C_3'+C_4S_f+C_5S_g}{\varepsilon}\right)
\in \left(\frac12,p\right),
\]
which proves \Cref{hyp_1_3}.

\smallskip
\noindent\textbf{\ref{hyp_1_4}.}
Assume $I_t=1$ and $\Lambda_t=0$. Then
\[
W_t\le \epsilon_g',\qquad
D_t\le \epsilon_f',\qquad
\hat\gamma_t<\bar\gamma\le \frac{2\bigl(1-2\vartheta-\theta(1-\vartheta)\bigr)}{L(1-\vartheta)}.
\]
For \eqref{Check_2}, decompose
\[
\langle \mathbf{G}_t,\hat y_t-x_t\rangle
=
\langle \nabla\phi(\hat y_t),\hat y_t-x_t\rangle
+\langle \mathbf{G}_t-\nabla\phi(\hat y_t),\hat y_t-x_t\rangle.
\]
By $\mu$-strong convexity and Young's inequality,
\[
\langle \mathbf{G}_t,\hat y_t-x_t\rangle
\ge
\phi(\hat y_t)-\phi(x_t)-\frac{W_t^2}{2\mu}
\ge
\phi(\hat y_t)-\phi(x_t)-\epsilon_g^{(t)}.
\]
Since $D_t\le \epsilon_f^{(t)}=\epsilon_f'$, condition \eqref{Check_2} follows.

For \eqref{Check_1}, since $t<T_\varepsilon$ and $\varepsilon>\varepsilon_\phi'\ge (\epsilon_g')^2/(2\mu\vartheta^2)$,
\[
\|\nabla\phi(\hat y_t)\|^2
\ge
2\mu\bigl(\phi(\hat y_t)-\phi^*\bigr)
>
2\mu\varepsilon
\ge
\left(\frac{\epsilon_g'}{\vartheta}\right)^2,
\]
hence $W_t\le \vartheta\|\nabla\phi(\hat y_t)\|$. The deterministic implication behind the generalized Armijo test (Assumption~1 and Lemma~4.3 of~\cite{Albert2021}) then gives
\[
\phi(\hat x_{t+1})
=
\phi(\hat y_t-\hat\gamma_t \mathbf{G}_t)
\le
\phi(\hat y_t)-\theta\hat\gamma_t\|\mathbf{G}_t\|^2.
\]
Using again $D_t\le \epsilon_f^{(t)}$ transfers this estimate to the noisy function values, so \eqref{Check_1} also holds. Therefore,
\[
(1-\Lambda_t)I_t\le \Theta_t.
\]

\smallskip
\noindent\textbf{\ref{hyp_1_5}.}
Since
\[
\mathcal R_i(W_i,D_i;\varepsilon)
=
\epsilon_f'-\frac{\vartheta}{1-\vartheta}\,\varepsilon<0
\qquad
\left(
\varepsilon>\frac{1-\vartheta}{\vartheta}\,\epsilon_f'
\right),
\]
we have $B_t=\emptyset$ when $\varepsilon_0=0$.

Moreover, since $\mathcal V_i$ is increasing in $(D_i,W_i^2)$,
\[
\prod_{i=1}^t \mathcal V_i(W_i,D_i;\varepsilon)
\le
\prod_{i=1}^t\left(1+\frac{1}{\varepsilon}\bigl(C_3+C_4D_i+C_5W_i^2\bigr)\right).
\]
Using
\[
\prod_{i=1}^t(1+a_i)
\le
\left(1+\frac{1}{t}\sum_{i=1}^t a_i\right)^t,
\qquad
D_i\le \widetilde D_i+\epsilon_f',
\qquad
W_i^2\le \widetilde W_i'+(\epsilon_g')^2,
\]
we obtain that on
\[
\left\{\sum_{i=1}^t \widetilde W_i'\le S_g t\right\}
\cap
\left\{\sum_{i=1}^t \widetilde D_i\le S_f t\right\},
\]
one has
\[
\prod_{i=1}^t \mathcal V_i(W_i,D_i;\varepsilon)
\le
\left(1+\frac{1}{\varepsilon}\bigl(C_3'+C_4S_f+C_5S_g\bigr)\right)^t,
\qquad
C_3':=C_3+C_4\epsilon_f'+C_5(\epsilon_g')^2.
\]

On the other hand, \Cref{High_pro_rate} yields, for
\[
\bar T:=\left\lceil \frac{D}{\hat p-\frac12}\right\rceil\le t<T_\varepsilon,
\]
that
\[
\mathcal I_t(\hat p)=1
\quad\Longrightarrow\quad
\lambda_t
\le
\left(1-(1-\vartheta)\sqrt{2\theta\mu\bar\gamma}\right)^{(\hat p-\frac12)t-\frac D2}.
\]
Hence
\[
\PP\{A_t\}
\le
\PP\{\mathcal I_t(\hat p)=0\}
+\PP\!\left\{\sum_{i=1}^t\widetilde W_i'>S_g t\right\}
+\PP\!\left\{\sum_{i=1}^t\widetilde D_i>S_f t\right\}.
\]
Now \Cref{lemma3.1} gives
\[
\PP\{\mathcal I_t(\hat p)=0\}
\le
\exp\!\left(-\frac{(p-\hat p)^2}{2p^2}\,t\right).
\]
Applying Lemma~\ref{LEMMA.m_T} to $\{\widetilde W_i'\}$ yields $P_1(t,\delta,S_g)$. For $\{\widetilde D_i\}$, define
\[
\mathcal G_{2i-1}:=\mathcal F_{i-\frac12},
\qquad
\mathcal G_{2i}:=\mathcal F_i,
\qquad
Y_{2i-1}:=0,
\qquad
Y_{2i}:=\widetilde D_i.
\]
Then $\{Y_k\}$ is a martingale difference sequence with conditional $(1+\varrho)$-moment bound $\upsilon_f$, so Lemma~\ref{LEMMA.m_T} implies
\[
\PP\!\left\{\sum_{i=1}^t \widetilde D_i>S_f t\right\}
=
\PP\!\left\{\sum_{k=1}^{2t}Y_k>\frac{S_f}{2}\cdot(2t)\right\}
\le
\frac{\upsilon_f}{8^\varrho S_f^{1+\varrho}}\,t^{-\varrho}.
\]
Collecting the above bounds proves \Cref{hyp_1_5} with the stated $P_t^{\mathrm{Tail}}$.

\endgroup
\end{proof}

\begin{proof}[\textbf{\emph{Proof of \Cref{Complexity_mu>0}}}]\label{proof_Comp_mu>0}
Apply \Cref{Big_threorem} with the instantiation in \Cref{proposition_second}.
Fix any $\hat p\in(p',p)$ and define
\[
\Delta
:= C_\kappa\Bigl(\hat p-\frac12\Bigr)
-\ln\!\Bigl(1+\frac{C_3'+C_4S_f+C_5S_g}{\varepsilon}\Bigr)>0.
\]
Using $C_\kappa=\ln\!\bigl(1/[1-(1-\vartheta)\sqrt{2\theta\mu\bar\gamma}]\bigr)$, the envelope is
\[
\mathcal H_{\bar\gamma,\hat p}(t)=\exp\!\Bigl(-\Delta t+\frac{D}{2}C_\kappa\Bigr),
\qquad
\mathcal H_{\bar\gamma,\hat p}^{-1}(y)=\frac{\ln(y^{-1})+\frac{D}{2}C_\kappa}{\Delta}\ (y>0).
\]
Since $\mathcal H_{\bar\gamma,\hat p}$ is strictly decreasing on $[\bar T,\infty)$, \Cref{Big_threorem} (with $\varepsilon_0=0$) gives that it suffices to have
\[
t\ge \max\!\left\{\bar T,\ \mathcal H_{\bar\gamma,\hat p}^{-1}\!\Bigl(\frac{\varepsilon}{\Phi_0}\Bigr)\right\}
=
\max\!\left\{\bar T,\ 
\frac{\ln(\Phi_0/\varepsilon)+\frac{D}{2}C_\kappa}{\Delta}\right\},
\]
which is exactly the claimed bound. The probability estimate follows by substituting
$P_t^{\mathrm{Tail}}$ from \Cref{proposition_second} into \Cref{Big_threorem}.
\end{proof}

\medskip

\subsection{Proof for \Cref{sec_mu=0_cons}}

\begin{propo}\label{proposition_fisrst_1}
Under \Cref{pre_assumption_2}, let the trial step-size sequence $\{\hat{\gamma}_t\}$ be generated by \Cref{algo}, let $\{\alpha_t\}$ be adapted to $\{\mathcal{F}_t\}$ as defined in \Cref{para}, and let the Lyapunov sequence $\{\Phi_t\}$ be defined by \eqref{Energy}. Define the indicator $\Lambda_t$ through
\begin{equation}
\bar{\gamma}
:=
\min\left\{
\frac{2\bigl(1-2\vartheta-\theta(1-\vartheta)\bigr)}{L(1-\vartheta)},
\,\gamma_{\max}
\right\}.
\end{equation}
If the ingredients of \Cref{hyp_1} are specified by
\begin{align*}
p'
&=
\frac{1}{2},
\qquad
p
=
1
-\frac{\upsilon_g}{(\epsilon_g'-\epsilon_g)^{1+\delta}}
-\frac{\upsilon_f}{(\epsilon_f'-\epsilon_f)^{1+\varrho}}
>\frac{1}{2},
\\
\mathcal{V}_t
&\equiv 1,
\qquad
\mathcal{R}_t(W_t,D_t;\varepsilon_\phi)
=
\frac{2B}{1-\vartheta}\,W_t
+\frac{B\epsilon_g'}{1-\vartheta}
+2\epsilon_f'
+\alpha_t^{-1}D_t
-\frac{\vartheta}{1-\vartheta}\,\varepsilon_\phi,
\end{align*}
then \Cref{hyp_1_2,hyp_1_3,hyp_1_4,hyp_1_5} hold for every $\hat p\in(1/2,p)$ and every
\[
\bar T:=\left\lceil \frac{D}{\hat p-\frac12}\right\rceil
\le t<T_{(\varepsilon_\phi,\varepsilon_\nabla)},
\]
with the following instantiations:
\begin{align*}
\mathcal{H}_{\bar{\gamma},\hat p}(t)
&=
C_\lambda\,t^{-2},
\qquad
\varepsilon_0
=
C_0C_\lambda
\left(
\frac{2B}{1-\vartheta}(\epsilon_g+S_g)+\epsilon_f+S_f
\right),
\\
P_t^{\mathrm{Tail}}
&=
\exp\!\left(-\frac{(p-\hat p)^2}{2p^2}\,t\right)
+\frac{2^{1-\delta}}{2+\delta}\,\frac{\upsilon_g}{S_g^{1+\delta}}\,t^{-\delta}
+\frac{2^{1-\varrho}}{2+\varrho}\,\frac{\upsilon_f}{S_f^{1+\varrho}}\,t^{-\varrho}.
\end{align*}
Here $S_g,S_f>0$ are free variables, and the constants $C_\lambda$ and $C_0$ are listed in \Cref{tab:constants_summary}.
\end{propo}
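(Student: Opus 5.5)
The proof follows the template of \Cref{proposition_second}: verify \ref{hyp_1_2}--\ref{hyp_1_5} in turn for a fixed $t<T_{\boldsymbol\varepsilon}$. We use the standing lower bounds $\phi(x_t)-\phi^*>\varepsilon_\phi$, $\phi(y_t)-\phi^*>\varepsilon_\phi$ and $\|\nabla\phi(\hat y_t)\|>\varepsilon_\nabla\ge\epsilon_g'/\vartheta$, all inherited from the definition of $T_{\boldsymbol\varepsilon}$.

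\textbf{Item \ref{hyp_1_2}.} Rejected trials are trivial. On $\{\Theta_t=1\}$ apply \Cref{energy_1} with $\mu=0$, $\epsilon_f^{(t)}=\epsilon_f'\hat\alpha_t=\epsilon_f'\alpha_t$ and $\epsilon_g^{(t)}=\epsilon_g'\|y_t-x_t\|$. The gradient term is bounded by Cauchy--Schwarz and \eqref{Bounded-iterats}: $\|y_t-x^*\|\le\|y_t-z_t\|+\|z_t-x^*\|\le 2B$, which gives $\frac{\alpha_t}{1-\vartheta}\cdot 2BW_t$.

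For $\epsilon_g^{(t)}$, use \eqref{eq:yhat_zt}: $y_t-x_t=\omega_t(z_t-x_t)$ with $\omega_t=\frac{\alpha_t}{(1-\vartheta)(1-\alpha_t)+\alpha_t}$ (recall $\beta_t=0$). Hence $(1-\alpha_t)\|y_t-x_t\|=\frac{\alpha_t(1-\alpha_t)}{(1-\vartheta)(1-\alpha_t)+\alpha_t}\|z_t-x_t\|$. Writing $y_t-x_t=\frac{\omega_t}{1-\omega_t}(z_t-y_t)$ together with $\frac{\omega_t}{1-\omega_t}=\frac{\alpha_t}{(1-\vartheta)(1-\alpha_t)}$ gives $(1-\alpha_t)\epsilon_g'\|y_t-x_t\|\le\frac{\alpha_t}{1-\vartheta}B\epsilon_g'$. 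The $\epsilon_f$ terms contribute $\alpha_t\epsilon_f'+2(1-\alpha_t)\alpha_t\epsilon_f'\le 2\alpha_t\epsilon_f'$ up to bookkeeping; I expect the coefficient arrangement to yield exactly $2\epsilon_f'\alpha_t$. The error differences are bounded by $D_t\le\alpha_t\cdot\alpha_t^{-1}D_t$. Finally $\mathcal T_C\ge\frac{\vartheta\alpha_t}{1-\vartheta}\varepsilon_\phi$. Collecting terms gives $\Phi_{t+1}\le(1-\alpha_t)\Phi_t+\alpha_t\mathcal R_t$ with $\mathcal V_t\equiv1$.

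\textbf{Items \ref{hyp_1_3} and \ref{hyp_1_4}.} Item \ref{hyp_1_3} is the union bound with \eqref{equ_D_t_pre} and \eqref{equ_W_t_pre}. For \ref{hyp_1_4}: on $I_t=1$, \eqref{Check_2} holds by convexity exactly as shown before \eqref{equ_W_t_pre}, with $D_t\le\epsilon_f^{(t)}$ absorbing the zeroth-order errors. \eqref{Check_1} follows because $W_t\le\epsilon_g'\le\vartheta\|\nabla\phi(\hat y_t)\|$, which is where $\varepsilon_\nabla'=\epsilon_g'/\vartheta$ enters, combined with the same deterministic Armijo implication for $\hat\gamma_t<\bar\gamma$ used in \Cref{proposition_second}, again transferred to noisy values via $D_t\le\epsilon_f^{(t)}$.

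\textbf{Item \ref{hyp_1_5}.} Since $\mathcal V\equiv1$, $A_t$ reduces to $\lambda_t>\mathcal H(t)$. By \Cref{High_pro_rate} (the $\mu=0$ case), on $\mathcal I_t(\hat p)=1$ and $t\ge\bar T$ we have $\bar N_{\mathrm{good}}(t)\ge\frac12(\hat p-\frac12)t$, so $\lambda_t\le 4(\alpha_0\sqrt{\bar\gamma/\gamma_0}\cdot\frac12(\hat p-\frac12)t)^{-2}=C_\lambda t^{-2}$. Thus $\PP(A_t)\le\exp(-\frac{(p-\hat p)^2}{2p^2}t)$ by \Cref{lemma3.1}.

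For $B_t$, the terms with $\varepsilon_\phi>\varepsilon_\phi'$ make $\frac{B\epsilon_g'}{1-\vartheta}+2\epsilon_f'-\frac{\vartheta}{1-\vartheta}\varepsilon_\phi<0$, so these can be dropped. What remains is $\sum_i a_i^{(t)}(\frac{2B}{1-\vartheta}W_i+\alpha_i^{-1}D_i)$. Now $a_i^{(t)}=\lambda_t\lambda_i^{-1}\Theta_i\alpha_i$, and \Cref{para_lemma_a} (equality for $\mu=0$) gives $\alpha_i^2/\lambda_i=\alpha_0^2\gamma_i/\gamma_0\le\rho^2$. Hence $a_i^{(t)}\le\lambda_t\rho^2/\alpha_i$, and so $a_i^{(t)}\alpha_i^{-1}\le\lambda_t\rho^2\alpha_i^{-2}$. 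Bounding $\alpha_i^{-1}\lesssim(1+\rho)\,i$ via \Cref{para_lemma_b}, the weights become of order $C_0\lambda_t\cdot i$ or smaller.

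I then split $W_i=\widetilde W_i+\EE_i W_i\le\widetilde W_i+\epsilon_g$, and similarly $D_i\le\widetilde D_i+\epsilon_f\alpha_i$, so that $\mathcal E_i=\hat\alpha_i$ cancels one $\alpha_i^{-1}$. Combining with $\lambda_t\le C_\lambda t^{-2}$, it suffices that the weighted martingale sums $\frac1{t^2}\sum_{i\le t}i\,\widetilde W_i\le S_g$ and $\frac1{t^2}\sum_{i\le t}i\,\widetilde D_i\le S_f$. Since the weights are random and predictable, I would factor them out using $i\le t$; tail bounds of order $t^{-\delta}$ and $t^{-\varrho}$ for these sums then come from the Burkholder--Rosenthal/Fuk--Nagaev lemma (Lemma~\ref{LEMMA.m_T}), interleaving the two filtrations as in \Cref{proposition_second}. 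This produces the stated constants $\frac{2^{1-\delta}}{2+\delta}$, which come from $\sum i^{1+\delta}\approx t^{2+\delta}/(2+\delta)$.

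The main obstacle is this last step: making the random weights $a_i^{(t)}\alpha_i^{-1}$ deterministic enough, via the exact $\mu=0$ identity of \Cref{para_lemma_a}, to apply a moment inequality for martingales. A secondary difficulty is the exact constant matching $\epsilon_g^{(t)}$ to $B\epsilon_g'/(1-\vartheta)$ in \ref{hyp_1_2}.
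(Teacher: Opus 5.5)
Your handling of items \ref{hyp_1_2}--\ref{hyp_1_4} and of the event $A_t$ matches the paper's proof. So does your reduction of $B_t$: you drop the negative constant bracket and are left with the two weighted sums $\sum_i a_i^{(t)}W_i$ and $\sum_i a_i^{(t)}\alpha_i^{-1}D_i$. One small correction in \ref{hyp_1_2}: the $\alpha_t$-weighted tolerance is $\alpha_t\epsilon_f^{(t)}=\alpha_t^2\epsilon_f'$, not $\alpha_t\epsilon_f'$. The correct total is $\alpha_t(2-\alpha_t)\epsilon_f'\le 2\alpha_t\epsilon_f'$, whereas your version, $\alpha_t(3-2\alpha_t)\epsilon_f'$, exceeds $2\alpha_t\epsilon_f'$ when $\alpha_t<\tfrac12$.

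The genuine gap is in the step you yourself flag as the main obstacle. Your weight bound goes through $a_i^{(t)}\le\lambda_t\rho^2\alpha_i^{-1}$ and then $\alpha_i^{-1}=\mathcal O\bigl((1+\rho)i\bigr)$ ``via Lemma~\ref{para_lemma_b}''. The second claim is false:
\begin{itemize}
\item That lemma only gives $\lambda_i^{-1/2}\le 1+\rho i$, whereas $\alpha_i^{-1}=\frac{\sqrt{\gamma_0}}{\alpha_0\sqrt{\gamma_i}}\lambda_i^{-1/2}$ also carries a factor $\gamma_i^{-1/2}$.
\item Accepted step sizes have no deterministic lower bound. A run of rejected trials shrinks $\hat\gamma$ geometrically before the next acceptance, even on $\{\mathcal I_t(\hat p)=1\}$, so $\alpha_i$ can be exponentially small.
\item You lost the cancelling factor by bounding $\gamma_i\le\gamma_{\max}$ and then inverting $\alpha_i^2$.
\end{itemize}
The fix is to take the square root of your own inequality, $\alpha_i\lambda_i^{-1/2}\le\rho$. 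This gives $\lambda_i^{-1}\Theta_i\alpha_i\le\rho\,\lambda_i^{-1/2}\le\rho(1+\rho i)\le C_0\,i$. The factor $\alpha_i^{-1}$ should never be bounded at all; it is absorbed by the scaling $\mathcal E_i=\hat\alpha_i$. Given $\mathcal F_{i-\frac12}$, the normalized variable $D_i/\hat\alpha_i$ has conditional mean at most $\epsilon_f$ and centered $(1+\varrho)$-moment at most $\upsilon_f$. You gesture at this cancellation, but your final sums are written with the unnormalized $\widetilde D_i$.

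A second gap: the weights $a_i^{(t)}=\lambda_t\lambda_i^{-1}\Theta_i\alpha_i$ are not predictable, contrary to what you assert. $\lambda_t$ depends on the future acceptance decisions $\Theta_j$, $j>i$, so no martingale inequality can be applied with these weights in place. The paper (\Cref{lem:Mt_poly_mu0_compact}) removes them by pathwise domination on $\{\mathcal I_t(\hat p)=1,\ \bar T\le t<T_{\boldsymbol\varepsilon}\}$:
\begin{itemize}
\item Combining $\lambda_t\le C_\lambda t^{-2}$ with the envelope above gives $a_i^{(t)}\le b_i:=C_0C_\lambda\, i\, t^{-2}$, which is deterministic.
\item This domination is legitimate only on the nonnegative variables $W_i$ and $D_i/\hat\alpha_i$, i.e.\ before centering. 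It cannot be applied to the signed $\widetilde W_i,\widetilde D_i$, as your ordering suggests.
\item Only afterwards does one center and apply the von Bahr--Esseen inequality to $\sum_i b_iZ_i$ with deterministic $b_i$. The factor $\frac{1}{2+\delta}$ in $P_t^{\mathrm{Tail}}$ comes from $\sum_i b_i^{1+\delta}$.
\end{itemize}
Your alternative of ``factoring out $i\le t$'' and calling the unweighted \Cref{LEMMA.m_T} is valid only if done on the nonnegative variables. Even then it yields $\frac{2^{1-\delta}\upsilon_g}{S_g^{1+\delta}}t^{-\delta}$ without the $\frac{1}{2+\delta}$. So it does not establish the stated $P_t^{\mathrm{Tail}}$, and it contradicts your own attribution of that constant to $\sum_i i^{1+\delta}$.
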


\begin{proof}
\begingroup
\setlength{\abovedisplayskip}{4pt}
\setlength{\belowdisplayskip}{4pt}
\setlength{\abovedisplayshortskip}{2pt}
\setlength{\belowdisplayshortskip}{2pt}
\setlength{\jot}{2pt}

Fix $t<T_{(\varepsilon_\phi,\varepsilon_\nabla)}$ with
$\varepsilon_\phi>\varepsilon_\phi'$ and $\varepsilon_\nabla\ge \varepsilon_\nabla'$.
By \eqref{Energy} and \eqref{eq_dfn_stopping_time},
\begin{equation}\label{equ_vareps_2_short}
\Phi_t
\ge
\min\{\phi(x_t),\phi(\hat y_t)\}-\phi^*
>
\varepsilon_\phi
>
\varepsilon_\phi'
=
\bigl(B\epsilon_g'+2(1-\vartheta)\epsilon_f'\bigr)\vartheta^{-1}.
\end{equation}
Hence, on $\{\Theta_t=1\}$ and in the regime $\mu=0$,
\[
\hat\alpha_t=\alpha_t,
\qquad
\hat y_t=y_t,
\qquad
\epsilon_f^{(t)}=\alpha_t\epsilon_f',
\qquad
\phi(y_t)-\phi^*>\varepsilon_\phi.
\]
Moreover, by \eqref{eq:yhat_zt} and \eqref{Bounded-iterats},
\begin{equation}\label{eq:geom-mu0-short}
(1-\alpha_t)\|x_t-y_t\|
\le
\frac{\alpha_t}{1-\vartheta}\|z_t-y_t\|
\le
\frac{\alpha_t B}{1-\vartheta}.
\end{equation}

\smallskip
\noindent\textbf{\ref{hyp_1_2}.}
Again, $(1-\Theta_t)\Phi_{t+1}=(1-\Theta_t)\Phi_t$ is immediate. On $\{\Theta_t=1\}$, \Cref{energy_1} with $\mu=0$ gives
\[
\Phi_{t+1}
\le
(1-\alpha_t)\Phi_t-\mathcal T_C+\mathcal T_E,
\qquad
\mathcal T_C
=
\frac{\vartheta\alpha_t}{1-\vartheta}\bigl(\phi(y_t)-\phi^*\bigr),
\]
and therefore
\[
-\mathcal T_C
\le
-\frac{\vartheta\alpha_t}{1-\vartheta}\,\varepsilon_\phi.
\]
Using $\epsilon_g^{(t)}=\epsilon_g'\|x_t-y_t\|$, $\epsilon_f^{(t)}=\alpha_t\epsilon_f'$, the bounds
$\|z_t-x^*\|,\|z_t-y_t\|\le B$, and \eqref{eq:geom-mu0-short}, we obtain
\[
\mathcal T_E
\le
\alpha_t
\left(
\frac{2B}{1-\vartheta}\,W_t
+\frac{B\epsilon_g'}{1-\vartheta}
+2\epsilon_f'
\right)
+D_t.
\]
Substituting these bounds into the Lyapunov recursion yields
\[
\Phi_{t+1}
\le
(1-\alpha_t)\Phi_t
+\alpha_t
\left(
\frac{2B}{1-\vartheta}\,W_t
+\frac{B\epsilon_g'}{1-\vartheta}
+2\epsilon_f'
+\alpha_t^{-1}D_t
-\frac{\vartheta}{1-\vartheta}\,\varepsilon_\phi
\right),
\]
which is precisely \Cref{hyp_1_2} with $\mathcal V_t\equiv 1$.

\smallskip
\noindent\textbf{\ref{hyp_1_3}.}
By the union bound and conditional Markov inequality,
\[
\PP\{I_t=1\mid\mathcal F_{t-1}\}
\ge
1
-\PP\{W_t>\epsilon_g'\mid\mathcal F_{t-1}\}
-\EE\!\left[
\PP\{D_t>\hat\alpha_t\epsilon_f'\mid\mathcal F_{t-\frac12}\}
\middle|
\mathcal F_{t-1}
\right].
\]
Since $\EE_t[W_t]\le \epsilon_g$,
\[
\PP\{W_t>\epsilon_g'\mid\mathcal F_{t-1}\}
\le
\frac{\EE_t\bigl[|W_t-\EE_t[W_t]|^{1+\delta}\bigr]}
     {(\epsilon_g'-\epsilon_g)^{1+\delta}}
\le
\frac{\upsilon_g}{(\epsilon_g'-\epsilon_g)^{1+\delta}}.
\]
Moreover, \Cref{pre_assumption_2} gives
\[
\EE_{t+\frac12}[D_t]\le \epsilon_f\,\hat\alpha_t,
\qquad
\EE_{t+\frac12}\!\left[|D_t-\EE_{t+\frac12}[D_t]|^{1+\varrho}\right]
\le
\upsilon_f\,\hat\alpha_t^{1+\varrho},
\]
hence
\[
\PP\{D_t>\hat\alpha_t\epsilon_f'\mid\mathcal F_{t-\frac12}\}
\le
\frac{\upsilon_f}{(\epsilon_f'-\epsilon_f)^{1+\varrho}}.
\]
Therefore,
\[
\PP\{I_t=1\mid\mathcal F_{t-1}\}
\ge
1
-\frac{\upsilon_g}{(\epsilon_g'-\epsilon_g)^{1+\delta}}
-\frac{\upsilon_f}{(\epsilon_f'-\epsilon_f)^{1+\varrho}}
=:p>\frac12,
\]
which proves \Cref{hyp_1_3} with $p'=1/2$.

\smallskip
\noindent\textbf{\ref{hyp_1_4}.}
Assume $I_t=1$ and $\Lambda_t=0$. Then
\[
W_t\le \epsilon_g',
\qquad
D_t\le \hat\alpha_t\epsilon_f'=\epsilon_f^{(t)},
\qquad
\hat\gamma_t<\bar\gamma.
\]
For \eqref{Check_2}, convexity implies
\[
\langle \nabla\phi(\hat y_t),\,\hat y_t-x_t\rangle
\ge
\phi(\hat y_t)-\phi(x_t),
\]
so that
\[
\langle \mathbf{G}_t,\hat y_t-x_t\rangle
\ge
\phi(\hat y_t)-\phi(x_t)-W_t\|\hat y_t-x_t\|
\ge
\phi(\hat y_t)-\phi(x_t)-\epsilon_g'\|\hat y_t-x_t\|
=
\phi(\hat y_t)-\phi(x_t)-\epsilon_g^{(t)}.
\]
Since $D_t\le \epsilon_f^{(t)}$, condition \eqref{Check_2} follows.

For \eqref{Check_1}, because $t<T_{(\varepsilon_\phi,\varepsilon_\nabla)}$,
\[
\|\nabla\phi(\hat y_t)\|
\ge
\varepsilon_\nabla
\ge
\varepsilon_\nabla'
=
\epsilon_g'/\vartheta,
\]
hence $W_t\le \vartheta\|\nabla\phi(\hat y_t)\|$. Together with $\hat\gamma_t<\bar\gamma$, the same deterministic descent implication as in \Cref{proposition_second} yields
\[
\phi(\hat x_{t+1})
\le
\phi(\hat y_t)-\theta\hat\gamma_t\|\mathbf{G}_t\|^2.
\]
Since $D_t\le \epsilon_f^{(t)}$, this estimate transfers to the noisy function values, so \eqref{Check_1} also holds. Therefore,
\[
(1-\Lambda_t)I_t\le \Theta_t,
\]
which proves \Cref{hyp_1_4}.

\smallskip
\noindent\textbf{\ref{hyp_1_5}.}
Let
\[
\bar T=\left\lceil \frac{D}{\hat p-\frac12}\right\rceil
\le t<T_{(\varepsilon_\phi,\varepsilon_\nabla)}.
\]
Since $\mathcal V_t\equiv 1$,
\[
A_t=\{\lambda_t>\mathcal H_{\bar\gamma,\hat p}(t)\},
\qquad
B_t=
\left\{
\sum_{i=1}^t a_i^{(t)}\mathcal R_i(W_i,D_i;\varepsilon_\phi)>\varepsilon_0
\right\}.
\]
We decompose
\[
\PP\{A_t\cup B_t\}
\le
\PP\{\mathcal I_t(\hat p)=0\}
+\PP\{\mathcal I_t(\hat p)=1,A_t\}
+\PP\{\mathcal I_t(\hat p)=1,B_t\}.
\]

By \Cref{High_pro_rate} and \eqref{eq:lambda_bound_v2},
\[
\PP\{\mathcal I_t(\hat p)=1,A_t\}=0,
\qquad
\PP\{\mathcal I_t(\hat p)=0\}
\le
\exp\!\left(-\frac{(p-\hat p)^2}{2p^2}\,t\right).
\]
Next,
\begin{align*}
\sum_{i=1}^t a_i^{(t)}\mathcal R_i(W_i,D_i;\varepsilon_\phi)
&=
\sum_{i=1}^t a_i^{(t)}
\left(
\frac{2B}{1-\vartheta}\,W_i+\frac{D_i}{\hat\alpha_i}
\right)
+\left(\sum_{i=1}^t a_i^{(t)}\right)
\left(
\frac{B\epsilon_g'}{1-\vartheta}+2\epsilon_f'-\frac{\vartheta\varepsilon_\phi}{1-\vartheta}
\right).
\end{align*}
By \eqref{equ_vareps_2_short}, the second bracket is strictly negative. Hence, on $\{\mathcal I_t(\hat p)=1,B_t\}$,
\[
\sum_{i=1}^t a_i^{(t)}
\left(
\frac{2B}{1-\vartheta}\,W_i+\frac{D_i}{\hat\alpha_i}
\right)
>
\varepsilon_0
=
C_0C_\lambda
\left(
\frac{2B}{1-\vartheta}(\epsilon_g+S_g)+\epsilon_f+S_f
\right).
\]
Therefore, at least one of the following two events must occur:
\[
\sum_{i=1}^t a_i^{(t)}W_i
>
C_0C_\lambda(\epsilon_g+S_g),
\qquad
\sum_{i=1}^t a_i^{(t)}\frac{D_i}{\hat\alpha_i}
>
C_0C_\lambda(\epsilon_f+S_f).
\]
Consequently,
\begin{align*}
\PP\{\mathcal I_t(\hat p)=1,B_t\}
&\le
\PP\!\left\{
\mathcal I_t(\hat p)=1,\,
\sum_{i=1}^t a_i^{(t)}W_i>C_0C_\lambda(\epsilon_g+S_g)
\right\} \\&\qquad
+
\PP\!\left\{
\mathcal I_t(\hat p)=1,\,
\sum_{i=1}^t a_i^{(t)}\frac{D_i}{\hat\alpha_i}>C_0C_\lambda(\epsilon_f+S_f)
\right\}.
\end{align*}
Apply \Cref{lem:Mt_poly_mu0_compact} to the first term with filtration $\{\mathcal F_i\}$ and parameters $(\epsilon,\upsilon,S)=(\epsilon_g,\upsilon_g,S_g)$. Apply the same lemma to the second term with the predictable filtration $\{\mathcal F_{i-\frac12}\}$, using $\Upsilon_t=\mathcal E_t=\hat\alpha_t$, parameters $(\epsilon,\upsilon,S)=(\epsilon_f,\upsilon_f,S_f)$, and exponent $\varrho$. This yields
\[
\PP\{\mathcal I_t(\hat p)=1,B_t\}
\le
\frac{2^{1-\delta}}{2+\delta}\,\frac{\upsilon_g}{S_g^{1+\delta}}\,t^{-\delta}
+
\frac{2^{1-\varrho}}{2+\varrho}\,\frac{\upsilon_f}{S_f^{1+\varrho}}\,t^{-\varrho}.
\]
Combining the three bounds proves \Cref{hyp_1_5}.
\endgroup
\end{proof}

\begin{proof}[\textbf{Proof of \Cref{complexity_mu=0_1}}]\label{proof_comp_mu=0_2}
Apply \Cref{Big_threorem} with the parameterization from \Cref{proposition_fisrst_1}. Since $\hat p\in(1/2,p)$ and $p'=1/2$, the constant
\[
C_\lambda
=
\frac{16\gamma_0}{\alpha_0^2\bigl(\hat p-\frac12\bigr)^2\bar\gamma}
\]
is well defined, and the envelope
$\mathcal H_{\bar\gamma,\hat p}(t)=C_\lambda t^{-2}$ 
is strictly decreasing on $(0,\infty)$, with inverse
\[
\mathcal H_{\bar\gamma,\hat p}^{-1}(y)
=
\sqrt{\frac{C_\lambda}{y}},
\qquad
y>0.
\]

By \Cref{Big_threorem}, a sufficient condition for
$T_{(\varepsilon_\phi,\varepsilon_\nabla)}\le t$ is
\[
t\ge \bar T,
\qquad
\mathcal H_{\bar\gamma,\hat p}(t)\le \frac{\varepsilon_\phi-\varepsilon_0}{\Phi_0},
\]
where $\varepsilon_\phi>\varepsilon_0$. Substituting the inverse formula gives
\[
t\ge \sqrt{\frac{C_\lambda\Phi_0}{\varepsilon_\phi-\varepsilon_0}}.
\]
Using
\[
\varepsilon_0
=
C_0C_\lambda
\left(
\frac{2B}{1-\vartheta}(\epsilon_g+S_g)+\epsilon_f+S_f
\right),
\]
this condition is equivalent to
\[
t
\ge
\sqrt{
\frac{\Phi_0}{
C_\lambda^{-1}\varepsilon_\phi
-
C_0\left(
\frac{2B}{1-\vartheta}(\epsilon_g+S_g)+\epsilon_f+S_f
\right)
}
},
\]
which is exactly the claimed complexity bound. The probability estimate follows by substituting $P_t^{\mathrm{Tail}}$ from \Cref{proposition_fisrst_1} into \Cref{Big_threorem}.
\end{proof}

\section{Auxiliary Lemmas}

\begin{lemma}[\texttt{FISTA-SS}~\cite{stoFISTA} acceptance condition degrade to Armijo with $\theta=\tfrac12$ when $h(x)\equiv 0$]\label{equivalence_lemma}
Let $F(x)=\phi(x)+h(x)$ where $\phi$ satisfying \cref{assump:SC-Lsmooth,assump:global-min}, $h$ being convex but not necessary differentiable.
Fix $y\in\mathbb{R}^d$, a stepsize $\gamma>0$, and a (possibly stochastic) gradient estimate $\mathbf{G}$.
Define the quadratic model
\begin{equation}\label{eq:Qtilde_model}
\widetilde{Q}_\gamma(x,y)
:=\; \phi(y) + \langle \mathbf{G}, x-y\rangle + \frac{1}{2\gamma}\|x-y\|^2 + h(x),
\end{equation}
and the trial point
\begin{equation}\label{eq:ptilde_def}
\widetilde{p}_\gamma(y)
:=\; \arg\min_{x}\,\widetilde{Q}_\gamma(x,y)
\;= \;\mathrm{Prox}_{\gamma h}\!\bigl(y-\gamma \mathbf{G}\bigr).
\end{equation}
If $h(x)\equiv 0$, then $\widetilde{p}_\gamma(y)=y-\gamma\mathbf{G}$ and
\begin{equation}\label{eq:accept_armijo}
F\!\bigl(\widetilde{p}_\gamma(y)\bigr)\;\le\; \widetilde{Q}_\gamma\!\bigl(\widetilde{p}_\gamma(y),y\bigr)
\quad\Longleftrightarrow\quad
\phi(y-\gamma\mathbf{G})\;\le\; \phi(y)-\frac{\gamma}{2}\|\mathbf{G}\|^2.
\end{equation}
which is the Armijo condition with scaling constrained to $\theta=1/2$.
\end{lemma}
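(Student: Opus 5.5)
The proof is a direct computation, so the plan has two parts: identify the proximal step when $h\equiv0$, then substitute into the model inequality and rearrange.

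First, with $h\equiv 0$ the operator $\mathrm{Prox}_{\gamma h}$ is the identity map. Hence by \eqref{eq:ptilde_def} we get $\widetilde p_\gamma(y)=y-\gamma\mathbf G$. To confirm this independently of the prox formula, I will note that $\widetilde Q_\gamma(\cdot,y)$ is then a strongly convex quadratic. Its first-order condition is $\mathbf G+\gamma^{-1}(x-y)=0$, whose unique solution is $x=y-\gamma\mathbf G$. Also $F=\phi$ in this case.

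Next, I will evaluate the model at $x=\widetilde p_\gamma(y)$ using $x-y=-\gamma\mathbf G$:
\[
\widetilde Q_\gamma(\widetilde p_\gamma(y),y)=\phi(y)-\gamma\|\mathbf G\|^2+\tfrac{1}{2\gamma}\gamma^2\|\mathbf G\|^2=\phi(y)-\tfrac{\gamma}{2}\|\mathbf G\|^2 .
\]
Substituting this together with $F(\widetilde p_\gamma(y))=\phi(y-\gamma\mathbf G)$ into the left-hand inequality of \eqref{eq:accept_armijo} gives exactly the right-hand inequality. Every step is an identity, so the two conditions are equivalent in both directions.

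Finally, I will compare with \eqref{Check_1} written with exact function values and no tolerance. This shows the condition is the Armijo test with $\theta=1/2$. There is no real obstacle here; the only care needed is the sign bookkeeping in the inner-product term.
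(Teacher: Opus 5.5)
Your proposal is correct and matches the paper's proof: take $\mathrm{Prox}_{\gamma h}$ to be the identity, so $\widetilde p_\gamma(y)=y-\gamma\mathbf{G}$ and $F=\phi$, then evaluate $\widetilde Q_\gamma$ at that point to get $\phi(y)-\frac{\gamma}{2}\|\mathbf{G}\|^2$. Your extra first-order-condition check that the $\arg\min$ equals $y-\gamma\mathbf{G}$ is a harmless, slightly more self-contained addition.
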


\begin{proof}
Assume $h\equiv 0$. Then $\mathrm{Prox}_{\gamma h}$ is the identity map, hence
\[
\widetilde{p}_\gamma(y)\;=\;y-\gamma\mathbf{G}.
\]
Also, when $h\equiv 0$ we have $F=\phi$. Therefore,
\[
F\!\bigl(\widetilde{p}_\gamma(y)\bigr)\;\le\; \widetilde{Q}_\gamma\!\bigl(\widetilde{p}_\gamma(y),y\bigr)
\quad\Longleftrightarrow\quad
\phi(y-\gamma\mathbf{G})\;\le\; \widetilde{Q}_\gamma(y-\gamma\mathbf{G},y).
\]
Using the definition \eqref{eq:Qtilde_model},
\[
\widetilde{Q}_\gamma(y-\gamma\mathbf{G},y)
\;=\; \phi(y) + \langle \mathbf{G}, -\gamma\mathbf{G}\rangle
  + \frac{1}{2\gamma}\|-\gamma\mathbf{G}\|^2\;=\; \phi(y)-\frac{\gamma}{2}\|\mathbf{G}\|^2.
\]
Substituting this expression yields \eqref{eq:accept_armijo}. The Armijo condition follows by setting
$\theta=\tfrac12$.
\end{proof}

\begin{remark}\label{rem:halfstep_mds}
Whenever an oracle bound is stated conditionally on $\mathcal F_{t-\frac12}$, we may apply any martingale
tail inequality by working with the interlaced filtration $\mathcal G_{2t-1}:=\mathcal F_{t-\frac12}$,
$\mathcal G_{2t}:=\mathcal F_t$ and inserting zeros on odd indices.
We use this convention implicitly when bounding sums of $\widetilde D_t:=D_t-\EE[D_t\mid\mathcal F_{t-\frac12}]$.
\end{remark}
\begin{lemma}\label{LEMMA.m_T}
Let $\{X_i\}_{i=1}^t$ be a martingale difference sequence adapted to a filtration $\{\mathcal F_i\}$.
Assume there exist constants $\delta>0$ and $\upsilon>0$ such that, for all $i$,
\[
\EE[X_i\mid\mathcal F_{i-1}]=0,
\qquad
\EE\!\left[|X_i|^{1+\delta}\mid\mathcal F_{i-1}\right]\le \upsilon
\quad\text{a.s.}
\]
Define $N_t:=\sum_{i=1}^t X_i$ and
$C_1:=\frac{(3+\delta)^2e^{1+\delta}}{2}$,
$C_2:=\bigl(1+\tfrac{2}{1+\delta}\bigr)^{1+\delta}$.
Then, for any $S>0$,
\begin{equation}\label{eq:poly-small}
\PP\!\left\{N_t>St\right\}
\;\le\;
\begin{cases}
\dfrac{2^{1-\delta}\upsilon}{S^{1+\delta}}\,t^{-\delta},
& \delta\in(0,1],\\[6pt]
\exp\!\left(-\dfrac{S^2}{C_1}\dfrac{t}{\upsilon^{2/(1+\delta)}}\right)
+
C_2\,\dfrac{\upsilon}{S^{1+\delta}}\,t^{-\delta},
& \delta>1.
\end{cases}
\end{equation}

\end{lemma}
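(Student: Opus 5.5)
The plan is to split according to whether $\delta\in(0,1]$ or $\delta>1$. In both cases we invoke a moment inequality for martingales with conditional $(1+\delta)$-moments. By the conditional-to-unconditional tower property, the hypothesis gives $\EE|X_i|^{1+\delta}\le\upsilon$ for every $i$, and $N_t$ is a martingale with respect to $\{\mathcal F_i\}$. In each regime we bound $\EE|N_t|^{1+\delta}$ (or a truncated version of it) and then apply Markov's inequality at level $St$.

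\textbf{Case $\delta\in(0,1]$.} Here $r:=1+\delta\in(1,2]$. We use the von Bahr--Esseen type inequality for martingale differences: $\EE|N_t|^{r}\le 2^{2-r}\sum_{i=1}^t\EE|X_i|^r$ for $r\in[1,2]$. This can be proved by induction on $t$ using the elementary inequality $|a+b|^r\le |a|^r+r\,\mathrm{sgn}(a)|a|^{r-1}b+2^{2-r}|b|^r$. The cross term vanishes under $\EE[\,\cdot\mid\mathcal F_{i-1}]$ because $\EE[X_i\mid\mathcal F_{i-1}]=0$. This gives $\EE|N_t|^{1+\delta}\le 2^{1-\delta}\upsilon t$. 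Markov's inequality then yields
\[
\PP\{N_t>St\}\le \frac{2^{1-\delta}\upsilon t}{(St)^{1+\delta}}=\frac{2^{1-\delta}\upsilon}{S^{1+\delta}}\,t^{-\delta},
\]
which is the first branch. The only delicate point is getting the constant $2^{2-r}$ correct in the scalar inequality. I would cite the martingale version directly (Rio~\cite{rio2009moment}) rather than re-derive it.

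\textbf{Case $\delta>1$.} Here a pure moment bound gives the right polynomial term but misses the Gaussian part, so I would use a Fuk--Nagaev inequality for martingales, as in~\cite{FAN2017538}. Fix a truncation level $y=\eta St$ with $\eta$ chosen later (for instance $\eta=(1+\delta)/(3+\delta)$). Split
\[
\{N_t>St\}\subset\Big\{\max_i X_i>y\Big\}\cup\Big\{\textstyle\sum_i X_i\mathbf 1\{X_i\le y\}>St\Big\}.
\]
The first event has probability at most $\sum_i\PP\{X_i>y\}\le t\upsilon/y^{1+\delta}$, which is of order $\upsilon S^{-(1+\delta)}t^{-\delta}$. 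For the truncated sum, the conditional variance satisfies $\EE[X_i^2\mid\mathcal F_{i-1}]\le\upsilon^{2/(1+\delta)}$ by Jensen's inequality, since $1+\delta>2$. The conditional means of the truncated variables are nonpositive. A Bennett/Freedman exponential-supermartingale argument applied to $e^{\lambda\sum X_i\mathbf 1\{X_i\le y\}}$ then gives a bound of the form $\exp\bigl(-cS^2t/\upsilon^{2/(1+\delta)}\bigr)$ plus a second polynomial remainder. This remainder comes from the regime where $\lambda y$ is large, and it is controlled via $\EE[|X_i|^{1+\delta}]$.

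Optimizing $\lambda$ and $\eta$ in the standard Fuk--Nagaev way should produce the constants $C_1=(3+\delta)^2e^{1+\delta}/2$ and $C_2=(1+\tfrac{2}{1+\delta})^{1+\delta}$; the latter is exactly $\eta^{-(1+\delta)}$ for the $\eta$ suggested above. I expect this constant bookkeeping in the $\delta>1$ branch to be the main obstacle. My first attempt would be to quote the Fuk--Nagaev bound of~\cite{FAN2017538}: $\PP\{N_t>x\}\le \exp\bigl(-x^2/(C_1 V^2)\bigr)+C_2\sum_i\EE|X_i|^{1+\delta}/x^{1+\delta}$ with $V^2=t\upsilon^{2/(1+\delta)}$. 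I would then substitute $x=St$ and check that the two terms match \eqref{eq:poly-small}.
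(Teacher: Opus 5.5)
Your proposal follows the paper's proof essentially step for step: von Bahr--Esseen with constant $2^{1-\delta}$ plus Markov for $\delta\in(0,1]$, and for $\delta>1$ the conditional Jensen/H\"older bound $\sum_{i=1}^t\EE[X_i^2\mid\mathcal F_{i-1}]\le t\,\upsilon^{2/(1+\delta)}$ fed into the cited martingale Fuk--Nagaev inequality at level $x=St$. Your hand-rolled truncation sketch is not needed, and as written it is internally inconsistent: if the max-event term already carries the full coefficient $C_2=\eta^{-(1+\delta)}$, then any further polynomial remainder from the Bennett step cannot simply be added on top without exceeding $C_2$. Quoting the inequality, as the paper does, is therefore what actually closes the $\delta>1$ case.
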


\begin{proof}
\begingroup
\setlength{\abovedisplayskip}{4pt}
\setlength{\belowdisplayskip}{4pt}
\setlength{\abovedisplayshortskip}{2pt}
\setlength{\belowdisplayshortskip}{2pt}
\setlength{\jot}{3pt}

Taking expectations of the conditional moment bound gives, for all $i$,
\begin{equation}\label{eq:uncond-moment-uni}
\EE\bigl[|X_i|^{1+\delta}\bigr]
\;=\;
\EE\!\left[\EE\!\left(|X_i|^{1+\delta}\mid\mathcal F_{i-1}\right)\right]
\;\le \;\upsilon.
\end{equation}

\smallskip
\noindent\textbf{Case $\delta\in(0,1]$.}\quad
Here $1+\delta\in(1,2]$. By the von Bahr--Esseen inequality for martingale difference sequences
\cite{vonbahr1965inequalities,rio2009moment},
\[
\EE\bigl[|N_t|^{1+\delta}\bigr]
\;\le\;
2^{1-\delta}\sum_{i=1}^t \EE\bigl[|X_i|^{1+\delta}\bigr]
\;\le\;
2^{1-\delta}\upsilon\,t,
\]
where we used \eqref{eq:uncond-moment-uni}. Markov's inequality yields
\[
\PP\!\left\{N_t>St\right\}
\;\le\;
\frac{\EE\!\left[|N_t|^{1+\delta}\right]}{(St)^{1+\delta}}
\;\le\;
\frac{2^{1-\delta}\upsilon}{S^{1+\delta}}\,t^{-\delta}.
\]

\smallskip
\noindent\textbf{Case $\delta>1$.}\quad
Since $1+\delta>2$, conditional H\"older implies
\[
\EE[X_i^2\mid\mathcal F_{i-1}]
\;\le\;
\Bigl(\EE[|X_i|^{1+\delta}\mid\mathcal F_{i-1}]\Bigr)^{\frac{2}{1+\delta}}
\;\le\;
\upsilon^{\frac{2}{1+\delta}}
\quad\text{a.s.}
\]
Hence the predictable quadratic variation proxy satisfies
\[
V_t:=\sum_{i=1}^t \EE[X_i^2\mid\mathcal F_{i-1}]
\;\le\;
t\,\upsilon^{\frac{2}{1+\delta}}
\quad\text{a.s.}
\]
Applying the martingale Fuk--Nagaev inequality (see, e.g.,
\cite{FAN2017538,fang2025highprobabilitycomplexitybounds}) with threshold $St$ gives
\[
\PP\!\left\{N_t>St\right\}
\;\le\;
\exp\!\left(-\frac{(St)^2}{C_1V_t}\right)
+
C_2\,\frac{\sum_{i=1}^t \EE\!\left[|X_i|^{1+\delta}\right]}{(St)^{1+\delta}}.
\]
Using $V_t\le t\upsilon^{2/(1+\delta)}$ and $\sum_{i=1}^t \EE[|X_i|^{1+\delta}]\le t\upsilon$ from
\eqref{eq:uncond-moment-uni} yields
\[
\PP\!\left\{N_t>St\right\}
\;\le\;
\exp\!\left(-\frac{S^2}{C_1}\frac{t}{\upsilon^{2/(1+\delta)}}\right)
+
C_2\,\frac{\upsilon}{S^{1+\delta}}\,t^{-\delta},
\]
which is exactly \eqref{eq:poly-small} for $\delta>1$.

\endgroup
\end{proof}

\begin{lemma}[Polynomial tail for $M_t$ when $\mu=0$]\label{lem:Mt_poly_mu0_compact}
Assume that $\mu=0$ and fix $\delta\in(0,1]$ and $\upsilon>0$.
Let $\{X_i\}_{i\ge1}$ be nonnegative and adapted to a filtration $\{\mathcal F_i\}$, and let
$\{w_i\}_{i\ge1}$ be positive $\mathcal F_{i-1}$-measurable scalars such that for all $i\ge1$,
\begin{equation}\label{eq:Mt_poly_mu0_assump}
\EE[X_i\mid \mathcal F_{i-1}]\le \epsilon\,w_i^{-1},
\qquad
\EE\Big[\,\big|X_i-\EE[X_i\mid \mathcal F_{i-1}]\big|^{1+\delta}\,\Big|\,\mathcal F_{i-1}\Big]
\le \upsilon\,w_i^{-(1+\delta)}\quad\text{a.s.}
\end{equation}
Let $\{\alpha_i\}_{i\ge1}\subset(0,1)$ be generated by \Cref{para_alpha} such that
$\rho:=\alpha_0\sqrt{\gamma_{\max}/\gamma_0}\in(0,1)$. Define
\[
\lambda_t:=\prod_{j=1}^t(1-\Theta_j\alpha_j),\qquad
\widetilde a_i:=\lambda_i^{-1}\Theta_i\alpha_i,\qquad
a_i^{(t)}:=\lambda_t\,\widetilde a_i,\qquad
M_t:=\sum_{i=1}^t a_i^{(t)}\,w_iX_i.
\]
Fix any $\hat p\in(p',p)$ and $\mathcal I_t(\hat p)$ as in \Cref{lemma3.1}. Set

\[
C_0:=\rho(1+\rho),\qquad
t_0:=\left\lceil \frac{D}{\hat p-\frac12}\right\rceil,
\qquad
C_\lambda:=\frac{16\gamma_0}{\alpha_0^2(\hat p-\frac12)^2\bar{\gamma}}.
\]
Then for all $t\ge t_0$ and any $S>0$,
\begin{equation}\label{eq:Mt_poly_mu0_concise}
\PP\!\left\{
t_0\le t<T_\varepsilon,\ \mathcal I_t(\hat p)=1,\ 
M_t>C_0C_\lambda (S+\epsilon)
\right\}
\;\le\;
\frac{2^{1-\delta}\,\upsilon}{(2+\delta)\,S^{1+\delta}}\;t^{-\delta}.
\end{equation}
\end{lemma}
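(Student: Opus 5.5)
We split $M_t$ into a predictable-mean part and a martingale part. Write $X_i=\EE[X_i\mid\mathcal F_{i-1}]+\widetilde X_i$. The first part gives
\[
\sum_{i=1}^t a_i^{(t)}w_i\,\EE[X_i\mid\mathcal F_{i-1}]\le \epsilon\sum_{i=1}^t a_i^{(t)},
\]
and the second part is $\sum_i a_i^{(t)}w_i\widetilde X_i$. The whole argument then reduces to one deterministic weight bound on the event $\{t_0\le t<T_\varepsilon,\ \mathcal I_t(\hat p)=1\}$, namely
\[
\sum_{i=1}^t a_i^{(t)}\le C_0C_\lambda .
\]
This bound is not uniform in $t$, which is why the target threshold carries the factor $C_\lambda$.

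\textbf{Weight bound.} With $\mu=0$, Lemma~\ref{para_lemma_a} gives the identity $\lambda_i=\gamma_0\alpha_i^2/(\alpha_0^2\gamma_i)$. Hence
\[
\widetilde a_i=\Theta_i\alpha_i/\lambda_i=\Theta_i\,\alpha_0^2\gamma_i/(\gamma_0\alpha_i)\le \Theta_i\,\alpha_0^2\gamma_{\max}/(\gamma_0\alpha_i).
\]
By Lemma~\ref{para_lemma_b}, $\sqrt{\gamma_i}/\alpha_i\le\sqrt{\gamma_0}/\alpha_0+\sum_{j\le i}\Theta_j\sqrt{\gamma_j}$, which is of order $i\sqrt{\gamma_{\max}}$. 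The $\rho$ factors arise here, since $\alpha_i\ge\rho^{-1}$-scaled ratios follow from $\gamma_i\le\gamma_{\max}$. Summing gives $\sum_{i\le t}\widetilde a_i\lesssim \rho(1+\rho)\,t^2$. On the good event, Lemma~\ref{High_pro_rate} (the case $\mu=0$) gives
\[
\lambda_t\le 4\bigl(2+\alpha_0\sqrt{\bar\gamma/\gamma_0}\,\bar N_{\rm good}(t)\bigr)^{-2}.
\]
For $t\ge t_0$ we have $\bar N_{\rm good}(t)\ge(\hat p-\tfrac12)t/2$, so $\lambda_t\le C_\lambda t^{-2}$. Multiplying the two estimates yields $\sum_i a_i^{(t)}\le C_0C_\lambda$, and the mean part is therefore at most $C_0C_\lambda\epsilon$.

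\textbf{Martingale part.} The remaining task is to bound the probability that $\sum_i a_i^{(t)}w_i\widetilde X_i> C_0C_\lambda S$. The weights $a_i^{(t)}=\lambda_t\widetilde a_i$ involve $\lambda_t$, which is not predictable. We therefore bound $\lambda_t\le C_\lambda t^{-2}$ on the event and consider
\[
N_t:=\sum_i \widetilde a_i\,w_i\widetilde X_i .
\]
Since $\widetilde a_i$ depends on $\Theta_i$, we replace it by its predictable envelope $\hat a_i:=\alpha_0^2\gamma_{\max}/(\gamma_0\hat\alpha_i)$. The difference contributes with a sign-controlled indicator, or alternatively we note that $\Theta_i$ enters only through $\alpha_i$. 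We then bound $\hat a_i\le c\,\rho(1+\rho)\,i$ deterministically using Lemma~\ref{para_lemma_b}, since $\hat\alpha_i\gtrsim 1/i$. This makes $Y_i:=\hat a_i w_i\widetilde X_i$ a martingale difference sequence with
\[
\EE[|Y_i|^{1+\delta}\mid\mathcal F_{i-1}]\le \upsilon\,(C_0 i)^{1+\delta}.
\]
The von Bahr--Esseen step from Lemma~\ref{LEMMA.m_T} then gives
\[
\EE|N_t|^{1+\delta}\le 2^{1-\delta}\upsilon\, C_0^{1+\delta}\sum_{i\le t} i^{1+\delta}\le 2^{1-\delta}\upsilon\, C_0^{1+\delta}\,t^{2+\delta}/(2+\delta).
\]
This is the source of the factor $(2+\delta)^{-1}$. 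Applying Markov's inequality at level $C_0S t^2$ gives
\[
\PP\{\,\cdot\,\}\le \frac{2^{1-\delta}\upsilon\, t^{2+\delta}}{(2+\delta)\,S^{1+\delta}\,t^{2+2\delta}}=\frac{2^{1-\delta}\upsilon}{(2+\delta)S^{1+\delta}}\,t^{-\delta},
\]
which matches \eqref{eq:Mt_poly_mu0_concise}.

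\textbf{Main obstacle.} The delicate step is making the weights predictable. Both $\lambda_t$ and $\Theta_i$ are non-predictable, and the constant $C_0=\rho(1+\rho)$ must come out exactly. The plan is to work with $\hat\alpha_i$, which is $\mathcal F_{i-1}$-measurable, and the deterministic growth bound $\hat a_i\le C_0 i$, which holds whether or not trial $i$ is accepted. The possible sign issue from dropping $\Theta_i$ is handled by applying the same argument to $\Theta_i\widetilde X_i$, again via the predictable envelope. If this fails, the fallback is to prove the bound with a slightly larger constant.
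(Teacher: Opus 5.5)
\textbf{There is a genuine gap in your martingale part.} You center first, writing $X_i=\EE[X_i\mid\mathcal F_{i-1}]+\widetilde X_i$, and only afterwards try to make the weights predictable. But $\widetilde a_i=\lambda_i^{-1}\Theta_i\alpha_i$ is only $\mathcal F_i$-measurable, through $\Theta_i$ and $\lambda_i$. So $\widetilde a_i w_i\widetilde X_i$ is not a martingale difference. In the intended application $X_i$ is $W_i$ or $D_i/\hat\alpha_i$, and the acceptance $\Theta_i$ is decided by the same oracle draw, so in general $\EE[\Theta_i\widetilde X_i\mid\mathcal F_{i-1}]\neq 0$.

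Replacing $\widetilde a_i$ by a predictable envelope $\hat a_i\ge\widetilde a_i$ does not give an inequality either, because $\widetilde X_i$ is signed. On $\{\widetilde X_i<0\}$ one has $\widetilde a_i\widetilde X_i\ge \hat a_i\widetilde X_i$, strictly on every rejected trial, where $\widetilde a_i=0<\hat a_i$. So the difference is not sign-controlled. Passing to $\Theta_i\widetilde X_i$ has the same defect, since $\Theta_i\widetilde X_i$ is not centered. This is a structural problem, not a constant that a fallback could absorb. Pulling out the single positive scalar $\lambda_t\le C_\lambda t^{-2}$ is harmless; the obstruction is the $\Theta_i$-dependence inside the sum.

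\textbf{The fix.} The missing idea is the hypothesis you never use, $X_i\ge 0$: dominate \emph{before} centering. On $\{t_0\le t<T_\varepsilon,\ \mathcal I_t(\hat p)=1\}$ one has $0\le a_i^{(t)}=\lambda_t\widetilde a_i\le b_i:=C_0C_\lambda\, i\, t^{-2}$. Since $w_iX_i\ge0$, this gives $M_t\le\sum_{i=1}^t b_i w_iX_i$ with \emph{deterministic} $b_i$. Only now center, with $Z_i:=w_iX_i-\EE[w_iX_i\mid\mathcal F_{i-1}]$. Then $b_iZ_i$ is a genuine martingale difference sequence with $\EE[|Z_i|^{1+\delta}\mid\mathcal F_{i-1}]\le\upsilon$. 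The mean part is at most $\epsilon\sum_i b_i\le \epsilon C_0C_\lambda$, and von Bahr--Esseen plus Markov conclude. This is exactly the paper's argument.

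\textbf{Weight bound.} Do not replace $\gamma_i$ by $\gamma_{\max}$ and then ask for $1/\alpha_i=O(i)$. Lemma~\ref{para_lemma_b} only controls $\sqrt{\gamma_i}/\alpha_i$, and indeed $\alpha_i\le\alpha_0\sqrt{\gamma_i/\gamma_0}$. So $\alpha_i$ can fall far below $1/i$ once rejections have shrunk the accepted step $\gamma_i$. Your envelope $\hat a_i$ built with $\gamma_{\max}$ fails for the same reason. Instead use $\widetilde a_i=\Theta_i\frac{\alpha_0}{\sqrt{\gamma_0}}\sqrt{\gamma_i}\,\lambda_i^{-1/2}$ with $\lambda_i^{-1/2}\le 1+\rho i$, which gives $\widetilde a_i\le\rho(1+\rho i)\le C_0 i$.

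\textbf{The constant $(2+\delta)^{-1}$.} The inequality $\sum_{i=1}^t i^{1+\delta}\le t^{2+\delta}/(2+\delta)$ is false; the integral comparison goes the other way. The paper's proof makes the same slip. A correct estimate such as $\sum_{i=1}^t i^{1+\delta}\le (t+1)^{2+\delta}/(2+\delta)$ costs an extra factor $(1+1/t)^{2+\delta}$ in the stated constant.
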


\begin{proof}
\begingroup
\setlength{\abovedisplayskip}{4pt}
\setlength{\belowdisplayskip}{4pt}
\setlength{\abovedisplayshortskip}{2pt}
\setlength{\belowdisplayshortskip}{2pt}
\setlength{\jot}{3pt}

\noindent\textbf{Step 1: deterministic envelope for $\widetilde a_i$.}
When $\mu=0$, Lemma~\ref{para_lemma_a} yields $\gamma_i\lambda_i/\alpha_i^2=\gamma_0/\alpha_0^2$, hence
\begin{equation}\label{eq:alpha_lambda_relation_v2}
\alpha_i\;=\;\frac{\alpha_0}{\sqrt{\gamma_0}}\sqrt{\gamma_i\lambda_i}\qquad (i\ge1).
\end{equation}
Therefore
\[
\widetilde a_i\;=\;\lambda_i^{-1}\Theta_i\alpha_i
\;=\;\Theta_i\,\frac{\alpha_0}{\sqrt{\gamma_0}}\sqrt{\gamma_i}\,\lambda_i^{-1/2}.
\]
By Lemma~\ref{para_lemma_b} (left inequality),
\[
\lambda_i^{-1/2}\;\le\; 1+\frac{\alpha_0}{\sqrt{\gamma_0}}\sum_{j=1}^i\Theta_j\sqrt{\gamma_j}
\;\le\; 1+\alpha_0\sqrt{\frac{\gamma_{\max}}{\gamma_0}}\,i
\;=\;1+\rho i,
\]
and also $\sqrt{\gamma_i}\le\sqrt{\gamma_{\max}}$. Hence, for all $i\ge1$,
\begin{equation}\label{eq:tildea_linear_v2}
\widetilde a_i\;\le\; \rho(1+\rho i)\;\le \;\rho(1+\rho)\,i\;=\;C_0\,i.
\end{equation}

\smallskip
\noindent\textbf{Step 2: $t^{-2}$ control of $\lambda_t$ on $\{t_0\le t<T_\varepsilon,\ \mathcal I_t(\hat p)=1\}$.}
By \Cref{High_pro_rate} with $\mu=0$, on $\{\mathcal I_t(\hat p)=1,\ t<T_\varepsilon\}$,
\[
\lambda_t
\;\le\;
4\Bigl(2+\alpha_0\sqrt{\bar{\gamma}/\gamma_0}\bigl[(\hat p-\tfrac12)t-\tfrac D2\bigr]_+\Bigr)^{-2}.
\]
For $t\ge t_0:=\lceil D/(\hat p-\tfrac12)\rceil$ we have
$(\hat p-\tfrac12)t-\tfrac D2\ge \tfrac12(\hat p-\tfrac12)t$, and thus on
$\{t_0\le t<T_\varepsilon,\ \mathcal I_t(\hat p)=1\}$,
\begin{equation}\label{eq:lambda_bound_v2}
~~~\lambda_t\;\le\; 4\left(\alpha_0\sqrt{\bar{\gamma}/\gamma_0}\cdot \tfrac12(\hat p-\tfrac12)t\right)^{-2}
\;=\;C_\lambda\,t^{-2}.
\end{equation}

\smallskip
\noindent\textbf{Step 3: deterministic domination.}
Fix $t\ge t_0$ and define $b_i:=C_0C_\lambda\,i\,t^{-2}$ for $i=1,\dots,t$.
Then by \eqref{eq:tildea_linear_v2}--\eqref{eq:lambda_bound_v2}, on
$\{t_0\le t<T_\varepsilon,\ \mathcal I_t(\hat p)=1\}$,
\[
M_t
\;=\;\lambda_t\sum_{i=1}^t \widetilde a_i\,w_iX_i
\;\le\;
\sum_{i=1}^t b_i\,w_iX_i,
\]
hence for any $u>0$,
\begin{equation}\label{eq:set_inclusion_v2}
\PP\!\left\{t_0\le t<T_\varepsilon,\ \mathcal I_t(\hat p)=1,\ M_t>u\right\}
\;\le\;
\PP\!\left\{\sum_{i=1}^t b_i\,w_iX_i>u\right\}.
\end{equation}

\smallskip
\noindent\textbf{Step 4: martingale tail bound.}
Let $Z_i:=w_iX_i-\EE[w_iX_i\mid\mathcal F_{i-1}]$. Then $\EE[Z_i\mid\mathcal F_{i-1}]=0$ and by
\eqref{eq:Mt_poly_mu0_assump},
\[
\EE[w_iX_i\mid\mathcal F_{i-1}]
\;=\;w_i\EE[X_i\mid\mathcal F_{i-1}]
\;\le\; \epsilon,
\qquad
\EE\bigl[|Z_i|^{1+\delta}\mid\mathcal F_{i-1}\bigr]\;\le\; \upsilon.
\]
Therefore
\[
\sum_{i=1}^t b_i\,w_iX_i
\;=\;
\sum_{i=1}^t b_i\,\EE[w_iX_i\mid\mathcal F_{i-1}]
+\sum_{i=1}^t b_iZ_i
\;\le\;
\epsilon\sum_{i=1}^t b_i+\sum_{i=1}^t b_iZ_i.
\]
Combining with \eqref{eq:set_inclusion_v2} and taking
$u:=\epsilon\sum_{i=1}^t b_i+(C_0C_\lambda)S$ yields
\[
\PP\!\left\{
t_0\le t<T_\varepsilon,\ \mathcal I_t(\hat p)=1,\
M_t>\epsilon\sum_{i=1}^t b_i+(C_0C_\lambda)S
\right\}
\;\le\;
\PP\!\left\{\sum_{i=1}^t b_iZ_i>(C_0C_\lambda)S\right\}.
\]

Since $\{b_iZ_i\}_{i=1}^t$ is a martingale difference sequence (deterministic $b_i$),
the von Bahr--Esseen inequality \cite{vonbahr1965inequalities,rio2009moment} yields
\[
\EE\Big|\sum_{i=1}^t b_iZ_i\Big|^{1+\delta}
\le\;
2^{1-\delta}\sum_{i=1}^t \EE|b_iZ_i|^{1+\delta}
\;\le\;
2^{1-\delta}\upsilon\sum_{i=1}^t b_i^{1+\delta}.
\]
Therefore, by Markov's inequality,
\[
\PP\!\left\{\sum_{i=1}^t b_iZ_i>(C_0C_\lambda)S\right\}
\;\le\;
\frac{2^{1-\delta}\upsilon}{(C_0C_\lambda)^{1+\delta}S^{1+\delta}}
\sum_{i=1}^t b_i^{1+\delta}.
\]

Finally,
\[
\sum_{i=1}^t b_i
=
C_0C_\lambda t^{-2}\sum_{i=1}^t i
=
\frac{C_0C_\lambda}{2}\Bigl(1+\frac1t\Bigr)\leq C_0C_\lambda,
\qquad
\sum_{i=1}^t b_i^{1+\delta}
=
(C_0C_\lambda)^{1+\delta} t^{-2(1+\delta)}\sum_{i=1}^t i^{1+\delta}
\le
\frac{(C_0C_\lambda)^{1+\delta}}{2+\delta}\,t^{-\delta}.
\]
Substituting these bounds gives \eqref{eq:Mt_poly_mu0_concise}.
\endgroup
\end{proof}

\begin{table}[htbp]
\centering
\caption{Summary of Constants}
\label{tab:constants_summary}

\small
\setlength{\tabcolsep}{4.5pt}
\renewcommand{\arraystretch}{1.28}

\newcolumntype{L}[1]{>{\raggedright\arraybackslash}p{#1}}
\newcolumntype{Y}{>{\raggedright\arraybackslash}X}

\begin{tabularx}{\linewidth}{@{} L{1.8cm} L{1.2cm} Y L{1.3cm} @{}}
\toprule
\textbf{Category} & \textbf{Symbol} & \textbf{Expression} & \textbf{Ref.}\\
\midrule

\multicolumn{4}{@{}l}{\textit{Tail bounds}}\\
\addlinespace[2pt]

& $C_1$ 
& $\displaystyle \frac{(3+\delta)^2 e^{1+\delta}}{2}$ 
& \Cref{LEMMA.m_T}\\

& $C_2$ 
& $\displaystyle \left(1+\frac{2}{1+\delta}\right)^{1+\delta}$ 
& \Cref{LEMMA.m_T}\\

& $P_1$ 
& $\displaystyle
P_1(t,\delta,S_g)=
\left\{
\begin{aligned}
&\frac{2^{1-\delta}\upsilon}{S_g^{1+\delta}}\,t^{-\delta},
&& \delta\in(0,1],\\
&\exp\!\left(-\frac{S_g^2\,t}{C_1\,\upsilon^{2/(1+\delta)}}\right)
+\frac{C_2\upsilon}{S_g^{1+\delta}}\,t^{-\delta},
&& \delta>1,
\end{aligned}
\right.$
& \Cref{Complexity_mu>0}\\

\addlinespace[4pt]
\midrule
\addlinespace[2pt]

\multicolumn{4}{@{}l}{\textit{Generic constants}}\\
\addlinespace[2pt]

& $\rho$ 
& $\displaystyle \alpha_0\sqrt{\frac{\gamma_{\max}}{\gamma_0}}$ 
& \Cref{lem:Mt_poly_mu0_compact}\\

& $D$ 
& $\displaystyle \max\!\left\{\frac{\ln(\gamma_0\bar{\gamma}^{-1})}{\ln(\nu^{-1})},\,0\right\}$ 
& \Cref{High_pro_rate}\\

& $\bar{\alpha}$ 
& $\displaystyle
\bar{\alpha}=
\left\{
\begin{aligned}
&\max\!\left\{\alpha_0,\; \frac{2}{\sqrt{1+4\nu}+1}\right\},
&& \mu=0,\\
&\max\!\left\{\alpha_0,\; \frac{2}{\sqrt{(1-\theta\nu)^2+4\nu}+(1-\theta\nu)}\right\},
&& \mu>0,
\end{aligned}
\right.$
& \Cref{para_lemma_d}\\

& $C_0$ 
& $\displaystyle \rho(1+\rho)$ 
& \Cref{lem:Mt_poly_mu0_compact}\\

& $C_\lambda$ 
& $\displaystyle \frac{16\gamma_0}{\bar{\gamma}\alpha_0^2}\left(\hat p-\frac12\right)^{-2}$ 
& \Cref{lem:Mt_poly_mu0_compact}\\

\addlinespace[4pt]
\midrule
\addlinespace[2pt]

\multicolumn{4}{@{}l}{\textit{Strongly convex only}}\\
\addlinespace[2pt]

& $C_\kappa$ 
& $\displaystyle \ln\!\left(\frac{1}{1-(1-\vartheta)\sqrt{2\theta\mu\bar{\gamma}}}\right)$
& \Cref{Complexity_mu>0}\\

& $C_3$ 
& $\displaystyle \frac{(\epsilon_g')^2}{2\mu}+2\epsilon_f'$ 
& \Cref{proposition_fisrst_1}\\

& $C_4$ 
& $\displaystyle \frac{1}{1-\bar\alpha_{\mu>0}}$ 
& \Cref{proposition_fisrst_1}\\

& $C_5$ 
& $\displaystyle \frac{\bar\alpha_{\mu>0}}{2\mu(1-\bar\alpha_{\mu>0})\vartheta(1-\vartheta)}$
& \Cref{proposition_fisrst_1}\\

& $C_3'$ 
& $\displaystyle C_3+C_4\epsilon_f'+C_5(\epsilon_g')^2$ 
& \Cref{proposition_fisrst_1}\\

\bottomrule
\end{tabularx}
\end{table}
\section{Geometric Illustrations}
\label{app:momentum_geometry}

Figures~\ref{fig:l1}--\ref{fig:l2} visualize the interpolation discussed in \Cref{subsection_momentum strategy}.
For a successful trial,
\[
x_{t+1}=y_t-\gamma_t \mathbf G_t,\qquad
\bar x_{t+1}=y_t-\gamma_t' \mathbf G_t,\qquad
\hat y_{t+1}=x_{t+1}+\hat\rho_{t+1}(\bar x_{t+1}-x_t),
\]
so the auxiliary step size $\gamma_t'$ controls how the trajectory of $\hat y_{t+1}$ varies with $\vartheta$.

\begin{figure}[!b]\label{fig_geom}
\centering

\begin{subfigure}[t]{0.48\linewidth}
  \centering
  \includegraphics[width=\linewidth]{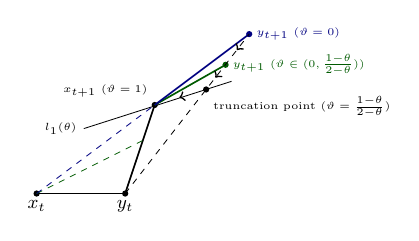}
  \caption{Truncate $\gamma_t'$ at specific $\vartheta=\frac{1-\theta}{2-\theta}$.}
  \label{fig:l1}
\end{subfigure}
\hfill
\begin{subfigure}[t]{0.48\linewidth}
  \centering
  \includegraphics[width=\linewidth]{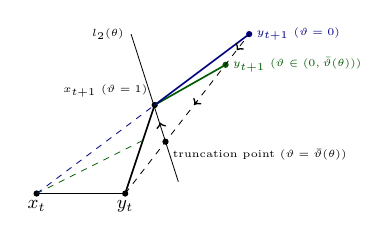}
  \caption{Truncate $\gamma_t'$ at general $\vartheta=\bar{\vartheta}(\theta)\in(0,1)$.}
  \label{fig:l2}
\end{subfigure}

\caption{Geometric illustration of the momentum step $y_{t+1}-x_{t+1}$ as the parameter $\vartheta$ varies.
In both panels, the black polyline with arrows traces the trajectory of the trial extrapolation $y_{t+1}$ from the aggressive endpoint $y_{t+1}\,(\vartheta=0)$ toward the conservative endpoint $x_{t+1}\,(\vartheta=1)$.
The blue and green points indicate representative locations of $y_{t+1}$ before and after truncation, respectively.
The black dot marks the turning point at which the active branch of $\gamma_t'$ switches.
In \Cref{fig:l1}, this turning point occurs at the specific value $\vartheta=\frac{1-\theta}{2-\theta}$ used in \eqref{gamma_t_prime}; it is characterized as the intersection of the untruncated trajectory with the $\vartheta$-independent line $l_1(\theta)$ passing through $x_{t+1}$.
In \Cref{fig:l2}, the same mechanism is shown for a general truncation rule, where the turning point is prescribed at $\vartheta=\bar{\vartheta}(\theta)$ and is determined by the corresponding $\vartheta$-independent line $l_2(\theta)$ through $x_{t+1}$.}
\label{fig:l1l2}
\end{figure}

Without truncation, one keeps
\[
\frac{\gamma_t'}{\gamma_t}\equiv\frac{\widetilde\gamma_t'}{\gamma_t}
=
\frac{2\theta-\alpha_t(1-\vartheta)^{-1}}{1-\alpha_t},
\]
Combining this with
\begin{align}
    z_t-x_t
    ~&\overset{\eqref{z_t}}{=}~ (1-\vartheta)\frac{1-\alpha_{t-1}}{\alpha_{t-1}}
\left[
(y_{t-1}-x_{t-1})
-
\frac{\gamma_{t-1}'}{\gamma_{t-1}}(y_{t-1}-x_t)
\right]\nonumber\\
    &~=~~ (1-\vartheta)\frac{1-\alpha_{t-1}}{\alpha_{t-1}}(y_{t-1}-x_{t-1})
       -\left({2\theta(1-\vartheta)}{\alpha_{t-1}^{-1}}-1\right)(x_t-y_{t-1})\nonumber\\
    &~\overset{\vartheta\uparrow1}{\longrightarrow}~ y_{t-1}-x_t,
    \tag{$\clubsuit$}\label{eq:only}
\end{align}
yields the pull-back limit
\begin{equation}
    {\hat y}_t(\vartheta)~\overset{\eqref{eq:yhat_zt}}{\longrightarrow}~
    z_t(\vartheta)~\overset{\eqref{eq:only}}{\longrightarrow}~x_t + (y_{t-1}-x_t)
    ~=~ y_{t-1}\qquad (\vartheta\uparrow1).\label{PULL_BACK}
\end{equation}
so the method does not approach a momentum-free \texttt{SASS} step, but rather a stalled pull-back regime.
RAAS avoids this by using
\[
\gamma_t'=\max\{\widetilde\gamma_t',\gamma_t^{\mathrm{Tr}}\},
\qquad
\gamma_t^{\mathrm{Tr}}
=
\gamma_t\frac{2\theta+(\theta-2)\alpha_t}{1-\alpha_t},
\]
which redirects the trajectory toward $x_t$ and yields a \texttt{SASS}-type limit.

\end{document}